\documentclass[11pt]{article}
\usepackage[font=small,width=\textwidth]{caption}
\usepackage{amsmath,amsfonts,amssymb,amsthm,thmtools}
\usepackage{comment} 
\usepackage{mathabx}

\usepackage{url}
\usepackage{hyperref}
\hypersetup{
  colorlinks   = true, %Colours links instead of ugly boxes
  urlcolor     = blue, %Colour for external hyperlinks
  linkcolor    = blue, %Colour of internal links
  citecolor   = red %Colour of citations
}
\usepackage[capitalise, nameinlink]{cleveref}

\usepackage{tikz}
\usepackage{tikz-cd}
\usetikzlibrary{matrix}

\usepackage{thmtools}
\usepackage{soul}

\usepackage{fullpage}

\usepackage{todonotes}
\newcounter{notes}

    \newtheoremstyle{TheoremNum}
        {\topsep}{\topsep}              %%% space between body and thm
        {\itshape}                      %%% Thm body font
        {}                              %%% Indent amount (empty = no indent)
        {\bfseries}                     %%% Thm head font
        {.}                             %%% Punctuation after thm head
        { }                             %%% Space after thm head
        {\thmname{#1} \thmnote{\bfseries #3}}%%% Thm head spec

\theoremstyle{TheoremNum}
\newtheorem{lemn}{Lemma}
\newtheorem{thmn}{Theorem}
\newtheorem{propn}{Proposition}
\newtheorem{coron}{Corollary}

\theoremstyle{definition}
\declaretheorem[name=Definition,Refname={Definition,Definitions},parent=section]{defi}
\declaretheorem[Refname={Example,Examples},qed=\ensuremath{\blacksquare},sibling=defi]{example}
\declaretheorem[name=Guide for the reader,numbered=no]{rmk}

\theoremstyle{plain}
\declaretheorem[name=Theorem,Refname={Theorem, Theorems},sibling=defi]{theo}
\declaretheorem[name=Corollary,Refname={Corollary, Corollaries},sibling=defi]{coro}
\declaretheorem[name=Proposition,Refname={Proposition, Propositions},sibling=defi]{prop}
\declaretheorem[name=Lemma,Refname={Lemma, Lemmas},sibling=defi]{lem}
\declaretheorem[name=Question,Refname={Question, Questions},sibling=defi]{ques}

\theoremstyle{remark}
\declaretheorem[name=Remark,numbered=no]{rmq}

\newcommand{\Aut}{\operatorname{Aut}}
\newcommand{\Out}{\operatorname{Out}}
\newcommand{\Inn}{\operatorname{Inn}}

\newcommand{\Stab}{\operatorname{Stab}}

\newcommand{\rk}{\operatorname{rank}}

\newcommand{\ZZ}{\mathbb{Z}}

\newcommand{\NN}{\mathbb{N}}
\newcommand{\RR}{\mathbb{R}}

\newcommand{\GG}{\mathbb{G}}
\newcommand{\free}{\mathbb{F}}
\newcommand{\BB}{\mathfrak{B}}

\newcommand{\defeq}{\coloneq}

\newcommand{\norm}[1]{\left\Vert#1\right\Vert}
\newcommand{\abs}[1]{\left\vert#1\right\vert}

\title{Depth of free-by-cyclic groups}
\author{Spencer Dowdall, Yassine Guerch, Radhika Gupta, \\Jean Pierre Mutanguha and Caglar Uyanik}

\begin{document}
\maketitle

\abstract{For a free group automorphism, we prove that its {poset of attracting lamination orbits} is a canonical invariant of the associated mapping torus. 
That is, if a free-by-cyclic group splits as a mapping torus in two different ways, then the corresponding automorphisms have isomorphic posets of lamination orbits.
Further, we show that the {lamination depth}, the size of the largest chain in this poset, is a commensurability invariant of the free-by-cyclic group.}

\section{Introduction}
Every automorphism $\Phi$ of a finitely generated free group $\free$ gives rise to a \emph{mapping torus extension} of $\free$ by $\ZZ$ defined by the presentation
\[M(\Phi) \defeq \free \rtimes_{\Phi}\ZZ = \langle \free, t \mid t g t^{-1} = \Phi(g)\text{ for all }g\in \free\rangle.\]
Up to isomorphism, this group depends only on the outer class $\phi \defeq [\Phi]\in \Out(\free)$ of the {{monodromy}}/automorphism, so we often write $M(\phi)$ or $\free \rtimes_\phi \ZZ$ instead.
A group $\GG$ is \emph{free-by-cyclic} if it is isomorphic to such an extension $M(\phi)$ for some $\phi \in \Out(\free)$ with nontrivial~$\free$. However, it is well known that $\phi$ is not uniquely determined by $\GG$;
indeed, there may be infinitely many distinct outer automorphisms $\psi\in \Out(\free')$ -- even of free groups $\free'$ of distinct ranks! -- for which $M(\phi) \cong \GG\cong M(\psi)$. Thus, in principle, $\GG$ loses information about $\phi$.
It is natural to ask what features of the outer automorphism $\phi$ are in fact group invariants of the associated free-by-cyclic group $\GG\cong M(\phi)$, or even stronger, commensurability or quasi-isometry invariants of $\GG$.

For the introduction, $\free$ is a finitely generated nontrivial free group and $\phi \in \Out(\free)$.
By the preceding remarks, the rank of $\free$ is not a group invariant of~$M(\phi)$ since it is possible that $M(\phi) \cong M(\psi)$ for some other $\psi \in \Out(\free')$ with $\operatorname{rank}(\free) \neq \operatorname{rank}(\free')$.
In contrast, one can show using \cite{MSW_I, Papasoglu:GrpSplittings} that $\phi$ having finite order in $\Out(\free)$ is a quasi-isometry invariant of~$M(\phi)$ while the precise order of $\phi$ is not even a group invariant. 
As another example, recent work \cite{JPPoly, Macura, Gautero-Lustig, Ghosh, Dahmani-Li} shows that $\phi$ being \emph{polynomially growing} (see~\S\ref{sec:polynomial_growth}), and moreover the degree of polynomial growth, is a quasi-isometry invariant of $M(\phi)$. 
At the other extreme, $\phi$ being \emph{atoroidal} (that is, having no periodic conjugacy classes of nontrivial elements) is also a quasi-isometry invariant since Brinkmann proved this is equivalent to the group $M(\phi)$ being Gromov hyperbolic~\cite{Brinkmann}. Finally, recall that $\phi$ is \emph{irreducible} if it does not preserve a proper free factor system of $\free$. Using dynamical techniques, it was first shown in~\cite{DKL-Dynamics} that when $\phi$ is both irreducible and atoroidal, then the same holds for many other splittings of~$M(\phi)$ (for example, those in the same component of the symmetrized BNS-invariant). It was later proven in full generality~\cite{Mutanguha-Irreducibility} that irreducibility of $\phi$ is a group
invariant of~$M(\phi)$.

In their work proving the Tits Alternative for $\Out(\free)$, Bestvina--Feighn--Handel~\cite{BesFeiHan00} showed that every outer automorphism $\phi$ of $\free$ has a canonically defined (and possibly empty) finite set $\mathcal{L}^+(\phi)$ of \emph{attracting laminations}, each of which is an $\free$--invariant closed subset of the double boundary $\partial^2 \free$ (see \S\ref{subsec:weakattraction} for details). These laminations encode essential dynamical information about the automorphism and, since their introduction over two decades ago, have become fundamental tools in many facets of the theory of $\Out(\free)$.

One might hope that the cardinality of $\mathcal{L}^+(\phi)$ is an invariant of the free-by-cyclic group $M(\phi)$. This guess is motivated by prior results in two important special cases: Firstly, the set $\mathcal{L}^+(\phi)$ is empty precisely when $\phi$ is \emph{polynomially growing}, which we have seen is a quasi-isometry invariant.  Thus $\mathcal{L}^+(\phi)$ being empty is a quasi-isometry invariant. 
Secondly, $\mathcal{L}^+(\phi)$ consists of a single lamination whenever $\phi$ is irreducible. 
Since irreducibility is a group invariant, it follows that $\mathcal{L}^+(\psi)$ also has cardinality $1$ for all outer automorphisms~$\psi$ with $M(\psi)\cong M(\phi)$. However, this does not prove cardinality $1$ is an invariant as there are also reducible automorphisms $\phi$ for which $\abs{\mathcal{L}^+(\phi)} = 1$.

It turns out that the cardinality of $\mathcal{L}^+(\phi)$ is not the right quantity. For example, by considering automorphisms arising from homeomorphisms of punctured surfaces, 
one may find examples $M(\phi)\cong M(\psi)$ where, say, $|\mathcal{L}^+(\phi)| > |\mathcal{L}^+(\psi)| = 1$. 
The full story is more interesting and involved.

The lamination set $\mathcal{L}^+(\phi)$ has two additional structures. For the first structure, attracting laminations may be properly nested inside each other (as subsets of $\partial^2\free$), making $\mathcal{L}^+(\phi)$ a partially ordered set. By the \emph{(lamination) depth} of $\phi$, we mean the length $\delta(\phi)$ of the longest chain in the poset $\mathcal{L}^+(\phi)$. Our original goal for this project was to determine whether the depth $\delta(\phi)$ is a group invariant of $M(\phi)$, which was posed as a question in \cite{JPPoly}. 
Our first main result (\cref{Thm:depthinvariant}) answers this in the affirmative, and we deduce a stronger statement.

\begin{coron}[\ref{commensurability}]
    Let~$\free$ be a finitely generated nontrivial free group and $\phi \in \Out(\free)$.
    The depth $\delta(\phi)$ of the outer automorphism~$\phi$ is a commensurability invariant of the free-by-cyclic group $M(\phi) = \free\rtimes_\phi \ZZ$. 
    That is, if $\psi\in \Out(\free')$ is any outer automorphism for which $M(\phi)$ and $M(\psi)$ are commensurable (i.e.~admit finite index subgroups that are isomorphic), then $\delta(\phi) = \delta(\psi)$.
\end{coron}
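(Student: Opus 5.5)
We reduce commensurability invariance to two facts: group invariance of depth (\cref{Thm:depthinvariant}), and invariance of depth under passing to finite-index subgroups of the mapping torus.

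\textbf{Step 1: finite-index subgroups of $M(\phi)$ are mapping tori of related automorphisms.} Let $H\le M(\phi)=\free\rtimes_\Phi\ZZ$ have finite index. Then $H\cap\free=\free_0$ has finite index in $\free$, so it is a finitely generated free group. The image of $H$ in $\ZZ$ is $k\ZZ$ for some $k\ge1$. Choose $h=gt^k\in H$. Then $H=\free_0\rtimes\langle h\rangle$, so $H\cong M(\psi_0)$, where $\psi_0$ is the restriction of $i_g\circ\Phi^k$ to $\free_0$. Thus $\psi_0$ is a representative of $\phi^k$ that preserves $\free_0$, restricted to $\free_0$.

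\textbf{Step 2: depth is unchanged by powers and by restriction to finite-index subgroups.} First, $\mathcal{L}^+(\phi^k)=\mathcal{L}^+(\phi)$ as sets. Attracting laminations of a power coincide with those of $\phi$ (Bestvina--Feighn--Handel): $\phi$ permutes $\mathcal{L}^+(\phi)$, and every lamination of $\phi$ is a lamination of $\phi^k$ and conversely. Inclusion of laminations as subsets of $\partial^2\free$ does not change, so the poset, and hence $\delta$, is the same.

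Second, compare $\free_0$ with $\free$. Since $\free_0$ has finite index, $\partial\free_0=\partial\free$ and $\partial^2\free_0=\partial^2\free$. I would show that the attracting laminations of $\psi_0$ are exactly the laminations $\Lambda\in\mathcal{L}^+(\phi^k)$, each regarded as an $\free_0$-invariant set. The cleanest route is via a lift of a relative train track for $\phi^k$ to the finite cover of the graph corresponding to $\free_0$. Passing to a further power if necessary, we may assume the lifted map fixes strata. The EG strata of the lift then cover EG strata downstairs, and the laminations are generic leaf closures, which are the same subsets of $\partial^2\free$.

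One subtlety is that an $\free$-orbit of leaves might split into several $\free_0$-laminations. In that case an $\free$-invariant $\Lambda$ becomes a finite union of $\free_0$-laminations that are permuted transitively by coset representatives. Nesting is preserved: $\Lambda\subset\Lambda'$ upstairs if and only if each $\free_0$-component of $\Lambda$ lies in some component of $\Lambda'$. So chains correspond, possibly after choosing components, and the longest chain length is the same. This is the step I expect to be the main obstacle: the component splitting must be handled carefully. One could alternatively argue it via the paper's poset of lamination orbits, where it is cleaner because laminations in the same orbit are identified. Here one uses that translates of laminations under coset representatives are incomparable, since they are distinct minimal-type objects of equal "height".

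\textbf{Step 3: conclude.} Suppose $M(\phi)$ and $M(\psi)$ are commensurable, with $H\le M(\phi)$ and $H'\le M(\psi)$ of finite index and $H\cong H'$. Step 1 gives $H\cong M(\psi_0)$ and $H'\cong M(\psi_0')$, and Step 2 gives $\delta(\psi_0)=\delta(\phi)$ and $\delta(\psi_0')=\delta(\psi)$. Since $M(\psi_0)\cong M(\psi_0')$, \cref{Thm:depthinvariant} gives $\delta(\psi_0)=\delta(\psi_0')$. Hence $\delta(\phi)=\delta(\psi)$.
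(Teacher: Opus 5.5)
This is essentially the paper's proof. A finite-index subgroup of $M(\phi)$ is the mapping torus of the restriction of a representative of $\phi^k$ to the invariant finite-index subgroup $H\cap\free$, and invariance of depth under powers and finite-index restriction together with \cref{Thm:depthinvariant} finishes the argument. Your Step 2 is exactly \cref{Prop:powers}, which you can cite directly instead of re-deriving; if you do re-derive it, your vague ``equal height'' justification of strictness should be replaced by the paper's argument: two $\psi_0$-laminations $\Lambda_1\subseteq\Lambda_2$ over the same $\phi$-lamination satisfy $\Lambda_1\subseteq\Lambda_2\subseteq g\cdot\Lambda_1$ for some $g\in\free$, and iterating up to a power $g^m\in\free_0$ forces $\Lambda_1=\Lambda_2$.
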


Since commensurable groups are quasi-isometric, this provides evidence for the conjecture posed in \cite[Conjecture 5.5]{JPPoly} that depth is a quasi-isometry invariant of the mapping torus.

The second additional structure is that the lamination set $\mathcal{L}^+(\phi)$ is invariant under the action of $\phi$ on the double boundary $\partial^2 \free$. As this action preserves nesting, the partial order descends to the quotient, turning the set $\mathcal{L}^+_\phi$ of $\phi$-orbits of attracting laminations into a canonical poset associated to the automorphism $\phi$. Note that the $\phi$-orbits of attracting laminations are in one-to-one correspondence with the exponentially growing strata in any relative train track representative of $\phi$ \cite[Lemma~3.1.13]{BesFeiHan00}. 
The length of the longest chain in $\mathcal{L}^+_\phi$ is also the length of the longest chain in $\mathcal{L}^+(\phi)$, which is a group invariant by the first result. 
 We must work a bit harder to show the group invariance of this poset of orbits:

\begin{thmn}[\ref{thm:posetlaminationinvariant}] 
    Let~$\free$ be a finitely generated nontrivial free group and $\phi \in \Out(\free)$.
    The poset $\mathcal{L}^+_\phi$ of lamination orbits of the outer automorphism $\phi$ is a group invariant of the free-by-cyclic group $M(\phi) = \free\rtimes_\phi \ZZ$: For any outer automorphism $\psi\in \Out(\free')$, any isomorphism $M(\phi) \to M(\psi)$ induces an order-isomorphism $\mathcal{L}^+_\phi\to \mathcal{L}^+_\psi$.
\end{thmn}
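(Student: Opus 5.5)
We will pass from laminations, which depend on a chosen fibration $\free\rtimes_\phi\ZZ$, to objects intrinsic to the group $\GG = M(\phi)$. The plan is to attach to each $\phi$-orbit of attracting laminations a conjugacy-invariant family of subgroups of $\GG$, and to recover the nesting order from these subgroups.

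For $\Lambda\in\mathcal{L}^+(\phi)$ with orbit $[\Lambda]\in\mathcal{L}^+_\phi$, we pass to a rotationless power $\phi^k$ that fixes every lamination. Let $\mathcal{F}_\Lambda$ be the smallest $\phi^k$-invariant free factor system carrying $\Lambda$, i.e. the free factor support of $\Lambda$, which is invariant because $\Lambda$ is. Let $A_\Lambda$ be a component of this system. Then the restriction $\phi^k|_{A_\Lambda}$ is well defined up to conjugacy, and its mapping torus $A_\Lambda\rtimes\ZZ$ embeds in $M(\phi^k)\le\GG$. Taking the $\phi$-orbit symmetrises this, and we obtain a finite collection of conjugacy classes of subgroups $\GG_{[\Lambda]}\le\GG$ which are themselves free-by-cyclic.

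The inclusion $\Lambda'\subseteq\Lambda$ holds iff $\Lambda'$ is carried by $\mathcal{F}_\Lambda$ with $\Lambda'\ne\Lambda$. This should translate into a statement about subgroups: some conjugate of $\GG_{[\Lambda']}$ lies in some $\GG_{[\Lambda]}$. The key step is to characterise these subgroups without reference to the splitting. We propose the following characterisation: a subgroup $H\le\GG$ is of the form $\GG_{[\Lambda]}$ iff $H$ is a maximal finitely generated subgroup that is free-by-cyclic and whose \emph{fibred} restriction is irreducible relative to its proper subgroups of the same type. Equivalently, $H$ is a ``minimal exponentially growing piece'' in the relative sense.

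For intrinsic detection we would use the following earlier inputs:
\begin{itemize}
\item polynomial growth is an invariant of mapping tori, via the cited results on polynomial growth;
\item irreducibility is a group invariant of mapping tori \cite{Mutanguha-Irreducibility}, applied relatively;
\item subgroups carried by invariant free factor systems are mapping tori of the restrictions, which holds because every finitely generated subgroup of $\GG$ meets $\free$ in a finitely generated normal subgroup.
\end{itemize}

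Concretely, we will induct on the filtration by invariant free factor systems. Define $\GG$-intrinsically the family $\mathcal{S}$ of ``exponentially growing, relatively irreducible'' free-by-cyclic subgroups: $H\in\mathcal{S}$ iff $H$ is not polynomially growing, and there is a (possibly empty) malnormal family of polynomially growing or smaller subgroups relative to which $H$ is fully irreducible. Here the conditions are formulated via intrinsic group-theoretic language, such as relative hyperbolicity or the vertex groups of canonical JSJ-type decompositions. We would then check two things. First, every element of $\mathcal{S}$ is conjugate to some $\GG_{[\Lambda]}$, with $[\Lambda]\mapsto\GG_{[\Lambda]}$ a bijection onto conjugacy classes. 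Second, the order corresponds to subconjugacy.

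Granting this, an isomorphism $M(\phi)\to M(\psi)$ carries $\mathcal{S}$ to $\mathcal{S}$ and preserves subconjugacy, and so it induces the order-isomorphism $\mathcal{L}^+_\phi\to\mathcal{L}^+_\psi$.

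The main obstacle is making ``relatively irreducible'' genuinely independent of the fibration. The subgroup $H$ may itself fibre in many ways, and its intersection with $\free'$ for another splitting need not coincide with the free factor $A_\Lambda$. We would first try to apply the relative version of \cite{Mutanguha-Irreducibility} directly to $H$ together with its peripheral subgroups (the mapping tori of the lower strata). We would then show that the peripheral structure is determined by $H$ via maximal polynomially growing and lower-depth subgroups, using induction on depth to identify those.
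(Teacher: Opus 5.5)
There is a genuine gap, and it shows up before any question of intrinsic detection. The free factor support $\mathcal F_\Lambda$ does not encode the nesting order: being carried by $\mathcal F_\Lambda$ is necessary for $\Lambda'\subseteq\Lambda$, but it is not sufficient. Take the running example (\cref{ex:lamination_poset}). The periodic line $[[aba^{-1}b^{-1}]]$ is a leaf of $\Lambda_{ef}$, and $aba^{-1}b^{-1}$ lies in no proper free factor of $\langle a,b\rangle$. So every free factor system carrying $\Lambda_{ef}$ has a component containing a conjugate of $\langle a,b\rangle$; indeed $\mathcal F_{\Lambda_{ef}}=[\langle a,b,e,f\rangle]$. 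Hence $\GG_{[\Lambda_{ab}]}=\langle a,b,t\rangle\le\langle a,b,e,f,t\rangle=\GG_{[\Lambda_{ef}]}$, although $\Lambda_{ab}\nsubseteq\Lambda_{ef}$. The surface example in \S\ref{subsec:support_ss}, where $\mathcal F[\Lambda_1]=\pi_1(\Sigma)$, is another instance.

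Adding ``relative irreducibility'' does not help. Over the common invariant factor $\langle a,b\rangle$, both $\langle a,b,c,d\rangle$ and $\langle a,b,e,f\rangle$ add a single EG stratum. What separates $\Lambda_{cd}$ from $\Lambda_{ef}$ is that $\Phi(e)$ only involves the fixed element $[a,b]$ of $\langle a,b\rangle$. This is recorded by the nonattracting system $\mathcal N[\Lambda_{ab}]=[\langle aba^{-1}b^{-1},e,f\rangle]$, which is not a free factor system. That is why the paper replaces $\mathcal F_\Lambda$ by $\mathcal S[\Lambda]$, the minimal free factor system of $\mathcal N(\Lambda^\perp)$ carrying $\Lambda^\downarrow$. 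Only after cutting down by the nonattracting systems of the exterior $\Lambda^\perp$ does one get an order-isomorphism (\cref{lem:supporting_poset_isomorphism}).

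Second, the step you flag as the main obstacle is really the whole theorem, and the proposal does not supply it. Your criterion is not well posed. The $\GG_{[\Lambda]}$ are nested, so they are not ``maximal'' free-by-cyclic subgroups (the maximal one is $\GG$ itself). Moreover, ``irreducible relative to smaller pieces'' presupposes that the lower pieces are already canonical, which is circular in a bottom-up induction. Appealing to \cite{Mutanguha-Irreducibility} ``relatively'' does not break the circle, since the peripheral structure is itself defined through the fibration. Your third input is also false as stated: a finitely generated free subgroup of $\GG$ not contained in $\free$ meets $\free$ in an infinitely generated subgroup, and \cref{lem:feighnhandel} needs $H$ to be free-by-cyclic.

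The paper instead works top-down with malnormal systems. When $\mathcal G$ has no cyclic splittings, the mapping tori of $\mathcal N[\Lambda|\mathcal A]$ for \emph{maximal} $[\Lambda]$ are exactly the maximal proper malnormal free-by-cyclic subgroup systems of $\mathcal G$ supporting $\mathcal P(\mathcal G)$ (\cref{thm:maximallamcanonical}). The key input is \cref{Prop:invariantmalnormal}, proved with the relative JSJ forest: $\mathcal P(\phi|\mathcal A)\sqsubseteq\mathcal N$ excludes rigid vertices, and a flexible vertex carries a pseudo-Anosov piece whose lamination has nonattracting system containing $\mathcal N$. Combined with the canonical passage to the maximal subsystem without cyclic splittings (\cref{prop:passing to no cyclic splittings}), this yields the canonical filtration $(\mathcal G_i,\mathcal G_i')$. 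Each $M(\phi|\mathcal S[\Lambda])$ is then recovered inductively as a meet of all but one of the maximal systems at each stage, singled out by not being supported by $\mathcal G_{i+1}$.
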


We remark that, in contrast to the depth $\delta(\phi)$, the poset $\mathcal{L}^+_\phi$ associated to $\phi$ is not a commensurability invariant of the free-by-cyclic group $M(\phi)$. Indeed, this can be seen by 
passing to a power of~$\phi$ and the corresponding mapping torus extension. 

In addition to the depth $\delta(\phi)$, we define the \emph{(lamination) depth spectrum} of $\phi$, denoted $\delta \mathcal{S}(\phi)$, to be the set consisting of the lengths of maximal chains in $\mathcal{L}^+(\phi)$ (equivalently~$\mathcal{L}^+_\phi$) recorded without multiplicity. As with depth, we obtain invariance of the depth spectrum:

\begin{coron}[\ref{cor:depthspectrum}]
    Let~$\free$ be a finitely generated nontrivial free group and $\phi \in \Out(\free)$.
    The depth spectrum $\delta\mathcal{S}(\phi)$ of the outer automorphism $\phi$ is a commensurability invariant of the free-by-cyclic group $\free\rtimes_\phi \ZZ$.
\end{coron}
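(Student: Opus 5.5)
We deduce the corollary from \cref{thm:posetlaminationinvariant} together with two facts about how the lamination poset behaves under powers and restrictions. By definition, $\delta\mathcal{S}(\phi)$ is the set of lengths of maximal chains in $\mathcal{L}^+(\phi)$, and this equals the set of lengths of maximal chains in $\mathcal{L}^+_\phi$. The point is that $\phi$ permutes $\mathcal{L}^+(\phi)$ by order-automorphisms, and a maximal chain in the orbit poset lifts to a maximal chain of the same length upstairs. So group invariance of $\delta\mathcal{S}$ is immediate from \cref{thm:posetlaminationinvariant}, and the real content is commensurability invariance. We will reduce it, exactly as for \cref{commensurability}, to showing that the depth spectrum is unchanged when passing to finite index subgroups of $M(\phi)$.

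\textbf{Structure of finite index subgroups.} Let $H \le M(\phi)$ have finite index. Then $H \cap \free$ is a finite index subgroup $\free_0 \le \free$. Also $H$ contains an element $t^k g$ with $k \ge 1$, and $H \cong \free_0 \rtimes_{\psi} \ZZ$ where $\psi$ is the restriction of $\Phi^k \circ \operatorname{ad}_g$ to $\free_0$. We then need two steps.

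\textbf{Step (i): powers.} $\mathcal{L}^+(\phi^k) = \mathcal{L}^+(\phi)$ as posets. This is a standard fact from \cite{BesFeiHan00}: attracting laminations of $\phi^k$ and $\phi$ coincide, and the nesting order is inclusion in $\partial^2\free$, so the order is visibly the same.

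\textbf{Step (ii): finite index restrictions.} For $\free_0 \le \free$ of finite index, we use the identification $\partial^2\free_0 = \partial^2\free$. Under it, the attracting laminations of the restriction $\psi$ are exactly those of $\phi^k$. Lifting a relative train track to the finite cover shows that each stratum lifts to strata whose laminations are the same subset of $\partial^2\free$. The subtle point is that an $\free$-orbit-closed lamination might split into several $\free_0$-invariant laminations. Choosing $k$ so that $\Phi^k$ stabilizes each component (replacing $k$ by a multiple if needed) handles the orbit issue. Even if a lamination splits into several minimal pieces, all pieces lie at the same depth levels, so chains upstairs project to chains downstairs and conversely. Therefore the set of maximal chain lengths is preserved, though the poset itself need not be.

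\textbf{Main obstacle and conclusion.} I expect step (ii) to be the main obstacle: one must check that a maximal chain for $\psi$ maps to a maximal chain for $\phi^k$ of the same length, and not merely to some chain. My plan is to show that the map from $\mathcal{L}^+(\psi)$ to $\mathcal{L}^+(\phi^k)$, sending each lamination to its $\free$-saturation, is surjective, strictly order preserving, and has the lifting property: whenever $\Lambda \subset \Lambda'$ downstairs and $\tilde\Lambda$ lies over $\Lambda$, some $\tilde\Lambda'$ over $\Lambda'$ contains $\tilde\Lambda$. This is presumably the same lemma used to prove \cref{commensurability}, and if so I will cite it. With it, maximal chains correspond in both directions with equal lengths. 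Finally, given commensurable $M(\phi)$ and $M(\psi')$ with isomorphic finite index subgroups $H \cong H'$, we get $\delta\mathcal{S}(\phi) = \delta\mathcal{S}(\text{monodromy of } H) = \delta\mathcal{S}(\text{monodromy of } H') = \delta\mathcal{S}(\psi')$. The middle equality uses \cref{thm:posetlaminationinvariant}, since $H \cong H'$ may present different splittings of the same group.
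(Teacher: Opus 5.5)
\paragraph{The gap.} Your reduction is the paper's own. You pass to a finite-index subgroup $\langle A,xt^k\rangle$, use $\mathcal L^+(\phi^k)=\mathcal L^+(\phi)$, and then use the saturation map $\pi\colon\mathcal L^+(\psi)\to\mathcal L^+(\phi^k)$ of Proposition~\ref{Prop:powers}. The argument breaks at the sentence ``with it, maximal chains correspond in both directions.''

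Surjectivity, strict monotonicity and upward lifting give only one direction. A lift of a maximal chain of $\mathcal L^+(\phi^k)$ is maximal in $\mathcal L^+(\psi)$, because anything insertable upstairs would project, by strict monotonicity, to something insertable downstairs. That yields $\delta\mathcal S(\phi)\subseteq\delta\mathcal S(\psi)$ and $\delta(\phi)=\delta(\psi)$, and nothing more.

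The reverse inclusion needs the image of every maximal chain of $\mathcal L^+(\psi)$ to be maximal. That is an interval-lifting property: if $\Lambda_1\subsetneq\Lambda_2$ and $\pi(\Lambda_1)\subsetneq\bar\Lambda\subsetneq\pi(\Lambda_2)$, then some lift of $\bar\Lambda$ lies between $\Lambda_1$ and $\Lambda_2$. Upward lifting from $\Lambda_1$ only reaches \emph{some} lift of $\pi(\Lambda_2)$, not necessarily $\Lambda_2$. Your heuristic that all lifts ``lie at the same depth levels'' preserves heights, but not covering relations. The paper's proof of Proposition~\ref{Prop:powers} makes exactly the same jump (``chains \dots\ can be lifted \dots, and so $\delta\mathcal S(\psi)=\delta\mathcal S(\phi)$''), so citing it does not close the gap.

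\paragraph{The reverse inclusion is false.} Take $\free=\langle h,a,b,c,d,e,f\rangle$ and
$\Phi\colon h\mapsto h$, $a\mapsto ab$, $b\mapsto bab$, $c\mapsto cd$, $d\mapsto dbcd$, $e\mapsto ef$, $f\mapsto f\,c\,hah^{-1}\,ef$.
This is an automorphism, and its rose map is a relative train track with three primitive EG strata. The poset $\mathcal L^+(\phi)$ is the chain $\Lambda_{ab}\subsetneq\Lambda_{cd}\subsetneq\Lambda_{ef}$, so $\delta\mathcal S(\phi)=\{3\}$.

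Let $A=\ker(\free\to\ZZ/2)$, where $h\mapsto 1$ and all other generators map to $0$. Then $A$ is $\Phi$-invariant of index $2$, with basis $x_1=x$, $x_2=hxh^{-1}$ for $x\in\{a,\dots,f\}$, together with $s=h^2$. The restriction $\Psi=\Phi|_A$ acts by the same formulas on each copy, except that $f_1\mapsto f_1c_1a_2e_1f_1$ and $f_2\mapsto f_2c_2\,sa_1s^{-1}\,e_2f_2$.

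Hence $\mathcal L^+(\psi)$ consists of six laminations $\alpha_i,\beta_i,\gamma_i$, coming from the strata $\{a_i,b_i\}$, $\{c_i,d_i\}$, $\{e_i,f_i\}$. The inclusions are $\alpha_i\subsetneq\beta_i\subsetneq\gamma_i$, $\alpha_2\subsetneq\gamma_1$ and $\alpha_1\subsetneq\gamma_2$, and there are no others. Iterates of $e_1,f_1$ are positive words that never involve $c_2,d_2$, so $\beta_2\not\subseteq\gamma_1$; likewise $\alpha_2\not\subseteq\beta_1$. Therefore $\{\alpha_2,\gamma_1\}$ is a maximal chain and $\delta\mathcal S(\psi)=\{2,3\}$.

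Yet $M(\psi)=\langle A,t\rangle$ has index $2$ in $M(\phi)$. Since $\phi$ and $\psi$ fix all their laminations, orbits play no role here. What your argument (and the paper's) actually proves is $\delta(\phi)=\delta(\psi)$ and $\delta\mathcal S(\phi)\subseteq\delta\mathcal S(\psi)$.

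\paragraph{Your opening claim.} The same one-sidedness affects your claim that maximal-chain lengths in $\mathcal L^+(\phi)$ and in $\mathcal L^+_\phi$ agree. Composing $\Psi$ with $x\mapsto hxh^{-1}$ gives an automorphism of $A$ with the same six-element lamination poset, but its orbit poset is a $3$-chain.
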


\subsection{Contrasting the 3-manifold setting}
\label{sec:3-manifold_motivation}

Much of the theory of free group automorphisms is motivated by parallels with surface homeomorphisms, and the same is true for the results of this paper. 
To add context and give a better geometric appreciation of the lamination poset,
let us now briefly describe how the story plays out in the surface setting.

For an oriented compact surface $S$ with negative Euler characteristic, the Nielsen--Thurston classification states that every orientation preserving homeomorphism $f\colon S\to S$ is isotopic to a homeomorphism that is either finite-order, reducible, or pseudo-Anosov. Recall that \emph{reducible} means the map setwise preserves a collection, termed a \emph{reducing system},  of disjoint essential simple closed curves in~$S$. On the other hand, \emph{pseudo-Anosov} means there exists a constant $\lambda > 1$ and a pair $(\Lambda_\pm, \mu_\pm)$ of \emph{attracting} and \emph{repelling} arational measured (singular) foliations of~$S$ such that 
$f(\Lambda_\pm, \mu_\pm) = (\Lambda_\pm,\lambda^{\pm1}\mu_\pm)$. 

It follows from the classification that, up to isotopy, every homeomorphism $f$ has a canonical \emph{Thurston normal form} with the following structure: There is a canonical finite (possibly empty) reducing system  $\mathcal{C}$ so that,
for each component $\Sigma$ of $S\setminus\mathcal{C}$ with an $f$-orbit of size $k \ge 1$, the restriction $f^k\vert{\Sigma}\colon \Sigma\to \Sigma$ is either finite order or pseudo-Anosov; $\Sigma$ is called a finite-order or pseudo-Anosov piece of $f$ accordingly. 
Since each pseudo-Anosov piece comes with its own attracting measured foliation (for the map $f^k\vert\Sigma$), we see that every homeomorphism $f$ has a canonical finite (possibly empty) set $\mathcal{L}^+(f)$ of attracting measured foliations: one for each pseudo-Anosov piece in the Thurston normal form. These are the analogs of, and were indeed the motivation for, the attracting laminations of a free group automorphism.

Thurston's geometrization conjecture (now proven by Perelman) posits that every compact oriented $3$-manifold can be canonically decomposed into geometric pieces. In the special case that $M$ fibers over the circle with compact fibers of negative Euler characteristic, 
this decomposition follows from Thurston's hyperbolization theorem and has the following structure:
Such an $M$ can be cut along a canonical finite (possibly empty) collection of essential tori and Klein bottles so that each complementary component is either: Seifert fibered and admits a complete $\mathbb{H}^2\times \mathbb{R}$-structure of finite volume; or atoroidal and admits a complete $\mathbb H^3$-structure of finite volume.

While this decomposition is intrinsic to the manifold $M$, it can also be read off from the dynamics of \emph{any}  monodromy as follows:

To say that $M$ is fibered means it is homeomorphic to the mapping torus $M_f$ of a surface homeomorphism $f\colon S\to S$. This surface homeomorphism has its Thurston normal form---the canonical reduction system $\mathcal{C}$ that decomposes~$f$ into finite-order and pseudo-Anosov pieces. Each $f$-orbit of curves in the reduction system $\mathcal{C}$ gives rise to an essential torus or Klein bottle in the mapping torus $M_f$. After cutting $M_f$ along the resulting collection $\mathcal{T}$ of tori or Klein bottles, the complementary components exactly correspond to $f$-orbits of components of $S\setminus\mathcal{C}$. Moreover, $f$-orbits of finite-order pieces of $S\setminus \mathcal{C}$ give rise to $(\mathbb H^2 \times \mathbb R)$-components of $M_f\setminus\mathcal{T}$ and, by Thurston's hyperbolization theorem, $f$-orbits of pseudo-Anosov pieces of $S\setminus\mathcal{C}$ give rise to $\mathbb H^3$-components.

By uniqueness, this decomposition of the mapping torus $M_f$ must be the geometric decomposition of the manifold $M$ into $\mathbb H^2\times \mathbb R$ and $\mathbb H^3$ pieces. In particular, the hyperbolic components of $M\cong M_f$ are in bijective correspondence with the $f$-orbits of pseudo-Anosov pieces of $S\setminus\mathcal{C}$ and thus with the $f$-orbits of the set $\mathcal{L}^+(f)$ of measured foliations; therefore, despite the well-known fact that $M$ may fiber as the mapping torus $M \cong M_g$ for infinitely many essentially distinct surface homeomorphisms $g\colon S'\to S'$, the uniqueness of the geometric decomposition shows that the sets of measured foliation orbits are naturally identified for all such fibrations. 

\subsection{Sketch of key ideas}
The above discussion proves the analog of 
\Cref{thm:posetlaminationinvariant} for surface homeomorphisms and explains why \emph{orbits} of laminations are relevant. It also highlights important differences  and challenges that must be overcome in our free group setting. Chief among these is that the result for surface homeomorphisms rests upon monumental work regarding the structure of $3$-manifolds. Frankly, there is no geometrization theorem for free-by-cyclic groups. Secondly, we see that the attracting measured foliations for surfaces do not interact with each other: they are supported on disjoint subsurfaces and are decidedly not nested. The nesting of laminations is something unique to the free group setting, making it unclear whether the depth or poset structure of $\mathcal{L}^+_\phi$ should be preserved in different splittings of the group.

Accordingly, our approach to \Cref{thm:posetlaminationinvariant} takes inspiration from the $3$-manifold setting but differs greatly in execution. For example, for a surface homeomorphism $f$, the measured foliations are supported on subsurfaces that give rise to canonical submanifolds of the mapping torus~$M_f$. One might hope that for a free group outer automorphism $\phi\in \Out(\free)$, the laminations would be supported by subgroups of $\free$ that induce canonical subgroups of $M(\phi)$. 
While this turns out to be true, constructing these supporting subgroups is significantly more involved, and proving that they induce canonical objects in $M(\phi)$ is rather circuitous (see \S\ref{sec:grp_inv_poset}).

To start, each lamination orbit $[\Lambda]\in \mathcal{L}^+_\phi$ gives rise to a canonical \emph{nonattracting subgroup system} $\mathcal{N}[\Lambda]$. 
Using these (specifically, the systems $\mathcal{N}[\Lambda']$ for those $[\Lambda']\in \mathcal{L}_\phi^+$ that are not contained in $[\Lambda]$), one may also construct a \emph{supporting subgroup system} $\mathcal{S}[\Lambda]$ that carries the lamination $[\Lambda]$ (see~\S\ref{sec:nonattracting_ss}).
Both $\mathcal{N}[\Lambda]$ and $\mathcal{S}[\Lambda]$ in turn give rise to their own mapping tori, which are free-by-cyclic subgroup systems of $\GG = M(\phi)$ (see~\S\ref{sec:mapping_tori_of_subgroup_system}).
We focus on the mapping tori $\mathcal{M}[\Lambda]$ coming from the nonattracting subgroup systems $\mathcal{N}[\Lambda]$, which are much easier to deal with.
Unfortunately, these subgroup systems are generally 
not known to be canonical in $\GG$. 
However, we show in the case when $\GG$ has \emph{no cyclic splittings} that the subgroup systems $\mathcal{M}[\Lambda]$ for 
\emph{maximal} elements $[\Lambda]$ of $\mathcal{L}^+_\phi$ are canonical in~$\GG$.
Indeed, they are algebraically characterized as the maximal proper malnormal free-by-cyclic subgroup systems of $\GG$ that contain the \emph{polynomial subgroup system} $\mathcal{P}(\GG)$ (see \S\ref{subsec:growthtypecanon} for the definition). 
This is the content of the next theorem and 
forms a key ingredient in our argument.

\begin{thmn}[\ref{thm:maximallamcanonical}]
Let~$\free$ be a finitely generated nontrivial free group and $\phi \in \Out(\free)$.
If $\GG = \free \rtimes_\phi \ZZ $  
has no cyclic splittings, then the set 
\[\left\{\mathcal M[\Lambda]\,:\,[\Lambda] \text{ maximal in } \mathcal L^+_{\phi}\right\} \] is the set of maximal proper malnormal free-by-cyclic subgroup systems in $\GG$ supporting $\mathcal P(\GG)$. Moreover, this set is in one-to-one correspondence with the collection of maximal lamination orbits in $\mathcal L^+_\phi$.
\end{thmn}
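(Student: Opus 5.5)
The plan is to prove two inclusions and then a separate injectivity statement.

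\textbf{Step 1: each $\mathcal{M}[\Lambda]$ has the required properties.} Fix a maximal $[\Lambda]\in\mathcal{L}^+_\phi$ and recall that $\mathcal{M}[\Lambda]$ is the mapping torus of the nonattracting subgroup system $\mathcal{N}[\Lambda]$. I would check three things.
\begin{itemize}
\item \emph{Malnormal and free-by-cyclic.} This follows because $\mathcal{N}[\Lambda]$ is a $\phi$-invariant malnormal subgroup system whose components are finitely generated, so each component of its mapping torus is a malnormal free-by-cyclic subgroup of $\GG$.
\item \emph{Proper.} $\Lambda$ is not carried by $\mathcal{N}[\Lambda]$, since a generic leaf of $\Lambda$ is weakly attracted to $\Lambda$.
\item \emph{Supports $\mathcal{P}(\GG)$.} Polynomially growing conjugacy classes are never weakly attracted to $\Lambda$, so the polynomial subgroups of $\free$ are carried by $\mathcal{N}[\Lambda]$. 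Their mapping tori are the components of $\mathcal{P}(\GG)$ (or their cyclic extensions), so they lie in $\mathcal{M}[\Lambda]$.
\end{itemize}

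\textbf{Step 2: every such system lies in some $\mathcal{M}[\Lambda]$.} Let $\mathcal{H}$ be any proper malnormal free-by-cyclic subgroup system of $\GG$ supporting $\mathcal{P}(\GG)$.
\begin{itemize}
\item Intersect with the fiber: $\mathcal{K}=\mathcal{H}\cap\free$ is a $\phi$-invariant malnormal subgroup system of $\free$, and $\mathcal{H}$ is its mapping torus.
\item Because $\mathcal{H}$ is proper, $\mathcal{K}$ is proper. I would argue that $\mathcal{K}$ cannot carry every attracting lamination. Otherwise, using the no cyclic splittings hypothesis and malnormality, $\mathcal{K}$ would carry every conjugacy class: every class is either polynomial, hence carried by assumption, or weakly attracted to some lamination. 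Combined with malnormality and finite generation, this would force $\mathcal{K}=\{[\free]\}$.
\item Pick a lamination $\Lambda'$ not carried by $\mathcal{K}$ and let $[\Lambda]$ be a maximal orbit with $[\Lambda']\subseteq[\Lambda]$.
\item I want to show $\mathcal{K}\sqsubseteq\mathcal{N}[\Lambda]$, i.e.\ no conjugacy class carried by $\mathcal{K}$ is weakly attracted to $\Lambda$. Suppose $c$ in $\mathcal{K}$ is attracted to $\Lambda$. Then iterating $\phi$ keeps $c$ in the $\phi$-invariant system $\mathcal{K}$, and the limit leaves would make $\Lambda$ carried by $\mathcal{K}$. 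Since $\mathcal{K}$ is closed under weak limits as a finitely generated system, this would force $\Lambda'\subseteq\Lambda$ to be carried as well, a contradiction.
\end{itemize}
The subtle point in this step is choosing $\Lambda'$ and its maximal $\Lambda$ compatibly. I would instead pick a maximal $[\Lambda]$ not carried by $\mathcal{K}$ directly. Such a maximal one exists: if every maximal orbit were carried, every orbit below them would be carried too.

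\textbf{Step 3: maximality and injectivity.} Steps 1 and 2 show that the maximal elements of the set in question are exactly the maximal elements among the $\mathcal{M}[\Lambda]$. It then remains to show two things for distinct maximal $[\Lambda_1]\neq[\Lambda_2]$:
\begin{itemize}
\item $\mathcal{M}[\Lambda_1]\not\sqsubseteq\mathcal{M}[\Lambda_2]$. This gives both that every $\mathcal{M}[\Lambda]$ is maximal and that $[\Lambda]\mapsto\mathcal{M}[\Lambda]$ is injective.
\item For this, I would show that $\Lambda_2$ is carried by $\mathcal{N}[\Lambda_1]$, since $\Lambda_2\not\subseteq\Lambda_1$ by maximality and incomparability. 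Meanwhile $\Lambda_2$ is not carried by $\mathcal{N}[\Lambda_2]$.
\end{itemize}

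\textbf{Expected main obstacle.} The hardest part is the fact used in Step 3: a lamination not contained in $[\Lambda_1]$ is carried by $\mathcal{N}[\Lambda_1]$. In general this fails, because a leaf of $\Lambda_2$ can be weakly attracted to $\Lambda_1$ when $\Lambda_2$ contains $\Lambda_1$. Here maximality of both orbits should rule this out. I would prove it using the relative train track characterization of $\mathcal{N}[\Lambda]$: the stratum of $\Lambda_2$ lies outside the paths attracted to the stratum of $\Lambda_1$. The no-cyclic-splittings hypothesis enters in Step 2 to exclude degenerate cases, where $\mathcal{N}[\Lambda]$ is not malnormal or $\mathcal{K}$ accumulates onto the whole of $\free$.
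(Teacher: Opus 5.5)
There is a genuine gap, and it sits where the real content of the theorem lies: the second bullet of your Step~2. You argue that if $\mathcal K$ carried every attracting lamination, it would carry every conjugacy class, because each class is either polynomial or weakly attracted to some lamination. That inference runs the wrong way. Closedness and $\phi$-invariance of $\BB(\mathcal K)$ only say that weak limits of carried classes are carried. They do not say that a class accumulating onto $\BB(\mathcal K)$ is itself carried. For instance, in \cref{ex:lamination_poset}, $c$ is weakly attracted to $\Lambda_{ab}$, which is carried by $[\langle a,b\rangle]$, yet $c$ is not carried by $[\langle a,b\rangle]$. So the no-cyclic-splittings hypothesis never does any work in your argument. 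The statement you actually need is left unproved: a $\phi$-invariant proper malnormal subgroup system supporting $\mathcal P(\phi)$ must fail to carry some lamination when $\phi$ preserves no free splitting. The rest of your Steps~2 and~3 is routine bookkeeping: if $\mathcal K$ misses $[\Lambda]$ then $\mathcal K\sqsubseteq\mathcal N[\Lambda]$, and distinct maximal orbits give incomparable systems. In particular, your ``main obstacle'' is just \cref{lem:nonattractingcarry}: $\mathcal N[\Lambda_1]$ carries $[\Lambda_2]$ if and only if $[\Lambda_1]\nsubseteq[\Lambda_2]$. This follows from \cref{lem:approximategenline} with no train track analysis.

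The missing ingredient is \cref{Prop:invariantmalnormal}. The paper proves it by a dichotomy on the minimal free factor system $\mathcal F(\mathcal K)$ supporting $\mathcal K$, which is $\phi$-invariant by uniqueness.
\begin{itemize}
\item If $\mathcal F(\mathcal K)$ is proper, enlarge it to a maximal $\phi$-invariant proper free factor system and realize that as an invariant subgraph of a relative train track map. The top stratum must be EG: an NEG top stratum would give a $\phi$-invariant free splitting, i.e.\ a cyclic splitting of $\GG$, and this is where the hypothesis enters. The lamination of that stratum attracts no $\mathcal K$-peripheral class.
\item If instead $\free$ is one-ended relative to $\mathcal K$, use the Guirardel--Levitt JSJ tree of $(\free,\mathcal K)$. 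There are no rigid vertices: a power of $\phi$ acts on a rigid vertex group with finite image, which forces that group into $\mathcal P(\phi)\sqsubseteq\mathcal K$, making it elementary. Hence some vertex is flexible. A power of $\phi$ induces on its surface a mapping class whose periodic classes are all boundary-peripheral, so it is pseudo-Anosov. Its lamination $\Lambda_v$ satisfies $\mathcal K\sqsubseteq\mathcal N[\Lambda_v]$.
\end{itemize}

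There is also a secondary gap in Step~1. Malnormality and finite type of $\mathcal N[\Lambda]$ do not make its mapping torus malnormal in $\GG$; you also need $\mathcal N[\Lambda]$ to have no $\phi$-twins (\cref{lem:malnormalnotwin}). This is not a degenerate case excluded by the hypothesis, as your last paragraph suggests. It holds for every $\phi$. The paper obtains it in \cref{cor:NSS} from the fact that the expanding $\Phi$-equivariant homeomorphism of the tree in \cref{Thm:NSS} has at most one fixed point.
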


This theorem is essentially a consequence of \Cref{Prop:invariantmalnormal}, which gives sufficient conditions for a malnormal subgroup system of~$\free$ to be supported by the nonattracting subgroup system $\mathcal{N}[\Lambda]$ of some lamination orbit $[\Lambda] \in \mathcal L^+_\phi$.
In order to apply \Cref{thm:maximallamcanonical}, one needs the free-by-cyclic group $\GG$ to have no cyclic splittings. While this is not true in general, we show in \Cref{prop:passing to no cyclic splittings} how to pass to a canonical subgroup system of $\GG$ with no cyclic splittings.

These two key ingredients set up a recursive procedure: First apply \Cref{prop:passing to no cyclic splittings} to $\mathcal G_0 = \GG$ and pass to a canonical free-by-cyclic subgroup system~$\mathcal G_0'$ with no cyclic splittings, then apply \Cref{thm:maximallamcanonical} to $\mathcal G_0'$ and extract the canonical subgroup systems corresponding to maximal lamination orbits. Next, take their meet (i.e.~intersection)~$\mathcal G_1$, and repeat from the first step until the procedure terminates. In this way, we obtain a canonical filtration in $\GG$ by free-by-cyclic subgroup systems whose length is the depth of~$\phi$ (\Cref{Thm:depthinvariant}). 

Finally, in~\S\ref{sec:grp_inv_poset}, we use this canonical filtration to prove that the mapping tori of the supporting subgroup systems $\mathcal{S}[\Lambda]$ are in fact canonical in $\GG$ and, moreover, that this canonical poset of subgroup systems of~$\GG$ is order-isomorphic to $\mathcal{L}^+_\phi$ (\Cref{thm:posetlaminationinvariant}).

A few results in the paper utilize the machinery of relative train track maps and topological trees. In order to avoid the arguments becoming too cumbersome, we postpone these details to the appendices. \Cref{app:passingtofinite} contains the proof of \Cref{Prop:powers} (invariance of depth under passing to finite index subgroups), which requires the precise definition of attracting laminations and relative train track maps. 
\Cref{app:genericapprox} constructs an element that approximates the generic leaves of an attracting lamination (\Cref{lem:approximategenline}).
\Cref{app:toptree} contains the explicit construction of a canonical topological tree associated to a collection of attracting laminations (\Cref{Thm:NSS}), implicit in the results of \cite{Mutanguha22}.

\medskip
\begin{rmk}
We note that, due to the recursive nature of our argument, we generally do not work with the groups $\free$ and $\GG$ directly, but rather work within certain subgroup systems. That is, all of the machinery is developed for: the restriction $\phi\vert\mathcal{A}$ of an outer automorphism $\phi$ to an invariant subgroup system $\mathcal{A}$ of $\free$; and the associated mapping torus $\mathcal{G} = \mathcal{M}(\phi\vert\mathcal{A})$, a free-by-cyclic subgroup system of $\GG$. However, to simplify the definitions and proofs, the reader can often pretend that $\mathcal{A} = \free$, $\phi\vert\mathcal{A} = \phi$, and $\mathcal G = \GG$.
\end{rmk}

\subsubsection*{Further directions} 
Given 
$\phi \in \Out(\free)$,  in this paper, we focus on the collection $\mathcal{L}^+(\phi)$ of attracting laminations, which is empty if and only if $\phi$ is polynomially growing. Nevertheless, one can associate a collection of laminations (not attracting anymore), denoted $\Omega(\phi)$, to a polynomially growing outer automorphism $\phi$  as explained by
Feighn--Handel in \cite{FeighnHandel:Recognition}. Handel--Mosher~\cite{HandelMosher20} later referred to $\Omega(\phi)$ as the set of \emph{weak accumulation sets of eigenrays and twistors of $\phi$}. In the same paper~\cite{FeighnHandel:Recognition}, Feighn--Handel show that $\Omega(\phi)$ is a partially ordered set and the length of a maximal chain in~$\Omega(\phi)$ is equal to the degree of polynomial growth of $\phi$, which is known to be a quasi-isometry invariant of $\free \rtimes_\phi \ZZ$.
Let $\Omega_\phi$ denoted the poset of $\phi$-orbits of laminations in~$\Omega(\phi)$.
Based on \Cref{thm:posetlaminationinvariant}, we expect a positive answer to the following question. 

\begin{ques} Is $\Omega_\phi$, the poset of lamination orbits associated to a polynomially growing automorphism $\phi$, a group invariant of $\free \rtimes_\phi \ZZ$? 
\end{ques}

Moreover, given any $\phi \in \Out(\free)$, one can look at the union of attracting laminations and weak accumulation sets of eigenrays and twistors, which is again a partially ordered set. We are curious to know if this full poset of laminations associated to $\phi$ or its depth is an invariant of $\free \rtimes_\phi \ZZ$. This invariant would be useful for distinguishing hyperbolic free-by-cyclic groups.

\subsubsection*{Acknowledgments}

We would like to thank the American Institute of Mathematics for their hospitality during the October 2023 workshop “Rigidity properties of free-by-cyclic groups” where this work started. We thank Mladen Bestvina, Martin Bridson, and Gilbert Levitt for helpful conversations.  

SD gratefully acknowledges support from NSF grants DMS--2005368 and DMS--2405061.  YG thanks the LABEX MILYON (ANR-10-LABX-0070) for its financial support during his postdoctoral position; during which part of this project was written. RG gratefully acknowledges partial support from SERB research grant SRG/2023/000123 and a grant from the Simons Foundation whilst on the Isaac Newton Institute
Ramanujan Fellowship. CU gratefully acknowledges support from the NSF grant DMS--2439076.

\section{Preliminaries} \label{sec:prelims}

\subsection{Subgroup systems}\label{subsec:ss}

Let $G$ be a group. A \emph{subgroup system} of $G$ is a (possibly empty) set $\mathcal{A}$ of conjugacy classes of nontrivial subgroups of $G$;
moreover, we require that if $[A], [B] \in \mathcal A$ are distinct conjugacy classes, then $A$ cannot be a subgroup of $B$.
A subgroup system has \emph{finite type} if it consists of finitely many conjugacy classes of finitely generated subgroups.
It is \emph{proper (resp.~cyclic)} if it consists of conjugacy classes of proper (resp.~cyclic) subgroups. 
A subgroup $A\le G$ is \emph{malnormal} if for any $h\in G \setminus A$,  the intersection $hAh^{-1}\cap A$ is trivial. 
A subgroup system $\mathcal{A}$ of $G$ is \emph{malnormal} if each $A$ with $[A]\in\mathcal{A}$ is malnormal in $G$ and, for each $A, B$ with $[A] \neq [B]$ in $\mathcal A$, the intersection $A \cap B$ is trivial.

There is a natural partial order, denoted $\sqsubseteq$, on the set of malnormal subgroup systems where  
{$\mathcal{B} \sqsubseteq \mathcal{A}$} if, for every $[B] \in \mathcal{B}$, there exists $[A] \in \mathcal{A}$ with $B \leq A$. 
In this case, we say that $\mathcal A$ \emph{supports} $\mathcal B$.
We will write $\mathcal B \sqsubsetneq \mathcal A$ if $\mathcal B \sqsubseteq \mathcal A$ and $\mathcal A \ne \mathcal B$. 
Note that malnormality is needed to ensure this relation is antisymmetric, for otherwise one might have a situation $A'  \lneq B \lneq A$ with $[A]=[A']\in \mathcal A$. 
This partial order is a meet-semilattice, i.e. every pair $\mathcal A, \mathcal B$ of malnormal subgroup systems has a greatest lower bound $\mathcal A \wedge \mathcal B$;
specifically, the \emph{meet} 
\[\mathcal A \wedge \mathcal B \defeq \{ [A\cap B]\,:\, [A] \in \mathcal A, [B] \in \mathcal B, \text{ and } A \cap B \text{ is not trivial} \}.\]
% When $G$ is a free group, then the meet of two malnormal subgroup systems with finite type also has finite type itself 
When $G$ is a free group, the following lemma extends the partial order beyond malnormal subgroup systems to the collection $G^{\le}_{\mathrm{fg}}$ of subgroup systems of~$G$ consisting of conjugacy classes of finitely generated subgroups. 
\begin{lem}\label{lem:partialorder_free}
    Let $G$ be a free group. The collection $G^{\le}_{\mathrm{fg}}$  has a partial order defined by $\mathcal B \sqsubseteq \mathcal A$ if, for every $[B] \in \mathcal B$, there exists $[A] \in \mathcal A$ with $B \le A$.
    Moreover, when restricted to subgroup systems with finite type, the partial order is a meet-semilattice.
\end{lem}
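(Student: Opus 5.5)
Reflexivity and transitivity of $\sqsubseteq$ are immediate from the definition: take $A=B$ for reflexivity; for transitivity, compose the inclusions $C\le B$ and $B\le A$, after conjugating appropriately. The only real content is antisymmetry. For this I will use the classical fact that finitely generated subgroups of a free group cannot be conjugated properly into themselves: if $H\le G$ is finitely generated and $gHg^{-1}\le H$, then $gHg^{-1}=H$. This follows from Takahasi's theorem, or from the Stallings graph of $H$. Conjugating $H$ into itself gives an immersion of its core graph into itself that is $\pi_1$-injective. Since the core graph is finite, has no valence-one vertices, and has the same rank, this immersion must be an isomorphism.

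Now suppose $\mathcal B\sqsubseteq\mathcal A$ and $\mathcal A\sqsubseteq\mathcal B$, and fix $[B]\in\mathcal B$.
\begin{itemize}
\item There is $[A]\in\mathcal A$ and a representative with $B\le A$.
\item There is $[B']\in\mathcal B$ and a representative with $A\le B'$.
\item Hence $B\le A\le B'$. Since $\mathcal B$ is a subgroup system, no class is properly contained in a distinct class, so $[B]=[B']$.
\item Therefore $B\le A\le gBg^{-1}$ for some $g$. Equivalently, $g^{-1}Bg\le B$, and the fact above forces equality, so $A=B$.
\end{itemize}
Thus $\mathcal B\subseteq\mathcal A$, and by symmetry $\mathcal A=\mathcal B$.

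For the meet-semilattice statement on finite type systems, I will define $\mathcal A\wedge\mathcal B$ as the set of maximal elements, up to conjugacy and under conjugate-inclusion, among the classes $[A\cap gBg^{-1}]$. Here $[A]\in\mathcal A$, $[B]\in\mathcal B$, $g\in G$, and the intersection is required to be nontrivial. The key input is the Stallings fibre-product picture, which rests on Howson's theorem. For fixed $A,B$, the nontrivial intersections $A\cap gBg^{-1}$ correspond, up to conjugacy, to the non-tree components of the pullback of the two finite core graphs. There are therefore only finitely many such classes, and each is finitely generated.

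Since conjugate-inclusion is a partial order on this finite set, by the antisymmetry argument above, maximal elements exist and form a subgroup system of finite type. Every element of the collection lies below one of these maximal elements. This system is a lower bound for both $\mathcal A$ and $\mathcal B$.

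To see that it is the greatest lower bound, let $\mathcal C$ be any system with $\mathcal C\sqsubseteq\mathcal A$ and $\mathcal C\sqsubseteq\mathcal B$, and fix $[C]\in\mathcal C$. Then $C\le A$ and $C\le gBg^{-1}$ for suitable choices, so $C\le A\cap gBg^{-1}$. The right-hand side is nontrivial because $C$ is nontrivial, and it lies below some maximal element. Hence $\mathcal C\sqsubseteq\mathcal A\wedge\mathcal B$.

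The main obstacle is the finiteness of the set of conjugacy classes of nontrivial intersections, which needs the fibre-product argument. Without it, maximal elements might fail to exist, or might be infinite in number. This finiteness is why the meet claim is restricted to finite type systems.
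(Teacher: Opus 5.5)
Your proposal is correct and follows the paper's proof essentially step for step. Antisymmetry rests on the fact that a finitely generated subgroup of a free group is not conjugate to a proper subgroup of itself: the paper gets this from subgroup separability, and you get it from Takahasi. The meet is taken as the maximal classes among the finitely many nontrivial intersections $[A\cap gBg^{-1}]$; the paper cites Neumann for the finiteness, and you obtain it from the fibre product.

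One small correction to your Stallings-graph justification: finiteness, no valence-one vertices and equal rank do not by themselves force an immersion to be an isomorphism. For example, the core graph of $\langle a^2,b\rangle$ immerses non-isomorphically into the rose of $\langle a,b\rangle$, which has the same rank. What actually makes a self-immersion of a finite connected graph an automorphism is a pigeonhole argument that uses the fact that source and target are the same graph. So either give that argument, or simply rely on Takahasi.
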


\noindent This lemma (and its proof) will serve as a blueprint for the partial order constructed in \S\ref{subsec:poset_of_freebycyclic}.

\begin{proof}
    Let $A \le G$ be a finitely generated subgroup. 
    We first observe that if $A' \le A$ is a conjugate of $A$, then $A' = A$.
    Let $\iota \colon G \to G$ be the inner automorphism that maps $A$ into itself with image $\iota(A) = A'$.
    As free groups are subgroup separable and $A$ is finitely generated, the automorphism $\iota$ must map $A$ onto itself, i.e. $A'  = A$  (see \cite[Lemma~6.0.6]{BesFeiHan00}).

    This observation implies the relation $\sqsubseteq$ is antisymmetric.
    Indeed, let $\mathcal A, \mathcal B \in G^{\le}_{\mathrm{fg}}$, and suppose $\mathcal A \sqsubseteq \mathcal B$ and $\mathcal B \sqsubseteq \mathcal A$.
    Pick an arbitrary $[A'] \in \mathcal A$.
    Then $A' \le B$ for some $[B] \in \mathcal B$.
    Similarly, $B \le A$ for some $[A] \in \mathcal A$.
    Since $\mathcal A$ is a subgroup system and $A' \le A$, we must have $[A'] = [A]$, i.e. $A'$ is conjugate to $A$.
    By the previous paragraph, $A' = A$.
    Consequently, $A' = B = A$ and $[A'] = [B]$.
    As $[A'] \in \mathcal A$ was arbitrary, we have shown $\mathcal A \subseteq \mathcal B$.
    The same argument shows $\mathcal B \subseteq \mathcal A$; hence $\mathcal A = \mathcal B$.
    It is straightforward to check that the relation $\sqsubseteq$ is reflexive and transitive.

    Now, suppose $\mathcal A, \mathcal B$ are subgroup systems of $G$ with finite type.
    We start with a collection mimicking the meet of malnormal subgroup systems:
    \[\mathcal A \hat\wedge \mathcal B \defeq \{ [A\cap B]\,:\, [A] \in \mathcal A, [B] \in \mathcal B, \text{ and } A \cap B \text{ is not trivial} \}.\]
    This need not be a subgroup system since, without the malnormality assumption, distinct conjugacy classes may be nested.
    Nevertheless, it is a finite collection \cite[Proposition~2.1]{Neumann}.
    For each $[A \cap B] \in \mathcal A \hat\wedge \mathcal B$, the singletons $\{ [A\cap B] \}$ are subgroup systems partially ordered by~$\sqsubseteq$.
    Then $\mathcal A \wedge \mathcal B$ is defined to be the subset of maximal classes in $\mathcal A \hat\wedge \mathcal B$.
    One can verify that $\mathcal A \wedge \mathcal B$ is the meet, i.e. the greatest lower bound of $\mathcal A$ and $\mathcal B$.
\end{proof}

Subgroup systems arise naturally in the context of relatively hyperbolic groups. For example, if $G$ is a hyperbolic group and $\mathcal A$ is a proper subgroup system of $G$ with finite type, then $G$ is hyperbolic relative to $\mathcal A$ if and only if 
$\mathcal{A}$ is malnormal and consists of (conjugacy classes of) quasi-convex subgroups \cite[Theorem 7.11]{Bowditch}  (see also~\cite{Osin, DGO}).
For a relatively hyperbolic pair $(G, \mathcal{A})$, we say that an element $g \in G$ is \emph{$\mathcal{A}$-peripheral} if there exists $[A] \in \mathcal{A}$ with $g \in A$;
we similarly define $\mathcal{A}$-peripheral subgroups.

Let $\mathcal A$, $\mathcal B$ be two malnormal subgroup systems of $G$ with finite type such that $\mathcal B \sqsubseteq \mathcal A$. 
For $[A] \in \mathcal{A}$, the \emph{restriction} of $\mathcal B$ to $A$ is 
\[ \mathcal B|A \defeq \{[B]_A \colon [B]\in \mathcal{B}\text{ and }B\le A\},\]
where $[\cdot]_A$ denotes the $A$-conjugacy class.
So the restriction $\mathcal B|A$ is a subgroup system of~$A$.

\subsection{Trees and splittings}

A \emph{$G$-tree} is a simplicial metric tree equipped with a minimal action of $G$ by isometries.
The tree is \emph{trivial} if it is a point.
A \emph{splitting} of $G$ is a $G$-equivariant isometry class of nontrivial $G$-trees. A splitting of $G$ is \emph{free} if edge stabilizers are trivial, and it is \emph{cyclic} if edge stabilizers are cyclic (possibly trivial). Let $S$ be a free splitting of~$G$. The set of conjugacy classes of nontrivial vertex stabilizers of $S$ is a malnormal subgroup system of $G$ called a \emph{proper free factor system}. 
If $G$ is finitely generated, then its proper free factor systems have finite type by Bass--Serre theory.

The automorphism group $\Aut(G)$ acts naturally on the right on the set of splittings of $G$ by precomposition of the action. Since we are considering $G$-equivariant isometry classes of trees, the action of $\Aut(G)$ factors through an action of $\Out(G)$. Note that $\Out(G)$ preserves the set of free splittings and the set of cyclic splittings of $G$. 

Let $S$ be a set with a $G$-action and $\Phi \in \Aut(G)$. 
A function $f \colon S \to S$ is \emph{$\Phi$-equivariant} if: for every $s \in S$ and every $g \in G$, we have $f(g \cdot s)=\Phi(g) \cdot f(s).$
If, additionally, $S$ is a splitting of $G$ that is not isometric to~$\RR$ and $\Phi$ preserves $S$, i.e. $S \cdot \Phi = S$, then {there exists a unique $\Phi$-equivariant isometry} $\iota_\Phi \colon S \to S$ (for instance, see~\cite[\S2.1]{Lev05}).

Let $\mathcal{G}$ be a subgroup system of $G$. A \emph{splitting} of $\mathcal{G}$ is a set $\mathcal S = \{T_{G} \,:\, [G]\in \mathcal G \}$ where each $T_{G}$ is a $G$-equivariant isometry class of $G$-trees (one for each conjugacy class $[G]$) and some $T_{G}$ is a splitting of $G$, that is, at least one $T_G$ is nontrivial.
A splitting $\mathcal S$ of $\mathcal{G}$ is \emph{cyclic} (resp.~\emph{free}) if each of the splittings in $\mathcal S$ is cyclic (resp.~free).
Now suppose $\mathcal G$ is a malnormal subgroup system of~$G$.
A \emph{proper free factor system} of $\mathcal{G}$ is the subgroup system of $G$ consisting of the conjugacy classes of nontrivial vertex stabilizers of a free splitting of $\mathcal{G}$.
One can check that proper free factor systems of $\mathcal G$ are malnormal subgroup systems of $G$.
When $\mathcal G$ has finite type, then so do its proper free factor systems by Bass-Serre theory again.

We now describe some properties of free factor systems of malnormal subgroup systems.

\begin{lem}\label{lem:transitivityffs}
    Let $\mathcal G$ be a malnormal subgroup system of $G$ and let $\mathcal F_1$ be a free factor system of~$\mathcal G$.
    If $\mathcal F_2$ is a free factor system of $\mathcal F_1$, then $\mathcal F_2$ is a free factor system of $\mathcal G$.
\end{lem}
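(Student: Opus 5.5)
The plan is to build a free splitting of $\mathcal G$ whose nontrivial vertex stabilizers realize $\mathcal F_2$, by blowing up the trees of a free splitting realizing $\mathcal F_1$. Unwinding the definitions: there is a free splitting $\mathcal S_1 = \{T_G : [G]\in\mathcal G\}$ of $\mathcal G$ whose nontrivial vertex stabilizers have conjugacy classes forming $\mathcal F_1$. Since $\mathcal F_1$ is malnormal, $\mathcal F_2$ being a free factor system of $\mathcal F_1$ makes sense. There is a free splitting $\mathcal S_2 = \{U_F : [F]\in\mathcal F_1\}$ of $\mathcal F_1$, with at least one $U_F$ nontrivial, whose nontrivial vertex stabilizers give $\mathcal F_2$. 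The two conventions to keep straight are these: each conjugacy class is represented once, and a trivial tree $U_F$ (a point) means $F$ itself is a vertex stabilizer of $\mathcal S_2$, so $[F]\in\mathcal F_2$.

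For each $[G]\in\mathcal G$, I will construct the blow-up $\widehat T_G$ of $T_G$. Choose representatives $v$ of the $G$-orbits of vertices of $T_G$ with nontrivial stabilizer $G_v$. By malnormality of $\mathcal G$ and of $\mathcal F_1$, each class $[G_v]$ is an element $[F]\in\mathcal F_1$ with $F\le G$. Any conjugate of $F$ lying in $G$ is $G$-conjugate to $F$: if $hFh^{-1}\le G$ with $F\le G$, then $hGh^{-1}\cap G\neq 1$, so $h\in G$. Hence the $G$-orbits of such vertices correspond exactly to the classes of $\mathcal F_1|G$, and we may set $G_v=F$ exactly. Replace each orbit $G\cdot v$ by $G\times_{G_v} U_{G_v}$. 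Then reattach the edges of $T_G$ incident to $v$ equivariantly. Since edge stabilizers are trivial, the edges at $v$ form free $G_v$-orbits, so for each such orbit we choose one attaching point in $U_{G_v}$ and extend equivariantly. This is standard Bass--Serre blowing up. The resulting $G$-tree $\widehat T_G$ has trivial edge stabilizers. Its nontrivial vertex stabilizers are exactly the conjugates of nontrivial vertex stabilizers of the $U_{G_v}$, plus nothing new, because vertices of $T_G$ with trivial stabilizer stay trivial. If $\widehat T_G$ is not minimal, I pass to its minimal subtree; nontrivial elliptic subgroups, hence nontrivial vertex stabilizers, are unchanged, except in degenerate cases such as a minimal subtree that is a single point, which is handled below.

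Next I check that the set of classes of nontrivial vertex stabilizers of $\{\widehat T_G\}$ is exactly $\mathcal F_2$. Each $[F]\in\mathcal F_1$ lies in a unique $[G]\in\mathcal G$ and arises from a unique vertex orbit. The vertex stabilizers contributed are then precisely the nontrivial vertex stabilizers of $U_F$ up to conjugacy, or $F$ itself if $U_F$ is trivial. Taking the union over $[F]\in\mathcal F_1$ gives $\mathcal F_2$. Nontriviality of the splitting also needs checking: some $T_G$ is nontrivial, so $\widehat T_G$, which contains a copy of $T_G$ collapsed onto it, is nontrivial. If $T_G$ is trivial, then $G$ itself is in $\mathcal F_1$, and $\widehat T_G = U_G$.

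The main obstacle is bookkeeping rather than depth: verifying that vertex-stabilizer classes are not duplicated or nested across different $[G]$. This follows from malnormality of $\mathcal G$, which gives trivial intersections between distinct classes. The edge case $T_G$ trivial with $G\in\mathcal F_1$, where the tree is replaced by $U_G$, also needs care. I would state the blow-up construction with trivial edge groups explicitly as a graph-of-groups operation: replace each vertex group $G_v$ by the graph of groups $U_{G_v}/G_v$ and attach the old edges to arbitrary vertices. This avoids any subtlety with minimality.
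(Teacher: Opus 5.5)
Your proposal is correct and follows the same argument as the paper. Take a free splitting of $\mathcal G$ realizing $\mathcal F_1$, and equivariantly substitute the trees of a free splitting of $\mathcal F_1$ realizing $\mathcal F_2$ at the vertices with nontrivial stabilizer; the paper states this blow-up in one line, and you additionally check orbit uniqueness via malnormality, triviality of edge stabilizers, and nontriviality. The only thing to add is the paper's opening reduction: when $\mathcal F_1=\mathcal G$ or $\mathcal F_2=\mathcal F_1$ the corresponding free splitting need not exist, but in those cases the conclusion is immediate.
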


\begin{proof}
    If $\mathcal F_1 = \mathcal G$ or $\mathcal F_2 = \mathcal F_1$, then there is nothing to show. 
    So we may assume both free factor systems are proper.
    Let $\mathcal S_1$ be a free splitting of $\mathcal G$ whose system of nontrivial vertex stabilizers is $\mathcal F_1$ and let $\mathcal S_2$ be a free splitting of $\mathcal F_1$ whose system of nontrivial vertex stabilizers is $\mathcal F_2$. 
    Then one can blow-up the splitting $\mathcal S_1$ 
    by equivariantly replacing the vertices with nontrivial stabilizers with the splitting $\mathcal S_2$.
    This produces a new free splitting 
    $\mathcal S_1'$ of $\mathcal G$ whose system of nontrivial vertex stabilizers is $\mathcal F_2$.
    Hence, $\mathcal F_2$ is a free factor system of $\mathcal G$.
\end{proof}

\begin{lem}\label{lem:meet_ffs_is_ffs}
    Let $\mathcal H \sqsubseteq \mathcal G$ be malnormal subgroup systems of $G$.
    If $\mathcal F$ is a free factor system of~$\mathcal G$, then $\mathcal H \wedge \mathcal F$ is a free factor system of $\mathcal H$.
\end{lem}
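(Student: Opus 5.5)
The plan is to realize $\mathcal H \wedge \mathcal F$ directly as the system of nontrivial vertex stabilizers of a free splitting of $\mathcal H$, obtained by restricting a free splitting of $\mathcal G$ to the components of $\mathcal H$. Let $\mathcal S = \{T_G : [G] \in \mathcal G\}$ be a free splitting of $\mathcal G$ whose system of nontrivial vertex stabilizers is $\mathcal F$. (If $\mathcal F = \mathcal G$ there is nothing to prove: by malnormality, $\mathcal H \wedge \mathcal G = \mathcal H$.)

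For each $[H] \in \mathcal H$, choose $[G] \in \mathcal G$ with $H \le G$; malnormality of $\mathcal G$ makes this choice unique. Restrict the $G$-action on $T_G$ to $H$. If $H$ fixes a point, set $T_H$ to be a point. Otherwise, let $T_H$ be the minimal $H$-invariant subtree of $T_G$. If $H$ is not finitely generated, minimal subtrees may fail to exist. In that case I would instead take $T_H$ to be the union of axes of hyperbolic elements of $H$, which is the standard minimal subtree. Edge stabilizers in $T_H$ are subgroups of edge stabilizers in $T_G$, so they are trivial, and $T_H$ is a free $H$-tree. A vertex $v \in T_H$ has stabilizer $H \cap \Stab_G(v)$, and $\Stab_G(v)$ is either trivial or some $F$ with $[F] \in \mathcal F$. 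Hence the nontrivial vertex stabilizers of the collection $\{T_H\}$ are exactly the nontrivial groups of the form $H \cap gFg^{-1}$. These include the elliptic case, where $H$ fixes a vertex and is therefore contained in some conjugate of $F$.

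Next I would match this collection with $\mathcal H \wedge \mathcal F = \{[H \cap F']\}$, where the conjugates are absorbed into the choice of representatives. Two points need checking. First, every nontrivial $H \cap F'$ actually occurs as a vertex stabilizer of $T_H$. This holds because $H \cap F'$ fixes the vertex $v$ of $T_G$ that $F'$ stabilizes. If $v \notin T_H$, then the nontrivial subgroup $H \cap F'$ fixes $v$ and hence also its projection onto $T_H$. That projection point is then fixed by a nontrivial group, so, since edge stabilizers are trivial, it is a vertex whose stabilizer contains $H \cap F'$, and malnormality of $\mathcal F$ forces the two stabilizers to agree. Second, conjugacy classes must be matched correctly, so that distinct $H$-orbits of vertices correspond to distinct classes in $\mathcal H \wedge \mathcal F$. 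This is the bookkeeping content of the meet for malnormal systems.

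The main obstacle is that the definition of a splitting of $\mathcal H$ requires at least one $T_H$ to be nontrivial. If every $H$ is elliptic in its $T_G$, then each $H$ lies in a conjugate of some $F$, so $\mathcal H \sqsubseteq \mathcal F$ and $\mathcal H \wedge \mathcal F = \mathcal H$. This is the non-proper case, which I would handle by the convention, implicit in \cref{lem:transitivityffs}, that $\mathcal H$ counts as a (non-proper) free factor system of itself. Otherwise some $T_H$ is nontrivial, and the argument above applies.
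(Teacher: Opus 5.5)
Your proposal is correct and follows essentially the same route as the paper. Both take a free splitting of $\mathcal G$ realizing $\mathcal F$, pass to the minimal $H$-subtree of the unique $T_G$ containing each $H$, and read off the nontrivial vertex stabilizers as the nontrivial intersections $H \cap F$, setting aside the degenerate case $\mathcal H \wedge \mathcal F = \mathcal H$ exactly as the paper does. Your check that every nontrivial $H \cap F'$ is realized on the minimal subtree is a detail the paper leaves implicit, and it is quicker than you make it: $H \cap F'$ fixes both $v$ and its projection, hence the segment between them, so trivial edge stabilizers give $v \in T_H$ directly without invoking malnormality of $\mathcal F$.
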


\begin{proof}
    We may assume $\mathcal F \neq \mathcal G$ and $\mathcal H \wedge \mathcal F \neq \mathcal H$.
    Let $\mathcal S$ be a free splitting of $\mathcal G$ whose system of nontrivial vertex stabilizers is $\mathcal F$.
    For each $[H] \in \mathcal H$, malnormality of~$\mathcal G$ implies there is a unique $T_H \in \mathcal S$ with an $H$-action induced by the free splitting. Let $T_H' \subseteq T_H$ be the minimal subtree for~$H$. Then the nontrivial vertex stabilizers in $H$ of $T_H'$ are precisely the nontrivial intersections of $H$ with representatives of classes in $\mathcal F$. As $\mathcal H \wedge \mathcal F \neq \mathcal H$, the set $\mathcal S' \defeq \{T_H' \,:\, [H] \in \mathcal H \}$ is a free splitting of $\mathcal H$, and its nontrivial vertex stabilizers are $\mathcal H \wedge \mathcal F$. 
\end{proof}

\begin{lem}\label{lem:meet_ffs}
    Let $\mathcal G$ be a malnormal subgroup system of $G$ with finite type. 
    If $(\mathcal F_n)_{n \in \NN}$ is a sequence of free factor systems of $\mathcal G$, then the sequence $(\bigwedge_{n \leq N} \mathcal F_n)_{N \in \NN}$ stabilizes. 
\end{lem}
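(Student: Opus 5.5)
By \cref{lem:meet_ffs_is_ffs} and \cref{lem:transitivityffs}, each partial meet $\mathcal M_N \defeq \bigwedge_{n\le N}\mathcal F_n$ is again a free factor system of $\mathcal G$. The plan is to show this by induction, and then to show that a strictly decreasing chain of free factor systems of a finite-type malnormal system must be finite, by attaching to each one a complexity that drops at every strict step.

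\emph{Step 1: the partial meets are free factor systems.} Assume inductively that $\mathcal M_N$ is a free factor system of $\mathcal G$. Then $\mathcal M_N \sqsubseteq \mathcal G$, and both are malnormal. Apply \cref{lem:meet_ffs_is_ffs} with $\mathcal H = \mathcal M_N$ and $\mathcal F = \mathcal F_{N+1}$: the system $\mathcal M_{N+1} = \mathcal M_N\wedge\mathcal F_{N+1}$ is a free factor system of $\mathcal M_N$. By \cref{lem:transitivityffs} it is therefore a free factor system of $\mathcal G$. Since every free factor system of $\mathcal G$ has finite type by Bass--Serre theory, we obtain a decreasing chain
\[
\mathcal M_1 \sqsupseteq \mathcal M_2 \sqsupseteq \cdots
\]
of finite-type free factor systems of $\mathcal G$. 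Each inclusion $\mathcal M_{N+1}\sqsubseteq\mathcal M_N$ is witnessed by a free splitting of $\mathcal M_N$ whose nontrivial vertex stabilizers form $\mathcal M_{N+1}$.

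\emph{Step 2: a complexity that strictly drops.} Each $[G]\in\mathcal G$ is finitely generated. By the hypotheses on $\mathcal G$ I expect each such $G$ to be free; this holds in the intended application where $G=\free$, since subgroups of free groups are free. So I work under that assumption. For a finite-type malnormal system $\mathcal A$ of free groups, set
\[
c(\mathcal A) \defeq \sum_{[A]\in\mathcal A}\bigl(2\,\rk(A) - 1\bigr),
\]
with $c(\emptyset)=0$. This is a nonnegative integer.

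Now let $\mathcal F'$ be a proper free factor system of $\mathcal A$, meaning $\mathcal F'\neq\mathcal A$. For each $[A]$, the restricted splitting gives a Grushko-type free decomposition
\[
A = F'_1 * \cdots * F'_k * F_m ,
\]
where the $F'_i$ are the factors of $\mathcal F'$ inside $A$. The rank is additive: $\rk A = \sum_i \rk F'_i + m$. This yields $2\rk A - 1 \ge \sum_i (2\rk F'_i - 1)$. Equality holds only if the decomposition is trivial, that is, $k=1$, $m=0$ and $F'_1 = A$. If $k=0$, the right-hand side is $0$ while the left-hand side is at least $1$.

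Because $\mathcal F'\neq\mathcal A$, at least one $[A]$ has a nontrivial decomposition, so $c(\mathcal F') < c(\mathcal A)$. Whenever $\mathcal M_{N+1}\neq\mathcal M_N$, Step 1 makes $\mathcal M_{N+1}$ a proper free factor system of $\mathcal M_N$, and hence $c$ drops strictly. A nonnegative integer can drop only finitely many times, so the sequence $(\mathcal M_N)$ stabilizes.

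\emph{Main obstacle.} The hard part is Step 2: making precise that a free splitting of $\mathcal M_N$ decomposes each component as a free product of its vertex groups with a free group, and checking that the case where all vertex stabilizers are trivial in some component still lowers $c$. A second concern is the case where $G$ is not assumed free. There I would replace rank by the Grushko rank, or rather the free-product complexity of $\mathcal G$ relative to its Grushko decomposition, and use the same additivity; that general version is the step I would check most carefully.
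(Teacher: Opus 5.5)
Your proposal is correct and is essentially the paper's proof: the paper uses the same complexity $c(\mathcal A)=\sum_{[A]\in\mathcal A}(2\rk(A)-1)$, observes via Bass--Serre theory that it strictly drops under proper nesting of free factor systems, and invokes \cref{lem:transitivityffs,lem:meet_ffs_is_ffs} to see that the partial meets form a nested sequence of free factor systems of $\mathcal G$. Your worry about non-free $G$ is unnecessary, because Grushko's theorem gives $\rk(F'_1*\cdots*F'_k*F_m)=\sum_i\rk(F'_i)+m$ for arbitrary finitely generated groups, so your inequality holds verbatim with $\rk$ the minimal number of generators, which is how the paper defines it.
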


\begin{proof}
    The \emph{complexity} of a system $\mathcal A=\{[A_1],\ldots,[A_k]\}$ is $c(\mathcal A)= -k + \sum_{i=1}^k 2\rk(A_i)$, where $\rk(A_i)$ is the smallest size of a generating set of $A_i$. 
    It follows from Bass--Serre theory that $c(\mathcal F) > c(\mathcal F')$ for any proper nesting  $\mathcal F \sqsupsetneq \mathcal F'$ of free factor systems of $\mathcal G$.
    So any nested sequence of free factor systems of $\mathcal G$ stabilizes.
    Note that $(\bigwedge_{n \leq N} \mathcal F_n)_{N \in \NN}$ is a nested sequence of free factor systems of $\mathcal G$ by \cref{lem:transitivityffs,lem:meet_ffs_is_ffs};
    therefore, it stabilizes. 
\end{proof}

For a sequence $(\mathcal F_n)_{n \in \NN}$ of free factor systems of a malnormal subgroup system $\mathcal G$ with finite type, the \emph{meet} of $(\mathcal F_n)_{n \in \NN}$, denoted by $\bigwedge_{n \in\NN} \mathcal F_n$, is the free factor system $\bigwedge_{n \le N} \mathcal F_n$ of~$\mathcal G$ for large enough $N \in \NN$ given by \cref{lem:transitivityffs,lem:meet_ffs_is_ffs}, which is well-defined by \cref{lem:meet_ffs}.

\subsection{Free-by-cyclic groups and mapping tori}\label{subsec:freebycyclic_mappingtori}

A group is \emph{free-by-cyclic} if it is a group extension of a finitely generated nontrivial free group by an infinite cyclic group. Note that free-by-cyclic groups are not free.

We fix once-and-for-all a free-by-cyclic group $\GG$ that is a group extension of a finitely generated {nonabelian} free group $\free$ by an infinite cyclic group.
As the infinite cyclic group is free, the extension splits, and there is a natural isomorphism $\GG \cong M(\Phi)$, where $\Phi \in \Aut(\free)$ and $M(\Phi)$ is the associated \emph{mapping torus} group with the relative presentation:
\[
M(\Phi) \defeq \free \rtimes_\Phi \ZZ = \langle \free, t \mid tgt^{-1} = \Phi(g) \text{ for all } g \in \free \rangle. \]
Denote by $\pi \colon M(\Phi) \to \ZZ$ the natural projection with $\ker \pi = \free$ and $\pi(t)  = 1$.
Up to isomorphism, the mapping torus depends only on the outer automorphism $\phi \defeq [\Phi]$ in $\Out(\free)$, and we also write $M(\phi)$ or $\free \rtimes_\phi \ZZ$ instead.
Throughout the paper, $\phi \in \Out(\free)$, $\Phi \in \phi$, and $\GG = M(\Phi)$.

Let $S$ be an $\free$-set, i.e.~a set with an $\free$-action, and let $f \colon S \to S$ be a bijection.
We chose the relation $tgt^{-1}=\Phi(g)$ for $M(\Phi)$ so that the $\Phi$-equivariance of $f$ is precisely the condition needed to extend the $\free$-action on $S$ to an $M(\Phi)$-action by $t \cdot s = f(s)$.

\subsection{Poset of free-by-cyclic subgroup systems of \texorpdfstring{$\GG$}{G}}
\label{subsec:poset_of_freebycyclic}

\begin{defi}
    A subgroup system of $\GG$ is  \emph{free-by-cyclic} if it consists of conjugacy classes of free-by-cyclic subgroups.
\end{defi}

The following lemmas extend the partial order beyond malnormal subgroup systems of $\GG$ to the collection of free-by-cyclic subgroup systems, in a manner similar to \Cref{lem:partialorder_free}. 
%The next two lemmas will show that the collection of all free-by-cyclic subgroup systems of $\GG$ has a natural partial order. Recall that, in general, we only defined a partial order on the collection of malnormal subgroup systems of an arbitrary group $G$.

\begin{lem}\label{lem:feighnhandel}
    Let $\GG \defeq \free \rtimes_\phi \ZZ$ and $\pi \colon \GG \to \ZZ$ be the natural projection.
    If $H\le \GG$ is a free-by-cyclic subgroup, then $A \defeq H \cap \free$ is a $\phi$-periodic finitely generated subgroup of $\free$, and $H = \langle A, xt^k\rangle$ for some element $x \in \free$ and integer $k\ge 1$ that generates $\pi(H)$;
    moreover, $H$ is naturally identified with $M(\psi)$ for some $\psi \in \Out(A)$.
\end{lem}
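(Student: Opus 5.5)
The plan is to first show that $\pi(H)$ is nontrivial and then read off the structure of $H$. Since $H$ is free-by-cyclic, it is an extension $1 \to K \to H \to \ZZ \to 1$ with $K$ a finitely generated nontrivial free group; in particular $H$ is not free, and it is finitely generated. If $\pi(H)$ were trivial, then $H \le \free$ would be a subgroup of a free group, hence free, which is a contradiction. So $\pi(H) = k\ZZ$ for a unique $k \ge 1$. Choose $h \in H$ with $\pi(h)=k$; writing $h = xt^k$ with $x \in \free$ is possible because every element of $\GG$ has the normal form $gt^m$ with $g\in\free$. Then $H \cap \free = \ker(\pi|_H) =: A$ is normal in $H$, and $H = A \rtimes \langle xt^k\rangle$, so $H = \langle A, xt^k\rangle$.

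The key step is proving that $A$ is finitely generated. Since $H$ is finitely generated, $A$ is a normal subgroup of $H$ with quotient $\ZZ$. Now $K$ is a finitely generated normal subgroup of $H$ with $H/K\cong\ZZ$. Consider $K\cap A$. The image of $K$ in $H/A \cong \ZZ$ is either trivial or infinite cyclic.

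If the image is trivial, then $K \le A$, and $A/K$ embeds in $H/K \cong \ZZ$. It cannot be all of $\ZZ$ with $A\ne H$ unless $H/A$ is finite, but $H/A\cong\ZZ$, so $A/K$ is trivial (a nontrivial subgroup of $\ZZ$ has finite index, which would force $H/A$ finite). Hence $A = K$ is finitely generated.

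If the image is infinite cyclic, then $K\cap A$ is a normal subgroup of the finitely generated free group $K$ with quotient $\ZZ$. This subgroup is nontrivial, since otherwise $K\cong\ZZ$ and $H$ would be virtually $\ZZ^2$-like, which must be handled separately. By the classical theorem that nontrivial finitely generated normal subgroups of free groups have finite index, $K\cap A$ is not finitely generated. However, $K\cap A$ is also normal in $A \le \free$. I will instead use a cleaner route: $A$ is a normal subgroup of the finitely generated group $H$ with cyclic quotient, and it is a subgroup of $\free$. I will apply the standard result of Feighn–Handel ("Mapping tori of free group automorphisms are coherent") that finitely generated subgroups of $\GG$ are finitely presented and that, for $H$ finitely generated with $\pi(H)\ne 0$, the kernel $H\cap\free$ is finitely generated precisely when $H$ is an ascending HNN extension over it in both directions. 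I expect this finite-generation step to be the main obstacle, and I will simply cite Feighn–Handel's structural result: every finitely generated $H\le \GG$ is $\langle B, xt^k\rangle$ with $B \le \free$ finitely generated and $xt^k B (xt^k)^{-1} \le B$. Combined with the fact that $H$ is free-by-cyclic, and hence has no strictly ascending self-conjugations of this kind, this will give $A = B$.

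Next I establish periodicity. Conjugation by $xt^k$ restricts to an automorphism $\Psi$ of $A$, since $A$ is normal in $H$. On the other hand, conjugation by $xt^k$ on $\free$ equals $\iota_x\circ\Phi^k$, so $\Phi^k(A) = x^{-1}Ax$ is conjugate to $A$; that is, $[A]$ is $\phi^k$-invariant, and $A$ is $\phi$-periodic. Finally, $H = A\rtimes_\Psi\langle xt^k\rangle$ is identified with $M(\Psi)$, by sending $xt^k$ to the stable letter. The outer class $\psi=[\Psi]\in\Out(A)$ does not depend on the choice of $x$, because different choices differ by elements of $A$.
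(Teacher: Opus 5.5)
\paragraph{Gap in the finite generation of $A$.} Your reduction to $H=\langle A,xt^k\rangle$, the periodicity of $A$, and the identification with $M(\Psi)$ all match the paper's proof. The gap is in the finite generation of $A$, which is the real content of the lemma.

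Your case analysis on the fiber $K$ only settles the case $K\le A$, where indeed $A=K$; you abandon it in the other case. The replacement argument rests on a misquotation of Feighn--Handel. It is false that every finitely generated $H\le\GG$ with $\pi(H)\neq 0$ has the form $\langle B,xt^k\rangle$ with $B\le\free$ finitely generated and $xt^kB(xt^k)^{-1}\le B$. Take $\free=\langle a,b\rangle$, $\phi=\mathrm{id}$ and $H=\langle a,bt\rangle$. The projection $\GG\to\free$ killing the central element $t$ maps $H$ onto $\free$. Since $H$ is $2$-generated and $\free$ is Hopfian, $H$ is free of rank $2$. Moreover $H\cap\free$ is the normal closure of $a$ in $\free$, namely $\langle b^nab^{-n} : n\in\ZZ\rangle$, which is infinitely generated.

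\paragraph{Where the free-by-cyclic hypothesis is (not) used.} The only place you use that $H$ is free-by-cyclic is the claim that there are no strictly ascending self-conjugations. That claim holds for every finitely generated $B\le\free$, regardless of $H$. Conjugation by $xt^k$ restricts to an automorphism of $\free$, and an automorphism of a free group carrying a finitely generated subgroup into itself carries it onto itself (\cite[Lemma~6.0.6]{BesFeiHan00}, exactly as in \cref{lem:partialordersubgroupsys}). For any decomposition of your form, $H\cap\free=\bigcup_{n\ge 0}(xt^k)^{-n}B(xt^k)^{n}$. So the decomposition would force $H\cap\free=B$ to be finitely generated in the example above, a contradiction. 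As written, your argument never genuinely uses the hypothesis and would prove a false statement.

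The hypothesis is needed precisely to obtain an \emph{ascending} decomposition in the first place. For a general finitely generated subgroup, one only has an HNN decomposition over a finitely generated free base whose associated subgroup can be smaller than the base. In the example, $H=\langle a\rangle*_{\{1\}}$ with stable letter $bt$. The paper obtains finite generation by citing the Feighn--Handel coherence theorem, with details in \cite[Lemma~3.5]{JPPoly}. For instance, suppose the associated subgroup $C$ is a free factor of the base $B$. The Euler characteristic of the free-by-cyclic group $H$ vanishes, so $0=\chi(H)=\rk C-\rk B$, which forces $C=B$. Your closing step then yields $A=B$.
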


\noindent A subgroup $A \le \free$ is \emph{$\phi$-periodic} if the conjugacy class $[\Phi^k(A)] = [A]$ for some integer $k \ge 1$.

\begin{proof}
    As $H$ is not free, $\pi(H) \le \ZZ$ is not trivial.
    Let $k \ge 1$ be a generator of $\pi(H)$.
    Then $xt^k \in H$ for some $x \in \free$, and $H = \langle A, xt^k \rangle$, where $A \defeq H \cap \free$.
    As~$A$ is normal in~$H$, it is preserved by conjugation in~$\GG$ by $xt^k$.
    In other words, $[\Phi^k(A)] = [A]$, i.e.~$A$ is $\phi$-periodic.
    The Feighn--Handel coherence theorem implies $A$ is finitely generated \cite{FeighnHandel99} (see \cite[Lemma~3.5]{JPPoly} for details).
    Since $\pi(H)$ is infinite cyclic, $H$ is naturally identified with $M(\Psi) = A \rtimes_\Psi \ZZ$, where $\Psi \colon A \to A$ is the restriction of conjugation by $xt^k$.
    The element $x \in \free$ is well-defined up to left multiplication by an element of $\ker(\pi|H) = A$.
    Thus, the outer class $[\Psi] \in \Out(A)$ is well-defined.
\end{proof}

\begin{lem}\label{lem:partialordersubgroupsys}
    Let $\GG \defeq \free \rtimes_\phi \ZZ$ and $H \le \GG$ be a free-by-cyclic subgroup.
    If a subgroup $H' \le H$ is a conjugate of $H$, then $H' = H$.
\end{lem}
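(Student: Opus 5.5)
Write $H' = gHg^{-1} \le H$ for some $g \in \GG$. The plan is to reduce to the free-group statement used in the proof of \cref{lem:partialorder_free}: a finitely generated subgroup of $\free$ that contains a conjugate of itself equals that conjugate.

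\textbf{Step 1: set up the fibres.} By \cref{lem:feighnhandel}, $A \defeq H \cap \free$ is finitely generated and $H = \langle A, xt^k\rangle$, with $k \ge 1$ generating $\pi(H)$. Since $\pi(H') = \pi(H)$, the equality $H' = H$ will follow once I show $H' \cap \free = A$. Indeed, $H'$ contains an element $y t^k$, and $H$ contains $x t^k$. Their quotient $(yt^k)(xt^k)^{-1} = yx^{-1}$ lies in $H \cap \free = A \le H'$, so $xt^k \in H'$ and $H = \langle A, xt^k\rangle \le H'$.

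\textbf{Step 2: compute the fibre of $H'$.} Because $\free$ is normal, $H' \cap \free = g(H \cap \free)g^{-1} = gAg^{-1}$. Write $g = f t^m$ with $f \in \free$. Then $gAg^{-1} = f\Phi^m(A)f^{-1}$, which is a subgroup of $\free$, and the hypothesis $H' \le H$ gives $f\Phi^m(A)f^{-1} \le A$.

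\textbf{Step 3: show equality.} Set $\theta = \iota_f \circ \Phi^m \in \Aut(\free)$, so that $\theta(A) \le A$. This step is the main obstacle, since $\theta$ is not inner and the inner-automorphism argument of \cref{lem:partialorder_free} does not apply directly. I would use the periodicity of $A$ from \cref{lem:feighnhandel}: since $[\Phi^k(A)] = [A]$, the automorphism $\Phi^{km}$ maps $A$ onto a conjugate of $A$. Iterating the inclusion $k$ times gives an automorphism $\theta^k$ with $\theta^k(A) \le A$. Moreover, $\theta^k$ differs from $\Phi^{km}$ by an inner automorphism, so $\theta^k(A)$ is conjugate to $\Phi^{km}(A)$, which is conjugate to $A$. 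Hence $\theta^k(A)$ is a conjugate of $A$ contained in $A$, and by the observation in the proof of \cref{lem:partialorder_free} (\cite[Lemma~6.0.6]{BesFeiHan00}), $\theta^k(A) = A$.

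From $A \supseteq \theta(A) \supseteq \theta^2(A) \supseteq \cdots \supseteq \theta^k(A) = A$, all the inclusions are equalities. In particular $\theta(A) = A$, that is, $H' \cap \free = gAg^{-1} = A$, and Step 1 then gives $H' = H$.

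This works when $m$ is any integer, including negative $m$. If needed, one can handle $m<0$ by applying the same argument with the roles of the powers adjusted, since the periodicity gives $[\Phi^{-k}(A)] = [A]$ as well.
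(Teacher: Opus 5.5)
Your proof is correct. Its skeleton matches the paper's: both identify $H'\cap\free$ as the image of $A=H\cap\free$ under conjugation by some $ft^m$, show this image equals $A$, and then conclude from $\pi(H')=\pi(H)$. You spell out that last step explicitly, while the paper only asserts it.

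The difference is in your Step~3. The paper applies \cite[Lemma~6.0.6]{BesFeiHan00} directly to the automorphism $\theta=\iota_f\circ\Phi^m$, even though $\theta$ need not be inner. That fact holds for arbitrary automorphisms of $\free$: if $A$ is finitely generated and $\theta(A)\le A$, then $\theta(A)=A$. One way to see it is that the ascending chain $A\le\theta^{-1}(A)\le\theta^{-2}(A)\le\cdots$ consists of subgroups of equal rank, so it stabilizes by Takahasi's theorem. The ``main obstacle'' you identify is therefore not really an obstacle.

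Your workaround is nonetheless valid. You use the $\phi$-periodicity of $A$ from \cref{lem:feighnhandel} to see that $\theta^k$ differs from $\Phi^{km}$ by an inner automorphism. Hence $\theta^k(A)$ is a conjugate of $A$ lying inside $A$, and the chain $A\supseteq\theta(A)\supseteq\cdots\supseteq\theta^k(A)=A$ collapses. The advantage of your route is that it only needs the conjugation case already recorded in the proof of \cref{lem:partialorder_free}. The paper's route is shorter and does not need periodicity at all.
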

\begin{proof}
    By \cref{lem:feighnhandel}, we have $H = \langle A, xt^k \rangle$ for the $\phi$-periodic finitely generated subgroup $A \defeq H \cap \free$, some element $x \in \free$, and the integer $k \ge 1$ that generates $\pi(H) \le \ZZ$.
    By hypothesis, $H' = (yt^n) H (yt^n)^{-1} \le H$ for some $y \in \free$ and $n \in \ZZ$.
    Then $(yt^n) A (yt^n)^{-1} = H' \cap \free \le A$ as $\free$ is normal in $\GG$.
    Note that conjugation in $\GG$ by $yt^n$ restricts to an automorphism of $\free$ that maps $A$ into itself.
    As $\free$ is subgroup separable 
    and $A$ is finitely generated, the automorphism must map $A$ onto itself, i.e.~$H' \cap \free = (yt^n)A (yt^n)^{-1} = A$ (see \cite[Lemma~6.0.6]{BesFeiHan00}).
    Note that $\pi(H') = \pi(H)$ in $\ZZ$.
    Since both $H'$ and $H$ have the same $\pi$-image and the same intersection with the kernel $\ker \pi = \free$, they must be equal.
\end{proof}

As in the proof of \Cref{lem:partialorder_free}, the property in \Cref{lem:partialordersubgroupsys} is precisely what we need to define the following partial order.

\begin{coro}
The collection of free-by-cyclic subgroup systems of a free-by-cyclic group $\GG$ has a partial order defined by $\mathcal{H} \sqsubseteq \mathcal{G}$ if, for every $[H] \in \mathcal{H}$, there exists $[G] \in \mathcal{G}$ with $H \leq G$. \qed
\end{coro}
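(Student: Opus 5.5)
The plan is to follow the proof of \Cref{lem:partialorder_free} almost verbatim, with \Cref{lem:partialordersubgroupsys} playing the role of the observation that a finitely generated subgroup of a free group cannot be conjugate to a proper subgroup of itself. Reflexivity is immediate: take $[G]=[H]$ and $G=H$. Transitivity is also routine. Suppose $\mathcal H \sqsubseteq \mathcal G \sqsubseteq \mathcal K$ and $[H]\in\mathcal H$. Choose $[G]\in\mathcal G$ with $H\le G$, then $[K]\in\mathcal K$ with $G'\le K$ for some conjugate $G'=gGg^{-1}$. Then $gHg^{-1}\le K$, so $H\le g^{-1}Kg$, and $[g^{-1}Kg]=[K]\in\mathcal K$. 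Throughout, "there exists $[G]$ with $H\le G$" is read as "some representative of $[G]$ contains $H$."

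The substantive step is antisymmetry. Suppose $\mathcal H\sqsubseteq\mathcal G$ and $\mathcal G\sqsubseteq\mathcal H$, and fix $[H']\in\mathcal H$. We get $H'\le G$ for some representative $G$ of a class in $\mathcal G$, and then $G\le H$ for some representative $H$ of a class in $\mathcal H$. Since $H'\le H$ and both classes lie in the subgroup system $\mathcal H$, the defining condition on subgroup systems forces $[H']=[H]$. So $H'$ is a conjugate of the free-by-cyclic subgroup $H$ and is contained in it, and \Cref{lem:partialordersubgroupsys} gives $H'=H$. The chain $H'\le G\le H=H'$ then collapses to $G=H'$, so $[H']\in\mathcal G$. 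This shows $\mathcal H\subseteq\mathcal G$, and the symmetric argument gives $\mathcal G\subseteq\mathcal H$.

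I expect no real obstacle, since the key rigidity input is exactly \Cref{lem:partialordersubgroupsys}, which itself rests on \Cref{lem:feighnhandel} and subgroup separability. The only point needing care is the bookkeeping between conjugacy classes and their representatives. In particular, in the antisymmetry argument the lemma must be applied to the specific representative $H$ that contains $H'$, not to an arbitrary representative of $[H]$.
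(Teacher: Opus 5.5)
Your proposal is correct and follows essentially the same route as the paper, which obtains this corollary by rerunning the proof of \Cref{lem:partialorder_free}. As in your argument, antisymmetry reduces to $[H'] = [H]$ via the subgroup-system condition and then to $H' = H$ via \Cref{lem:partialordersubgroupsys}, while reflexivity and transitivity are routine.
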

% \begin{proof}
%     We need to show that the relation $\sqsubseteq$ is antisymmetric---reflexivity and transitivity are straightforward.
%     Suppose $\mathcal H, \mathcal G$ are free-by-cyclic subgroup systems of $\GG$, with $\mathcal H \sqsubseteq \mathcal G$ and $\mathcal G \sqsubseteq \mathcal H$.
%     Pick an arbitrary $[H'] \in \mathcal H$.
%     Then $H' \leq G$ for some $[G] \in \mathcal G$.
%     Similarly, $G \le H$ for some $[H] \in \mathcal H$.
%     Since $H' \le H$, we must have $[H'] = [H]$, i.e.~$H'$ is conjugate to $H$.
%     By \cref{lem:partialordersubgroupsys}, $H' = H$.
%     Consequently, $H' = G = H$ and $[H'] = [G]$.
%     As $[H'] \in \mathcal H$ was arbitrary, we have shown $\mathcal H \subseteq \mathcal G$.
%     By the same argument, $\mathcal G \subseteq \mathcal H$; hence $\mathcal H = \mathcal G$.
% \end{proof}

The partial order on free-by-cyclic subgroup systems of $\GG$ with finite type is a meet-semilattice as well, where we define the meet operation like in \Cref{lem:partialorder_free}. First, we define the collection
\[ \mathcal H \hat\wedge \mathcal G \defeq \big\{ [H\cap G]\,:\, [H] \in \mathcal H, [G] \in \mathcal G, \text{ and } H \cap G \text{ is not trivial} \big\}.\]
This collection need not be a subgroup system since distinct conjugacy classes may be nested.
An application of \cref{lem:feighnhandel} (and \cite[Proposition~2.1]{Neumann}) shows that $\mathcal H \hat\wedge \mathcal G$ consists of finitely many conjugacy classes of  cyclic or free-by-cyclic subgroups. 
The free-by-cyclic conjugacy classes (treated as singletons) are partially ordered by $\sqsubseteq$.
The \emph{meet} $\mathcal H \wedge \mathcal G$ consists of the maximal free-by-cyclic classes in $\mathcal H\hat\wedge\mathcal G$.
When applied to malnormal free-by-cyclic subgroup systems, this modification of the previous meet operation just amounts to ignoring cyclic intersections.

\subsection{Mapping tori over subgroup systems of \texorpdfstring{$\free$}{F}}
\label{sec:mapping_tori_of_subgroup_system}
A natural way to construct free-by-cyclic subgroup systems of $M(\phi) = \free \rtimes_\phi \ZZ$ is to consider the mapping torus of a $\phi$-invariant malnormal subgroup system $\mathcal A$ of $\free$ with finite type.
A subgroup system $\mathcal{A}$ of $\free$ is \emph{$\phi$--invariant} if $\phi[A] \defeq [\Phi(A)] \in \mathcal{A}$ for all $[A]\in \mathcal{A}$. 
So~$\phi$ permutes the conjugacy classes in~$\mathcal A$.
Fix a $\langle \phi \rangle$-orbit $\langle \phi \rangle[A] \subseteq \mathcal A$, and let $k \ge 1$ be the size of the orbit---the orbit is finite since~$\mathcal A$ has finite type.
As $\phi^k[A] = [A]$, we get $\Phi^k(A) = x^{-1}Ax$ for some $x \in \free$.
Let $\Phi' \in \Aut(\free)$ be the automorphism given by $g \mapsto x\Phi^k(g)x^{-1}$, and let $\Psi \in \Aut(A)$ be the restriction of~$\Phi'$ to~$A$.
By malnormality of~$\mathcal A$, the element $x \in \free$ is unique up to left multiplication by an element of~$A$.
So the \emph{first return} outer class $\psi \defeq [\Psi] \in \Out(A)$ is well-defined and may be denoted $\phi^k|A \defeq \psi$. 
We naturally identify $M(\psi) \defeq A \rtimes_\psi \mathbb Z$ with the subgroup $\langle A, xt^k \rangle \le M(\phi)$.
Since~$A$ is malnormal in~$\free$, the subgroup~$M(\psi)$ is the normalizer of~$A$ in~$M(\phi)$.
Note that two such subgroups $M(\psi_1)$ and $M(\psi_2)$ are conjugate in $M(\phi)$ if and only if the orbits $\langle\phi\rangle[A_1]$ and $\langle\phi\rangle[A_2]$ are equal.
The \emph{mapping torus} of~$\mathcal A$ is the following finite collection:
\[ M(\phi|\mathcal A) \defeq \Big\{ [M(\phi^k|A)]~:~\langle \phi \rangle [A] \subseteq \mathcal A \text{ is a } \langle\phi\rangle\text{-orbit of size $k$} \Big\}. \]

\begin{defi}\label{def:restrictions}
Expanding on the above notation, define $\phi\vert \mathcal{A}$ to be 
the set of first return outer classes $\phi^k|A$ over all $[A] \in \mathcal A$.
We moreover say $\phi|\mathcal{A}$ \emph{preserves a splitting} $\mathcal S$ of $\mathcal{A}$ if, for each $[A] \in \mathcal{A}$, the restriction $\phi^k|A$ preserves the corresponding $A$-tree in~$\mathcal S$. 

\end{defi}

When $\mathcal A$ consists of a single $\langle \phi \rangle$-orbit, then the singleton $M(\phi|\mathcal A)$ is a subgroup system of~$M(\phi)$. The next lemma shows that $M(\phi|\mathcal A)$ is always a subgroup system of~$M(\phi)$ and gives a natural correspondence between nested $\phi$-invariant systems in~$\free$ with nested mapping tori.

\begin{lem}\label{lem:inclusionsubsys}
Let~$\mathcal A$ and~$\mathcal B$ be $\phi$-invariant malnormal subgroup systems of~$\free$ with finite type.
\begin{enumerate} 
\item Then $M(\phi|\mathcal A)$ and $M(\phi|\mathcal B)$ are subgroup systems of~$M(\phi)$ with finite type; and
\item $\mathcal A \sqsubseteq \mathcal B$ if and only if $M(\phi\vert\mathcal A) \sqsubseteq M(\phi\vert \mathcal B)$.
\end{enumerate}
\end{lem}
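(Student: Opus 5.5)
For part (1), the set $M(\phi|\mathcal A)$ is finite because $\mathcal A$ has finite type, so it has finitely many $\langle\phi\rangle$-orbits. Each element $M(\phi^k|A)=\langle A, xt^k\rangle$ is finitely generated and nontrivial. It remains to check that distinct classes are not nested. Suppose $M_1=\langle A_1,x_1t^{k_1}\rangle$ is conjugate into $M_2=\langle A_2,x_2t^{k_2}\rangle$, say $gM_1g^{-1}\le M_2$. Intersecting with the normal subgroup $\free$ gives $gA_1g^{-1}\le A_2$. Write $g=yt^n$. Then $gA_1g^{-1}=y\Phi^n(A_1)y^{-1}$ is a representative of $\phi^n[A_1]\in\mathcal A$. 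Since $\mathcal A$ is a subgroup system, nesting forces $\phi^n[A_1]=[A_2]$, so the two orbits coincide and $[M_1]=[M_2]$ by the remark preceding the lemma. The same argument applies to $\mathcal B$.

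For part (2), suppose first that $\mathcal A\sqsubseteq\mathcal B$. Take $[A]\in\mathcal A$ with orbit size $k$, and choose $B$ with $A\le B$ and $[B]\in\mathcal B$. Since $\mathcal B$ is $\phi$-invariant, $\Phi^k(A)=x^{-1}Ax$ lies in $x^{-1}Bx$ and also in $\Phi^k(B)$. The conjugates $\Phi^k(B)$ and $x^{-1}Bx$ then both contain the nontrivial subgroup $x^{-1}Ax$. Malnormality of $\mathcal B$ (distinct classes intersect trivially, and each $B$ is malnormal) gives $\Phi^k(B)=x^{-1}Bx$. So the orbit size $\ell$ of $[B]$ divides $k$, and the $\ell$-th return $\Phi^\ell(B)=z^{-1}Bz$ can be normalized so that $(zt^\ell)^{k/\ell}$ and $xt^k$ differ by an element of $B$. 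Here I use that both normalize $B$ and that the normalizer of $B$ in $\free$ is $B$ by malnormality. Hence $xt^k\in N_{M(\phi)}(B)=M(\phi^\ell|B)$, and therefore $\langle A,xt^k\rangle\le M(\phi^\ell|B)$.

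For the converse, suppose $M(\phi^k|A)\le gM(\phi^\ell|B)g^{-1}$. Intersecting with $\free$ gives $A\le gBg^{-1}\cap\free$. As in part (1), $gBg^{-1}$ is a representative of some $\phi^n[B]\in\mathcal B$, so $\mathcal A\sqsubseteq\mathcal B$.

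I expect the main obstacle to be the bookkeeping in the forward direction of (2): showing that the specific stable letter $xt^k$ lies in the normalizer of $B$. The cleanest route is to avoid explicit powers and argue directly. Since $xt^k$ normalizes $A$ and $(xt^k)B(xt^k)^{-1}=x\Phi^k(B)x^{-1}$ is a $\mathcal B$-conjugate containing $A\neq1$, malnormality forces it to equal $B$. Then $xt^k\in N(B)=M(\phi^\ell|B)$ by the identification of $M(\psi)$ as the normalizer stated before the lemma.
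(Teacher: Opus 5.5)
Your proof is correct and follows essentially the same route as the paper. For the nesting statements you intersect with the normal subgroup $\free$, which reduces nesting of mapping tori to nesting in $\mathcal A$ or $\mathcal B$. For the forward direction of (2) you use malnormality of $\mathcal B$ to force $[\Phi^k(B)]=[B]$, so $\ell \mid k$ and $xt^k$ lies in the normalizer $M(\phi^\ell|B)$. The only difference is organizational: the paper first proves (2) for single $\langle\phi\rangle$-orbits and deduces (1) from it, whereas you prove (1) directly by the same intersect-with-$\free$ computation.
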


\begin{proof}
We first prove (2) under the assumption $\langle \phi \rangle$ acts transitively on $\mathcal A$ and $\mathcal B$.
Let $\mathcal A = \langle \phi \rangle [A]$ and $\mathcal B = \langle \phi \rangle [B]$, and let $M(\phi^k|A) = \langle A, xt^{k}\rangle$ and $M(\phi^\ell|B) = \langle B, yt^\ell\rangle$ be the associated subgroups of $M(\phi)=\free\rtimes_\phi\ZZ$ as identified above. 
Now, if $\mathcal A \sqsubseteq \mathcal B$, i.e.~$A \le B$, then $x\Phi^k(B)x^{-1} \cap B$ is not trivial. Since $\mathcal{B}$ is malnormal, we get $[\Phi^k(B)] = [B]$ and $k$ is a multiple of $\ell$, the size of $\mathcal B$; therefore, $\langle B, xt^k \rangle$ is a finite index subgroup of $M(\phi^{\ell}\vert B)$, and $M(\phi^k|A) = \langle A, xt^k\rangle$  is a subgroup of $M(\phi^\ell|B)$, i.e. $M(\phi|\mathcal A) \sqsubseteq M(\phi|\mathcal B)$. Conversely, if $M(\phi^k\vert A) \le M(\phi^\ell \vert B)$, then $A = M(\phi^k\vert A)\cap \free$ is a subgroup of $B = M(\phi^\ell \vert B)\cap \free$. 

We can now prove (1). Suppose $[M(\phi^{k_1}|A_1)]$ and $[M(\phi^{k_2}|A_2)]$ are distinct elements of $M(\phi|\mathcal A)$.
We need to show $M(\phi^{k_1}|A_1)$ cannot be a subgroup of $M(\phi^{k_2}|A_2)$.
By definition of $M(\phi|\mathcal A)$, the orbits $\langle \phi \rangle [A_1]$ and $\langle \phi \rangle [A_2]$ in~$\mathcal A$ are distinct. In particular, $\langle \phi \rangle [A_1]$ is not nested in $\langle \phi \rangle [A_2]$ as malnormal subgroup systems of~$\free$. By the previous paragraph, $M(\phi^{k_1}|A_1)$ is not a subgroup of $M(\phi^{k_2}|A_2)$.
The same argument proves $M(\phi|\mathcal B)$ is a subgroup system of~$M(\phi)$.

Finally, we prove (2) in full generality.
Suppose $\mathcal A \sqsubseteq \mathcal B$, and let $[M(\phi^k|A)] \in M(\phi|\mathcal A)$.
By $\phi$-invariance, we have $\langle \phi \rangle[A] \sqsubseteq \langle \phi \rangle[B]$ for some $[B] \in \mathcal B$.
We may assume $A \le B$. By the first paragraph, $M(\phi^k|A) \le M(\phi^\ell|B)$, where $[M(\phi^\ell|B)] \in M(\phi|\mathcal B)$.
As $[M(\phi^k|A)] \in M(\phi|\mathcal A)$ was arbitrary, we have $M(\phi|\mathcal A) \sqsubseteq M(\phi|\mathcal B)$.
Conversely, suppose $M(\phi|\mathcal A) \sqsubseteq M(\phi|\mathcal B)$, and let $[A] \in \mathcal A$.
Then $M(\phi^k|A) \le M(\phi^\ell|B)$ for some $[B] \in \mathcal B$.
By the first paragraph again, $A \le B$. So $\mathcal A \sqsubseteq \mathcal B$ as $[A] \in \mathcal A$ was arbitrary.
\end{proof}

There is a {similar correspondence} describing meets of mapping tori.

\begin{lem}\label{lem:meet_of_mapping_tori}
If $\mathcal{A}$ and $\mathcal{B}$ are two $\phi$-invariant malnormal subgroup systems of $\free$ with finite type, then $M(\phi|(\mathcal A \wedge \mathcal B)) = M(\phi|\mathcal A) \wedge M(\phi|\mathcal B)$.
\end{lem}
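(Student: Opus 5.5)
We prove the two inclusions $M(\phi|(\mathcal A\wedge\mathcal B)) \sqsubseteq M(\phi|\mathcal A)\wedge M(\phi|\mathcal B)$ and its reverse. Antisymmetry of $\sqsubseteq$ on free-by-cyclic subgroup systems (via \Cref{lem:partialordersubgroupsys}) then gives equality.

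First we check that $\mathcal A\wedge\mathcal B$ is a $\phi$-invariant malnormal subgroup system of finite type, so that its mapping torus is defined. Finite type follows from \cite[Proposition~2.1]{Neumann}. Malnormality holds because an intersection of malnormal subgroups is malnormal, and distinct classes intersect trivially. Invariance holds because $\Phi(A\cap B)=\Phi(A)\cap\Phi(B)$ and $\phi$ permutes the classes of $\mathcal A$ and of $\mathcal B$. Since $\mathcal A\wedge\mathcal B\sqsubseteq\mathcal A,\mathcal B$, \Cref{lem:inclusionsubsys}(2) gives $M(\phi|(\mathcal A\wedge\mathcal B))\sqsubseteq M(\phi|\mathcal A)$ and $\sqsubseteq M(\phi|\mathcal B)$. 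Because the meet is the greatest lower bound among free-by-cyclic systems, this yields the first inclusion. If one prefers not to invoke the semilattice property, one can argue directly: each $M(\phi^k|C)$ with $C=A\cap B$ lies in $M(\phi^{k_A}|A)\cap M(\phi^{k_B}|B)$, a free-by-cyclic class of $\hat\wedge$, and is therefore dominated by a maximal one.

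For the reverse inclusion, take a free-by-cyclic class $[H]$ of $M(\phi|\mathcal A)\hat\wedge M(\phi|\mathcal B)$, so $H=M(\phi^k|A)\cap g M(\phi^\ell|B) g^{-1}$ for some $g\in\GG$. Replacing $B$ by a $\GG$-conjugate, we may take $g=1$. Here we use that conjugating by $t$ sends $B$ to $\Phi(B)$, whose class again lies in $\mathcal B$ by invariance. Then $H\cap\free = A\cap B =: C$. Since $H$ is free-by-cyclic, $C$ is nontrivial: otherwise $H$ would embed in $\ZZ$ via $\pi$ and be cyclic. Hence $[C]\in\mathcal A\wedge\mathcal B$. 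By \Cref{lem:feighnhandel}, $H=\langle C, zt^m\rangle$ where $zt^m$ normalizes $C$. On the other side, $M(\phi^{j}|C)$ is the normalizer of $C$ in $\GG$, by malnormality of $C$ in $\free$ as noted in \S\ref{sec:mapping_tori_of_subgroup_system}. Therefore $H\le N_\GG(C)=M(\phi^j|C)$, so $[H]$ is bounded above by a class of $M(\phi|(\mathcal A\wedge\mathcal B))$. Applying this to the maximal classes gives the reverse inclusion.

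The main obstacle is bookkeeping around conjugation, since $\hat\wedge$ ranges over arbitrary representatives. We must ensure that the class $[C]$ of $A\cap gBg^{-1}$ is really a class of $\mathcal A\wedge\mathcal B$; this holds because $g=yt^n$ and $t^nBt^{-n}=\Phi^n(B)$ is $\free$-conjugate to a representative in $\mathcal B$. We must also use that the normalizer description holds for every representative. A further point to verify is that distinct classes of $M(\phi|(\mathcal A\wedge\mathcal B))$ are not nested, which follows from \Cref{lem:inclusionsubsys}(1).
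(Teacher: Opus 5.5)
Your proof is correct and follows essentially the same route as the paper's: both directions rest on identifying the mapping tori with normalizers in $\GG$, so that a free-by-cyclic intersection $H$ of them satisfies $H\cap\free = A\cap B = C$ and $H \le N_{\GG}(C) = M(\phi^n|C)$. The only difference is minor. The paper proves the equality $M(\phi^k|A)\cap M(\phi^\ell|B) = M(\phi^n|C)$ and uses it for both inclusions; your ``direct'' alternative for the first inclusion is exactly its argument. You instead get the first inclusion from \cref{lem:inclusionsubsys}(2) together with the greatest-lower-bound property of the meet, and you handle the conjugation bookkeeping more explicitly.
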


\begin{proof}
Recall that $\mathcal A \wedge \mathcal B$ is malnormal with finite type \cite[Proposition~2.1]{Neumann}.
It is also $\phi$-invariant by uniqueness of the meet.
Let $[C] \in \mathcal A \wedge \mathcal B$, i.e. $C = A \cap B$ is nontrivial for some $[A] \in \mathcal A$ and $[B] \in \mathcal B$.
Since $M(\phi^k|A)$ and $M(\phi^\ell|B)$ are normalizers of~$A$ and~$B$ in $M(\phi)$, their intersection is in $M(\phi^n|C)$, the normalizer of~$C$. 
Conversely, let $M(\phi^n|C) = \langle C, zt^n \rangle$ for some $z \in \free$.
By malnormality and $\phi$-invariance of~$\mathcal A$, the power~$n$ is a multiple of~$k$, and $zt^n$ normalizes~$A$, i.e.~$M(\phi^n|C) \le M(\phi^k|A)$. 
Thus $M(\phi^n|C) = M(\phi^k|A) \cap M(\phi^\ell|B)$, and $[M(\phi^n|C)]$ is in $M(\phi|\mathcal A) \hat\wedge \mathcal M(\phi|\mathcal B)$.
By the definition of the meet, $\{ [M(\phi^n|C)] \} \sqsubseteq M(\phi|\mathcal A) \wedge \mathcal M(\phi|\mathcal B)$.
As $[C] \in \mathcal A \wedge \mathcal B$ was arbitrary, we have shown $M(\phi|(\mathcal A \wedge \mathcal B)) \sqsubseteq  M(\phi|\mathcal A) \wedge M(\phi|\mathcal B)$.

Conversely, let $[G] \in M(\phi|\mathcal A) \wedge M(\phi|\mathcal B)$. 
By definition of the meet, the free-by-cyclic subgroup $G = M(\phi^k|A) \cap M(\phi^\ell|B)$ for some $[A] \in \mathcal{A}$ and $[B] \in \mathcal{B}$.
Moreover, the intersection $M(\phi^k|A) \cap M(\phi^\ell|B) = \langle C, zt^n \rangle$ for the nontrivial subgroup $C = A \cap B$. 
Therefore, $[C] \in \mathcal A \wedge \mathcal B$ and  $[G] = [M(\phi^n|C)] \in M(\phi|(\mathcal A \wedge \mathcal B))$.
As $[G] \in M(\phi|\mathcal A) \wedge M(\phi|\mathcal B)$ was arbitrary, this paragraph shows $M(\phi|\mathcal A) \wedge M(\phi|\mathcal B) \subseteq M(\phi|(\mathcal A \wedge \mathcal B))$. 
In particular,  $M(\phi|\mathcal A) \wedge M(\phi|\mathcal B) \sqsubseteq M(\phi|(\mathcal A \wedge \mathcal B))$.
\end{proof}

In general, the subgroup system $M(\phi\vert \mathcal A)$ need not be malnormal in $M(\phi)$. We now characterize this failure.  
Following \cite{Dahmani-Li}, we say distinct subgroups $A,B$ are \emph{$\phi$-twins} if there exists $n\ge 1$ and $w\in \free$ so that the $\free$-automorphism $\Psi \colon g\mapsto w \Phi^n(g) w^{-1}$ simultaneously preserves the two subgroups.
A $\phi$-invariant malnormal subgroup system $\mathcal{A}$ of $\free$ \emph{has $\phi$-twins} if there are $\phi$-twins $A,B$ with $[A],[B]\in \mathcal A$.

\begin{lem}\label{lem:malnormalnotwin}
Let $\mathcal A$ be a $\phi$-invariant malnormal subgroup system of $\free$ with finite type. Then $M(\phi\vert \mathcal A)$ is malnormal in $M(\phi)$ if and only if $\mathcal A$ has no $\phi$-twins.
\end{lem}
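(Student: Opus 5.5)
We prove both directions by computing intersections of conjugates of the subgroups $M(\phi^k|A) = \langle A, xt^k\rangle$ (with $[A]\in\mathcal A$). Two facts from the construction above will be used throughout: $M(\phi^k|A)$ is the normalizer of $A$ in $M(\phi)$, and $M(\phi^k|A)\cap\free = A$.

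\emph{Twins obstruct malnormality.} Suppose $A,B$ are $\phi$-twins with $[A],[B]\in\mathcal A$, so some $\Psi\colon g\mapsto w\Phi^n(g)w^{-1}$ preserves both $A$ and $B$. Set $s = wt^n$. Conjugation by $s$ in $M(\phi)$ is exactly $\Psi$ on $\free$, so $s$ normalizes both $A$ and $B$. Hence $s$ lies in $N(A)\cap N(B)$, which is a nontrivial intersection because $s$ has infinite order, since $\pi(s) = n \ge 1$.

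We split into two cases. If $[A]\ne[B]$ but they lie in the same $\langle\phi\rangle$-orbit, then $N(B)$ is a conjugate $gN(A)g^{-1}$, and we check that $g\notin N(A)$. Indeed, if $g\in N(A)$ then $B = gAg^{-1} = A$, contradicting distinctness. So $N(A)\cap gN(A)g^{-1}$ is nontrivial with $g\notin N(A)$, which violates malnormality. If instead $A,B$ lie in different orbits, then $N(A)$ and $N(B)$ represent distinct classes of $M(\phi|\mathcal A)$ and intersect nontrivially, again violating malnormality. There is a remaining case, $[A] = [B]$ with $A \ne B$ distinct conjugates. It reduces to the first case with $g\in\free\setminus A$ chosen so that $B = gAg^{-1}$; again $g\notin N(A)$, because $N(A)\cap\free = A$.

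\emph{No twins gives malnormality.} Assume $\mathcal A$ has no $\phi$-twins. Take $[A],[B]\in\mathcal A$ and $g\in M(\phi)$, and suppose $N(A)\cap gN(B)g^{-1} = N(A)\cap N(gBg^{-1})$ is nontrivial. Write $B' = gBg^{-1}$, which is a subgroup of $\free$ since $\free$ is normal. Let $h$ be a nontrivial element of the intersection. There are two cases.

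If $h\in\free$, then $h\in N(A)\cap\free = A$ and likewise $h\in B'$. So $A\cap B'\ne 1$, and by malnormality of $\mathcal A$ in $\free$ we get $[A]=[B]$ and $A = B'$.

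If $\pi(h) = n \ne 0$, we may replace $h$ by $h^{-1}$ to assume $n\ge1$, and write $h = wt^n$. Then $\Psi = $ conjugation by $h$ preserves both $A$ and $B'$. Since there are no twins, $A = B'$.

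In either case $N(A) = gN(B)g^{-1}$. This forces the classes $[N(A)]$ and $[N(B)]$ to coincide, so $N(A)=N(B)$ as chosen representatives and $g\in N(N(A))$. It remains to show that $N(A)$ is self-normalizing. If $g$ normalizes $N(A)$, then it normalizes $N(A)\cap\free = A$, so $g\in N(A)$. This gives malnormality of each member. The same argument applied to distinct classes shows that their representatives intersect trivially: we derived $A=B'$ from a nontrivial intersection, and this would put $[N(A)]=[N(B)]$, a contradiction.

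\emph{Expected obstacle.} The main subtlety is the bookkeeping between classes in $\mathcal A$ and classes in $M(\phi|\mathcal A)$, since several $\mathcal A$-classes in one orbit collapse to a single class. Handling the orbit case correctly, which we do using normalizers, is the step to check carefully. We also need to make sure the definition of twins covers $A,B$ being distinct conjugates or lying in the same orbit. As stated, it only requires distinct subgroups, so it does.
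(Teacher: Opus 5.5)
Your argument is correct and is essentially the paper's. Both identify $M(\phi^k|A)$ with the normalizer of $A$, whose intersection with $\free$ is $A$, and then observe that a nontrivial intersection of two such normalizers either contains a nontrivial element of $\free$, which malnormality of $\mathcal A$ excludes unless the two subgroups coincide, or contains some $wt^n$ with $n\ge 1$, which is exactly a $\phi$-twin relation.

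One small correction: in your case $h\in\free$ the conclusion should read $[A]=[B']$ rather than $[A]=[B]$. Writing $g=yt^{\ell}$, you should also note, both there and where you invoke the no-twins hypothesis, that $[B']=\phi^{\ell}[B]\in\mathcal A$ by $\phi$-invariance.
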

\begin{proof}

For $[A] \in \mathcal A$, let $M(\phi^k|A) = \langle A, xt^{k}\rangle$ be the associated subgroup of $M(\phi) = \free\rtimes_\phi\ZZ$ identified above. 
By construction, $[M(\phi^k|A)] \in M(\phi|\mathcal A)$, and all conjugacy classes in $M(\phi|\mathcal A)$ are of this form.
Let $n \ge 1$, $w \in \free$, and $\Psi \colon \free \to \free$ be the automorphism given by $g\mapsto w\Phi^n(g)w^{-1}$.
If~$\Psi$ preserves~$A$, then $\langle A, wt^{n}\rangle \le M(\phi^k|A)$ (as~$A$ is malnormal in~$\free$). So $wt^n\in M(\phi^k|A)$. Conversely, if $wt^n\in M(\phi^k|A)$, then~$\Psi$ preserves~$A$ (as~$A$ is normal in $M(\phi^k|A)$).

We first show that $M(\phi^k|A)$ is malnormal in $M(\phi)$ if and only if the orbit $\langle \phi \rangle[A]$ in $\mathcal A$ has no $\phi$-twins.
Let $h = yt^\ell \in M(\phi) \setminus M(\phi^k|A)$ and set $B \defeq y\Phi^\ell(A)y^{-1} \neq A$.
Note that $[B] = \phi^\ell[A]$,  the conjugate $h M(\phi^k|A) h^{-1} = M(\phi^k|B)$, and $M(\phi^k|A) \cap M(\phi^k|B) \cap \free = A \cap B$ is trivial as $B \neq A$ and $\langle \phi \rangle[A]$ is malnormal in $\free$.
So $M(\phi^k|A) \cap M(\phi^k|B)$ is nontrivial if and only if it contains some element $wt^n$ with $w \in \free$ and $n \ge 1$; equivalently, $A,B$ are $\phi$-twins.

To conclude, suppose no $\langle \phi \rangle$-orbit in $\mathcal A$ has $\phi$-twins.
Choose distinct orbits $\langle \phi \rangle[A], \langle \phi \rangle[B]$ in $\mathcal A$.
Then the conjugacy classes $[M(\phi^k|A)]$, $[M(\phi^k|B)]$ in $M(\phi|\mathcal A)$ are distinct.
By the same argument as in the previous paragraph, $M(\phi^k|A) \cap M(\phi^k|B)$ is nontrivial if and only if 
$A,B$ are $\phi$-twins.
\end{proof}

In fact, all malnormal free-by-cyclic subgroup systems of finite type arise in this way:

\begin{lem}\label{lem:FbC_SS_MT}
If $\mathcal{G}$ is a malnormal free-by-cyclic subgroup system of $\GG$ with finite type, then $\mathcal G = M(\phi|\mathcal{A})$ for some $\phi$-invariant malnormal subgroup system $\mathcal{A}$ of $\free$ with finite type.

\end{lem}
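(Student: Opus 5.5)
The plan is to build $\mathcal A$ directly from the fibers. Write $\mathcal G = \{[G_1],\dots,[G_m]\}$. For each $i$, \cref{lem:feighnhandel} gives
\[ G_i = \langle A_i, x_it^{k_i}\rangle, \qquad A_i \defeq G_i\cap\free, \]
where $A_i$ is a nontrivial $\phi$-periodic finitely generated subgroup and $k_i\ge1$ generates $\pi(G_i)$. I then set
\[ \mathcal A \defeq \{\,[\Phi^j(A_i)] : 1\le i\le m,\ j\in\ZZ\,\}, \]
using $\free$-conjugacy classes. Periodicity makes this a finite collection of finitely generated subgroups, and it is $\phi$-invariant by construction. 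The verification then has three steps: $\mathcal A$ is a malnormal subgroup system of $\free$, the first return subgroups are exactly the $G_i$, and distinct classes of $\mathcal G$ correspond to distinct $\phi$-orbits in $\mathcal A$.

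\textbf{Malnormality of $\mathcal A$.} Conjugating $A_i$ by a power of $t$ gives $t^jA_it^{-j}=\Phi^j(A_i)$, which is the fiber $t^jG_it^{-j}\cap\free$. So every representative of a class in $\mathcal A$ is the fiber $H\cap\free$ of some conjugate $H$ of some $G_i$.

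Take two such fibers $A=H\cap\free$ and $B=H'\cap\free$. Suppose $h\in\free$ and $hAh^{-1}\cap B\neq1$. Then $hHh^{-1}\cap H'\neq 1$, and malnormality of $\mathcal G$ forces $hHh^{-1}=H'$. Taking fibers gives $hAh^{-1}=B$. Two consequences follow:
\begin{itemize}
\item Taking $B=A$ shows $A$ is malnormal in $\free$.
\item If $[A]\neq[B]$ in $\free$, the intersection $A\cap B$ (the case $h=1$) must be trivial.
\end{itemize}
Non-nesting of distinct classes follows from the same implication: if $A\le B$ with $A\ne 1$, then $A\cap B=A\neq1$ forces $A=B$. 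Hence $\mathcal A$ is a $\phi$-invariant malnormal subgroup system of $\free$ with finite type.

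\textbf{Identifying first return groups.} Fix $i$ and write $A=A_i$. Let $k$ be the size of the orbit $\langle\phi\rangle[A]$, and let $M(\phi^k|A)=\langle A, xt^k\rangle$; as noted before \cref{lem:inclusionsubsys}, this is the normalizer of $A$ in $\GG$. Since $A\lhd G_i$, we get $G_i\le M(\phi^k|A)$. For the reverse inclusion, suppose $g\in N_{\GG}(A)$. Then $gG_ig^{-1}\cap G_i\supseteq A\ne1$, so malnormality of $G_i$ gives $g\in G_i$. Thus $G_i=M(\phi^k|A)$ and $[G_i]\in M(\phi|\mathcal A)$. In particular $k_i=k$, although that is not needed.

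\textbf{Matching classes with orbits.} Every $\phi$-orbit in $\mathcal A$ contains some $[A_i]$, so $M(\phi|\mathcal A)\subseteq\mathcal G$. Conversely, suppose $[A_i]$ and $[A_j]$ lie in the same $\phi$-orbit. Then some $yt^n$ conjugates $A_i$ to $A_j$, hence conjugates the normalizer $G_i$ onto $G_j$, giving $[G_i]=[G_j]$. So orbits biject with the classes of $\mathcal G$, and $\mathcal G=M(\phi|\mathcal A)$.

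\textbf{Main obstacle.} The delicate point is keeping two notions of conjugacy separate. Conjugacy classes in $\mathcal A$ are taken in $\free$, while conjugacy in $\mathcal G$ is taken in $\GG$. The malnormality step has to run over all $\GG$-conjugates $H$ of the $G_i$, precisely so that the $\free$-classes $[\Phi^j(A_i)]$ for different $j$ are covered.
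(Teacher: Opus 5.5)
Your proof is correct and follows essentially the paper's route: build $\mathcal A$ from the fibers $H\cap\free$ of all $\GG$-conjugates $H$ of classes in $\mathcal G$, obtain malnormality by pulling intersections back to $\mathcal G$, and use malnormality of $G_i$ to identify $G_i$ with the first-return group. For that last step you show $N_{\GG}(A_i)\le G_i$, whereas the paper shows the orbit size equals $k_i$; this is the same idea. One small fix: $hAh^{-1}=A$ alone does not give malnormality of $A$ in $\free$, so apply malnormality of $H$ directly to $hHh^{-1}\cap H\neq 1$ to get $h\in H\cap\free=A$, exactly as the paper does.
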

\begin{proof}
Let $\pi \colon \GG \to \ZZ$ be the natural projection for the presentation $\GG \defeq \free \rtimes_\phi \ZZ$.
By \cref{lem:feighnhandel}, for every $[G] \in \mathcal{G}$, $G = \langle A, xt^k \rangle$ for some $x \in \free$, where $A = G\cap \free$ is a $\phi$-periodic finitely generated subgroup of $\free$ and $k \ge 1$ generates $\pi(G)$. 
Notice that $A$ is malnormal in $\free$.
Indeed, suppose $hAh^{-1} \cap A$ is not trivial for some $h \in \free$. 
Then $hGh^{-1}\cap G$ is not trivial.
By malnormality of~$G$ in $\GG$, we have $h \in G$ and $h \in \free \cap G = A$.
Let $\kappa \ge 1$ be the size of the $\langle \phi \rangle$-orbit of $[A]$ in $\free$.
We claim that $\kappa = k$. 
As $xt^k A t^{-k}x^{-1} = A$, we have $\phi^k[A] = [A]$ and $k$ is multiple of $\kappa$.
If $\kappa < k$, then $yt^{\kappa} A t^{-\kappa}y^{-1} = A$ for some $y \in \free$, and $yt^\kappa \notin G$ (as $\kappa$ does not generate $\pi(G))$;
therefore, $yt^\kappa G t^{-\kappa}y^{-1} \cap G$ is not trivial, contradicting that $G$ is malnormal in $\GG$.

Define $\mathcal{A}$ to be the collection of $\free$-conjugacy classes $[A]$ corresponding to all $[G] \in \mathcal{G}$. Consider any $[A_1],[A_2]\in \mathcal{A}$, where $A_i = G_i\cap \free$ with $[G_i]\in \mathcal{G}$.
If $A_1 \ne A_2$, then $G_1\ne G_2$. Since $\mathcal{G}$ is malnormal, this implies $A_1\cap A_2 \le G_1\cap G_2$ is trivial. Therefore, $\mathcal A$ is a malnormal subgroup system. 
If $[G_1] = [G_2]$, say with $G_2 = (w t^m)G_1(w t^m)^{-1}$, then $A_2 = w \Phi^m(A_1) w^{-1}$, and so the $\free$-conjugacy classes $[A_1],[A_2]$ are in the same $\phi$-orbit. 
Since $\mathcal{G}$ has finite type and each $[A]$ has finite $\phi$-orbit, the subgroup system $\mathcal{A}$ has finite type.
Note that the $\free$-conjugacy class $\phi[A_1] = [\Phi(A_1)]$ corresponds to $[G_1] = [t G_1 t^{-1}] \in \mathcal G$,
i.e.~$\phi[A_1] \in \mathcal A$.
So $\mathcal A$ is $\phi$-invariant.
Finally, $\mathcal G = M(\phi|\mathcal A)$ as $k$ is the size of the orbit $\langle \phi \rangle[A] \subseteq \mathcal A$ by the previous paragraph.
\end{proof}

\subsection{Malnormal within a subgroup system}

When $\mathcal{H}\sqsubseteq \mathcal{G}$ are nested free-by-cyclic subgroup systems of $\mathbb G$,  we shall colloquially refer to $\mathcal{H}$ as a subgroup system of $\mathcal{G}$. In this context, we shall need the following notion of malnormality:

\begin{defi}\label{def:malnormalinsystem}
Let $\mathcal{G}, \mathcal H$ be free-by-cyclic subgroup systems of $\GG$ such that $\mathcal H \sqsubseteq \mathcal G$. 
We say~$\mathcal H$ is \emph{malnormal} in~$\mathcal{G}$ if it is the meet of~$\mathcal{G}$ with a malnormal free-by-cyclic subgroup system of~$\GG$.
Similarly, let $\mathcal A, \mathcal B$ be $\phi$-invariant malnormal subgroup systems of~$\free$ with finite type such that $\mathcal B \sqsubseteq \mathcal A$.
Then~$\mathcal B$ \emph{has $\phi|\mathcal A$-twins} if it is the meet of~$\mathcal A$ with a $\phi$-invariant malnormal subgroup system of~$\free$ with finite type and $\phi$-twins.
\end{defi}

The following extends \cref{lem:FbC_SS_MT}, the characterization of malnormal free-by-cyclic subgroup systems:

\begin{lem}\label{lem:FbC_SS_MappingTorus}
Let $\mathcal{A}$ be  a $\phi$-invariant malnormal subgroup system of $\free$ with finite type. 
If $\mathcal{H}$ is a malnormal free-by-cyclic subgroup system in $M(\phi|\mathcal{A})$ with finite type, then $\mathcal{H} = M(\phi|\mathcal{D})$ for some $\phi$-invariant malnormal subgroup system $\mathcal{D} \sqsubseteq \mathcal{A}$ of~$\free$ with finite type. 
\end{lem}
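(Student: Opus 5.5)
By \cref{def:malnormalinsystem}, $\mathcal H$ being malnormal in $M(\phi|\mathcal A)$ means that $\mathcal H = M(\phi|\mathcal A) \wedge \mathcal G$ for some malnormal free-by-cyclic subgroup system $\mathcal G$ of $\GG$. The plan is to translate $\mathcal G$ back to $\free$ and then use the meet correspondence. I will first arrange that $\mathcal G$ has finite type. If it does not, I will replace it by the subsystem of those classes $[G]\in\mathcal G$ that actually contribute a maximal free-by-cyclic class to $M(\phi|\mathcal A)\hat\wedge\mathcal G$. This subsystem is still malnormal and gives the same meet. Since $\mathcal H$ has finite type, only finitely many classes of $\mathcal G$ contribute. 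Each contributing $G$ is free-by-cyclic, hence finitely generated by \cref{lem:feighnhandel}, where $G\cap\free$ is finitely generated.

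With $\mathcal G$ of finite type, \cref{lem:FbC_SS_MT} gives $\mathcal G = M(\phi|\mathcal B)$ for some $\phi$-invariant malnormal subgroup system $\mathcal B$ of $\free$ with finite type. \Cref{lem:meet_of_mapping_tori} then yields
\[ \mathcal H = M(\phi|\mathcal A)\wedge M(\phi|\mathcal B) = M(\phi|(\mathcal A\wedge\mathcal B)). \]
So I set $\mathcal D \defeq \mathcal A\wedge\mathcal B$. This is malnormal with finite type by \cite[Proposition~2.1]{Neumann}, and it is $\phi$-invariant by uniqueness of the meet, as noted in the proof of \cref{lem:meet_of_mapping_tori}. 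Clearly $\mathcal D\sqsubseteq\mathcal A$. One could also obtain $\mathcal D\sqsubseteq\mathcal A$ from $M(\phi|\mathcal D)=\mathcal H\sqsubseteq M(\phi|\mathcal A)$ together with \cref{lem:inclusionsubsys}(2).

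There is one subtlety: $\mathcal A\wedge\mathcal B$ may contain classes $[C]$ whose mapping torus meets the others trivially, for instance where the intersection of mapping tori is only cyclic. \Cref{lem:meet_of_mapping_tori} already handles this, since every nontrivial $C$ yields the free-by-cyclic subgroup $M(\phi^n|C)$, so no classes need to be discarded.

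I expect the main obstacle to be the finite-type reduction in the first step. It requires checking that discarding non-contributing classes of $\mathcal G$ leaves a malnormal system, which is immediate because a subset of a malnormal system is malnormal. It also requires checking that only finitely many $\GG$-conjugacy classes of $G$ can produce a given class in $\mathcal H$. For the latter, a maximal class $[M(\phi^k|A)\cap G]$ determines $G$ up to conjugacy: malnormality of $\mathcal G$ forces any two $G$ containing a common nontrivial subgroup to coincide.
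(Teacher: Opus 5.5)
Your proposal is correct and matches the paper's proof: write $\mathcal H$ as the meet of $M(\phi|\mathcal A)$ with a malnormal free-by-cyclic system, identify that system as $M(\phi|\mathcal B)$ via \cref{lem:FbC_SS_MT}, and take $\mathcal D = \mathcal A\wedge\mathcal B$ via \cref{lem:meet_of_mapping_tori}. The one addition is your reduction to a finite-type auxiliary system, which the paper simply assumes; your malnormality argument (each class of $\mathcal H$ determines the contributing class of $\mathcal G$, so discarding the others keeps the meet unchanged) is a valid way to justify that assumption.
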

\begin{proof} We have that $\mathcal{H} = \mathcal{H}' \wedge M(\phi|\mathcal{A})$ for a malnormal free-by-cyclic subgroup system $\mathcal{H}'$ of $\GG$ with finite type. 
By \cref{lem:FbC_SS_MT}, $\mathcal{H}' = M(\phi|\mathcal{C})$ for some $\phi$-invariant malnormal subgroup system $\mathcal{C}$ of $\free$ with finite type. 
Now, by \cref{lem:meet_of_mapping_tori}, $M(\phi|\mathcal{C}) \wedge M(\phi|\mathcal{A}) = M(\phi|(\mathcal{C}\wedge \mathcal{A}))$. Then $\mathcal{D} \defeq \mathcal{C} \wedge \mathcal{A}$ is the desired $\phi$-invariant malnormal subgroup system of~$\free$ with finite type. 
 \end{proof}

\subsection{Polynomial growth}
\label{sec:polynomial_growth}

Let $\phi \in \Out(\free)$ and $g \in \free$, and 
fix a basis of $\free$. 
The \emph{length} of the conjugacy class $[g]$ of $g$ with respect to the basis is 
\[\norm{[g]} \defeq \min_{g' \in [g]} \abs{g'},\]
where $\abs{g'}$ is the \emph{word length} of $g$ with respect to the chosen basis.  

We say that $g$ has \emph{polynomial growth under (forward) $\phi$-iteration} if there exists a polynomial $P \in \RR[X]$ such that 
\[ \norm{\phi^n[g]} \leq P(n) \quad \text{for every }n \in \NN.\]

This property does not depend on the choice of the basis. We say $\phi$ is \emph{polynomially growing} if every element has polynomial growth under $\phi$-iteration; otherwise,~$\phi$ is \emph{exponentially growing}.

In~\cite[Proposition 1.4]{Levitt09}, Levitt showed that there is a unique subgroup system~$\mathcal P(\phi)$ of $\free$ whose subgroup representatives are the maximal subgroups consisting entirely of elements with polynomial growth under $\phi$-iteration; further, $\mathcal P(\phi)$ is a $\phi$-invariant malnormal subgroup system with finite type.
Note that $\phi$ is exponentially growing if and only if the subgroup system~$\mathcal P(\phi)$ is proper. 
Define 
\[\mathcal P(M(\phi))\defeq M(\phi|\mathcal{P}(\phi)),\] which is a free-by-cyclic subgroup system  of the mapping torus group $M(\phi) \defeq \free \rtimes_\phi \ZZ$.
It follows from the proof of~\cite[Proposition 1.4]{Levitt09} that $\mathcal P(\phi)$ has no $\phi$-twins (see also \cref{cor:NSS} below), so $\mathcal P(M(\phi))$ is malnormal in~$M(\phi)$.
We shall observe later in \S\ref{subsec:growthtypecanon} that the subgroup system $\mathcal P(M(\phi))$ is independent of the presentation $M(\phi)$.

Let $\mathcal A$ be a $\phi$-invariant malnormal subgroup system of $\free$ with finite type. Then $\phi|\mathcal{A}$ is \emph{polynomially growing} if every $\mathcal A$-peripheral element in $\free$ has polynomial growth under $\phi$-iteration; equivalently, $\phi|\mathcal{A}$ is polynomially growing if and only if $\mathcal{A} \sqsubseteq \mathcal P(\phi)$.
Define $\mathcal P(\phi|\mathcal A) \defeq \mathcal A \wedge \mathcal P(\phi)$ and $\mathcal P(M(\phi|\mathcal A)) \defeq M(\phi|\mathcal A) \wedge \mathcal P(M(\phi))$.
Note that $\mathcal P(\phi|\mathcal A)$ has finite type too.

\subsection{Weak attraction theory and attracting laminations}
\label{subsec:weakattraction}

Let $\partial_\infty \free$ be the Gromov boundary of $\free$ and 
\[\partial^2\free\defeq(\partial_\infty \free \times \partial_\infty \free-\Delta)/{\sim}\] 
be the \emph{double boundary} of $\free$, where $\Delta$ denotes the diagonal subset and $\sim$ is the equivalence relation generated by the flip $(x,y){\sim}(y,x)$. 
The double boundary $\partial^2\free$ is equipped with the quotient topology. 
The (continuous) action of $\free$ on $\partial_\infty \free$ induces a diagonal action of $\free$ on $\partial^2 \free$;
moreover, this extends to an action of $\Aut(\free)$ on $\partial^2 \free$ via the natural identification $\free \cong \Inn(\free)$.

The \emph{space of lines} of $\free$ is the quotient $\mathfrak{B}(\free)\defeq\free \backslash \partial^2\free$ of the double boundary by the $\free$-action. 
A \emph{line} of $\free$ is an element of $\BB(\free)$.
The conjugacy class $[g]$ of a nontrivial $g \in \free$ naturally determines a (periodic) line ${[[g]]} \in \BB(\free)$, called the \emph{axis} of $[g]$. 
The quotient topology on $\BB(\free)$ is also called the \emph{weak topology} to emphasize that it is non-Hausdorff.
A \emph{lamination} of $\free$ is a closed nonempty subset of $\BB(\free)$.
Note that $\Out(\free)$ naturally acts on the space $\BB(\free)$ of lines by homeomorphisms.

Fix an outer automorphism $\phi \in \Out(\free)$. Bestvina--Feighn--Handel associated to $\phi$ a collection $\mathcal{L}^+(\phi)$ of closed subsets $\Lambda \subseteq \BB(\free)$ called the \emph{attracting laminations} of $\phi$~\cite[Definition~3.1.5]{BesFeiHan00}. 
We give the precise definition of an attracting lamination in \cref{app:A}.
 For $\Lambda \in \mathcal{L}^+(\phi)$ and $g \in \free$, we say that $g$ is \emph{weakly attracted} to $\Lambda$ (under forward $\phi$-iteration) if $\Lambda$ (as a subset of $\BB(\free)$) is contained in the accumulation points of the sequence $({[[\phi^n(g)]]})_{n \in \NN}$ (in the weak topology).
The set $\mathcal{L}^+(\phi)$ is finite (see proof of \cite[Lemma~3.1.13]{BesFeiHan00}), and it is empty if and only if $\phi$ is polynomially growing.

\begin{defi}
    The set $\mathcal{L}^+(\phi)$ of all attracting laminations for $\phi$ is partially ordered by inclusion.
    The \emph{(lamination) depth} $\delta(\phi)$ of $\phi$ is the maximum size of a chain in $\mathcal L^+(\phi)$,
    and the \emph{depth spectrum} $\delta\mathcal S(\phi)$ is the set consisting of the sizes of all maximal chains in $\mathcal L^+(\phi)$.
\end{defi}

We first record the following result on the invariance of the depth of an outer automorphism under passing to positive powers and restrictions to finite index invariant subgroups. We postpone its proof to  \cref{app:passingtofinite} as it relies on the exact definition of attracting laminations.

\begin{prop}\label{Prop:powers} Suppose $A \le \free$ has finite index, $\Phi \colon \free \to \free$ is an automorphism such that $\Phi(A) = A$, and $\Psi \colon A \to A$ is the restriction of $\Phi$ to $A$.
Set $\phi \defeq [\Phi]$ in $\Out(\free)$ and $\psi \defeq [\Psi]$ in $\Out(A)$. Then $\delta(\phi^m) = \delta(\phi) = \delta(\psi)$ and  $\delta\mathcal S(\phi^m) = \delta\mathcal S(\phi) = \delta\mathcal S(\psi)$ for all $m \ge 1$.

In fact, there is a natural surjective strict-order preserving map $\pi \colon \mathcal L^+(\psi) \to \mathcal L^+(\phi)$, and chains in $\mathcal L^+(\phi)$ can be lifted (via~$\pi$) to chains of the same length in $\mathcal L^+(\psi)$.
\end{prop}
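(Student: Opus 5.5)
The plan is to handle powers and finite-index restrictions separately, using the relative train track definition of attracting laminations from the appendix. In each case the goal is a natural surjection $\pi$ between lamination sets that is strictly order preserving and lifts chains. Depth equality then follows formally. If $\pi$ is surjective, strictly order preserving, and chains in the target lift to chains of the same length, then $\delta(\text{source}) \ge \delta(\text{target})$. Conversely, strict order preservation gives $\delta(\text{source}) \le \delta(\text{target})$. For the depth spectrum, I additionally need that maximal chains map to maximal chains and maximal chains lift to maximal chains. I plan to get both from a stronger property: $\Lambda_1 \subsetneq \Lambda_2$ holds if and only if $\pi(\Lambda_1) \subsetneq \pi(\Lambda_2)$.

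\emph{Powers.} I would show that $\mathcal L^+(\phi^m) = \mathcal L^+(\phi)$ as sets of closed subsets of $\BB(\free)$, so that the identity map works. By \cite[Lemma~3.1.9 onward]{BesFeiHan00}, attracting laminations are characterised intrinsically. Each is the closure of a birecurrent line $\ell$ having an attracting neighbourhood $V$ with $\phi^k(V) \subset V$ for some $k$, such that $\{\phi^{kn}(V)\}$ is a neighbourhood basis of $\ell$; further conditions exclude periodic lines. This characterisation only involves some power of $\phi$, so it is insensitive to replacing $\phi$ by $\phi^m$. Since the order is inclusion of the same subsets, chains coincide.

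\emph{Finite index.} Here $\partial_\infty A = \partial_\infty \free$, so $\partial^2 A = \partial^2 \free$ and $\Psi$ acts on it exactly as $\Phi$ does. This gives a natural continuous surjection $p\colon \BB(A) = A\backslash\partial^2\free \to \BB(\free)$ with finite fibres. For $\Lambda' \in \mathcal L^+(\psi)$, I would set $\pi(\Lambda') \defeq p(\Lambda')$. To see that this lies in $\mathcal L^+(\phi)$, I would take a relative train track $f\colon G \to G$ for a power of $\phi$ and pass to the finite cover $\widehat G \to G$ corresponding to $A$. After replacing $\phi$ by a further power, which is allowed by the first part, $f$ lifts to a relative train track $\widehat f$ for a power of $\psi$. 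Its exponentially growing strata are the preimages of those of $f$, possibly split into several components.

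The generic leaves of the lamination of an exponentially growing stratum $\widehat H$ project to generic leaves of the lamination of its image $H$, which shows $\pi(\Lambda')\in\mathcal L^+(\phi)$. Surjectivity holds because every exponentially growing stratum upstairs covers one downstairs. Moreover $p^{-1}(\Lambda) = \bigcup_{\Lambda'\in\pi^{-1}(\Lambda)} \Lambda'$ is a finite union of laminations permuted by the deck-like action of $\free/A$ (after passing to a normal finite-index subgroup if needed, again harmless). For the order properties, $\Lambda'_1 \subseteq \Lambda'_2$ clearly gives $p(\Lambda'_1)\subseteq p(\Lambda'_2)$. Strictness holds because equal images would force generic leaves of $\Lambda'_2$ to lie in a translate of $\Lambda'_1$, and hence, by birecurrence and minimality of generic leaves, force $\Lambda'_1 = \Lambda'_2$. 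For lifting, suppose $\Lambda_1\subsetneq \Lambda_2$ and $\Lambda'_2$ lies over $\Lambda_2$. A generic leaf of $\Lambda_1$ is a weak limit of generic leaves of $\Lambda_2$; lifting to $\Lambda'_2$, its lift lies in $\Lambda'_2$ and is generic for some $\Lambda'_1$ over $\Lambda_1$. Hence $\Lambda'_1\subseteq\Lambda'_2$, and I can lift a chain top-down.

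The main obstacle is making the covering argument precise. One issue is that the lifted map $\widehat f$ must be a genuine relative train track, and the lifted exponentially growing strata must be irreducible; this may require subdividing and using the transition matrices. The other is the strictness and maximality claims for the spectrum, i.e.\ showing that $\Lambda'_1 \not\subseteq \Lambda'_2$ upstairs is compatible with the images, so that maximal chains correspond. I would first try to reduce both to the statement that for $\Lambda\in\mathcal L^+(\phi)$, the laminations above it are exactly the closures of the components of lifts of a single generic leaf.
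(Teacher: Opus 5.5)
\textbf{Verdict: the depth part of your plan is sound; the depth-spectrum part rests on a false property, and the spectrum equality itself fails for finite-index restrictions.}

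\textbf{Depth.} Your handling of powers and of $\delta(\psi)=\delta(\phi)$ is essentially the paper's argument. You use the same finite-to-one map $q\colon\BB(A)\to\BB(\free)$ and the same lifted relative train track after passing to a power. Strictness again comes from comparing with a translate of a generic leaf. You lift chains top-down where the paper goes bottom-up; yours also works, using that $q$ is a closed map. For strictness, ``minimality of generic leaves'' is not the right mechanism, since attracting laminations need not be minimal. The paper closes the loop with $h^k\in A$: it iterates $\Lambda_1\subseteq\Lambda_2\subseteq h\cdot\Lambda_1$ up to $h^k\cdot\Lambda_1=\Lambda_1$.

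\textbf{Spectrum.} The strengthening you plan to prove, $\Lambda_1\subsetneq\Lambda_2\iff\pi(\Lambda_1)\subsetneq\pi(\Lambda_2)$, is false; only $\Rightarrow$ holds. So is the consequence you need from it, that maximal chains in $\mathcal L^+(\psi)$ project to maximal chains in $\mathcal L^+(\phi)$. The mechanism: a lamination upstairs can contain one lift of $\bar\Lambda_1$ directly, while the lifts of an intermediate $\bar\Lambda$ that it contains only contain other lifts of $\bar\Lambda_1$.

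\textbf{Counterexample.} Let $\free=F(a,b,c,d,k,e,g)$ and
\[ \Phi\colon a\mapsto ab,\ b\mapsto bab,\ c\mapsto cbd,\ d\mapsto dcbd,\ k\mapsto k,\ e\mapsto e\,a\,kck^{-1}g,\ g\mapsto g\,e\,a\,kck^{-1}g. \]
The rose is a relative train track with EG strata $\{a,b\},\{c,d\},\{e,g\}$. So $\mathcal L^+(\phi)$ is the single chain $\bar\Lambda_{ab}\subsetneq\bar\Lambda_{cd}\subsetneq\bar\Lambda_{eg}$, and $\delta\mathcal S(\phi)=\{3\}$.

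Let $A$ be the kernel of $\free\to\ZZ/2$ sending $k\mapsto1$ and the other generators to $0$. It is $\Phi$-invariant and free on $s=k^2$ together with $x$ and $x'=kxk^{-1}$ for $x\in\{a,b,c,d,e,g\}$. The restriction $\Psi$ acts on $\langle a,b,c,d\rangle$ and on $\langle a',b',c',d'\rangle$ by the same formulas as $\Phi$, fixes $s$, and
\[ e\mapsto e\,a\,c'\,g,\quad g\mapsto g\,e\,a\,c'\,g,\quad e'\mapsto e'\,a'\,scs^{-1}g',\quad g'\mapsto g'\,e'\,a'\,scs^{-1}g'. \]
Hence $\mathcal L^+(\psi)$ has six elements, one per EG stratum.
\begin{itemize}
\item $\Lambda_{eg}$ contains $\Lambda_{ab}$ and $\Lambda_{c'd'}\supsetneq\Lambda_{a'b'}$.
\item $\Lambda_{eg}$ does not contain $\Lambda_{cd}$. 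It is carried by the $\Psi$-invariant free factor $\langle a,b,e,g,a',b',c',d'\rangle$, whose meet with $\langle a,b,c,d\rangle$ is $\langle a,b\rangle$, and $\langle a,b\rangle$ does not carry $\Lambda_{cd}$.
\item $\Lambda_{ab}\not\subseteq\Lambda_{c'd'}$, because $\{[\langle a,b\rangle],[\langle a',b',c',d'\rangle]\}$ is a free factor system of $A$.
\end{itemize}
Thus $\pi(\Lambda_{ab})\subsetneq\pi(\Lambda_{c'd'})$ although $\Lambda_{ab}\not\subseteq\Lambda_{c'd'}$. Moreover $\{\Lambda_{ab}\subsetneq\Lambda_{eg}\}$ is a maximal chain of size $2$. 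So $\delta\mathcal S(\psi)=\{2,3\}\neq\delta\mathcal S(\phi)$, while $\delta(\psi)=\delta(\phi)=3$.

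\textbf{What survives.} Your tools prove $\delta(\psi)=\delta(\phi)$ and $\delta\mathcal S(\phi)\subseteq\delta\mathcal S(\psi)$, since by strictness any lift of a maximal chain is maximal. The paper infers the full spectrum equality from chain lifting alone, so its proof has the same hole. Since $M(\psi)$ has index $2$ in $M(\phi)$, this example also contradicts \cref{cor:depthspectrum}. The spectrum part of the statement cannot be proved as stated.
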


The next example describes the poset of attracting laminations for an outer automorphism.
We will use it as a running example to illustrate our constructions.

\begin{example}\label{ex:lamination_poset}
Consider $\free = \langle a, b, c, d, e, f \rangle$ and an outer automorphism $\phi \in \Out(\free)$ with representative
\[\Phi \colon a \mapsto ab,\; b \mapsto bab,\; c \mapsto bcd,\; d \mapsto dcd,\; e \mapsto aba^{-1}b^{-1}ef,\; f \mapsto fef.\]
See \Cref{fig:ex_lamination_poset}.

\tikzset{node style ge/.style={circle}}

\begin{figure}
       
       \centering

\begin{minipage}[b]{0.35\textwidth}
\hspace{0.5cm}
\begin{tikzpicture}[scale=1.6]

\draw[color=green!80!black] (0,0) plot[smooth, tension=1] coordinates {(0,0) (10:.7) (0:1.15) (350:.7) (0,0)};

\draw[color=green!80!black] (0,0) plot[smooth, tension=1] coordinates {(0,0) (170:.7) (180:1.15) (190:.7) (0,0)};

 \node [right][color=green!80!black]       at (0:1.15)  {$d$};

 \node [left][color=green!80!black]       at (180:1.15)  {$c$};

\draw[color=red] (0,0) plot[smooth, tension=1] coordinates {(0,0) (310:.7) (300:1.15) (290:.7) (0,0)};

\draw[color=red] (0,0) plot[smooth, tension=1] coordinates {(0,0) (250:.7) (240:1.15) (230:.7) (0,0)};

 \node [left][color=red]        at (240:1.15)  {$a$};
 \node [right][color=red]         at (300:1.15)  {$b$};

\draw[color=blue] (0,0) plot[smooth, tension=1] coordinates {(0,0) (70:.7) (60:1.15) (50:.7) (0,0)};

\draw[color=blue] (0,0) plot[smooth, tension=1] coordinates {(0,0) (130:.7) (120:1.15) (110:.7) (0,0)};

 \node [left][color=blue]       at (120:1.15)  {$e$};
 \node [right][color=blue]        at (60:1.15)  {$f$};

\end{tikzpicture}
\end{minipage}
\begin{minipage}[b]{0.35\textwidth}
\hspace{0.8cm}
\begin{tikzpicture}[scale=0.6][baseline=(A.center)]
  \tikzset{BarStyle/.style =   {opacity=.4,line width=4 mm,line cap=round,color=#1}}

\matrix (A) [matrix of math nodes ,column sep=0 mm] 
{ \textcolor{blue}{e}  & \mapsto  & \textcolor{red}{ab\overline{a}\overline{b}}\textcolor{blue}{ef} \\
 \textcolor{blue}{f} &\mapsto & \textcolor{blue}{fef}  \\
    \textcolor{green!80!black}{c} & \mapsto & \textcolor{red}{b}\textcolor{green!80!black}{cd} \\
     \textcolor{green!80!black}{d} & \mapsto  &\textcolor{green!80!black}{dcd}  \\
       \textcolor{red}{a} & \mapsto  &\textcolor{red}{ab}   \\
        \textcolor{red}{b} & \mapsto  &\textcolor{red}{bab}  \\
};

\end{tikzpicture}

\end{minipage}
\begin{minipage}[b]{0.25\textwidth}
\begin{tikzpicture}[scale=1.1][baseline=(A.center)]
  \tikzset{BarStyle/.style =   {opacity=.4,line width=4 mm,line cap=round,color=#1}}

\matrix (A) [matrix of math nodes, nodes = {node style ge},,column sep=0 mm] 
{ \textcolor{green!80!black}{\Lambda_{cd}} \ & \ &\textcolor{blue}{\Lambda_{ef}} \\
  \ & \  & \ &\ \\
  \textcolor{red}{\Lambda_{ab}} &\ & \\ &  \\
    \ & \ & \ & \ & \ \\
};
\draw[thick] (A-1-1) to (A-3-1);

\end{tikzpicture}
\end{minipage}

\caption{Illustration for \cref{ex:lamination_poset}}
    \label{fig:ex_lamination_poset}

\end{figure}

Let the lamination $\Lambda_{ab}$ be the set of accumulation points of the sequence $([[\phi^n(g)]])_{n \in \mathbb{N}}$ for $g = a$ (or $g = b$). 
Similarly, define $\Lambda_{cd}$ and $\Lambda_{ef}$.
An application of Bestvina-Feighn-Handel relative train track theory will show that  
 $\mathcal{L}^+(\phi) = \{\Lambda_{ab}, \Lambda_{cd}, \Lambda_{ef}\}$.
Note that $c$ (resp.~$e$) is not weakly attracted to $\Lambda_{ef}$ (resp.~$\Lambda_{cd}$) since $c,d$ (resp.~$e,f$) do not map over $e,f$ (resp.~$c,d$).
So the two laminations are not nested.
For large $n \gg 1$, the line $[[\phi^n(c)]]$ contains a long segment of $[[\phi^n(b)]]$;
therefore, $c$ is weakly attracted to $\Lambda_{ab}$, and $\Lambda_{ab} \subseteq \Lambda_{cd}$.
Moreover, $e$ is not weakly attracted to $\Lambda_{ab}$  since $\Phi(aba^{-1}b^{-1}) = aba^{-1}b^{-1}$.  Thus, the only nesting in $\mathcal L^+(\phi)$ is $\Lambda_{ab} \subseteq \Lambda_{cd}$, and the depth $\delta(\phi) = 2$.
Finally, the line $[[aba^{-1}b^{-1}]]$ is in $\Lambda_{ef}$ since $[[\phi^n(e)]]$ contains arbitrarily long segments of $[[aba^{-1}b^{-1}]]$ for $n \gg 1$. 
\end{example}

In the above example, the outer automorphism $\phi$ acts trivially on $\mathcal L^+(\phi)$, but that is not always the case.
In general, we will consider the $\phi$-orbits of laminations. 

The poset $\mathcal L^+(\phi)$ admits a natural order-preserving action of the cyclic group $\langle \phi \rangle$.
We denote the $\langle \phi \rangle$-orbit of $\Lambda \in \mathcal{L}^+(\phi)$ by $[\Lambda]$. Since every $\Lambda \in \mathcal{L}^+(\phi)$ has a finite $\langle \phi \rangle$-orbit, $[\Lambda]$ is a closed subset of $\BB(\free)$ and hence there is a natural partial order on the set of $\langle \phi \rangle$-orbits of laminations. We denote the poset of $\langle \phi \rangle$-orbits by $\mathcal{L}^+_\phi$.
As $\phi$ acts on $\mathcal L^+(\phi)$ by an order-isomorphism, the depth~$\delta(\phi)$ is also the maximum size of a chain in~$\mathcal L^+_\phi$.
Similarly, the depth spectrum $\delta\mathcal S(\phi)$ consists of the sizes of all maximal chains in $\mathcal L^+_\phi$.
Our goal for this paper is to show that the poset $\mathcal{L}^+_\phi$ is naturally a group invariant of $\free \rtimes_\phi \ZZ$.

Each finitely generated subgroup $A$ of $\free$ has its own double boundary $\partial^2 A$ that is naturally identified with a closed subset of $\partial^2 \free$ using the embedding induced by the inclusion $A\hookrightarrow \free$. 
If $A$ is also malnormal, then the image of $\partial^2 A$ in $\BB(\free)$ under the quotient is a closed subset identified with the intrinsic space of lines $\BB(A)$ of $A$. Further, suppose $\mathcal A$ is a malnormal subgroup system of $\free$ with finite type. 
Then $\bigcup \{\partial^2 A \mid [A]\in \mathcal A\}$ is an $\free$-invariant closed subset of $\partial^2 \free$ whose  image $\BB(\mathcal A)$ is a closed subset of $\BB(\free)$ and a finite disjoint union of the sets $\BB(A)$ for $[A]\in \mathcal A$. 
Note that, for any malnormal subgroup systems $\mathcal A_1, \mathcal A_2$ of $\free$ with finite type,  $\BB(\mathcal A_1) \cap \BB(\mathcal A_2) = \BB(\mathcal A_1 \wedge \mathcal A_2)$, and if $\mathcal A_1 \sqsubseteq \mathcal A_2$, then $\BB(\mathcal A_1) \subseteq \BB(\mathcal A_2)$.
Finally, for any given subset $L \subseteq \BB(\free)$ of lines in $\free$, we say that $\mathcal A$ \emph{carries} $L$ if  $L \subseteq \BB(\mathcal A)$.

Now assume $\mathcal A$ is a $\phi$-invariant malnormal subgroup system of $\free$ with finite type.
An \emph{attracting lamination} of $\phi|\mathcal A$, denoted $\Lambda \in \mathcal L^+(\phi|\mathcal A)$, is an attracting lamination of $\phi$ that is carried by $\mathcal A$.
So $\mathcal L^+(\phi|\mathcal A)$ is a subset of $\mathcal L^+(\phi)$.
As with $\phi$, the subset $\mathcal L^+(\phi|\mathcal A)$ is empty if and only if $\phi|\mathcal A$ is polynomially growing.
Since $\mathcal A$ is $\phi$-invariant, the $\langle \phi \rangle$-action on $\BB(\free)$ will preserve $\BB(\mathcal A)$.
In particular, the $\langle \phi \rangle$-orbit $[\Lambda]$ is a closed subset of $\BB(\mathcal A)$ for any $\Lambda \in \mathcal L^+(\phi|\mathcal A)$.
Let $\mathcal L^+_{\phi|\mathcal A} \subseteq \mathcal L^+_\phi$ denote the partially ordered subset of $\langle \phi \rangle$-orbits in $\mathcal L^+(\phi|\mathcal A)$.
We will need the following folklore lemma, whose proof is postponed to \cref{app:genericapprox}.

\begin{lem}\label{lem:approximategenline}
For $[\Lambda] \in \mathcal L^+_{\phi|\mathcal A}$, some $\mathcal A$-peripheral element $g \in \free$ has the following property: 
\begin{quote}for any $[\Lambda']\in \mathcal L^+_{\phi|\mathcal A}$, $g$ is weakly attracted to $[\Lambda']$ if and only if  $[\Lambda'] \subseteq [\Lambda]$. 
\end{quote}
\end{lem}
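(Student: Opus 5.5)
We will produce $g$ from a relative train track representative, choosing a conjugacy class that closely follows a generic leaf of $\Lambda$. After replacing $\phi$ by a positive power $\phi^m$ (where $m$ is a common period of all laminations and of the components of $\mathcal A$), every $\Lambda''\in\mathcal L^+(\phi)$ is $\phi^m$-invariant and each $[A]\in\mathcal A$ is $\phi^m$-invariant. This replacement does not change weak attraction to an orbit $[\Lambda']$: the iterates $\phi^{n}[g]$ split into $m$ subsequences $\phi^{mk+r}[g]$, and $\phi^r$ permutes $\mathcal L^+(\phi)$. So it suffices to show that $g$ is weakly attracted (under $\phi^m$) to some element of $[\Lambda']$ exactly when $[\Lambda']\subseteq[\Lambda]$. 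By passing to a representative $A$ with $\Lambda$ carried by $\mathcal B(A)$, we may also work inside a single $\phi^m$-invariant component $A$, using the first return map $\phi^m|A$. Laminations of $\phi$ carried by $A$ are exactly the attracting laminations of $\phi^m|A$, and weak attraction of $A$-elements is intrinsic to $A$ because $A$ is malnormal, so $\mathcal B(A)$ is a closed subset of $\mathcal B(\free)$.

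Take an improved relative train track map $f\colon G\to G$ for $\phi^m|A$ (from \cite{BesFeiHan00}) with a filtration in which the exponentially growing stratum $H_r$ corresponds to $\Lambda$. Choose a generic leaf $\ell$ of $\Lambda$; it is birecurrent and $r$-legal, and its height is $r$. We will find a long subpath $\sigma\subset\ell$ that begins and ends with the same oriented edge of $H_r$ in a way that makes the closed path $\sigma'$ obtained by identifying the endpoints a circuit. Since $\ell$ is $r$-legal, the turn at the closing point can be arranged to be $r$-legal as well, for example by choosing the two occurrences so that the outgoing edge after the second one equals the one after the first. This uses birecurrence together with the fact that $\ell$ crosses a fixed legal two-edge path infinitely often. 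Take $g$ to be an element of $A$ representing the circuit $\sigma'$.

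Then $\sigma'$ is an $r$-legal circuit of height $r$ that contains edges of $H_r$. By the standard train track argument (\cite[Lemma 3.1.9, 3.1.10]{BesFeiHan00}), its iterates $f^k_\#(\sigma')$ contain $r$-legal subpaths, built from $H_r$-edges, of length tending to infinity. These subpaths weakly converge to $\Lambda$, so $g$ is weakly attracted to $\Lambda$, and hence to every $\Lambda'\subseteq\Lambda$. This is because $\Lambda$ lies in the accumulation set and accumulation sets are closed.

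The converse direction is where the choice of $\sigma$ has to be made carefully, and we expect it to be the main obstacle. Suppose $g$ is weakly attracted to some $\Lambda'\in\mathcal L^+(\phi)$ with $\Lambda'\not\subseteq\Lambda$. Since $\sigma'$ has height $r$, the lamination $\Lambda'$ must have height at most $r$, since weak limits of $f^k_\#(\sigma')$ lie in $G_r$, and $\Lambda'\neq\Lambda$. We will use the weak attraction theorem philosophy in the following form. The circuits $f^k_\#(\sigma')$ split, in the sense of a splitting for the completely split/improved train track, into pieces $f^k_\#(\sigma)$ and a bounded number of pieces near the closing turn. By taking $\sigma$ to be a long enough subpath of $\ell$ with $r$-legal endpoints, we ensure the splitting $\sigma'=\sigma\cdot(\text{closing})$ has the property that the closing turn is legal, so that $f^k_\#(\sigma')$ is exactly the cyclic concatenation of the $f^k_\#(\sigma)$. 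Every subpath of $f^k_\#(\sigma)$ is then a subpath of $f^k_\#(\ell)$, which is a leaf of $\Lambda$.

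As a consequence, every weak limit of $[[\phi^{mk}(g)]]$ is a line whose finite subpaths are subpaths of leaves of $\Lambda$, together possibly with the single closing turn. Turns that are legal and occur within $\Lambda$ impose no additional constraint, since the closing turn is itself taken to be a turn crossed by $\ell$. So every accumulation line lies in the closed set $\Lambda$, which gives $\Lambda'\subseteq\Lambda$. To justify that the closing turn is one crossed by $\ell$, we pick the two occurrences of the same edge $E$ in $\ell$ so that the edge following the second occurrence equals the one following the first; birecurrence guarantees that such occurrences exist. If making the splitting exact turns out to be delicate, the fallback is to approximate: show that any leaf of $\Lambda'$ would have to contain arbitrarily long $r$-legal segments of height $r$ crossing $H_r$, and then apply \cite[Lemma 3.1.15]{BesFeiHan00}, namely that a lamination containing such a leaf contains $\Lambda$ and, by height considerations, equals it.
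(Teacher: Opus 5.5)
Your construction is essentially the paper's: the paper closes up the segment of $f^M_\#(e)$ between the origins of two copies of an $H_r$-edge $e$, and you close up a segment of a generic leaf instead. Your reduction to a single $\phi^m$-invariant component and your proof that $g$ is attracted to $\Lambda$ are fine. The gap is in the converse, at exactly the step you flag.

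Windows of the circuits $f^k_\#(\sigma')$ that avoid the closing point are subpaths of $f^k_\#(\ell)$. For windows crossing the closing point, however, you argue that the closing turn, being a legal turn crossed by $\ell$, ``imposes no additional constraint.'' That principle is false: two leaf segments joined across a turn that occurs in leaves need not form a leaf segment. For $a\mapsto ab$, $b\mapsto a$, both $aa$ and $a$ are leaf segments and the turn between them occurs in leaves, yet $aaa$ occurs in no leaf. A weak limit of the circuits may run through the junction, where a long terminal piece of $f^k_\#(\sigma')$ meets a long initial piece. That is precisely where an accumulation line outside $\Lambda$ could appear, and your condition on the edges following the two occurrences of $E$ does not touch this.

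Your fallback is aimed at the wrong laminations. By your own height argument, any $\Lambda'\nsubseteq\Lambda$ attracting $g$ has all its leaves in the strata below $H_r$, so they never cross $H_r$. The real task is to rule out long windows below $H_r$ that are not leaf segments of $\Lambda$, which is again the junction issue.

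The missing observation, which is how the paper argues, is that one side of the junction is an iterate of an $H_r$-edge, whose length tends to infinity. Take the circuit to be $\sigma'=E\tau$ where $E\tau E$ is a subpath of $\ell$. Since $\sigma'E$ is $r$-legal and begins with $E\in H_r$, \cref{lem:nocancelEG} gives three facts:
\begin{itemize}
\item $f^k_\#(\sigma')$ is cyclically reduced;
\item $f^k_\#(\sigma')$ begins with $f^k_\#(E)$;
\item $f^k_\#(\sigma')f^k_\#(E)=f^k_\#(\sigma'E)$ is a subpath of the leaf $f^k_\#(\ell)$ of $\Lambda$.
\end{itemize}
Hence, for $k$ large, every window of bounded length in the circuit that crosses the junction lies in $f^k_\#(\sigma'E)$, so it is a leaf segment of $\Lambda$. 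Consequently every accumulation line of $[[\phi^{mk}(g)]]$ lies in $\Lambda$, which gives the converse.
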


\subsection{JSJ decompositions of relatively one-ended free groups}
\label{sec:JSJ_theory}

This section follows~\cite[\S3.3]{GuirardelLevitt2015} (see also~\cite{GuirardelLevitt2016} for a general presentation of JSJ decompositions of groups). We state the results of the section for the free group $\free$ for simplicity but all the statements extend to torsion-free relatively hyperbolic groups.

Let~$\mathcal{N}$ be a proper malnormal subgroup system of~$\free$ with finite type. Recall that, in this case, the group $\free$ is hyperbolic relative to the set $\mathcal{N}$.
Partition $\mathcal N = \mathcal P \sqcup \mathcal H$ into conjugacy classes of noncyclic and cyclic subgroups respectively. In the general theory, only the noncyclic subset $\mathcal P$ (not $\mathcal N$) is necessarily a subgroup system of $\free$. However, in this section, we assume that~$\mathcal N$ is a proper malnormal subgroup system. 
A subgroup~$H$ of~$\free$ is \emph{(relatively) elementary} if it is cyclic or $\mathcal{N}$-peripheral.
We denote by $\Out(\free,\mathcal N)$ the setwise stabilizer in $\Out(\free)$ of~$\mathcal N$.

An \emph{$(\free,\mathcal N)$-tree} is an $\free$-tree in which every conjugacy class in $\mathcal N$ is elliptic;
it is \emph{elementary} if edge stabilizers are elementary.
Similarly, a \emph{splitting} of~$(\free,\mathcal N)$ is a splitting of~$\free$ consisting of $(\free,\mathcal N)$-trees.
We say that $\free$ is \emph{one-ended relative to} $\mathcal N$ if there is no free splitting of $(\free, \mathcal N)$. 
In that case, Guirardel--Levitt associate to the pair $(\free,\mathcal N)$ a canonical 
elementary 
$(\free,\mathcal N)$-tree $T_{\mathcal N}$ called the \emph{(elementary) JSJ tree}.

\begin{theo}[see~{\cite[Theorem~9.18]{GuirardelLevitt2016}}]\label{Thm:JSJ}
Let~$\mathcal{N}$ be a proper malnormal subgroup system of~$\free$ with finite type and let $\mathcal{N}= \mathcal P \sqcup \mathcal H$ be the partition into conjugacy classes of noncyclic and cyclic subgroups respectively. 
If~$\free$ is one-ended relative to $\mathcal{N}$, then there is a canonical 
elementary 
$(\free,\mathcal N)$-tree~$T_{\mathcal N}$ with the following properties.

\begin{enumerate}
\item Any vertex with an elementary stabilizer is adjacent to one with a nonelementary stabilizer.
\item The tree $T_{\mathcal N}$ is preserved by $\Out(\free,\mathcal N)$.

Let $\Out^0(\free,\mathcal N)$ be the maximal finite index subgroup of $\Out(\free,\mathcal N)$ which acts trivially on the quotient graph $\free \backslash T_{\mathcal N}$.
\item For each vertex $v \in VT_{\mathcal N}$, exactly one of the following holds:
\begin{enumerate}
\item The stabilizer $\Stab(v)$ of $v$ is elementary; moreover, either it is cyclic or its conjugacy class is in $\mathcal{P}$.
\item 
The natural homomorphism \[\Out^0(\free,\mathcal N) \to \Out(\Stab(v))\] has finite image (see~\cite[Theorem~3.9]{GuirardelLevitt2015}).
\item The stabilizer $\Stab(v)$ is identified with the fundamental group of a compact surface~$\Sigma$. Edge groups and representatives of conjugacy classes in~$\mathcal N$ intersect $\Stab(v)$ in the fundamental groups of the boundary components of~$\Sigma$ and the image
of the natural homomorphism 
\[\Out^0(\free,\mathcal N) \to \Out(\Stab(v))\] is infinite and contained in the mapping class group of~$\Sigma$. \qed
\end{enumerate}
\end{enumerate}
\end{theo}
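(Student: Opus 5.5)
This is a specialization of Guirardel--Levitt's general JSJ theory, so the plan is to import their construction rather than build the tree by hand. First I will set up the hypotheses. Since $\mathcal N$ is a proper malnormal subgroup system of finite type, $\free$ is hyperbolic relative to $\mathcal P$ (the cyclic classes in $\mathcal H$ may be added or dropped harmlessly, since they are quasi-convex and malnormal). Relative one-endedness of $(\free,\mathcal N)$ is exactly the hypothesis under which \cite{GuirardelLevitt2016} produces a JSJ deformation space of $(\free,\mathcal N)$-trees over elementary subgroups. In a torsion-free relatively hyperbolic group, elementary subgroups are cyclic or peripheral. Commensurability of elementary subgroups is then an admissible equivalence relation, so the \emph{tree of cylinders} of this deformation space is defined.

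I take $T_{\mathcal N}$ to be that tree of cylinders, possibly after collapsing redundant vertices. It depends only on the deformation space, and the deformation space is invariant under every automorphism preserving $\mathcal N$ up to conjugacy. Hence $T_{\mathcal N}$ is canonical, which gives (2). The tree of cylinders is bipartite: each edge joins a cylinder vertex, whose stabilizer is the (maximal) elementary subgroup determined by the cylinder, to a vertex of the original JSJ tree. After discarding vertices with elementary stabilizer that are adjacent only to elementary ones, this bipartition yields (1). It also gives (3a): an elementary stabilizer is either a maximal cyclic subgroup or, by malnormality of $\mathcal N$, a representative of a class in $\mathcal P$.

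For the nonelementary vertices, I will use the standard dichotomy that each one is either rigid or quadratically hanging (flexible). In the flexible case, the general theory identifies $\Stab(v)$ with $\pi_1\Sigma$ for a compact surface $\Sigma$, where incident edge groups and peripheral subgroups are boundary subgroups. Restrictions of elements of $\Out^0(\free,\mathcal N)$ must preserve these boundary classes, so they act as mapping classes. The image is infinite because Dehn twists along essential simple closed curves of $\Sigma$ extend to $\free$ and act nontrivially; this gives (3c). In the rigid case I quote \cite[Theorem~3.9]{GuirardelLevitt2015} for finiteness of the image, which gives (3b).

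The step I expect to be the real content is the rigid case. Suppose the image were infinite. A Bestvina--Paulin rescaling argument would then produce a nontrivial $\Stab(v)$-action on an $\RR$-tree relative to the incident edge groups and $\mathcal N$, with elementary arc stabilizers. Rips theory would turn this into an elementary splitting of $\Stab(v)$ relative to its incident edge groups, contradicting rigidity. Since this is precisely what the cited theorem provides, the proof here amounts to checking that our $(\free,\mathcal N)$ satisfies its hypotheses.
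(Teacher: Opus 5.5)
Your overall route is the paper's. The paper gives no argument of its own: it cites Guirardel--Levitt's canonical elementary JSJ tree (Theorem~9.18 of the JSJ memoir) and their finiteness theorem for rigid vertices (Theorem~3.9 of the 2015 paper), which is what you propose to import.

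\textbf{The gap.} The one ingredient you make explicit is wrong, and it is the one you rely on for (1) and (3a): the cylinders cannot be defined by commensurability.

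\emph{Admissibility fails.} Admissibility of the equivalence relation requires nested subgroups to be equivalent. This fails on elementary subgroups, e.g.\ for $\langle c\rangle\subsetneq P$ with $[P]\in\mathcal P$ noncyclic.

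\emph{The resulting tree is wrong even with cyclic edge groups.} Cylinder stabilizers become commensurators of edge groups, not the maximal elementary subgroups you describe. A vertex fixed by $P$ then lies in many cylinders. Concretely, take $\free=\pi_1\Sigma$ and $P=\pi_1\Sigma_0$ for an essential subsurface, the configuration behind Proposition~\ref{Prop:invariantmalnormal}. The vertex $x$ fixed by $P$ is attached to surface vertices along edges stabilized by the $P$-conjugates of a boundary curve $\langle c\rangle$. These conjugates are pairwise non-commensurable for distinct cosets of $\langle c\rangle$ in $P$. So in the commensurability tree of cylinders, $x$ is a non-cylinder vertex with elementary stabilizer $P$, and all its neighbours are cylinder vertices with maximal cyclic stabilizers. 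This contradicts (1). ``Discarding'' such vertices is not an operation on trees: you would have to collapse edges and then re-prove canonicity and (3a).

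\textbf{The fix.} The relation Guirardel--Levitt use, and the one you need, is \emph{co-elementarity}: $A\sim B$ iff $\langle A,B\rangle$ is elementary. It is admissible and $\Out(\free,\mathcal N)$-invariant, so your argument for (2) goes through.
\begin{itemize}
\item For (3a): every nontrivial elementary subgroup of $\free$ lies in a unique maximal elementary subgroup. That subgroup is self-normalizing: it is either a maximal cyclic subgroup or, by malnormality of $\mathcal N$, a conjugate of some $P$ with $[P]\in\mathcal P$. The stabilizer of a cylinder is exactly this subgroup.
\item For (1): a non-cylinder vertex lies in two distinct cylinders, so its stabilizer contains two non-co-elementary edge groups and is nonelementary. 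Every edge of the tree of cylinders joins a cylinder vertex to a non-cylinder vertex, so (1) holds with nothing discarded.
\end{itemize}

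\textbf{A smaller point in (3c).} Infinitude of the image via Dehn twists needs an essential simple closed curve on $\Sigma$. This holds because flexible (non-universally elliptic) vertices are not pairs of pants. Universally elliptic QH vertices have finite image and belong in (3b), which is what makes the trichotomy exclusive.
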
 

\begin{rmq}
    The tree $T_\mathcal{N}$ is not necessarily a splitting since it can be a point. This is why we call~$T_\mathcal{N}$ a \emph{JSJ tree} instead of a \emph{JSJ splitting}. However, even when $T_\mathcal{N}$ is a point, the trichotomy for the vertex stabilizers in \cref{Thm:JSJ}(3) gives information about $\Out(\free,\mathcal{N})$ that will be sufficient for our purpose (see, for instance, \cref{lem:norigidvertex} and the proof of \cref{Prop:invariantmalnormal}).
 \end{rmq}

Let $T_{\mathcal N}$ be the JSJ tree for $(\free,\mathcal N)$ as in \cref{Thm:JSJ}.
A vertex $v \in VT_{\mathcal N}$ is: \emph{elementary} if it satisfies Condition~(3a); \emph{rigid} if it satisfies Condition~(3b); and finally, \emph{flexible} if it satisfies Condition~(3c).

We will also need JSJ forests for subgroup systems of $\free$. Let $\mathcal{A}=\{[A_1],\ldots,[A_k]\}$ be a malnormal subgroup system of $\free$ with finite type such that $\mathcal{N} \sqsubsetneq \mathcal{A}$. 
A \emph{splitting} $\mathcal S = \{T_1,\ldots,T_k\}$ of the pair $(\mathcal{A}, \mathcal{N})$ is a splitting of~$\mathcal A$ such that each~$T_i$ is a class of $(A_i, \mathcal N|A_i)$-trees.
We say that $\mathcal{A}$ is \emph{one-ended relative} to $\mathcal{N}$ if there is no free splitting of $(\mathcal{A},\mathcal{N})$. 

When $\mathcal{A}$ is one-ended relative to $\mathcal{N}$, there exists a canonical 
elementary
forest $\mathcal T_\mathcal{N}$ for the pair $(\mathcal{A},\mathcal{N})$ constructed as follows. 
Pick a conjugacy class $[A] \in \mathcal{A}$. If $[A] \in \mathcal N$, then define $T^A_{\mathcal N}$ to be a point as an $A$-tree. 
Otherwise, since $\mathcal{A}$ is one-ended relative to $\mathcal{N}$, the subgroup~$A$ is one-ended relative to $\mathcal{N}|A$.  
Let $T_\mathcal{N}^A$ be the JSJ tree for $(A,\mathcal{N}|A)$ given by \cref{Thm:JSJ}. The forest $\mathcal T_\mathcal{N} \defeq \{T_\mathcal{N}^A:[A]\in \mathcal{A}\}$ is the \emph{JSJ forest} for $(\mathcal{A},\mathcal{N})$. We keep using the trichotomy rigid, flexible, or elementary to classify vertices of the JSJ forest.

\section{Subgroup systems associated to attracting laminations}\label{sec:nonattracting_ss}

Let $\phi \in \Out(\free)$.
To prove that the depth of $\phi$ is a group invariant of its mapping torus, we need to translate the dynamical information given by an attracting lamination into algebraic data. 
This is done through \emph{nonattracting subgroup systems}.
We will also introduce \emph{supporting subgroup systems}, which will be needed to prove the group invariance of the poset $\mathcal L^+_\phi$.

\subsection{Nonattracting subgroup systems}\label{subsec:nonattracting_ss}

Handel--Mosher proved that the set of all conjugacy classes of elements of $\free$ which are not weakly attracted to $[\Lambda] \in \mathcal L^+_\phi$ determines a $\phi$-invariant proper malnormal subgroup system $\mathcal{N}[\Lambda]$ with finite type, called the \emph{nonattracting subgroup system}~\cite[Theorem~F]{HandelMosher20}.
Their construction also applies to subsets of $\mathcal L^+_\phi$, and the following theorem is an alternative construction of these subgroup systems:

\begin{theo}[see~{\cite[\S3]{Mutanguha22}}]\label{Thm:NSS}
For $\phi \in \Out(\free)$ and $\mathcal L \subseteq \mathcal L^+_\phi$, there is a canonical topological tree $T_{\mathcal L}$ with a non-nesting continuous action by $\free$ such that:
\begin{enumerate}
\item the action is minimal with trivial arc stabilizers;
\item for any automorphism $\Phi \colon \free \to \free$ representing~$\phi$, there is an expanding $\Phi$-equivariant homeomorphism $h_{\mathcal L} \colon T_{\mathcal L} \to T_{\mathcal L}$; and
\item an element $g \in \free$ is elliptic in $T_{\mathcal L}$ if and only if it is not weakly attracted to any $[\Lambda] \in \mathcal L$.
\end{enumerate}
\end{theo}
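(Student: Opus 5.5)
The plan is to build $T_{\mathcal L}$ by collapsing, in the universal cover of a relative train track representative, everything that does not grow toward the laminations in $\mathcal L$. First I fix a relative train track representative $f\colon \Gamma\to\Gamma$ of $\phi$ (an improved one in the sense of \cite{BesFeiHan00}, or a CT after passing to a rotationless power if needed). Its filtration has EG strata in bijection with $\mathcal L^+_\phi$ \cite[Lemma~3.1.13]{BesFeiHan00}. Let $\mathcal H_{\mathcal L}$ be the set of EG strata whose lamination orbit lies in $\mathcal L$. For a path $\sigma$ in $\Gamma$, let $\ell_{\mathcal L}(\sigma)$ be its number of edges in strata of $\mathcal H_{\mathcal L}$.

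On the universal cover $\widetilde\Gamma$, define $x\sim y$ if $\ell_{\mathcal L}\bigl([\tilde f^n(x),\tilde f^n(y)]\bigr)$ stays bounded as $n\to\infty$, where $\tilde f$ is the lift corresponding to $\Phi$. Set $T_{\mathcal L}\defeq \widetilde\Gamma/{\sim}$ with the quotient topology, refined so that it is generated by the "growth" of separating paths. Bounded cancellation gives that $\sim$ is $\free$-invariant and $\tilde f$-invariant. Hence $\tilde f$ descends to a $\Phi$-equivariant map $h_{\mathcal L}$. Changing $\Phi$ within $\phi$ changes the lift by a deck transformation, which yields the required equivariant map for every representative.

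The key steps are as follows.
\begin{enumerate}
\item Show that each $\sim$-class is a connected subforest of $\widetilde\Gamma$, and that the quotient is a Hausdorff, uniquely arcwise connected topological tree.
\item Show that $h_{\mathcal L}$ is a homeomorphism that expands arcs. Injectivity follows because $\tilde f$ is a homotopy equivalence with bounded cancellation, so nonequivalent points stay nonequivalent under the inverse. Expansion follows because a nondegenerate arc contains a growing subpath in some $\mathcal H_{\mathcal L}$ stratum, whose $\ell_{\mathcal L}$-length tends to infinity under the train track property.
\item Verify minimality, the non-nesting property and triviality of arc stabilizers. For the last, a nontrivial $g$ fixing an arc would carry a periodic path that grows, which is impossible: $g$ acts by translation of bounded length on its axis, while growth forces unbounded length.
\end{enumerate}

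Item (3) of the statement comes from the weak attraction theory of \cite{BesFeiHan00}. The element $g$ is elliptic in $T_{\mathcal L}$ exactly when its axis collapses, that is, when $\ell_{\mathcal L}$ of the circuits $f^n_\#(g)$ stays bounded. A conjugacy class is weakly attracted to $\Lambda$ precisely when, after iteration, its splitting into terms contains a growing term in the EG stratum of $\Lambda$, or eventually crosses one that maps over it. Since strata maps over lower strata, I must check that "weakly attracted to some $[\Lambda]\in\mathcal L$" matches "$\ell_{\mathcal L}$ unbounded". This uses that a conjugacy class growing in a stratum of $[\Lambda]$ is weakly attracted to $\Lambda$, and conversely.

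Canonicity comes last. Given two relative train track representatives, a homotopy equivalence between them lifts to a quasi-isometry of universal covers that commutes up to bounded error with the lifts of $f$. By bounded cancellation it preserves the relation $\sim$, so it descends to an $\free$-equivariant homeomorphism of the two quotients. Alternatively, one can characterize $T_{\mathcal L}$ intrinsically by its boundary behaviour: the leaves of $\mathcal L$ are the non-collapsed lines. Passing to a power of $\phi$ is harmless, since $\sim$ is unchanged under taking powers of $f$.

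The main obstacle I expect is the topology of the quotient when $\mathcal L$ contains laminations with different expansion factors or nested laminations. A single rescaled limit metric then does not exist, so one genuinely needs a topological rather than an $\RR$-tree. My first attempt will be to build $T_{\mathcal L}$ inductively up the filtration. At each EG stratum in $\mathcal H_{\mathcal L}$, I would take the $\RR$-tree limit of the Perron--Frobenius metric on that stratum relative to the lower filtration element, then blow up the point stabilizers using the trees already constructed for the lower strata. This is essentially the hierarchical construction implicit in \cite[\S3]{Mutanguha22}. Hausdorffness and non-nesting would then be checked one level at a time.
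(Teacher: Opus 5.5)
Your construction breaks at its core. The relation $x\sim y$, defined by boundedness of $\ell_{\mathcal L}\bigl([\tilde f^n(x),\tilde f^n(y)]\bigr)$, does not yield condition~(3), because counting edges of the EG strata in $\mathcal H_{\mathcal L}$ does not detect weak attraction. EG strata carry (closed) indivisible Nielsen paths, and an element that is not weakly attracted to $[\Lambda]$ can have iterates containing unboundedly many copies of them.

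The paper's running example already shows this, with $\mathcal L=\{[\Lambda_{ab}]\}$. The element $e$ is not weakly attracted to $\Lambda_{ab}$; indeed $\mathcal N[\Lambda_{ab}]=[\langle aba^{-1}b^{-1},e,f\rangle]$. So (3) requires $e$ to be elliptic in $T_{\mathcal L}$. Yet the cyclically reduced word $\Phi^n(e)$ contains exponentially many copies of $aba^{-1}b^{-1}$. Intrinsically, $e$ is weakly attracted to $\Lambda_{ef}\ni[[aba^{-1}b^{-1}]]$, so the circuits $f^n_\#(\sigma_e)$ contain arbitrarily long subpaths of that periodic line. In any relative train track representative, the circuit of $aba^{-1}b^{-1}$ crosses the EG stratum $H_r$ of $\Lambda_{ab}$: it lies in $\Gamma_r$ but not in $\Gamma_{r+1}$, since the commutator lies in no proper free factor of $\langle a,b\rangle$.

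Hence $\ell_{\mathcal L}\bigl([\tilde f^n(x),\tilde f^n(ex)]\bigr)$ is unbounded for every $x$, so $e$ fixes no $\sim$-class and (3) fails. A linear twist $c\mapsto c\,aba^{-1}b^{-1}$ produces the same failure with only polynomial growth. So the converse you planned to check (unbounded $\ell_{\mathcal L}$ implies weak attraction to a member of $\mathcal L$) is false. A repair needs a length that vanishes on Nielsen paths and records only legal growth at the Perron--Frobenius rate, i.e.\ limit $\RR$-trees. \cite[Proposition~2.2]{Mutanguha22} supplies such a tree, with the nonattracting subgroup system as its point stabilizers, for a \emph{maximal} lamination orbit of the group it is built on.

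This is why the paper never builds $T_{\mathcal L}$ directly. It applies that proposition top-down: first to a maximal orbit in $\mathcal L^+_\phi$, then recursively to the point stabilizers, which carry exactly the remaining laminations. It glues the resulting $\RR$-trees by the substitution of \cite{Mutanguhaexistence} into one topological tree $T_\phi$ carrying a pseudometric $d_i$ for each orbit. Canonicity comes from uniqueness of $T_\phi$ \cite[Corollary~3.9]{Mutanguha22}. Then $T_{\mathcal L}$ is obtained by collapsing the leaves of every $[\Lambda]$ that contains no member of $\mathcal L$. The whole up-closure of $\mathcal L$ must be kept: if $[\Lambda]\subseteq[\Lambda']$, every arc of positive $d_{\Lambda'}$-diameter contains one of positive $d_{\Lambda}$-diameter. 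Condition (3) then follows from ``$d_i$-loxodromic $\Leftrightarrow$ weakly attracted to $[\Lambda_i]$'' and ``$d_i\ll d_j\Leftrightarrow[\Lambda_j]\subseteq[\Lambda_i]$'' \cite[Proposition~3.5, Claim~3.6]{Mutanguha22}.

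Your fallback, which builds up the relative train track filtration using only the strata in $\mathcal H_{\mathcal L}$, recurses over the wrong index set. It must pass through every orbit of $\mathcal L^+_\phi$ and restrict each time to nonattracting subgroup systems, not to filtration elements.

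Finally, your canonicity argument does not survive iteration. The maps $\tilde f_1\circ\tilde h$ and $\tilde h\circ\tilde f_2$ are uniformly close, but $\tilde f_1^n\circ\tilde h$ and $\tilde h\circ\tilde f_2^n$ are in general only within a distance that grows exponentially in $n$. So a relation defined by boundedness over all $n$ need not be preserved.
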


A topological tree can be thought of as an $\RR$-tree without the metric.
Non-nesting actions on topological trees are the analogues of isometric actions; in particular, they allow us to classify elements of $\free$ as elliptic/loxodromic.
Finally, the definition of expanding for $\Phi$-equivariant homeomorphisms is a bit technical (due to lack of a metric) but the only property we will use is that their fixed points are unique if they exist; this is analogous to expanding homotheties.

\begin{rmq}
    Technically, the stated theorem does not appear in the cited reference;
    however, it follows readily from the discussion in the citation. For completeness, we sketch in \cref{app:toptree} how to deduce \cref{Thm:NSS}. 
\end{rmq}

Suppose $\mathcal L \subseteq \mathcal L^+_\phi$ and $T_{\mathcal L}$ is the corresponding topological tree given by \cref{Thm:NSS}.
The \emph{nonattracting subgroup system} $\mathcal N(\mathcal L)$ for $\mathcal L$ is the set of conjugacy classes of nontrivial point stabilizers of $T_{\mathcal L}$. 
For singletons $\mathcal L = \{[\Lambda]\}$ in $\mathcal L^+_\phi$, we recover Handel--Mosher's definition: $\mathcal N(\mathcal L) = \mathcal N[\Lambda]$.
When $\mathcal L = \mathcal L^+_\phi$, we recover Levitt's subgroup system: $\mathcal N(\mathcal L) = \mathcal P(\phi)$.
The \emph{nonattracting systems} for $\phi$ is the collection $\mathcal N(\phi) \defeq 
\left\{ \mathcal N[\Lambda]\,:\, [\Lambda]\in \mathcal L_\phi^+ \right\}$.

More generally, let~$T$ be a topological tree with a non-nesting action of~$\free$ that is minimal with trivial arc stabilizers.
Then the set $\mathcal{N}(T)$ of conjugacy classes of nontrivial point stabilizers of~$T$ is a malnormal subgroup system of~$\free$. 
Such subgroup systems include nonattracting subgroup systems as well as free factor systems of~$\free$.
Let $\mathcal N(T')$ be another such subgroup system of~$\free$.
Their meet $\mathcal{N}(T) \wedge \mathcal{N}(T')$ can also be described as the subgroup system consisting of the conjugacy classes of the maximal subgroups of $\free$ that are elliptic in both $T$ and $T'$. 
For instance, if $\mathcal{L} \subseteq \mathcal{L}_\phi^+$, then $\bigwedge \{ \mathcal{N}[\Lambda]:[\Lambda]\in \mathcal{L}\} = \mathcal{N}(\mathcal{L})$ by \cref{Thm:NSS}(3).

In our inductive argument, we will consider  nonattracting subgroup systems for restrictions to invariant subgroup systems. To that end, 
let $\mathcal A$ be a $\phi$-invariant malnormal subgroup system of~$\free$ with finite type.
For any $\mathcal L \subseteq \mathcal L^+_{\phi|\mathcal A}$, define $\mathcal N(\mathcal L|\mathcal A)$ to be the meet $\mathcal A \wedge \mathcal N(\mathcal L)$.
Similarly, define $\mathcal N[\Lambda|\mathcal A] \defeq \mathcal A \wedge \mathcal N[\Lambda]$ for $[\Lambda] \in \mathcal L^+_{\phi|\mathcal A}$ and $\mathcal N(\phi|\mathcal A) \defeq 
\left\{ \mathcal N[\Lambda|\mathcal A]\,:\, [\Lambda]\in \mathcal L_{\phi|\mathcal A}^+ \right\}$.
By \cref{lem:approximategenline}, for $\mathcal L \subseteq \mathcal L^+_{\phi|\mathcal A}$, the subgroup system $\mathcal N(\mathcal L|\mathcal A) = \mathcal A$ if and only if $\mathcal L$ is empty.

\begin{example}\label{ex:non-attracting_subgroup_system}
Recall from \cref{ex:lamination_poset} the automorphism of the free group $\free= \langle a, b, c, d, e, f\rangle$
\[\Phi \colon a \mapsto ab, ~ b \mapsto bab,~ c \mapsto bcd,~ d \mapsto dcd,~ e \mapsto aba^{-1}b^{-1}ef,~ f \mapsto fef. \]  
The following are some nonattracting subgroup systems of $\phi$:
\begin{align*}
    \mathcal{N}[\Lambda_{cd}] & =  [\langle a, b, e, f \rangle] & \mathcal{N}(\{[\Lambda_{cd}], [\Lambda_{ef}]\}) & = \mathcal{N}(\Lambda_{cd}) \wedge \mathcal{N}(\Lambda_{ef}) = [\langle a, b \rangle] \\
    \mathcal{N}[\Lambda_{ef}] & =  [\langle a, b, c, d \rangle] & \mathcal{N}(\mathcal{L}_{\phi}^+) & = [\langle aba^{-1}b^{-1}\rangle] = \mathcal{P}(\phi)\\
    \mathcal{N}[\Lambda_{ab}] & =  [\langle aba^{-1}b^{-1}, e, f \rangle] & \mathcal{N}[\Lambda_{ab}|\mathcal{A}] & = [\langle aba^{-1}b^{-1}\rangle ]  \text{ if } \mathcal{A} = [\langle a, b \rangle]. \qedhere
\end{align*}
\end{example}

\begin{coro}\label{cor:NSS} Let $\phi \in \Out(\free)$, $\mathcal A$ be a $\phi$-invariant malnormal subgroup system of $\free$ with finite type, and $\mathcal L \subseteq \mathcal L^+_{\phi|\mathcal A}$. 
Then the following statements hold:
\begin{enumerate}
\item  $\mathcal N(\mathcal L|\mathcal A)$ is a malnormal subgroup system of~$\free$ with finite type;

\item  $\mathcal N(\mathcal L|\mathcal A)$ is $\phi$-invariant and has no $\phi|\mathcal A$-twins; and

\item $\mathcal N(\mathcal L|\mathcal A)$ supports $\mathcal P(\phi|\mathcal A)$.
\end{enumerate}

\end{coro}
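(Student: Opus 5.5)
The plan is to deduce everything from \cref{Thm:NSS} applied to $\mathcal L$ (viewed as a subset of $\mathcal L^+_\phi$), together with the general facts about meets recorded in \S\ref{subsec:ss}. Write $T = T_{\mathcal L}$, so that $\mathcal N(\mathcal L)$ is the system of conjugacy classes of nontrivial point stabilizers of $T$, and $\mathcal N(\mathcal L|\mathcal A) = \mathcal A \wedge \mathcal N(\mathcal L)$.

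\emph{Step 1 (malnormality and finite type).} Because $T$ has trivial arc stabilizers, distinct point stabilizers intersect trivially. Moreover, if $gA_xg^{-1}\cap A_x \ne 1$ for the stabilizer $A_x$ of a point $x$, then the nontrivial intersection fixes both $x$ and $gx$, so $gx = x$ and $g\in A_x$. Hence $\mathcal N(\mathcal L)$ is malnormal. For finite type I would argue as follows. By \cref{Thm:NSS}(3), $\mathcal N(\mathcal L)$ is the meet $\bigwedge\{\mathcal N[\Lambda] : [\Lambda]\in\mathcal L\}$, and each $\mathcal N[\Lambda]$ has finite type by Handel--Mosher \cite[Theorem~F]{HandelMosher20}. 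Finite type of meets follows from \cite[Proposition~2.1]{Neumann}, as in \cref{lem:partialorder_free}. Meeting with $\mathcal A$ preserves both properties, since the meet of malnormal finite type systems is again malnormal of finite type.

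\emph{Step 2 ($\phi$-invariance and no twins).} For $\Phi\in\phi$, the $\Phi$-equivariant homeomorphism $h=h_{\mathcal L}$ of \cref{Thm:NSS}(2) satisfies $\Phi(\Stab(x)) = \Stab(h(x))$. Thus $\phi$ permutes $\mathcal N(\mathcal L)$, and since $\mathcal A$ is $\phi$-invariant, uniqueness of the meet gives $\phi$-invariance of $\mathcal N(\mathcal L|\mathcal A)$.

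The main obstacle is ruling out twins. Suppose $A = \Stab(x)$ and $B=\Stab(y)$ are distinct and both preserved by $\Psi = i_w\circ\Phi^n$. The map $h' = w\cdot h^n$ is a $\Psi$-equivariant homeomorphism, and it is again expanding for the representative $\Psi$ of $\phi^n$; I would use the uniqueness clause of \cref{Thm:NSS}(2) to justify this. Since $\Psi(A)=A$ and $A$ fixes a unique point (trivial arc stabilizers and $A\ne 1$), we get $h'(x)=x$. Likewise $h'(y)=y$. Now $x\ne y$ because $A\ne B$, which contradicts uniqueness of fixed points of expanding maps.

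To handle $\phi|\mathcal A$-twins, i.e.\ twins in $\mathcal N(\mathcal L|\mathcal A)$, note the following. If $C_1=A_1\cap N_1$ and $C_2 = A_2\cap N_2$ are twins under $\Psi$, then by malnormality $\Psi$ also preserves $N_1$ and $N_2$. So either $N_1=N_2$, or we have twins in $\mathcal N(\mathcal L)$, which was just excluded. If $N_1=N_2$, then the $C_i$ lie in the same point stabilizer and the twin relation must come from $\mathcal A$. Here I expect I may need to interpret \cref{def:malnormalinsystem} carefully: the intended statement is that $M(\phi|\mathcal N(\mathcal L|\mathcal A))$ is malnormal \emph{in} $M(\phi|\mathcal A)$. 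This is exactly the meet $M(\phi|\mathcal A)\wedge M(\phi|\mathcal N(\mathcal L))$ by \cref{lem:meet_of_mapping_tori}, and $M(\phi|\mathcal N(\mathcal L))$ is malnormal by \cref{lem:malnormalnotwin} together with the twin-freeness just proved.

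\emph{Step 3 (support of $\mathcal P$).} Every element of polynomial growth is not weakly attracted to any attracting lamination, so by \cref{Thm:NSS}(3) it is elliptic in $T$. A subgroup $P$ consisting of such elements, with $[P]\in\mathcal P(\phi)$, is finitely generated and consists of elliptic elements. By the standard Serre-type lemma for non-nesting actions with trivial arc stabilizers, $P$ is elliptic. Hence $\mathcal P(\phi)\sqsubseteq\mathcal N(\mathcal L)$, and taking meets with $\mathcal A$ gives $\mathcal P(\phi|\mathcal A)\sqsubseteq\mathcal N(\mathcal L|\mathcal A)$. Alternatively, one can take the case $\mathcal L=\mathcal L^+_\phi$ of the remark after \cref{Thm:NSS}, which gives $\mathcal P(\phi)=\mathcal N(\mathcal L^+_\phi)$. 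Since $\mathcal N$ is monotone under enlarging $\mathcal L$ (the meet gets smaller), this yields $\mathcal P(\phi)\sqsubseteq\mathcal N(\mathcal L)$ directly, and that is the route I would take.
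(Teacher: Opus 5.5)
Your proposal is correct, and parts (2) and (3) follow the paper's argument. The absence of $\phi$-twins in $\mathcal N(\mathcal L)$ comes from the $\Psi$-equivariant map $w\cdot h_{\mathcal L}^n$ having at most one fixed point. The $\phi|\mathcal A$-twin statement is read off from $\mathcal N(\mathcal L|\mathcal A)=\mathcal A\wedge\mathcal N(\mathcal L)$, exactly as you end up doing. Support of $\mathcal P(\phi|\mathcal A)$ is \cref{Thm:NSS}(3) followed by meeting with $\mathcal A$.

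Two small comments on part (2). You were right not to push the direct twin analysis toward an absolute statement: for $\mathcal L=\emptyset$ one gets $\mathcal N(\mathcal L|\mathcal A)=\mathcal A$, which may well have $\phi$-twins. Also, the one-fixed-point property of $w\cdot h_{\mathcal L}^n$ is not a uniqueness clause of \cref{Thm:NSS}(2). It holds because $w$ preserves each pseudometric $d_i$ of \cref{app:toptree}, so $w\cdot h_{\mathcal L}^n$ is still a $\lambda_i^n$-homothety for every non-collapsed $d_i$.

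The genuine difference is in part (1). The paper gets malnormality from trivial arc stabilizers, as you do. It gets finite type by applying Gaboriau--Levitt index theory, in the non-nesting form \cite[Theorem~A.7]{Mutanguhaexistence}, directly to the minimal action on $T_{\mathcal L}$. You instead write $\mathcal N(\mathcal L)=\bigwedge_{[\Lambda]\in\mathcal L}\mathcal N[\Lambda]$. You then import finite type of each $\mathcal N[\Lambda]$ from \cite[Theorem~F]{HandelMosher20} and use Neumann/Howson for the finite meet.

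Your route avoids index theory, but it rests on two identifications the paper states without detailed proof. The first is that the stabilizer system of $T_{\{[\Lambda]\}}$ is Handel--Mosher's system. The second is that $\mathcal N(\mathcal L)$ equals the meet. That second identification is a subgroup-level statement, whereas \cref{Thm:NSS}(3) is only elementwise. Justifying it needs the equivariant collapse maps $T_{\mathcal L}\to T_{\{[\Lambda]\}}$ from the appendix construction for one inclusion. For the other inclusion it needs a Serre-type lemma for the finitely generated classes of the meet acting on $T_{\mathcal L}$. The paper's argument is shorter and works uniformly for any $\mathcal L$ without passing through the singleton case.
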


\begin{proof}
Let $T_{\mathcal L}$ be the topological tree corresponding to~$\mathcal L$ given by \cref{Thm:NSS}.
By Condition~(1) of the theorem (trivial arc stabilizers) and Gaboriau--Levitt index theory~\cite[Theorem~III.2]{GabLev} (see also~\cite[Theorem~A.7]{Mutanguhaexistence}), $\mathcal N(\mathcal L)$ is a malnormal subgroup system with finite type.
As the meet of two malnormal subgroup systems of~$\free$ with finite type, $\mathcal N(\mathcal L| \mathcal A)$ is a malnormal subgroup system of $\free$ with finite type too.

Pick an automorphism $\Phi \in \phi$. 
By Condition~(2) ($h_{\mathcal L}$ is $\Phi$-equivariant), $\mathcal N(\mathcal L)$ is $\phi$-invariant.
For an arbitrary $n \ge 1$ and $w \in \free$, define the $\free$-automorphism $\Psi \colon g \mapsto w\Phi^n(g)w^{-1}$.
Then the homeomorphism $w \cdot h_{\mathcal L}^n \colon T_{\mathcal L} \to T_{\mathcal L}$ is $\Psi$-equivariant.
By Condition~(2) ($h_{\mathcal L}$ is expanding), the map $w \cdot h_{\mathcal L}^n$ has at most one fixed point. In particular, the automorphism $\Psi$ preserves at most one nontrivial point stabilizer of $T_{\mathcal L}$ by $\Psi$-equivariance of $w \cdot h_{\mathcal L}^n$. Thus, $\mathcal{N}(\mathcal{L})$ has no $\phi$-twins. 
As the meet of two $\phi$-invariant subgroup systems, $\mathcal N(\mathcal L|\mathcal A)$ is $\phi$-invariant.
Consequently, it has no $\phi|\mathcal A$-twins.

Finally, by Condition~(3), $\mathcal N(\mathcal L)$ supports $\mathcal P(\phi)$. 
When we take the meets of both systems with $\mathcal A$, we deduce that $\mathcal N(\mathcal L|\mathcal A)$ supports $\mathcal P(\phi|\mathcal A)$.
\end{proof}

We now characterize the laminations that are carried by the nonattracting subgroup systems.

\begin{lem}\label{lem:nonattractingcarry}

For $[\Lambda_1], [\Lambda_2] \in \mathcal L^+_{\phi|\mathcal A}$,
\begin{center}$[\Lambda_1] \nsubseteq [\Lambda_2]$ (as subsets of $\BB(\mathcal A)$) if and only if $\mathcal N[\Lambda_1|\mathcal A]$ carries $[\Lambda_2]$.\end{center}
\end{lem}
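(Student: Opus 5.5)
The plan is to prove both implications by combining \cref{lem:approximategenline} with the characterization of $\mathcal N[\Lambda_1]$ as the system of point stabilizers of $T_{\{[\Lambda_1]\}}$ from \cref{Thm:NSS}(3). Throughout, write $\mathcal N \defeq \mathcal N[\Lambda_1|\mathcal A] = \mathcal A \wedge \mathcal N[\Lambda_1]$. By \cref{cor:NSS}, $\mathcal N$ is a $\phi$-invariant malnormal subgroup system of $\free$ with finite type. Hence $\BB(\mathcal N)$ is a closed, $\phi$-invariant subset of $\BB(\free)$ contained in $\BB(\mathcal A)$.

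\emph{($\Rightarrow$)} Suppose $[\Lambda_1] \nsubseteq [\Lambda_2]$. Apply \cref{lem:approximategenline} to $[\Lambda_2] \in \mathcal L^+_{\phi|\mathcal A}$. This gives an $\mathcal A$-peripheral element $g$ that is weakly attracted to $[\Lambda']$ exactly when $[\Lambda'] \subseteq [\Lambda_2]$.
\begin{itemize}
\item Taking $[\Lambda'] = [\Lambda_1]$, the element $g$ is not weakly attracted to $[\Lambda_1]$. By \cref{Thm:NSS}(3), $g$ is elliptic in $T_{\{[\Lambda_1]\}}$, so $g$ is $\mathcal N[\Lambda_1]$-peripheral.
\item Since $g$ is also $\mathcal A$-peripheral, malnormality gives that $g$ lies in some $A \cap N$, i.e.\ $g$ is $\mathcal N$-peripheral.
\item By $\phi$-invariance of $\mathcal N$, every axis $[[\phi^n(g)]]$ lies in $\BB(\mathcal N)$.
\item Taking $[\Lambda'] = [\Lambda_2]$, the element $g$ is weakly attracted to $[\Lambda_2]$. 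Thus $[\Lambda_2]$ is contained in the set of accumulation points of $([[\phi^n(g)]])_n$.
\item Because $\BB(\mathcal N)$ is closed in the weak topology, those accumulation points lie in $\BB(\mathcal N)$, so $\mathcal N$ carries $[\Lambda_2]$.
\end{itemize}
If the lemma only yields attraction to one lamination $\Lambda_2$ in the orbit rather than the whole orbit, apply $\phi$-invariance of $\BB(\mathcal N)$ to get the rest of the orbit.

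\emph{($\Leftarrow$)} Suppose $\mathcal N$ carries $[\Lambda_2]$, and assume for contradiction that $[\Lambda_1] \subseteq [\Lambda_2]$.
\begin{itemize}
\item Then $\mathcal N$ carries $\Lambda_1$, so $[\Lambda_1] \in \mathcal L^+_{\phi|\mathcal N}$, treating $\mathcal N$ as the invariant system in place of $\mathcal A$.
\item Apply \cref{lem:approximategenline} to $\phi|\mathcal N$ and $[\Lambda_1]$, taking $[\Lambda'] = [\Lambda_1]$ itself. This produces an $\mathcal N$-peripheral element $g$ that is weakly attracted to $[\Lambda_1]$.
\item Every $\mathcal N$-peripheral element is $\mathcal N[\Lambda_1]$-peripheral, hence elliptic in $T_{\{[\Lambda_1]\}}$. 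By \cref{Thm:NSS}(3), $g$ is therefore not weakly attracted to $[\Lambda_1]$, a contradiction.
\end{itemize}

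The main point needing care is this last direction. It relies on \cref{lem:approximategenline} holding for an arbitrary $\phi$-invariant malnormal finite type system, here $\mathcal N$ rather than $\mathcal A$, which is exactly how that lemma is stated. The remaining ingredient is the routine fact that weak limits of lines carried by a malnormal finite type system remain carried, which is the closedness of $\BB(\mathcal N)$ noted in \S\ref{subsec:weakattraction}.
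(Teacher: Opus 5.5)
Your proposal is correct and follows the paper's proof essentially step for step. The forward direction combines \cref{lem:approximategenline} for $[\Lambda_2]$ with \cref{Thm:NSS}(3) and the fact that the carried line spaces are closed and $\phi$-invariant. The converse is the same contradiction obtained from \cref{lem:approximategenline}; the paper applies it to $\mathcal N[\Lambda_1]$ rather than to $\mathcal N[\Lambda_1|\mathcal A]$, which is an inessential difference.
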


\begin{proof}

Suppose $[\Lambda_1] \nsubseteq [\Lambda_2]$.
Let $T_1$ be the tree associated with the singleton $\{[\Lambda_1]\}$ given by \cref{Thm:NSS}. 
By \cref{lem:approximategenline}, there is an $\mathcal A$-peripheral element $g_2 \in \free$ that is weakly attracted to $[\Lambda_2]$ but not $[\Lambda_1]$ since $[\Lambda_1] \nsubseteq [\Lambda_2]$.
By \cref{Thm:NSS}(3), $g_2$ is elliptic in~$T_1$, i.e.~$\mathcal N[\Lambda_1]$ carries the axis $[[g_2]]$.
Since the $\mathcal A$-peripheral element $g_2$ is weakly attracted to $[\Lambda_2]$ and since $\BB(\mathcal A), \BB(\mathcal N[\Lambda_1]) \subseteq \BB(\free)$ are $\phi$-invariant closed subsets that contain $[[g_2]]$, we deduce that $\mathcal N[\Lambda_1|\mathcal A]$ carries $[\Lambda_2]$.

The converse follows from the claim that $\mathcal N[\Lambda_1]$ does not carry $[\Lambda_1]$.
Suppose, for a contradiction, that $\mathcal N[\Lambda_1]$ carries $[\Lambda_1]$, i.e.~$[\Lambda_1] \in \mathcal L^+_{\phi|\mathcal N[\Lambda_1]}$.
By \cref{lem:approximategenline}, some $\mathcal N[\Lambda_1]$-peripheral element $g_1 \in \free$ is weakly attracted to $[\Lambda_1]$, which contradicts the definition of $\mathcal N[\Lambda_1]$.
\end{proof}

As an application, we get $\mathcal L^+_{\phi|\mathcal N[\Lambda|\mathcal A]} = \mathcal L^+_{\phi|\mathcal A} \setminus \{ [\Lambda] \}$ when $[\Lambda] \in \mathcal L^+_{\phi|\mathcal A}$ is maximal. 
The next lemma is a key step in tying the poset of laminations with posets of subgroup systems.

\begin{lem}\label{lem:isoposetlaminonattracting}
For $[\Lambda_1], [\Lambda_2] \in \mathcal L^+_{\phi|\mathcal A}$, 
\begin{center}$[\Lambda_1] \subseteq [\Lambda_2]$ if and only if $\mathcal N[\Lambda_1|\mathcal A] \sqsubseteq  \mathcal N[\Lambda_2|\mathcal A]$.\end{center}
Equivalently, the collection $\mathcal N(\phi|\mathcal A)$ is naturally order-isomorphic to $\mathcal L^+_{\phi|\mathcal A}$.
\end{lem}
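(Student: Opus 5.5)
The plan is to prove the two implications of the displayed equivalence separately. The reverse implication reduces to \cref{lem:nonattractingcarry}. The forward implication comes from the monotonicity of weak attraction together with \cref{Thm:NSS}(3). The order-isomorphism then follows formally.

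\emph{Forward direction.} Suppose $[\Lambda_1] \subseteq [\Lambda_2]$. If $g \in \free$ is weakly attracted to $[\Lambda_2]$, then the accumulation set of $([[\phi^n(g)]])_n$ contains $[\Lambda_2] \supseteq [\Lambda_1]$, so $g$ is also weakly attracted to $[\Lambda_1]$. Contrapositively, every element that is not weakly attracted to $[\Lambda_1]$ is not weakly attracted to $[\Lambda_2]$. Let $T_1, T_2$ be the trees for the singletons $\{[\Lambda_1]\}, \{[\Lambda_2]\}$ given by \cref{Thm:NSS}. By \cref{Thm:NSS}(3), every element of every point stabilizer $N$ of $T_1$ is elliptic in $T_2$. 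By \cref{cor:NSS}(1), $N$ is finitely generated.

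Next I want $N$ itself to fix a point of $T_2$. For this I would invoke the standard Serre-type fact for non-nesting actions on topological trees: a finitely generated group in which every element is elliptic has a global fixed point. It suffices to check that pairwise products of generators are elliptic, and in $N$ they are. This gives $N \le N'$ for some $[N'] \in \mathcal N[\Lambda_2]$, i.e.\ $\mathcal N[\Lambda_1] \sqsubseteq \mathcal N[\Lambda_2]$. Taking meets with $\mathcal A$ preserves $\sqsubseteq$, so $\mathcal N[\Lambda_1|\mathcal A] \sqsubseteq \mathcal N[\Lambda_2|\mathcal A]$.

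\emph{Reverse direction.} Suppose $\mathcal N[\Lambda_1|\mathcal A] \sqsubseteq \mathcal N[\Lambda_2|\mathcal A]$ but $[\Lambda_1] \nsubseteq [\Lambda_2]$. By \cref{lem:nonattractingcarry}, $\mathcal N[\Lambda_1|\mathcal A]$ carries $[\Lambda_2]$. Since $\BB(\cdot)$ is monotone under $\sqsubseteq$, the system $\mathcal N[\Lambda_2|\mathcal A]$ also carries $[\Lambda_2]$. But \cref{lem:nonattractingcarry} applied with both orbits equal to $[\Lambda_2]$ says $\mathcal N[\Lambda_2|\mathcal A]$ does not carry $[\Lambda_2]$, because $[\Lambda_2] \subseteq [\Lambda_2]$. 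This is a contradiction.

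\emph{Order-isomorphism.} The map $[\Lambda] \mapsto \mathcal N[\Lambda|\mathcal A]$ is surjective onto $\mathcal N(\phi|\mathcal A)$ by definition. It is injective because equality of the systems gives mutual containment of the orbits by the equivalence, and inclusion of orbits is antisymmetric. Hence it is an order-isomorphism.

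The main obstacle I expect is the fixed-point step in the forward direction: upgrading ``every element of $N$ is elliptic'' to ``$N$ is elliptic'' in a topological tree with a non-nesting action rather than a metric tree. My first attempt would use the Helly-type property of fixed-point sets, which are subtrees, together with finite generation of $N$.

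If that proves awkward, I would fall back on a different route. Form $T_{\{[\Lambda_1],[\Lambda_2]\}}$ via \cref{Thm:NSS}. By (3) it has exactly the same elliptic elements as $T_1$. I would then argue that its stabilizer system $\mathcal N[\Lambda_1] \wedge \mathcal N[\Lambda_2]$ equals $\mathcal N[\Lambda_1]$, using that two malnormal finite-type systems with the same peripheral elements coincide.
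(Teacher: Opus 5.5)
Your proof is correct and follows the paper's own argument. The forward direction uses monotonicity of weak attraction together with \cref{Thm:NSS}(3), and the reverse direction uses \cref{lem:nonattractingcarry} applied to the pairs $([\Lambda_1],[\Lambda_2])$ and $([\Lambda_2],[\Lambda_2])$. The one difference is that the paper passes from ``every element elliptic in $T_1$ is elliptic in $T_2$'' to $\mathcal N[\Lambda_1] \sqsubseteq \mathcal N[\Lambda_2]$ with a bare ``equivalently,'' whereas you justify that step via finite generation (\cref{cor:NSS}(1)) and a Serre-type fixed-point lemma for non-nesting actions; this is a sound way to fill in the detail, and your fallback route is not needed.
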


\begin{proof}
For $i \in \{1,2\}$, let $T_i$ be the tree associated with singleton $\{[\Lambda_i]\}$ given by \cref{Thm:NSS}. 
An element $g \in \free$ is loxodromic in $T_i$ if and only if it is weakly attracted to $[\Lambda_i]$ by \cref{Thm:NSS}(3).
If $[\Lambda_1]\subseteq [\Lambda_2]$, then any element $g \in \free$ that is elliptic in $T_1$ is also elliptic in $T_2$,
or equivalently, $\mathcal N[\Lambda_1] \sqsubseteq  \mathcal N[\Lambda_2]$; therefore, $\mathcal N[\Lambda_1|\mathcal A] \sqsubseteq  \mathcal N[\Lambda_2|\mathcal A]$. 
Conversely, if $[\Lambda_1]\nsubseteq [\Lambda_2]$,
then $\mathcal N[\Lambda_1|\mathcal A]$ carries $[\Lambda_2]$ but $\mathcal N[\Lambda_2|\mathcal A]$ does not by \cref{lem:nonattractingcarry}; therefore, $\mathcal N[\Lambda_1|\mathcal A] \nsqsubseteq  \mathcal N[\Lambda_2|\mathcal A]$.
\end{proof}

\subsection{Supporting subgroup systems}\label{subsec:support_ss}

For $[\Lambda]\in \mathcal L_\phi^+$,  the \emph{support} of $[\Lambda]$ is the subset 
\[ \Lambda^\downarrow \defeq \left\{ [\Lambda'] \in \mathcal L_\phi^+\,:\, [\Lambda'] \subseteq [\Lambda] ~\text{as subsets of}~\BB(\free) \right\}, \]
and the \emph{exterior} of $[\Lambda]$ is the subset $\Lambda^\perp \defeq \mathcal L_\phi^+ \setminus \Lambda^\downarrow$. 
By \cref{lem:nonattractingcarry}, we get $\mathcal L^+_{\phi|\mathcal N(\Lambda^\perp)} = \Lambda^\downarrow$.
The \emph{supporting subgroup system} $\mathcal S[\Lambda]$ for~$[\Lambda]$ is the meet of all free factor systems of~$\mathcal N(\Lambda^\perp)$ that carry~$\Lambda^\downarrow$, which is the free factor system of~$\mathcal N(\Lambda^\perp)$ defined using \cref{lem:transitivityffs,lem:meet_ffs,lem:meet_ffs_is_ffs}.
By uniqueness of the meet and $\phi$-invariance of~$\Lambda^\perp$, $\mathcal S[\Lambda]$ is $\phi$-invariant.
Note that $\mathcal L^+_{\phi|\mathcal S[\Lambda]} = \Lambda^\downarrow$.
The \emph{supporting systems} for $\phi$ is the collection $\mathcal S(\phi) \defeq \left\{ \mathcal S[\Lambda]\,:\, [\Lambda]\in \mathcal L_\phi^+ \right\}$.

The next lemma is an analog of \cref{lem:isoposetlaminonattracting} for supporting systems.

\begin{lem}\label{lem:supporting_poset_isomorphism}
For $[\Lambda_1], [\Lambda_2] \in \mathcal{L}_\phi^+$, 
\begin{center}$[\Lambda_1] \subseteq [\Lambda_2]$ if and only if $\mathcal{S}
[\Lambda_1] \sqsubseteq \mathcal{S}
[\Lambda_2]$.\end{center}
Equivalently, the collection $\mathcal S(\phi)$ is naturally order-isomorphic to $\mathcal L^+_{\phi}$.
\end{lem}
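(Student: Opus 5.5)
We prove the two implications separately, using the laminations carried by each supporting system as the main invariant. The key fact, noted just before the statement, is that $\mathcal L^+_{\phi|\mathcal S[\Lambda]} = \Lambda^\downarrow$ for every $[\Lambda]\in\mathcal L^+_\phi$.

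\emph{The ``if'' direction.} Suppose $\mathcal S[\Lambda_1]\sqsubseteq\mathcal S[\Lambda_2]$.
\begin{enumerate}
\item Nesting of malnormal finite-type systems gives $\BB(\mathcal S[\Lambda_1])\subseteq\BB(\mathcal S[\Lambda_2])$.
\item Since $[\Lambda_1]\in\Lambda_1^\downarrow$, the system $\mathcal S[\Lambda_1]$ carries $[\Lambda_1]$, so $\mathcal S[\Lambda_2]$ carries it too.
\item Hence $[\Lambda_1]\in\mathcal L^+_{\phi|\mathcal S[\Lambda_2]}=\Lambda_2^\downarrow$, which means $[\Lambda_1]\subseteq[\Lambda_2]$.
\end{enumerate}

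\emph{The ``only if'' direction.} Suppose $[\Lambda_1]\subseteq[\Lambda_2]$.
\begin{enumerate}
\item By transitivity, $\Lambda_1^\downarrow\subseteq\Lambda_2^\downarrow$, so $\Lambda_2^\perp\subseteq\Lambda_1^\perp$.
\item The meet identity $\mathcal N(\mathcal L)=\bigwedge_{[\Lambda]\in\mathcal L}\mathcal N[\Lambda]$ then gives $\mathcal N(\Lambda_1^\perp)\sqsubseteq\mathcal N(\Lambda_2^\perp)$. In fact $\mathcal N(\Lambda_1^\perp)=\mathcal N(\Lambda_2^\perp)\wedge\mathcal N(\Lambda_1^\perp\setminus\Lambda_2^\perp)$.
\item Let $\mathcal F$ be any free factor system of $\mathcal N(\Lambda_2^\perp)$ carrying $\Lambda_2^\downarrow$. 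By \cref{lem:meet_ffs_is_ffs}, $\mathcal N(\Lambda_1^\perp)\wedge\mathcal F$ is a free factor system of $\mathcal N(\Lambda_1^\perp)$.
\item This system carries $\Lambda_1^\downarrow$. Indeed, $\mathcal F$ carries $\Lambda_2^\downarrow\supseteq\Lambda_1^\downarrow$, and $\mathcal N(\Lambda_1^\perp)$ carries $\Lambda_1^\downarrow$ because $\mathcal L^+_{\phi|\mathcal N(\Lambda_1^\perp)}=\Lambda_1^\downarrow$. Also $\BB(\mathcal A_1)\cap\BB(\mathcal A_2)=\BB(\mathcal A_1\wedge\mathcal A_2)$.
\item Therefore $\mathcal S[\Lambda_1]\sqsubseteq\mathcal N(\Lambda_1^\perp)\wedge\mathcal F\sqsubseteq\mathcal F$.
\item Taking $\mathcal F=\mathcal S[\Lambda_2]$ yields $\mathcal S[\Lambda_1]\sqsubseteq\mathcal S[\Lambda_2]$.
\end{enumerate}

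Step 5 uses that the meet of all free factor systems carrying $\Lambda_1^\downarrow$ lies below each such system. This holds because, by \cref{lem:meet_ffs}, that meet is realized by a finite meet.

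\emph{Order-isomorphism.} The ``if'' direction applied twice shows that $[\Lambda]\mapsto\mathcal S[\Lambda]$ is injective: equal supporting systems force $[\Lambda_1]\subseteq[\Lambda_2]$ and the reverse, and $\subseteq$ is a partial order on orbits. The equivalence therefore makes this map an order-isomorphism $\mathcal L^+_\phi\to\mathcal S(\phi)$.

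\emph{Expected obstacle.} The delicate step is the ``only if'' direction, specifically verifying that the meet with $\mathcal N(\Lambda_1^\perp)$ still carries all of $\Lambda_1^\downarrow$. This requires that carrying commutes with meets, which is exactly the identity $\BB(\mathcal A_1)\cap\BB(\mathcal A_2)=\BB(\mathcal A_1\wedge\mathcal A_2)$ for malnormal finite-type systems.
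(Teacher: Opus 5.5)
Your proposal is correct and follows the paper's proof essentially verbatim. Your ``if'' direction is the contrapositive of the paper's first paragraph (via $\mathcal L^+_{\phi|\mathcal S[\Lambda_i]} = \Lambda_i^\downarrow$), and your ``only if'' direction is the same argument: $\Lambda_2^\perp \subseteq \Lambda_1^\perp$, then \cref{lem:meet_ffs_is_ffs}, then minimality of $\mathcal S[\Lambda_1]$ applied to $\mathcal N(\Lambda_1^\perp)\wedge\mathcal S[\Lambda_2]$, with you only spelling out the carrying step through $\BB(\mathcal A_1)\cap\BB(\mathcal A_2)=\BB(\mathcal A_1\wedge\mathcal A_2)$ more explicitly than the paper does.
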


\begin{proof}
Suppose $[\Lambda_1] \nsubseteq [\Lambda_2]$. 
As $\mathcal L^+_{\phi|\mathcal S[\Lambda_i]} = \Lambda_i^\downarrow$ for $i \in \{1,2\}$, we know $[\Lambda_1]$ is carried by $\mathcal{S}[\Lambda_1]$ but not by $\mathcal{S}[\Lambda_2]$. 
This shows that $\mathcal{S}[\Lambda_1] \nsqsubseteq \mathcal{S}[\Lambda_2]$.
Conversely, suppose $[\Lambda_1] \subseteq [\Lambda_2]$. Then $\Lambda_1^\downarrow \subseteq \Lambda_2^\downarrow$ and $\Lambda_2^\perp \subseteq \Lambda_1^\perp$.
As $\Lambda_2^\perp \subseteq \Lambda_1^\perp$, we have $\mathcal N(\Lambda_1^\perp) \sqsubseteq \mathcal N(\Lambda_2^\perp)$.
As $\mathcal S[\Lambda_2]$ is a free factor system of $\mathcal N(\Lambda_2^\perp)$ that carries $\Lambda_2^\downarrow \supseteq \Lambda_1^\downarrow$,
the meet $\mathcal N(\Lambda_1^\perp) \wedge \mathcal S[\Lambda_2]$ is a free factor system of $\mathcal N(\Lambda_1^\perp)$
that carries $\Lambda_1^\downarrow$ by \Cref{lem:meet_ffs_is_ffs}.
By minimality of $\mathcal S[\Lambda_1]$ in $\mathcal N(\Lambda_1^\perp)$, 
\[ \mathcal S[\Lambda_1] \sqsubseteq \mathcal N(\Lambda_1^\perp) \wedge \mathcal S[\Lambda_2] \sqsubseteq \mathcal S[\Lambda_2]. \qedhere \]
\end{proof}

Our supporting subgroup system~$\mathcal S[\Lambda]$ is closely related to the supporting free factor systems defined by Bestvina--Feighn--Handel \cite[Definition 3.2.3]{BesFeiHan00}: 
For $\Lambda \in \mathcal{L}^+(\phi)$, let $\mathcal{F}(\Lambda)$ be the meet of all free factor systems of~$\free$ that carry $\Lambda$.
By uniqueness of the meet, $\mathcal F(\phi\Lambda) = \phi \mathcal F(\Lambda)$ for $\Lambda \in \mathcal{L}^+(\phi)$.
For $[\Lambda] \in \mathcal L^+_\phi$, we define $\mathcal{F}[\Lambda]$ to be the meet of all free factor systems of~$\free$ that support $\{ \mathcal F(\Lambda') : \Lambda' \in [\Lambda] \}$.
Note that $\mathcal{S}[\Lambda] \sqsubseteq \mathcal{F}[\Lambda]$, but they need not be equal in general as the following example shows. 

\begin{example}
Let $\phi$ be induced by a homeomorphism $f$ of a surface $\Sigma$ with boundary. Suppose the Thurston normal form for $f$ cuts $\Sigma$ into four $f$-invariant subsurfaces $\Sigma_0,\Sigma_1,\Sigma_2,\Sigma_3$, as shown in \Cref{fig:surface}, with attracting laminations $\Lambda_i$ supported on $\Sigma_i$ for $i=1,2,3$. 

\begin{figure}[ht]
    \centering
    \includegraphics[width=0.3\linewidth]{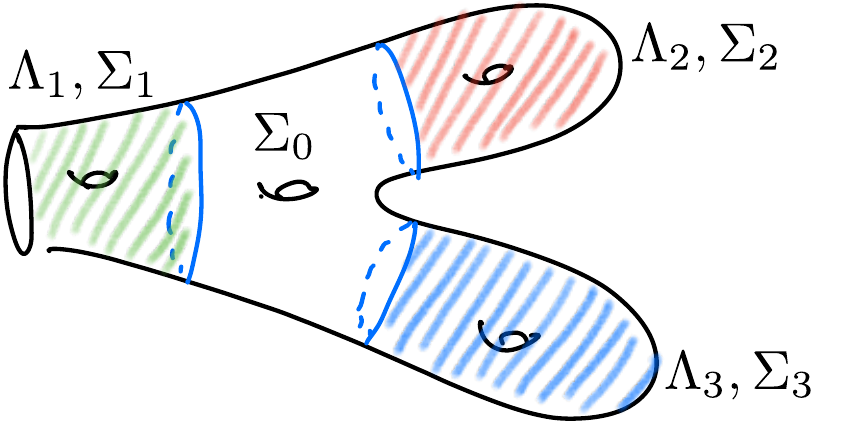}
  \caption{Illustrating support subgroup systems}
    \label{fig:surface}
\end{figure}

We obtain $\mathcal{S}[\Lambda_1]$ using the non-attracting subgroup systems to illustrate the definition of supporting subgroup systems. We have  $\Lambda_1^{\perp} = \{\Lambda_2, \Lambda_3\}$ and $\mathcal{N}[\Lambda_i] = \pi_1(\Sigma \setminus \Sigma_i)$ for $i=2,3$. Then $\mathcal{S}[\Lambda_1] = \pi_1(\Sigma_1)$ is the smallest free factor system of $\mathcal{N}[\Lambda_2] \wedge \mathcal{N}[\Lambda_3] = \pi_1(\Sigma_1 \cup \Sigma_0)$ that carries $\Lambda_1$. We see that $\mathcal{S}[\Lambda_1] = \pi_1(\Sigma_1) $ is a proper subgroup of $\mathcal{F}[\Lambda_1] = \pi_1(\Sigma)$, whereas $\mathcal{S}[\Lambda_i] = \mathcal{F}[\Lambda_i] = \pi_1(\Sigma_i)$ for $i=2,3$. 
\end{example}

Bestvina--Feighn--Handel used these free factor systems to give a natural bijection between $\mathcal L^+(\phi)$ and $\mathcal L^-(\phi) \defeq \mathcal L^+(\phi^{-1})$.
The latter is also referred to as the poset of \emph{repelling} laminations of $\phi$.
By \cite[Lemma 3.2.4]{BesFeiHan00}, for every $\Lambda_+ \in \mathcal{L}^+(\phi)$, there exists a unique $\Lambda_- \in \mathcal{L}^-(\phi)$ such that $\mathcal{F}(\Lambda_+) = \mathcal{F}(\Lambda_-)$.
We say $\Lambda_+$ and $\Lambda_-$ are \emph{paired}, and this defines the \emph{pairing bijection} between $\mathcal L^+(\phi)$ and $\mathcal L^-(\phi)$.
By uniqueness, the pairing bijection is equivariant with respect to the $\langle \phi \rangle$-action, and we get an induced pairing between $\mathcal L^+_\phi$ and $\mathcal L^-_\phi \defeq \mathcal L^+_{\phi^{-1}}$.

\begin{lem}[{see~\cite[Theorem~F]{HandelMosher20}}]\label{lem:non-attracting subgroup for repelling}
$\mathcal{N}[\Lambda_+] = \mathcal{N}[\Lambda_-]$ for paired lamination orbits $[\Lambda_{\pm}]$. 
\end{lem}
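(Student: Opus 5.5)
The plan is to use the characterization in \cref{Thm:NSS}(3): $\mathcal N[\Lambda_+]$ consists of the maximal subgroups all of whose elements are not weakly attracted to $[\Lambda_+]$ under $\phi$. Symmetrically, $\mathcal N[\Lambda_-]$ consists of the maximal subgroups whose elements are not weakly attracted to $[\Lambda_-]$ under $\phi^{-1}$. Since both are malnormal subgroup systems with finite type by \cref{cor:NSS}, and each is determined by its set of peripheral conjugacy classes (maximal elliptic subgroups in the respective trees $T_{\{[\Lambda_\pm]\}}$), it is enough to show the following. A conjugacy class $[g]$ fails to be weakly attracted to $[\Lambda_+]$ under $\phi$ if and only if it fails to be weakly attracted to $[\Lambda_-]$ under $\phi^{-1}$. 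This is exactly the Handel--Mosher weak attraction symmetry, and I would organize the argument around it.

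The first step is to pass to a rotationless power $\phi^m$, so that every lamination in $[\Lambda_\pm]$ is $\phi^m$-invariant. Weak attraction to the orbit $[\Lambda_+]$ under $\phi$ is the same as weak attraction to some $\Lambda'\in[\Lambda_+]$ under $\phi^m$, since the weak limits of $\phi^n[g]$ split into the $m$ residue classes. The pairing is $\langle\phi\rangle$-equivariant, so the problem reduces to an individual pair $\Lambda_+\leftrightarrow\Lambda_-$ for $\phi^m$. The second step is to choose a CT (completely split relative train track) representative of $\phi^m$ in which $\mathcal F(\Lambda_+)=\mathcal F(\Lambda_-)$ is realized by a filtration element $G_r$, where $H_r$ is the EG stratum of $\Lambda_+$. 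I would also choose one for $\phi^{-m}$ whose corresponding stratum carries $\Lambda_-$. I would then use the Handel--Mosher description of the nonattracting subgraph $Z$: it consists of the strata not mapping over $H_r$, together with the Nielsen paths and the connecting paths that could meet $H_r$ only in Nielsen pieces. Their theorem gives a characterization of $\mathcal N[\Lambda_+]$ as the subgroup system carried by $Z$ together with the closed Nielsen path $\rho_r$ when it exists. The third step is to check that the same description, read for $\phi^{-m}$, yields $\mathcal N[\Lambda_-]$. Both are described intrinsically: $\mathcal N[\Lambda_\pm]$ is the maximal subgroup system carrying no generic leaf of $\Lambda_\pm$ and all of whose lines have trivial intersection with $\mathcal F(\Lambda_\pm)$'s top stratum dynamics.

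The main obstacle is the third step, the actual symmetry. Nonattraction is defined dynamically in opposite time directions, so it must be converted into a time-symmetric algebraic condition. A periodic line could fail to be attracted forward yet be attracted backward only if it crosses $H_r$ in an essential non-Nielsen way. In that case complete splitting forces both forward and backward iterates to pick up long generic segments, the latter via the duality between $\Lambda_+$ and $\Lambda_-$ in the shared free factor system. I would first try to prove the following: if $[g]$ is not $\phi$-attracted to $\Lambda_+$, then $[g]$ is carried by the time-symmetric subgroup system above, hence by $\mathcal N[\Lambda_-]$. The reverse inclusion then follows by applying the same argument to $\phi^{-1}$. Failing a clean self-contained proof, the honest route is to cite \cite[Theorem~F]{HandelMosher20}, which establishes precisely that the nonattracting subgroup system depends only on the lamination pair.
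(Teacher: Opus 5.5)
\textbf{There is a genuine gap.} Your preliminary reductions are fine. By \cref{Thm:NSS}(3), applied to $\phi$ and to $\phi^{-1}$, it suffices to compare the two notions of nonattraction elementwise, and passing to a power to split the orbits is harmless. The problem is that the only step with real content, the time symmetry, is asserted rather than proved.

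A CT for $\phi^m$ controls forward iterates only. So your claim that a class crossing $H_r$ in a non-Nielsen way has backward iterates that ``pick up long generic segments'' of $\Lambda_-$ ``via the duality'' is exactly the statement to be established, and nothing in a forward CT delivers it. Reading the Handel--Mosher description of $Z$ and $\rho_r$ in a CT for $\phi^{-m}$ just restates their theorem for $\phi^{-1}$ on an unrelated marked graph. It gives no comparison with the $\phi^m$ side. Your ``intrinsic'' characterization is not a well-defined condition, and its first clause hides the crux. Showing that $\mathcal N[\Lambda_+]$, which carries no leaf of $\Lambda_+$, also carries no leaf of $\Lambda_-$ is (together with its mirror image) equivalent to the lemma. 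Falling back on \cite[Theorem~F]{HandelMosher20} attributes the statement; it does not prove it.

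\textbf{The missing idea} is to localize to where a symmetric characterization is available, and then globalize by a closure argument. This is the paper's route.
\begin{itemize}
\item Let $\mathcal F=\mathcal F[\Lambda_\pm]$ and choose a maximal $\phi$-invariant proper free factor system $\mathcal F'\sqsubsetneq\mathcal F$.
\item By \cite[Proposition~2.2]{Mutanguha22}, $\mathcal N[\Lambda_+|\mathcal F]$ (resp.\ $\mathcal N[\Lambda_-|\mathcal F]$) is the unique maximal malnormal subsystem of $\mathcal F$ whose peripheral elements grow polynomially relative to $\mathcal F'$ under $\phi$ (resp.\ $\phi^{-1}$).
\item Relative polynomial growth is unchanged by inverting $\phi$ \cite[Proposition~3.1]{Mutanguhaexistence}. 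Hence $\mathcal N[\Lambda_+|\mathcal F]=\mathcal N[\Lambda_-|\mathcal F]$.
\item Now argue by contradiction rather than elementwise. Suppose some $g$ is attracted to $[\Lambda_-]$ under $\phi^{-1}$ but not to $[\Lambda_+]$ under $\phi$. Then every $\phi^{-k}[[g]]$ lies in the closed $\phi$-invariant set $\BB(\mathcal N[\Lambda_+])$, so $[\Lambda_-]$ does too.
\item Since $\mathcal F$ carries $[\Lambda_-]$, the system $\mathcal N[\Lambda_-|\mathcal F]=\mathcal N[\Lambda_+|\mathcal F]$ carries $[\Lambda_-]$. This contradicts \cref{lem:approximategenline} for $\phi^{-1}$, as in the proof of \cref{lem:nonattractingcarry}.
\end{itemize}
The closure step converts one element's failure of attraction into a lamination-level statement that can be tested inside $\mathcal F$, where the symmetry holds. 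It also bypasses CTs altogether.
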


The lemma essentially is due to Handel--Mosher. We include a proof that uses splittings instead of the machinery of completely split relative train track maps.  

We start by defining relative growth. 
Suppose $\mathcal A$ is a $\phi$-invariant malnormal subgroup system of~$\free$ with finite type and $\mathcal F' \sqsubsetneq \mathcal A$ is a $\phi$-invariant proper free factor system.
Fix a free splitting~$\mathcal T$ of~$\mathcal A$ whose subgroup system of point stabilizers is~$\mathcal F'$.
The length~$\norm{[g]}_{\mathcal F'}$ of an $\mathcal A$-peripheral element of~$\free$ is the translation length of its conjugacy class in~$\mathcal T$.
An $\mathcal A$-peripheral element~$g$ has \emph{polynomial growth relative to~$\mathcal F'$} under $\phi$-iteration if the sequence $(\norm{\phi^n[g]}_{\mathcal F'})_n$ is bounded by a polynomial in~$n$.
The property is independent of the choice of the free splitting~$\mathcal T$, and we recover the original definition of polynomial growth when $\mathcal F' = \emptyset$.
The restriction~$\phi|\mathcal A$ is \emph{polynomially growing relative to~$\mathcal F'$} if every $\mathcal A$-peripheral element has polynomial growth relative to~$\mathcal F'$ under $\phi$-iteration.
For completeness, we insist~$\phi|\mathcal A$ is \emph{polynomially growing relative to~$\mathcal A$}.

\begin{proof}[Proof of \Cref{lem:non-attracting subgroup for repelling}]
    Suppose, for a contradiction, that $\mathcal{N}[\Lambda_+] \neq \mathcal{N}[\Lambda_-]$.
    Without loss of generality, we may assume some $g \in \free$ is  weakly attracted to $[\Lambda_-]$ (under backward $\phi$-iteration) but not $[\Lambda_+]$ (under forward $\phi$-iteration). 
    Then the $\phi$-invariant malnormal subgroup system $\mathcal N[\Lambda_+]$ carries~$[[g]]$.
    By $\phi$-invariance, $\mathcal N[\Lambda_+]$ carries each periodic line in the sequence $(\phi^{-k}[[g]])_{k \in \NN}$.
    Since the accumulation set of the sequence contains $[\Lambda_-]$, the closed subspace $\BB(\mathcal N[\Lambda_+]) \subseteq \BB(\free)$ contains~$[\Lambda_-]$.
    Let $\mathcal F \defeq \mathcal F[\Lambda_\pm]$ be the free factor system of~$\free$ defined above.
    Then $\mathcal N[\Lambda_+|\mathcal F]$ carries~$[\Lambda_-]$.
    However, $\mathcal N[\Lambda_+|\mathcal F] = \mathcal N[\Lambda_-|\mathcal F]$. 
    Indeed, choose a maximal $\phi$-invariant proper free factor system $\mathcal F' \sqsubsetneq \mathcal F$. By \cite[Proposition~2.2]{Mutanguha22}, $\mathcal N[\Lambda_+|\mathcal F]$ (resp. $\mathcal N[\Lambda_-|\mathcal F]$) is the unique maximum amongst malnormal subgroup systems $\mathcal H \sqsubseteq \mathcal F$ such that $\mathcal H$-peripheral elements of~$\free$ are polynomially growing relative to~$\mathcal F'$ under $\phi$-iteration (resp. under $\phi^{-1}$-iteration). 
    So $\mathcal F' \sqsubseteq \mathcal{N}[\Lambda_+|\mathcal F]$, and the restriction $\phi|\mathcal{N}[\Lambda_+|\mathcal F]$ is polynomially growing relative to $\mathcal F'$.
    By the equivalence of Conditions~($1{\Leftrightarrow}2$) in \cite[Proposition~3.1]{Mutanguhaexistence}, the restriction $\phi^{-1}|\mathcal{N}[\Lambda_+|\mathcal F]$ is polynomially growing relative to $\mathcal F'$ (see also \cite[Corollary~2.5]{JPPoly}).  
    Uniqueness and maximality of~$\mathcal N[\Lambda_-|\mathcal F]$ implies $\mathcal N[\Lambda_+|\mathcal F] \sqsubseteq \mathcal N[\Lambda_-|\mathcal F]$. 
    By symmetry, the two are equal, and $\mathcal N[\Lambda_-]$ carries $[\Lambda_-]$, which contradicts \cref{lem:approximategenline}.
\end{proof}

From this lemma and \Cref{lem:isoposetlaminonattracting}, we immediately get the following:
\begin{coro}\label{cor:isomposet pairing}
    The induced pairing between $\mathcal{L}^+_\phi$ and $\mathcal{L}^-_{\phi}$ is an order isomorphism. \qed
\end{coro}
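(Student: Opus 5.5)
The corollary asserts that the pairing map $\mathcal L^+_\phi \to \mathcal L^-_\phi$, $[\Lambda_+]\mapsto[\Lambda_-]$, is an order isomorphism. I will factor it through the nonattracting systems, using \cref{lem:isoposetlaminonattracting} twice and \cref{lem:non-attracting subgroup for repelling} once.

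\textbf{Step 1: bijectivity and equivariance.} By \cite[Lemma 3.2.4]{BesFeiHan00} the pairing is a bijection $\mathcal L^+(\phi)\to\mathcal L^-(\phi)$. It is $\langle\phi\rangle$-equivariant, because $\mathcal F(\phi\Lambda)=\phi\,\mathcal F(\Lambda)$ and the paired lamination is unique. Hence it descends to a bijection on orbits, $\mathcal L^+_\phi\to\mathcal L^-_\phi$.

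\textbf{Step 2: the two order isomorphisms.} Apply \cref{lem:isoposetlaminonattracting} with $\mathcal A=\{[\free]\}$ to $\phi$. This gives
\[
[\Lambda_1]\subseteq[\Lambda_2] \iff \mathcal N[\Lambda_1]\sqsubseteq\mathcal N[\Lambda_2] \quad\text{for } [\Lambda_1],[\Lambda_2]\in\mathcal L^+_\phi .
\]
Apply the same lemma to $\phi^{-1}$. Since $\mathcal L^-_\phi=\mathcal L^+_{\phi^{-1}}$, this gives the analogous equivalence for repelling orbits, where the nonattracting system is taken with respect to $\phi^{-1}$. This is exactly the system $\mathcal N[\Lambda_-]$ that appears in \cref{lem:non-attracting subgroup for repelling}.

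\textbf{Step 3: chaining.} Take $[\Lambda_{1,+}],[\Lambda_{2,+}]\in\mathcal L^+_\phi$ with paired orbits $[\Lambda_{1,-}],[\Lambda_{2,-}]\in\mathcal L^-_\phi$. Then
\[
[\Lambda_{1,+}]\subseteq[\Lambda_{2,+}] \iff \mathcal N[\Lambda_{1,+}]\sqsubseteq\mathcal N[\Lambda_{2,+}] \iff \mathcal N[\Lambda_{1,-}]\sqsubseteq\mathcal N[\Lambda_{2,-}] \iff [\Lambda_{1,-}]\subseteq[\Lambda_{2,-}].
\]
The first and last equivalences are Step 2. The middle one holds because \cref{lem:non-attracting subgroup for repelling} gives $\mathcal N[\Lambda_{i,+}]=\mathcal N[\Lambda_{i,-}]$ for each $i$. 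So the bijection of Step 1 preserves and reflects the order, i.e.\ it is an order isomorphism.

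The only point needing care is Step 2's identification: the nonattracting system of $[\Lambda_-]$ for $\phi^{-1}$ must be the same object as the $\mathcal N[\Lambda_-]$ in \cref{lem:non-attracting subgroup for repelling}. This holds by definition, since that lemma's proof treats $\mathcal N[\Lambda_-]$ via weak attraction under backward $\phi$-iteration. Nothing else is an obstacle.
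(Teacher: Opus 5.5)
Your proposal is correct and is essentially the paper's argument: the paper states the corollary as an immediate consequence of \cref{lem:non-attracting subgroup for repelling} together with \cref{lem:isoposetlaminonattracting}, the latter applied with $\mathcal A=\{[\free]\}$ to both $\phi$ and $\phi^{-1}$ and chained exactly as in your Step 3. Your check that the $\mathcal N[\Lambda_-]$ of \cref{lem:non-attracting subgroup for repelling} is the nonattracting system for $\phi^{-1}$ is the only bookkeeping the argument requires.
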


Finally, we show that paired lamination orbits have the same supporting subgroup systems.

\begin{coro}\label{cor:suppporting subgroup for repelling}
$\mathcal{S}[\Lambda_+] = \mathcal{S}[\Lambda_-]$ for paired lamination orbits $[\Lambda_{\pm}]$. 
\end{coro}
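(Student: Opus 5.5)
The plan is to unwind the definition of $\mathcal S[\Lambda_\pm]$ and check that every ingredient is invariant under the pairing. By definition, $\mathcal S[\Lambda_+]$ is the meet of all free factor systems of $\mathcal N(\Lambda_+^\perp)$ that carry $\Lambda_+^\downarrow$. Here $\Lambda_+^\downarrow,\Lambda_+^\perp$ are computed in $\mathcal L^+_\phi$. Likewise $\mathcal S[\Lambda_-]$ is built from $\Lambda_-^\downarrow,\Lambda_-^\perp$ computed in $\mathcal L^-_\phi=\mathcal L^+_{\phi^{-1}}$, with nonattracting systems taken for $\phi^{-1}$.

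\emph{Step 1: the ambient systems agree.} By \cref{cor:isomposet pairing}, the pairing $\mathcal L^+_\phi\to\mathcal L^-_\phi$ is an order isomorphism. It therefore carries $\Lambda_+^\downarrow$ onto $\Lambda_-^\downarrow$ and $\Lambda_+^\perp$ onto $\Lambda_-^\perp$. Recall that $\mathcal N(\mathcal L)=\bigwedge\{\mathcal N[\Lambda']:[\Lambda']\in\mathcal L\}$. Applying \cref{lem:non-attracting subgroup for repelling} to each paired pair $[\Lambda'_\pm]$ gives $\mathcal N[\Lambda'_+]=\mathcal N[\Lambda'_-]$ termwise, hence $\mathcal N(\Lambda_+^\perp)=\mathcal N(\Lambda_-^\perp)=:\mathcal N$. (If $\Lambda^\perp$ is empty, both sides are $\free$.)

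\emph{Step 2: the same free factor systems of $\mathcal N$ carry the two supports.} Let $\mathcal F$ be a free factor system of $\mathcal N$. I would show that $\mathcal F$ carries $\Lambda_+^\downarrow$ if and only if it carries $\Lambda_-^\downarrow$. By symmetry (replacing $\phi$ by $\phi^{-1}$), it suffices to treat one direction. Since $\mathcal S[\Lambda_+]$ is $\phi$-invariant, I may assume $\mathcal F$ is $\phi$-invariant: replace $\mathcal F$ by its meet with $\mathcal S[\Lambda_+]$ via \cref{lem:meet_ffs_is_ffs}. It then suffices to show that the single $\phi$-invariant free factor system $\mathcal S[\Lambda_+]$ of $\mathcal N$ carries each $[\Lambda'_-]$ with $[\Lambda'_-]\in\Lambda_-^\downarrow$. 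In that case $\mathcal S[\Lambda_-]\sqsubseteq\mathcal S[\Lambda_+]$ by minimality, and symmetry gives equality.

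To carry $[\Lambda'_-]$, I would mimic the argument in the proof of \cref{lem:non-attracting subgroup for repelling}, using the Bestvina--Feighn--Handel pairing through $\mathcal F(\Lambda'_+)=\mathcal F(\Lambda'_-)$. Since $\mathcal S[\Lambda_+]$ is a free factor system of $\mathcal N$, it equals $\mathcal N\wedge\mathcal F_0$ for a free factor system $\mathcal F_0$ of $\free$, obtained by extending the free splitting of $\mathcal N$. The plan is to show $\mathcal F_0$ may be chosen to carry $\Lambda'_+$, hence $\mathcal F(\Lambda'_+)=\mathcal F(\Lambda'_-)$, so $\mathcal F_0$ carries $\Lambda'_-$. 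Separately, $\mathcal N=\mathcal N(\Lambda_-^\perp)$ carries $[\Lambda'_-]$ by \cref{lem:nonattractingcarry}, applied to $\phi^{-1}$ for each element of $\Lambda_-^\perp$, none of which contains $[\Lambda'_-]$. Since $\BB(\mathcal N)\cap\BB(\mathcal F_0)=\BB(\mathcal N\wedge\mathcal F_0)$, it follows that $\mathcal S[\Lambda_+]$ carries $[\Lambda'_-]$.

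\emph{Main obstacle.} The hard step is the passage between free factor systems of the malnormal system $\mathcal N$ and free factor systems of $\free$. A free splitting of $\mathcal N$ need not extend to one of $\free$, so $\mathcal S[\Lambda_+]$ need not be $\mathcal N\wedge\mathcal F_0$ for some free factor system $\mathcal F_0$ of $\free$. If that fails, I would instead work intrinsically inside each $[A]\in\mathcal N$. Restricting $\phi$ to $A$ via first return, the laminations in $\Lambda^\downarrow_\pm$ are attracting and repelling laminations of that restriction. I would then apply the Bestvina--Feighn--Handel pairing for the restricted outer automorphism, where supporting free factor systems of $A$ are genuinely free factor systems. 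This requires checking that the restricted pairing agrees with the global one; I would verify this by the nonattracting-system characterization in \cref{lem:non-attracting subgroup for repelling}.
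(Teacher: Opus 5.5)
Your Step 1 and the reduction in Step 2 match the paper's structure exactly: show that $\mathcal S[\Lambda_+]$ carries $\Lambda_-^\downarrow$, then conclude by minimality of $\mathcal S[\Lambda_-]$ and symmetry. However, your first mechanism for the carrying step does not merely risk failing. It fails in the paper's own surface example. There $\mathcal N=\mathcal N(\Lambda_1^\perp)=\pi_1(\Sigma_1\cup\Sigma_0)$ and $\mathcal S[\Lambda_1]=\pi_1(\Sigma_1)$. But $\mathcal F[\Lambda_1]=\pi_1(\Sigma)$, so the only free factor system $\mathcal F_0$ of $\free$ carrying $\Lambda_1$ is $\{[\free]\}$, and $\mathcal N\wedge\mathcal F_0=\mathcal N\neq\mathcal S[\Lambda_1]$. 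The global Bestvina--Feighn--Handel pairing through $\mathcal F(\Lambda'_+)=\mathcal F(\Lambda'_-)$ therefore cannot detect $\mathcal S[\Lambda_+]$. Your fallback is not optional; it is the actual proof.

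The fallback works, and it is simpler than you expect: you do not need the restricted pairing to agree with the global one.
\begin{enumerate}
\item Fix $[A]\in\mathcal N$ with first return $\psi_A=\phi^k|A$. Bestvina--Feighn--Handel's Lemma~3.2.4 applied to $\psi_A^{-1}$ shows that every repelling lamination $\Lambda''_-$ of $\psi_A$ has \emph{some} attracting lamination $\Lambda''_+$ of $\psi_A$ with the same minimal free factor system in $A$.
\item $\mathcal S[\Lambda_+]|A$ is a free factor system of $A$: restrict the free splitting of $\mathcal N$ to its $A$-tree.
\item $\mathcal S[\Lambda_+]|A$ carries every attracting lamination of $\psi_A$. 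These laminations lie in $\mathcal L^+(\phi|\mathcal N)$, whose orbits form $\Lambda_+^\downarrow$.
\item Hence $\mathcal S[\Lambda_+]|A$ supports the minimal free factor system of $\Lambda''_+$, which equals that of $\Lambda''_-$, so it carries $\Lambda''_-$.
\item Each $[\Lambda'_-]\in\Lambda_-^\downarrow$ is carried by $\mathcal N$, as you note via \cref{lem:nonattractingcarry} for $\phi^{-1}$. So some representative is a repelling lamination of some $\psi_A$, and you are done.
\end{enumerate}

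The paper encodes the same symmetry through relative growth. $\mathcal S[\Lambda_+]$ carries $\mathcal L^+_{\phi|\mathcal N}$ if and only if $\phi|\mathcal N$ is polynomially growing relative to $\mathcal S[\Lambda_+]$. By Mutanguha's Proposition~3.1, this holds if and only if the same is true for $\phi^{-1}|\mathcal N$, which means $\mathcal S[\Lambda_+]$ carries the repelling laminations. That route avoids componentwise bookkeeping: first-return maps, and identifying laminations of $\psi_A$ with laminations of $\phi$ carried by $A$. Your route instead needs only the classical pairing lemma, applied inside each component of $\mathcal N$, where supporting free factor systems really are free factor systems of a free group.
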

\begin{proof}
    By \Cref{lem:non-attracting subgroup for repelling,cor:isomposet pairing}, we have 
    $\mathcal N \defeq \mathcal N(\Lambda_{+}^{\perp}) = \mathcal N(\Lambda_{-}^{\perp})$.
    As $\mathcal L^\pm_{\phi|\mathcal N} = \Lambda_{\pm}^{\downarrow}$ (\Cref{lem:nonattractingcarry}), the supporting subgroup system $\mathcal S[\Lambda_+]$ (resp.~$\mathcal S[\Lambda_-])$ is the minimal free factor system of $\mathcal N$ that carries $\mathcal L^+_{\phi|\mathcal N}$ (resp.~$\mathcal L^-_{\phi|\mathcal N}$).
    This is equivalent
    to stating that~$\phi|\mathcal N$ is polynomially growing relative to $\mathcal S[\Lambda_+]$, and so is $\phi^{-1}|\mathcal N$ as in the previous proof; 
    therefore, $\mathcal S[\Lambda_+]$ carries $\mathcal L^-_{\phi|\mathcal N}$.
    By minimality, $\mathcal S[\Lambda_-] \sqsubseteq \mathcal S[\Lambda_+]$. 
    Similarly, $\mathcal S[\Lambda_+] \sqsubseteq \mathcal S[\Lambda_-]$.
\end{proof}

%%%%%%%%%%%%%%%%%%%%%%%%%%%%
\section{Invariant malnormal subgroup systems of \texorpdfstring{$\free$}{the free group}}\label{sec:invariantmalnormal_ss}

Let $\phi \in \Out(\free)$.
In this section, we relate $\phi$-invariant proper malnormal subgroup systems supporting the polynomial subgroup system~$\mathcal P(\phi)$ with nonattracting subgroup systems of~$\phi$. 
Suppose $\mathcal{A}$ is a $\phi$-invariant malnormal subgroup system of $\free$ with finite type.  
We start with the case of $\phi$-invariant proper free factor systems of~$\mathcal A$.

\begin{lem}\label{Lem:freefactorsystemnonattracting}

Suppose that $\phi|\mathcal{A}$ does not preserve a free splitting of $\mathcal{A}$.
If $\mathcal{F} \sqsubsetneq \mathcal A$ is a $\phi$-invariant proper free factor system, then $\mathcal{F}\sqsubseteq \mathcal{N}[\Lambda|\mathcal A]$ for some $[\Lambda] \in \mathcal{L}^+_{\phi|\mathcal{A}}$. 
\end{lem}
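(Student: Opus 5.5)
The plan is to argue by contradiction, assuming $\mathcal F \not\sqsubseteq \mathcal N[\Lambda|\mathcal A]$ for every $[\Lambda] \in \mathcal L^+_{\phi|\mathcal A}$, and then build a $\phi|\mathcal A$-invariant free splitting of $\mathcal A$.

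First, dispose of the degenerate case. If $\mathcal L^+_{\phi|\mathcal A}$ is empty, then $\phi|\mathcal A$ is polynomially growing. Since $\mathcal F$ is a $\phi$-invariant proper free factor system, I expect a nontrivial free splitting of $\mathcal A$ with vertex system $\mathcal F$ to be $\phi|\mathcal A$-invariant after passing to an appropriate refinement. Rather than rely on that, I would aim for a uniform argument via relative growth. Choose a $\phi$-invariant proper free factor system $\mathcal F' \sqsubsetneq \mathcal A$ that supports $\mathcal F$ and is maximal among $\phi$-invariant proper free factor systems of $\mathcal A$; maximal nested chains stabilize by the complexity argument of \cref{lem:meet_ffs}. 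There are then two cases.

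\emph{Case 1: $\phi|\mathcal A$ is polynomially growing relative to $\mathcal F'$.} Here I would use the relative theory (as in \cite[Proposition~3.1]{Mutanguhaexistence} and \cite{Mutanguha22}). By maximality of $\mathcal F'$, the top ``stratum'' of $\mathcal A$ relative to $\mathcal F'$ is polynomially growing. One then expects a $\phi|\mathcal A$-invariant free splitting of $\mathcal A$ relative to $\mathcal F'$, for instance the one-edge splitting coming from a linear or nongrowing top stratum in a relative train track. This contradicts the hypothesis, so Case 1 cannot occur.

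\emph{Case 2: $\phi|\mathcal A$ is not polynomially growing relative to $\mathcal F'$.} Then some lamination $[\Lambda] \in \mathcal L^+_{\phi|\mathcal A}$ is not carried by $\mathcal F'$; it is the lamination of the exponentially growing top stratum. By \cite[Proposition~2.2]{Mutanguha22}, already used in the proof of \cref{lem:non-attracting subgroup for repelling}, $\mathcal N[\Lambda|\mathcal A]$ is the maximum malnormal subgroup system of $\mathcal A$ whose peripheral elements grow polynomially relative to $\mathcal F'$. That result is stated there for the free factor system $\mathcal F[\Lambda]$, so it must be adapted to $\mathcal A$. Every $\mathcal F'$-peripheral element has length $0$ in the splitting, so it grows polynomially relative to $\mathcal F'$. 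Hence $\mathcal F \sqsubseteq \mathcal F' \sqsubseteq \mathcal N[\Lambda|\mathcal A]$, and this case gives the lemma directly.

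I expect the main obstacle to be Case 1: turning polynomial growth relative to a maximal invariant free factor system into an invariant free splitting of $\mathcal A$ itself. This is where the hypothesis on $\phi|\mathcal A$ is used. I would first try to import the corresponding statement from \cite{Mutanguhaexistence}, namely that a relatively polynomially growing restriction admits an invariant free splitting relative to $\mathcal F'$ when $\mathcal F'$ is proper. A secondary point to check is that \cite[Proposition~2.2]{Mutanguha22} applies to the invariant subgroup system $\mathcal A$ rather than to a free factor system; the restriction conventions $\mathcal N[\Lambda|\mathcal A] = \mathcal A \wedge \mathcal N[\Lambda]$ should make this routine.
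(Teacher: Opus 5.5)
Your outline matches the paper's: enlarge $\mathcal F$ to a maximal $\phi$-invariant proper free factor system $\mathcal F'$ of $\mathcal A$, then split on whether the top stratum above $\mathcal F'$ is exponentially growing. The gap is your Case~1. You leave it as an expectation, yet it is the only place the hypothesis is used, and the splitting you suggest there is not the right one. Here is the missing argument. Pick $[A]\in\mathcal A$ with $[A]\notin\mathcal F'$. By maximality, $\phi^k|A$ is irreducible relative to $\mathcal F'|A$. So a relative train track $f\colon\Gamma\to\Gamma$ for $\phi^k|A$, whose invariant subgraph $\Gamma_1$ induces $\mathcal F'|A$, has a single top stratum $H_0$, which is either EG or NEG. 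Suppose $H_0$ is NEG. Then $f$ sends each edge of $H_0$ to an edge of $H_0$, up to paths in $\Gamma_1$. Collapsing each component of $\Gamma_1$ therefore gives a $\phi^k|A$-invariant graph of groups with trivial edge groups. Its Bass--Serre tree is a free splitting of $A$ preserved by $\phi^k|A$. Translate it along the orbit of $[A]$ and use trivial trees on the remaining classes. This gives a splitting of $\mathcal A$ preserved by $\phi|\mathcal A$ in the sense of Definition~\ref{def:restrictions}, a contradiction. You must keep all edges of $H_0$. A one-edge splitting is not invariant when $f$ cyclically permutes several $H_0$-edges, and you cannot pass to a power because the hypothesis concerns $\phi|\mathcal A$ itself. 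No import from \cite{Mutanguhaexistence} is needed.

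Your Case~2 is correct but heavier than necessary; you do not need to adapt \cite[Proposition~2.2]{Mutanguha22} to $\mathcal A$. If $H_0$ is EG, \cite[Lemma~3.1.9]{BesFeiHan00} gives $[\Lambda]\in\mathcal L^+_{\phi|\mathcal A}$ not carried by $\Gamma_1$, hence not carried by $\mathcal F'$. The set $\BB(\mathcal F')$ is closed and $\phi$-invariant. So for any $\mathcal F'$-peripheral $g$, all the lines $\phi^n[[g]]$ and their accumulation points lie in $\BB(\mathcal F')$. Hence $g$ is not weakly attracted to $[\Lambda]$, and $\mathcal F\sqsubseteq\mathcal F'\sqsubseteq\mathcal N[\Lambda|\mathcal A]$. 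Your relative-growth phrasing also still has to be translated back into the EG/NEG dichotomy for $H_0$. An EG top stratum yields a conjugacy class growing exponentially relative to $\mathcal F'$, while an NEG one gives bounded relative length. It is simpler to split on the type of $H_0$ from the start, which is what the paper does.
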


\begin{proof}
We may suppose that $\mathcal{F} \sqsubsetneq \mathcal A$ is a maximal $\phi$-invariant proper free factor system. 
Equivalently, for every $[A] \in \mathcal{A}$, the outer automorphism $\phi^k|A$ is irreducible relative to~$\mathcal F | A$.
Choose $[A] \in \mathcal{A}$ such that $[A] \notin \mathcal{F}$.
Let $f \colon \Gamma \to \Gamma$ be a relative train track map for $\phi^k|A$ equipped with an $f$-invariant subgraph that induces the proper free factor system $\mathcal{F}|A$ of $A$ (see \Cref{app:A}).
We claim that the top stratum~$H_0$ of~$f$ is an EG stratum. Indeed, suppose towards a contradiction that $H_0$ is NEG.
Then collapsing each connected component of $\Gamma_1 = \overline{\Gamma\setminus H_0}$ to a point induces a $\phi^k|A$-invariant graph of groups $\mathbb{X}$ of $A$ with trivial edge groups. By Bass--Serre theory, $\mathbb{X}$ is covered by a $\phi^k|A$-invariant free splitting of~$A$. Since $\phi|\mathcal{A}$ does not preserve a free splitting of~$\mathcal{A}$, 
we have a contradiction. Hence, $H_0$ is an EG stratum.

By~\cite[Lemma~3.1.9]{BesFeiHan00}, some $\langle \phi^k|A \rangle$-orbit of attracting laminations $[\Lambda] \in \mathcal{L}^+_{\phi^k|A}$ is not carried by $\Gamma_{1}$. The attracting lamination $\Lambda$ for $\phi^k|A$ is also an attracting lamination for $\phi|\mathcal{A}$. 
We abuse notation and denote its $\langle \phi\rangle$-orbit by $[\Lambda] \in \mathcal L^+_{\phi|\mathcal A}$ as well.
By construction, no $\mathcal{F}$-peripheral conjugacy class $[g]$ is weakly attracted to $[\Lambda]$, i.e.~$\mathcal{F}\sqsubseteq \mathcal{N}[\Lambda]$;
thus, $\mathcal F \sqsubseteq \mathcal N[\Lambda|\mathcal A]$.
\end{proof}

Let $\mathcal{N} \sqsubsetneq \mathcal A$ be a proper malnormal subgroup system with finite type and $\mathcal{N}=\mathcal{P}_\mathcal{N} \sqcup \mathcal{H}_\mathcal{N}$ be the partition into conjugacy classes of noncyclic and cyclic subgroups respectively. Note that $\mathcal{P}_\mathcal{N}$ is still a proper malnormal subgroup system with finite type and, for every $[A] \in \mathcal{A}$, the group $A$ is hyperbolic relative to $\mathcal{P}_\mathcal{N} | A$;
therefore, by \cref{Thm:JSJ}, the JSJ forest $\mathcal T_{\mathcal N}$ for $(\mathcal{A}, \mathcal N)$ exists if $\mathcal{A}$ is one-ended relative to $\mathcal{N}$.

\begin{lem}\label{lem:norigidvertex}

Suppose $\mathcal{N} \sqsubsetneq \mathcal A$ is a $\phi$-invariant proper malnormal subgroup system with finite type such that $\mathcal{P}(\phi|\mathcal A) \sqsubseteq \mathcal{N}$. 
If $\mathcal{A}$ is one-ended relative to $\mathcal{N}$, then the JSJ forest $\mathcal T_\mathcal{N}$ has no rigid vertices.
\end{lem}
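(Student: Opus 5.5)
Suppose for contradiction that some $[A]\in\mathcal A$ has JSJ tree $T^A_{\mathcal N}$ with a rigid vertex $v$, and let $V=\Stab(v)$. Let $k$ be the size of the $\langle\phi\rangle$-orbit of $[A]$ and $\psi=\phi^k|A$. Since $\mathcal N$ is $\phi$-invariant and malnormal, $\mathcal N|A$ is $\psi$-invariant. Hence $\psi\in\Out(A,\mathcal N|A)$ and preserves $T^A_{\mathcal N}$ by \cref{Thm:JSJ}(2). After replacing $\psi$ by a power $\psi^m$, we get $\psi^m\in\Out^0(A,\mathcal N|A)$, so it fixes the orbit of $v$ in the quotient graph and induces an outer class $\psi_V\in\Out(V)$. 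By \cref{Thm:JSJ}(3b), the image of $\Out^0$ in $\Out(V)$ is finite, so after a further power $\psi_V$ is trivial in $\Out(V)$. Concretely, some representative $\Psi'$ of $\phi^{km}$ (composed with conjugation by a suitable element of $\free$) restricts to the identity on $V$.

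Every element of $V$ is therefore fixed by a representative of a power of $\phi$. Its conjugacy class is then $\phi^{km}$-periodic, so it has bounded (hence polynomial) growth under $\phi$-iteration. Thus $V$ consists entirely of polynomially growing elements. By Levitt's characterization of $\mathcal P(\phi)$ as the maximal such subgroups, $V$ is contained in a representative of some class in $\mathcal P(\phi)$. Since $V\le A$, we get $V\le A\cap P$, i.e. $V$ is carried by $\mathcal P(\phi|\mathcal A)=\mathcal A\wedge\mathcal P(\phi)$. The hypothesis $\mathcal P(\phi|\mathcal A)\sqsubseteq\mathcal N$ then makes $V$ $\mathcal N|A$-peripheral, hence elementary.

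The final step is to contradict rigidity. By definition, the trichotomy in \cref{Thm:JSJ}(3) is exclusive, and a rigid vertex cannot have an elementary stabilizer, since elementary vertices are exactly those satisfying (3a). This is a contradiction, so $\mathcal T_{\mathcal N}$ has no rigid vertices.

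The main obstacle is making the second paragraph rigorous. First, one has to check that the trivial outer class on $V$ really yields an honest automorphism in the $\phi$-power class that fixes $V$ pointwise, so that each $g\in V$ satisfies $\phi^{km}[g]=[g]$. Second, one has to check that a subgroup all of whose elements grow polynomially lies in a single class of $\mathcal P(\phi)$; this is Levitt's maximality statement \cite[Proposition 1.4]{Levitt09}, applied to the finitely generated subgroup $V$. The degenerate case where $T^A_{\mathcal N}$ is a point, with $v$ the whole of $A$, is handled by the same argument: it would force $A$ itself to be carried by $\mathcal N$, contradicting that $A$ is not $\mathcal N$-peripheral when $[A]\notin\mathcal N$.
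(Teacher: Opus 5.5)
Your proposal is correct and follows essentially the same route as the paper. The paper's argument runs as follows: \cref{Thm:JSJ}(2) gives a $\Psi$-equivariant isometry of $T^A_{\mathcal N}$ for a representative $\Psi$ of $\phi^k|A$; one passes to a power fixing the orbit of $v$, and the finite image in $\Out(\Stab(v))$ from (3b) yields a representative of a power of $\phi$ acting on $\Stab(v)$ by an inner automorphism. This gives $\Stab(v)\le P$ for some $[P]\in\mathcal P(\phi|\mathcal A)\sqsubseteq\mathcal N$, contradicting that rigid vertex stabilizers are nonelementary. Your extra care about upgrading elementwise polynomial growth to containment in a single class of $\mathcal P(\phi)$ just makes explicit a step the paper states without comment. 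Your exponent should read $\phi^{kmn}$ rather than $\phi^{km}$ once the further power $n$ is taken, but this is cosmetic.
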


\begin{proof}
Suppose towards a contradiction that there exists $[A] \in \mathcal{A}$ such that $T_{\mathcal N}^A$ has a rigid vertex $v$. 
Pick an automorphism $\Psi \in \phi^k|A$.
Then there is a $\Psi$-equivariant isometry~$\iota$ of $T_{\mathcal N}^A$ by \cref{Thm:JSJ}(2); moreover, $\iota^m$ preserves the orbits of vertices of $T^A_{\mathcal N}$ for some $m \ge 1$. Thus, we have a natural homomorphism $\langle (\phi^k|A)^m \rangle \to \Out(\Stab(v))$ whose image is finite by \cref{Thm:JSJ}(3b). Therefore, for some $g \in \free$ and $n \ge 1$, $\Psi^{mn}(\Stab(v))=g^{-1}\Stab(v)g$ and $(\iota_g \circ \Psi^{mn})|\Stab(v) \in \Inn(\Stab(v))$, where $\iota_g \colon \free \to \free$ is left-conjugation by $g$. This shows that $\Stab(v) \le P$ for some  $[P] \in \mathcal{P}(\phi|\mathcal A) \sqsubseteq  \mathcal{N}$. Hence, $\Stab(v)$ is elementary. This contradicts the fact that $\Stab(v)$ is nonelementary for a rigid vertex.
\end{proof}

We now prove the main proposition of this section. 

\begin{prop}\label{Prop:invariantmalnormal}
Let $\phi \in \Out(\free)$ and $\mathcal{A}$ be a $\phi$-invariant malnormal subgroup system of $\free$ with finite type.  
Suppose that $\phi|\mathcal{A}$ does not preserve a free splitting of $\mathcal{A}$.
If $\mathcal{N} \sqsubsetneq \mathcal A$ is a $\phi$-invariant proper malnormal subgroup system with finite type such that $\mathcal{P}(\phi|\mathcal A) \sqsubseteq \mathcal{N}$, then $\mathcal{N} \sqsubseteq  \mathcal{N}[\Lambda|\mathcal A]$ for some $[\Lambda] \in \mathcal{L}^+_{\phi|\mathcal{A}}$. 
\end{prop}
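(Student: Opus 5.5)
Our approach splits according to whether $\mathcal A$ is one-ended relative to $\mathcal N$. If it is not, there is a free splitting of $(\mathcal A,\mathcal N)$. Let $\mathcal F$ be the meet of all proper free factor systems of $\mathcal A$ that support $\mathcal N$; this meet is obtained from \cref{lem:meet_ffs} together with \cref{lem:transitivityffs,lem:meet_ffs_is_ffs}. The system $\mathcal F$ is canonical, so it is $\phi$-invariant because $\mathcal N$ is. It is proper, since a free splitting of $(\mathcal A,\mathcal N)$ yields a proper free factor system supporting $\mathcal N$. Moreover, $\mathcal N\sqsubseteq\mathcal F$. \Cref{Lem:freefactorsystemnonattracting} then gives $[\Lambda]$ with $\mathcal N\sqsubseteq\mathcal F\sqsubseteq\mathcal N[\Lambda|\mathcal A]$, which settles this case. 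One point needs care: the minimal free factor system supporting $\mathcal N$ must be proper. I would handle this by noting that each $\mathcal N$-peripheral subgroup is elliptic in some nontrivial free splitting, and then taking the meet over all such splittings. Here elements of $\mathcal N$ lying in $[A]\in\mathcal A$ with $[A]\in\mathcal N$ are automatically handled, since those components are elliptic.

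Now suppose $\mathcal A$ is one-ended relative to $\mathcal N$, and consider the JSJ forest $\mathcal T_{\mathcal N}$. By \cref{lem:norigidvertex} it has no rigid vertices, so every nonelementary vertex is flexible, that is, a surface vertex by \cref{Thm:JSJ}(3c). By \cref{Thm:JSJ}(2) the forest is $\phi|\mathcal A$-invariant, since $\mathcal N$ is $\phi$-invariant. Pass to a power $\phi^{km}$ that fixes each vertex orbit of the quotient graphs. Each surface vertex $v$ then carries an induced mapping class of the compact surface $\Sigma_v$, relative to its boundary.

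The goal is to locate a lamination $\Lambda$ that does not attract any $\mathcal N$-peripheral element. If some surface piece carries a pseudo-Anosov component of the Thurston normal form of its induced mapping class, I would take $\Lambda$ to be the corresponding attracting lamination of $\phi|\mathcal A$. I would then argue that the $\mathcal N$-peripheral subgroups are elliptic in the JSJ tree and meet $\Stab(v)$ only in boundary subgroups. Consequently, no $\mathcal N$-peripheral conjugacy class crosses the interior of that pseudo-Anosov subsurface, and hence none is weakly attracted to $\Lambda$. This gives $\mathcal N\sqsubseteq\mathcal N[\Lambda]$, so $\mathcal N\sqsubseteq\mathcal N[\Lambda|\mathcal A]$ by meeting with $\mathcal A$.

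It remains to rule out the case where no surface piece has a pseudo-Anosov component. In that case every vertex group is elementary or carries a reducible mapping class with finite-order pieces. Then every element of $\mathcal A$ grows polynomially: vertex groups contribute polynomial growth, and Dehn twists along edges contribute at most linear growth per level. So $\phi|\mathcal A$ is polynomially growing and $\mathcal A\sqsubseteq\mathcal P(\phi|\mathcal A)\sqsubseteq\mathcal N$, contradicting the properness of $\mathcal N$. The main obstacle is making the pseudo-Anosov case rigorous. One must show that the surface lamination, viewed in $\BB(\free)$, really is an attracting lamination in $\mathcal L^+_{\phi|\mathcal A}$, and that $\mathcal N$-peripheral elements avoid it. To do this I would compare with a relative train track for $\phi^k|A$ built from the JSJ decomposition. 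The EG stratum would come from the pseudo-Anosov piece, its lamination would be carried by the vertex group, and the nonattracting elements would be those whose axes stay in the complement, which by \cref{Thm:NSS}(3) would be those elliptic in $T_{\{[\Lambda]\}}$.
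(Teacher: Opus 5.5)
Your treatment of the case where $\mathcal A$ is not one-ended relative to $\mathcal N$ is correct and matches the paper: you take the minimal free factor system of $\mathcal A$ supporting $\mathcal N$ and apply \cref{Lem:freefactorsystemnonattracting}.

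The gap is in the one-ended case, where you dispose of the possibility that no surface piece has a pseudo-Anosov component. You argue that $\phi|\mathcal A$ would then be polynomially growing, so $\mathcal A\sqsubseteq\mathcal P(\phi|\mathcal A)\sqsubseteq\mathcal N$. This step fails. Some elementary vertices of $\mathcal T_{\mathcal N}$ have $\mathcal N$-peripheral stabilizers. The hypothesis only gives $\mathcal P(\phi|\mathcal A)\sqsubseteq\mathcal N$, not the reverse, so these vertex groups can contain exponentially growing elements. For instance, by \cref{lem:nonattractingcarry} a system such as $\mathcal N[\Lambda'|\mathcal A]$ carries every lamination orbit not containing $[\Lambda']$. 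So ``vertex groups contribute polynomial growth'' is unjustified, and no contradiction with properness of $\mathcal N$ follows. The remark about Dehn twists along edges is likewise a heuristic, not an argument.

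The missing idea is to use $\mathcal P(\phi|\mathcal A)\sqsubseteq\mathcal N$ locally at a flexible vertex. This forces the induced mapping class $f_v$ to be pseudo-Anosov on all of $\Sigma_v$, so no case split is needed.

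\begin{itemize}
\item \emph{A flexible vertex exists.} Take $[A]\in\mathcal A$ with $[A]\notin\mathcal N$. Such $A$ is nonelementary: it is not $\mathcal N|A$-peripheral, and it is not cyclic, since a cyclic $A$ would lie in $\mathcal P(\phi|\mathcal A)\sqsubseteq\mathcal N$. Hence \cref{Thm:JSJ}(1) gives a nonelementary vertex $v$, which is $A$ itself if $T^A_{\mathcal N}$ is a point. By \cref{lem:norigidvertex}, $v$ is flexible.
\item \emph{$f_v$ is pseudo-Anosov.} Any $f_v$-periodic conjugacy class in $\pi_1(\Sigma_v)$ has polynomial growth, so it is $\mathcal N$-peripheral. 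Since $\mathcal N$-peripheral subgroups meet $\Stab(v)$ only in boundary subgroups, the class is $\pi_1(\partial\Sigma_v)$-peripheral. So $f_v$ has no essential, non-peripheral periodic curve. This rules out both reducing curves and finite order, and Nielsen--Thurston gives that $f_v$ is pseudo-Anosov.
\end{itemize}

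Its attracting lamination is then your $\Lambda$. Your argument that no $\mathcal N$-peripheral element is weakly attracted to it, via ellipticity and boundary-peripheral intersections, goes through essentially as in the paper.
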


\begin{proof}
Let $\mathcal{N} \sqsubsetneq \mathcal A$ be a $\phi$-invariant proper malnormal subgroup system with finite type such that $\mathcal{P}(\phi |\mathcal A) \sqsubseteq  \mathcal{N}$. 
By \cref{lem:transitivityffs,lem:meet_ffs,lem:meet_ffs_is_ffs}, the meet $\mathcal{F}(\mathcal{N})$ of all free factor systems of $\mathcal{A}$ that support $\mathcal{N}$ is a $\phi$-invariant
free factor system of~$\mathcal A$. 
If $\mathcal F(\mathcal N) \neq \mathcal A$, then the result follows from \cref{Lem:freefactorsystemnonattracting}.
Otherwise, $\mathcal{F}(\mathcal{N}) = \mathcal{A}$, i.e.~$\mathcal{A}$ is one-ended relative to~$\mathcal{N}$.

Let $\mathcal T_{\mathcal N}$ be the JSJ forest for $(\mathcal{A}, \mathcal N)$ (\cref{Thm:JSJ}).
By \cref{Thm:JSJ}$(1)$, there exists $[A] \in \mathcal{A}$ such that the tree $T_{\mathcal N}^A$ contains a nonelementary vertex $v$. 
By \cref{Thm:JSJ}$(3)$ and \cref{lem:norigidvertex}, the vertex $v$ is flexible. 
For some $m \ge 1$, the outer automorphism $(\phi^k|A)^m$ preserves the orbit $[v]$ of $v$, and
we have a well-defined natural homomorphism $\langle (\phi^k|A)^m \rangle \to \Out(\Stab(v))$ given by restriction. 
Denote by $\Sigma_v$ the compact surface associated with $v$.
By \cref{Thm:JSJ}$(3c)$, the iterate $(\phi^k|A)^m$ restricts to a mapping class $f_v$ of $\Sigma_v$.

We claim that $f_v$ is pseudo-Anosov. Indeed, by the definition of a flexible vertex, for every $[H] \in \mathcal{N}$, the intersection $H \cap \pi_1(\Sigma_v)$ is $\pi_1(\partial \Sigma_v)$-peripheral. As $\mathcal{P}(\phi|\mathcal A) \sqsubseteq  \mathcal{N}$, this implies that any $f_v$-periodic conjugacy class in $\pi_1(\Sigma_v)$ is $\pi_1(\partial\Sigma_v)$-peripheral. Thus, the mapping class $f_v$ is pseudo-Anosov by the Nielsen--Thurston classification~\cite[Theorem~4]{Thurston88}.

Let $\Lambda_v$ be the attracting singular foliation of~$\Sigma_v$ associated with $f_v$. Then $\Lambda_{v}$ determines an attracting lamination for $\phi|\mathcal{A}$, also denoted $\Lambda_v$. On one hand, if $g \in \free$ is $\mathcal{A}$-peripheral and weakly attracted to the orbit $[\Lambda_v] \in \mathcal L^+_{\phi|\mathcal A}$, then $g \in A$ up to conjugation and $\phi$-iteration. Moreover, if such an element~$g$ is an elliptic isometry of $T_{\mathcal N}^A$, then, up to an additional conjugation, 
$g$ fixes $v$ and is not $\pi_1(\partial\Sigma_v)$-peripheral.
On the other hand, every $\mathcal N |A$-peripheral element is elliptic in~$T_{\mathcal N}^A$, and $H \cap \pi_1(\Sigma)$ is $\pi_1(\partial\Sigma_v)$-peripheral for every $[H] \in \mathcal{N}$. Thus, no $\mathcal{N}$-peripheral element is weakly attracted to $[\Lambda_v]$, i.e.~$\mathcal{N} \sqsubseteq  \mathcal{N}[\Lambda_v|\mathcal A]$.
\end{proof}

\section{Group invariance of lamination depth}\label{sec:Group invariance}

Fix an outer automorphism $\phi \in \Out(\free)$ and its mapping torus $\GG \defeq M(\phi) = \free \rtimes_\phi \ZZ$.
We say an object/property associated with $\phi$ is \emph{canonical} in $\GG$ if it has a characterization involving only algebraic statements about $\GG$.
In particular, the algebraic statements do not reference the presentation $M(\phi)$.

For the rest of the section, $\mathcal A$ is a $\phi$-invariant malnormal subgroup system of $\free$ with finite type and $\mathcal G \defeq M(\phi|\mathcal A)$ is the associated mapping torus. 
As before, we say an object/property associated with $\phi|\mathcal A$ is \emph{canonical} in $\mathcal G$ if it has a characterization involving only algebraic statements about $\mathcal G$ and $\GG$.
Note that canonicity is transitive: if an object is canonical in~$\mathcal G$ and $\mathcal G$ is canonical in~$\GG$, then the object is canonical in~$\GG$.

\subsection{Growth type is canonical}\label{subsec:growthtypecanon}

For our first example of a property of $\phi$ that is canonical in $\GG$, we remark that $\phi$ preserves a free splitting of $\free$ if and only if $\GG$ has a cyclic splitting \cite[Lemma~3.1]{JPPoly}. 

More generally, the growth type of $\phi$ is canonical in $\GG$, and the algebraic characterization involves inductively choosing cyclic splittings.
Set $\mathcal H_0 = \{ [\GG] \}$ and, for $i \ge 0$, assume $\mathcal H_i$ is a free-by-cyclic subgroup system of $\GG$ with finite type.
Pick a cyclic splitting $\mathcal S_i$ of $\mathcal H_i$ (if one exists).
By Bass--Serre theory, the conjugacy classes of noncyclic vertex stabilizers of $\mathcal S_i$ form a proper free-by-cyclic subgroup system $\mathcal H_{i+1} \sqsubsetneq \mathcal H_i$.
This inductively produces a filtration $\mathcal H_0 \sqsupsetneq \mathcal H_1 \sqsupsetneq \cdots$,
and we stop when every vertex stabilizer of $\mathcal S_i$ is cyclic or $\mathcal H_i$ has no cyclic splittings.
We will say $\GG$ is \emph{polynomial} if this process stops when every vertex stabilizer is cyclic (for some/any choices of cyclic splittings).
The growth type of $\phi$ is canonical in $\GG$: $\phi$ is polynomially growing if and only if $\GG$ is polynomial \cite[Theorem~3.2]{JPPoly}.

Finally, the mapping torus $\mathcal P(M(\phi))$ (defined in \S\ref{sec:polynomial_growth}) is canonical in $\GG$ since it is the unique subgroup system of maximal polynomial free-by-cyclic subgroups of $\GG$ \cite[Proposition~3.6]{JPPoly}.
We may now denote it as $\mathcal P(\GG)$. 
Set $\mathcal P(\mathcal G) \defeq \mathcal G \wedge \mathcal P(\GG)$.
As the meet of $\mathcal G$ with a canonical subgroup system in~$\GG$, $\mathcal P(\mathcal G)$ is canonical in $\mathcal G$.

\subsection{Maximal proper malnormal mapping tori are canonical}
\label{subsec:maximalpropermalnormal}

For any $[\Lambda] \in \mathcal L_{\phi|\mathcal A}^+$, recall that $\mathcal N[\Lambda|\mathcal A] \sqsubseteq \mathcal A$ is a $\phi$-invariant proper malnormal subgroup system with finite type;
moreover, it supports $\mathcal P(\phi|\mathcal A)$ and has no $\phi|\mathcal A$-twins by \cref{cor:NSS}.
The mapping torus $M(\phi|\mathcal N[\Lambda|\mathcal A])$, now abbreviated $\mathcal M[\Lambda|\mathcal A]$, is a proper malnormal subgroup system in~$\mathcal G$ supporting $\mathcal P(\mathcal G)$ by \cref{lem:malnormalnotwin,lem:inclusionsubsys,lem:meet_of_mapping_tori}.

For this section, assume $\mathcal G$ has no cyclic splittings, and set $\mathcal L_{\phi|\mathcal A}^{\mathrm{max}} \defeq \left\{\text{maximal } [\Lambda] \in \mathcal L^+_{\phi|\mathcal A} \right\}$. 
We now give an algebraic characterization of the collection $\mathcal M^{\mathrm{max}}_{\mathcal G}$ of associated mapping tori $\mathcal M[\Lambda|\mathcal A]$ for $[\Lambda] \in \mathcal L_{\phi|\mathcal A}^{\mathrm{max}}$. 
This is the key observation underlying the rest of the paper:

\begin{theo}\label{thm:maximallamcanonical}
Let $\phi \in \Out(\free)$ and $\mathcal{A}$ be a $\phi$-invariant malnormal subgroup system of $\free$ with finite type. 
If $\mathcal G \defeq M(\phi|\mathcal{A})$ has no cyclic splittings, then the collection 
\[ \mathcal M^{\mathrm{max}}_{\mathcal G} \defeq \left\{\mathcal M[\Lambda|\mathcal A]\,:\,[\Lambda] \in \mathcal L_{\phi|\mathcal{A}}^{\mathrm{max}}\right\} \] is the set of maximal proper malnormal free-by-cyclic subgroup systems in $\mathcal G$ supporting $\mathcal P(\mathcal G)$. 
Moreover, the natural function $\mathcal L_{\phi|\mathcal{A}}^{\mathrm{max}} \to \mathcal M^{\mathrm{max}}_{\mathcal G}$ given by $[\Lambda] \mapsto \mathcal M[\Lambda|\mathcal A]$ is a bijection.
\end{theo}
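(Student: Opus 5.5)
The plan is to translate everything back to subgroup systems of $\free$ and invoke \cref{Prop:invariantmalnormal}. First, since $\mathcal G$ has no cyclic splittings, $\phi|\mathcal A$ preserves no free splitting of $\mathcal A$. This is the subgroup-system version of \cite[Lemma~3.1]{JPPoly}: a $\phi^k|A$-invariant free splitting of some $A$ with $[A]\in\mathcal A$ extends, via the $\Psi$-equivariant isometry $\iota_\Psi$, to an action of $M(\phi^k|A)$ with cyclic edge stabilizers. Declaring the remaining trees in the splitting of $\mathcal G$ to be points then gives a cyclic splitting of $\mathcal G$.

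Second, I show that each $\mathcal M[\Lambda|\mathcal A]$ is a proper malnormal free-by-cyclic subgroup system in $\mathcal G$ supporting $\mathcal P(\mathcal G)$. This was already noted before the statement, using \cref{cor:NSS} together with \cref{lem:malnormalnotwin,lem:inclusionsubsys,lem:meet_of_mapping_tori}.

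Third, I show that every such system is dominated by one of these. Let $\mathcal H$ be a proper malnormal free-by-cyclic subgroup system in $\mathcal G$ supporting $\mathcal P(\mathcal G)$. By \cref{lem:FbC_SS_MappingTorus}, $\mathcal H=M(\phi|\mathcal D)$ for a $\phi$-invariant malnormal $\mathcal D\sqsubseteq\mathcal A$ with finite type. By \cref{lem:inclusionsubsys}(2), $\mathcal D\ne\mathcal A$. The same lemma, with \cref{lem:meet_of_mapping_tori} and the identity $\mathcal P(\mathcal G)=M(\phi|\mathcal P(\phi|\mathcal A))$, gives $\mathcal P(\phi|\mathcal A)\sqsubseteq\mathcal D$. \cref{Prop:invariantmalnormal} then yields $[\Lambda]$ with $\mathcal D\sqsubseteq\mathcal N[\Lambda|\mathcal A]$. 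Enlarging $[\Lambda]$ to a maximal orbit $[\Lambda_0]\supseteq[\Lambda]$, \cref{lem:isoposetlaminonattracting} gives $\mathcal N[\Lambda|\mathcal A]\sqsubseteq\mathcal N[\Lambda_0|\mathcal A]$, and hence $\mathcal H\sqsubseteq\mathcal M[\Lambda_0|\mathcal A]$. One technical point must be checked here: the ``proper'' hypothesis in \cref{Prop:invariantmalnormal} must be used in the sense $\mathcal D\ne\mathcal A$. If the proposition really needs each class of $\mathcal D$ to be proper in its $A$, I would first discard the classes of $\mathcal D$ lying in $\mathcal A$, which are $\phi$-invariant unions of orbits. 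I expect this bookkeeping, together with the first step, to be the main obstacle.

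Finally, maximality and the bijection follow. Every element of $\mathcal M^{\max}_{\mathcal G}$ is dominated only by elements of the same family, so the maximal systems are exactly the maximal members of $\mathcal M^{\max}_{\mathcal G}$. Distinct maximal orbits $[\Lambda_1]\neq[\Lambda_2]$ are incomparable, so by \cref{lem:isoposetlaminonattracting} their nonattracting systems are incomparable. \cref{lem:inclusionsubsys}(2) then makes the corresponding mapping tori incomparable. Hence each element of $\mathcal M^{\max}_{\mathcal G}$ is maximal, and the map $[\Lambda]\mapsto\mathcal M[\Lambda|\mathcal A]$ is injective. Surjectivity holds by definition.
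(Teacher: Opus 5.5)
Your argument follows the paper's proof step for step. You pull $\mathcal H$ back to a $\phi$-invariant $\mathcal D\sqsubsetneq\mathcal A$ with \cref{lem:FbC_SS_MappingTorus}, transfer $\mathcal P(\mathcal G)\sqsubseteq\mathcal H$ to $\mathcal P(\phi|\mathcal A)\sqsubseteq\mathcal D$ with \cref{lem:inclusionsubsys,lem:meet_of_mapping_tori}, apply \cref{Prop:invariantmalnormal}, and conclude with the order-isomorphism from \cref{lem:isoposetlaminonattracting,lem:inclusionsubsys}. Your first step is correct, and it spells out something the paper only uses implicitly, through its remark citing \cite[Lemma~3.1]{JPPoly}. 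Reading ``proper'' as $\mathcal D\neq\mathcal A$ is right, since \cref{Prop:invariantmalnormal} is stated for $\mathcal N\sqsubsetneq\mathcal A$. Do not use the fallback of discarding classes of $\mathcal D$ that lie in $\mathcal A$: those classes may be exactly the ones supporting $\mathcal P(\phi|\mathcal A)$.

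What is missing is a finite-type reduction. The theorem concerns \emph{all} proper malnormal free-by-cyclic subgroup systems $\mathcal H$ in $\mathcal G$ supporting $\mathcal P(\mathcal G)$, with no finiteness assumption. However, \cref{lem:FbC_SS_MappingTorus} requires $\mathcal H$ to have finite type, and \cref{Prop:invariantmalnormal}, which works with free factor systems and JSJ forests, needs $\mathcal D$ of finite type. You apply the lemma to $\mathcal H$ without checking this.

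The paper handles it explicitly. Because $\mathcal P(\mathcal G)$ has finite type, some finite subsystem $\mathcal H_0\subseteq\mathcal H$ already supports it. One then runs your Step 3 on each finite subsystem $\mathcal H'$ with $\mathcal H_0\subseteq\mathcal H'\subseteq\mathcal H$. Each such $\mathcal H'$ is still proper, since $\mathcal H'=\mathcal G$ would force $\mathcal H=\mathcal G$ by antisymmetry. This gives $\mathcal H'\sqsubseteq\mathcal M[\Lambda_{\mathcal H'}|\mathcal A]$ for some maximal $[\Lambda_{\mathcal H'}]$.

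To obtain a single orbit for all of $\mathcal H$ (a point the paper passes over quickly), use that $\mathcal L^{\mathrm{max}}_{\phi|\mathcal A}$ is finite. Suppose that for every maximal $[\Lambda_i]$ some finite $\mathcal H_i$ with $\mathcal H_0\subseteq\mathcal H_i\subseteq\mathcal H$ were not supported by $\mathcal M[\Lambda_i|\mathcal A]$. Then the finite subsystem $\bigcup_i\mathcal H_i$ would be supported by none of them, contradicting Step 3 applied to it. With this reduction inserted, the rest of your argument goes through as written, including the paragraph on maximality and the bijection.
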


Thus, the collection $\mathcal M^{\mathrm{max}}_{\mathcal G}$ 
is canonical in $\mathcal G$, and we may also say $\mathcal L_{\phi|\mathcal{A}}^\mathrm{max}$ is canonical in~$\mathcal G$.

\begin{proof}
Let $\mathcal H$ be a proper malnormal free-by-cyclic subgroup system in $\mathcal G$ supporting~$\mathcal P(\mathcal G)$.
Since $\mathcal P(\mathcal G)$ has finite type, some finite subsystem of $\mathcal H$ supports it.
For an arbitrary finite subsystem $\mathcal H' \subseteq \mathcal H$ that supports $\mathcal P(\mathcal G)$, \cref{lem:FbC_SS_MappingTorus} implies $\mathcal H' = M(\phi|\mathcal N')$ for some $\phi$-invariant proper malnormal subgroup system $\mathcal N' \sqsubsetneq \mathcal{A}$ with finite type. 
By \cref{lem:inclusionsubsys}, the system $\mathcal N'$ supports $\mathcal P(\phi|\mathcal{A})$ since $\mathcal P(\mathcal G) \sqsubseteq \mathcal H'$.
By \cref{Prop:invariantmalnormal}, $\mathcal N' \sqsubseteq \mathcal N[\Lambda|\mathcal A]$ for some $[\Lambda] \in \mathcal L^+_{\phi|\mathcal{A}}$; therefore, $\mathcal H' \sqsubseteq \mathcal M[\Lambda|\mathcal A]$. 
As $\mathcal H' \subseteq \mathcal H$ was arbitrary, we get $\mathcal H \sqsubseteq \mathcal M[\Lambda|\mathcal A]$. 
We may assume $[\Lambda]$ is in $\mathcal L_{\phi|\mathcal{A}}^{\mathrm{max}}$ by \cref{lem:inclusionsubsys,lem:isoposetlaminonattracting}.

Thus, the collection of maximal proper malnormal free-by-cyclic subgroup systems of $\mathcal G$ supporting $\mathcal P(\mathcal G)$ is a subset of $\mathcal M^{\mathrm{max}}_{\mathcal G}$.
This subset is not proper. 
Indeed, for $[\Lambda'] \in \mathcal L_{\phi|\mathcal{A}}^{\mathrm{max}}$, we show $\mathcal M[\Lambda'|\mathcal A]$ is a maximal proper malnormal free-by-cyclic subgroup system of~$\mathcal{G}$ supporting~$\mathcal P(\mathcal G)$.
Suppose $\mathcal M[\Lambda'|\mathcal A] \sqsubseteq \mathcal H$ for some proper malnormal free-by-cyclic subgroup system~$\mathcal H$ of~$\mathcal G$ supporting~$\mathcal P(\mathcal G)$.
By the previous paragraph, $\mathcal M[\Lambda'|\mathcal A] \sqsubseteq \mathcal H \sqsubseteq \mathcal M[\Lambda|\mathcal A]$ for some $[\Lambda] \in \mathcal L_{\phi|\mathcal{A}}^{\mathrm{max}}$.
Then $[\Lambda'] = [\Lambda]$ as $\big\{\mathcal M[\Lambda|\mathcal A]\,:\,[\Lambda] \in \mathcal L^+_{\phi|\mathcal{A}}\big\}$ is naturally order-isomorphic to $\mathcal L^+_{\phi|\mathcal{A}}$ by \cref{lem:inclusionsubsys,lem:isoposetlaminonattracting}.
So $\mathcal M[\Lambda'|\mathcal A] = \mathcal H$ as needed. 

The last statement in the theorem follows from the previous order-isomorphism.
\end{proof}

This leads to an algebraic characterization of $M(\phi|\mathcal N(\mathcal L_{\phi|\mathcal A}^{\mathrm{max}}|\mathcal A))$, abbreviated $\mathcal M(\mathcal L_{\phi|\mathcal A}^{\mathrm{max}}|\mathcal A)$.

\begin{coro}\label{Cor:canonintersection}
Let $\phi \in \Out(\free)$ and $\mathcal{A}$ be a $\phi$-invariant malnormal subgroup system of $\free$ with finite type. 
If $\mathcal G \defeq M(\phi|\mathcal{A})$ has no cyclic splittings, then the subgroup system $\mathcal M(\mathcal L_{\phi|\mathcal{A}}^{\mathrm{max}}|\mathcal A)$ is the meet of all maximal proper malnormal free-by-cyclic subgroup systems in~$\mathcal G$ supporting $\mathcal P(\mathcal G)$. 
\end{coro}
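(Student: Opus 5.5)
By \cref{thm:maximallamcanonical}, the maximal proper malnormal free-by-cyclic subgroup systems in $\mathcal G$ supporting $\mathcal P(\mathcal G)$ are exactly the systems $\mathcal M[\Lambda|\mathcal A]$ for $[\Lambda] \in \mathcal L^{\mathrm{max}}_{\phi|\mathcal A}$. Since $\mathcal L^+_{\phi|\mathcal A}$ is finite, this is a finite collection. So the claim reduces to the identity
\[ \bigwedge_{[\Lambda] \in \mathcal L^{\mathrm{max}}_{\phi|\mathcal A}} M(\phi|\mathcal N[\Lambda|\mathcal A]) = M\Big(\phi \,\Big|\, \bigwedge_{[\Lambda]\in \mathcal L^{\mathrm{max}}_{\phi|\mathcal A}} \mathcal N[\Lambda|\mathcal A]\Big). \]
The left side is the meet of free-by-cyclic systems in $\GG$; the right side is the mapping torus of the meet in $\free$.

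The first step is to identify the inner meet with $\mathcal N(\mathcal L^{\mathrm{max}}_{\phi|\mathcal A}|\mathcal A)$. By definition $\mathcal N[\Lambda|\mathcal A] = \mathcal A \wedge \mathcal N[\Lambda]$, and the meet of malnormal systems is associative, commutative and idempotent. Hence
\[ \bigwedge_{[\Lambda]} \mathcal N[\Lambda|\mathcal A] = \mathcal A \wedge \bigwedge_{[\Lambda]} \mathcal N[\Lambda]. \]
The paper notes, via \cref{Thm:NSS}(3), that $\bigwedge_{[\Lambda]\in\mathcal L}\mathcal N[\Lambda] = \mathcal N(\mathcal L)$. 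Therefore the inner meet equals $\mathcal A \wedge \mathcal N(\mathcal L^{\mathrm{max}}_{\phi|\mathcal A}) = \mathcal N(\mathcal L^{\mathrm{max}}_{\phi|\mathcal A}|\mathcal A)$.

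The second step is to commute $M(\phi|\cdot)$ with the finite meet. I would induct on the number of maximal orbits using \cref{lem:meet_of_mapping_tori}. Each intermediate meet $\mathcal N[\Lambda_1|\mathcal A]\wedge\cdots\wedge\mathcal N[\Lambda_j|\mathcal A]$ is $\phi$-invariant, malnormal and of finite type, being $\mathcal N(\{[\Lambda_1],\dots,[\Lambda_j]\}|\mathcal A)$ by \cref{cor:NSS}(1),(2). These are exactly the hypotheses needed to apply \cref{lem:meet_of_mapping_tori} at each stage. The meet of free-by-cyclic systems in $\GG$ is associative here because all systems involved are mapping tori of such malnormal systems, and \cref{lem:meet_of_mapping_tori} identifies each partial meet with a mapping torus.

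The main obstacle I anticipate is the edge case where $\mathcal L^{\mathrm{max}}_{\phi|\mathcal A}$ is empty, that is, where $\phi|\mathcal A$ is polynomially growing. There the meet over an empty family must be interpreted as $\mathcal G$ itself, and correspondingly $\mathcal N(\emptyset|\mathcal A) = \mathcal A$, so both sides agree by convention. A secondary point is checking that the meet in $\GG$ of malnormal mapping tori drops only cyclic intersections, so that it matches the meet in $\free$. This is exactly what \cref{lem:meet_of_mapping_tori} already handles.
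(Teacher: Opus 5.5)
Your proposal is correct and follows essentially the paper's argument: identify $\mathcal N(\mathcal L^{\mathrm{max}}_{\phi|\mathcal A})$ with the meet of the $\mathcal N[\Lambda]$ via \cref{Thm:NSS}(3), commute the mapping torus with finite meets using \cref{lem:meet_of_mapping_tori}, and conclude with \cref{thm:maximallamcanonical}. The only difference is cosmetic. You intersect with $\mathcal A$ inside $\free$ before passing to mapping tori, whereas the paper takes mapping tori of the unrestricted $\mathcal N[\Lambda]$ first and then meets with $\mathcal G$. You also make explicit the finite induction and the empty-family convention, which the paper leaves implicit.
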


So $\mathcal M(\mathcal L_{\phi|\mathcal{A}}^{\mathrm{max}}|\mathcal A)$ is canonical in $\mathcal G$.

\begin{proof} 
Recall that $\mathcal N[\mathcal L_{\phi|\mathcal{A}}^{\mathrm{max}}] = \bigwedge\big\{ \mathcal N[\Lambda] : [\Lambda] \in \mathcal L_{\phi|\mathcal{A}}^{\mathrm{max}}\big\}$ by the discussion following \cref{Thm:NSS}.
Thus the mapping torus $\mathcal M(\phi|\mathcal N(\mathcal L_{\phi|\mathcal{A}}^{\mathrm{max}})) = \bigwedge\{ \mathcal M(\phi|\mathcal N[\Lambda]) : [\Lambda] \in \mathcal L_{\phi|\mathcal{A}}^{\mathrm{max}}\}$ by \cref{lem:meet_of_mapping_tori}.
After taking the meet with $\mathcal G = \mathcal M(\phi|\mathcal A)$ on both sides and applying the lemma again,
\[\mathcal M(\mathcal L_{\phi|\mathcal{A}}^{\mathrm{max}}|\mathcal A) = \bigwedge \left\{\mathcal{M}[\Lambda|\mathcal{A}] : [\Lambda] \in \mathcal L_{\phi|\mathcal{A}}^{\mathrm{max}}\right\} = \bigwedge \mathcal M^{\mathrm{max}}_{\mathcal G}. \]
The conclusion follows from the characterization of $\mathcal M^{\mathrm{max}}_{\mathcal G}$ in \cref{thm:maximallamcanonical}.
\end{proof}

As illustrated in the following example, \cref{Cor:canonintersection} can be sufficient to construct a {canonical} filtration of $\GG$ by free-by-cyclic subgroup systems whose length is the depth $\delta(\phi)$.

\begin{example}\label{ex:Filtration_F}
Recall from \cref{ex:lamination_poset} the automorphism~$\Phi$ of $\free= \langle a, b, c, d, e, f\rangle$ given by
\[a \mapsto ab,~ b \mapsto bab,~ c \mapsto bcd,~ d \mapsto dcd,~ e \mapsto aba^{-1}b^{-1}ef,~ f \mapsto fef. \]  

Let $\mathcal{L}^0 \defeq \big\{[\Lambda_{cd}], [\Lambda_{ef}]\big\}$ and $\mathcal{L}^1 \defeq \big\{[\Lambda_{ab}]\big\}$ be the `levels' of the poset of laminations $\mathcal{L}_{\phi}^+$, also indicated in \Cref{fig:Filtration_F}. The nonattracting subgroup systems $\mathcal{N}(\mathcal{L}^i)$ were given in \cref{ex:non-attracting_subgroup_system}, which we recall here
\begin{align*}
    \mathcal{N}(\mathcal{L}^0) & = \mathcal{N}[\Lambda_{cd}] \wedge \mathcal{N}[\Lambda_{ef}] = [\langle a, b \rangle] \\
    \mathcal{N}(\mathcal{L}^1) & = \mathcal{N}[\Lambda_{ab}] =  [\langle aba^{-1}b^{-1}, e, f \rangle].
\end{align*}
Set $\mathcal{F}_0 \defeq [\free]$, and inductively define $\mathcal{F}_{i+1} \defeq \mathcal{N}(\mathcal{L}^i) \wedge \mathcal{F}_i$ for $0 \leq i < \delta(\phi)$. We get the filtration $\mathcal{F}_0 \sqsupsetneq \mathcal{F}_1 \sqsupsetneq \mathcal{F}_2 $,   which is equal to
\[ [\free] \sqsupsetneq [\langle a, b \rangle] \sqsupsetneq [\langle aba^{-1}b^{-1} \rangle].\]

\begin{figure}
    \centering

\begin{tikzpicture}[scale=0.8][baseline=(A.center)]
  \tikzset{BarStyle/.style =   {opacity=.4,line width=4 mm,line cap=round,color=#1}}

\matrix (A) [matrix of math nodes,column sep=3 mm] 
{ \ & \ & \  & \ & \ & \ & \ & \ &\mathcal{F}_0=[\free] & \  \\
    {}[\Lambda_{cd}] & {}[\Lambda_{ef}]  & \ & \ & \mathcal{L}^0 & \ & \ & \ & \mathcal{F}_1=\mathcal{N}(\mathcal{L}^0)\wedge \mathcal{F}_0  \\
   \ &\ & \ & \ &\ & \ & \ & \ & \ & \\
   {}[\Lambda_{ab}] & \  &\ & \ & \mathcal{L}^1 & \ & \ & \ & \mathcal{F}_2= \mathcal{N}(\mathcal{L}^1)\wedge \mathcal{F}_1 \\
    \ & \ & \ &\ & \ & \ & \ & \ & \  \\
};

\draw[thick] (A-2-1) to (A-4-1);

\end{tikzpicture}

    \caption{Filtration for \cref{ex:Filtration_F}}
    \label{fig:Filtration_F}
\end{figure}

One can check that $\mathcal{F}_0 = [\free]$ has no $\phi$-invariant proper free factor system that carries $\mathcal{L}_{\phi}^+$. Equivalently, there is no $\phi$-invariant free splitting of $\free$. 
Thus, $\mathcal{G}_0\defeq \mathcal{M}(\phi|\mathcal{F}_0) = [\GG]$ has no cyclic splitting. Since $\mathcal{L}_{\phi|\mathcal{F}_0}^{\mathrm{max}} = \mathcal{L}^0$,  \cref{Cor:canonintersection} implies that $\mathcal{G}_1\defeq \mathcal{M}(\phi|\mathcal{F}_1)$ is canonical in~$\mathcal{G}_0$.

Observe that $\mathcal{F}_1$ has no $\phi|\mathcal{F}_1$-invariant proper free factor system that carries $\mathcal{L}_{\phi|\mathcal{F}_1}^+ = \{[\Lambda_{ab}]\}$. Indeed, a cyclic free factor cannot support an attracting lamination. So $\mathcal{G}_1$ has no cyclic splitting. Since $\mathcal{L}_{\phi|\mathcal{F}_1}^{\mathrm{max}} = \mathcal{L}^1$, the subgroup system $\mathcal{G}_2\defeq \mathcal{M}(\phi|\mathcal{F}_2)$ is canonical in $\mathcal{G}_1$ by \cref{Cor:canonintersection} again. 
We stop the filtration at the polynomial system $\mathcal{G}_2$. Thus, we get a canonical filtration  
\[ [\GG] = \mathcal{G}_0 \sqsupsetneq \mathcal{G}_1 \sqsupsetneq \mathcal{G}_2 \]
whose length is $2=\delta(\phi)$.
In other words, $\delta(\phi)$ is the number of canonical applications of \cref{Cor:canonintersection} needed to reduce $\GG$ to a polynomial free-by-cyclic subgroup system.
\end{example}

%%%%%%%%%%%%%%%%%%%%%%%%%%%%%%%%%%%%%%%%%%%%%%%%%%%%%%%%%%%%%%%%%%%%%%%%%%%%%%%%
\subsection{Lamination depth is canonical}\label{subsec:depthcanonical}

In \cref{thm:maximallamcanonical}, we assume that $\mathcal G$ has no cyclic splittings. In this section, we first show that we can canonically reduce to that case. Then we create canonical filtrations of~$\GG$ by free-by-cyclic subgroup systems whose lengths are the lamination depth~$\delta(\phi)$.

Let $\mathcal{B}$ be the meet of all free factor systems of $\mathcal A$ that carry $\mathcal{L}^+_{\phi|\mathcal A}$, which is the free factor system of $\mathcal A$ defined using \cref{lem:transitivityffs,lem:meet_ffs,lem:meet_ffs_is_ffs}.
Then $\mathcal{B}$ is the minimal free factor system of~$\mathcal A$ that carries $\mathcal{L}^+_{\phi|\mathcal A}$, and by uniqueness, it is $\phi$-invariant.
Since $\mathcal A$ is a malnormal subgroup system of $\free$ with finite type and $\mathcal B$ is a free factor system of $\mathcal A$, we get $\mathcal B$ is a malnormal subgroup system of $\free$ with finite type. 
Note that $M(\phi|\mathcal B)$ is empty if and only if $\phi|\mathcal A$ is polynomially growing.
The next proposition gives an algebraic characterization of the mapping torus $M(\phi|\mathcal{B})$ in~$\mathcal G$.

\begin{prop}\label{prop:passing to no cyclic splittings}
Let $\phi \in \Out(\free)$, 
$\mathcal{A}$ be a $\phi$-invariant malnormal subgroup system of $\free$ with finite type, and $\mathcal G \defeq M(\phi|\mathcal A)$.
If $\mathcal B$ is the $\phi$-invariant minimal free factor system  of $\mathcal A$ that carries $\mathcal L_{\phi|\mathcal A}^+$, then $M(\phi|\mathcal B)$ has no cyclic splittings.
Moreover, for any $[G] \in \mathcal{G}$ and any subgroup $H \leq G$ with no cyclic splitting, there is some $[G'] \in M(\phi|\mathcal B)$ with $H \leq G'$. 
\end{prop}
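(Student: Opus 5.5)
The plan has two parts. For the first claim, I use the characterization (generalizing \cite[Lemma~3.1]{JPPoly}) that a mapping torus $M(\phi|\mathcal B)$ has a cyclic splitting if and only if $\phi|\mathcal B$ preserves a free splitting of $\mathcal B$. For the second claim, I take the minimal subtree of $H$ in a free splitting of $\mathcal A$ whose nontrivial vertex stabilizers are $\mathcal B$.

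\emph{First claim.} Suppose, for contradiction, that $\phi|\mathcal B$ preserves a free splitting $\mathcal S$ of $\mathcal B$. Replacing $\mathcal S$ by the union of its $\phi$-translates, I may assume it is $\phi$-invariant as a set of trees. Its system of nontrivial vertex stabilizers $\mathcal F'$ is then a $\phi$-invariant proper free factor system of $\mathcal B$, and by \cref{lem:transitivityffs} also of $\mathcal A$. I want to show that $\mathcal F'$ still carries $\mathcal L^+_{\phi|\mathcal A}$, which contradicts the minimality of $\mathcal B$.

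The natural argument uses a relative train track for $\phi^k|B$ with a filtration containing the invariant subgraph realizing $\mathcal F'|B$. The strata above $\mathcal F'$ come from an invariant simplicial free splitting, so they can be taken NEG. Indeed, the equivariant isometry $\iota_\Psi$ of the Bass--Serre tree permutes the finitely many edge orbits, so no edge is stretched. By \cite[Lemma~3.1.9]{BesFeiHan00} and the fact that laminations live on EG strata, every attracting lamination carried by $\mathcal B$ is therefore carried by $\mathcal F'$.

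An alternative route: a generic leaf $\ell$ of $\Lambda$ not carried by $\mathcal F'$ crosses edges of the tree, and $\phi$-invariance of the tree implies iterates of $\ell$ cross edges a bounded number of times. This contradicts the fact that generic leaves are birecurrent and not carried by $\mathcal F'$, since a birecurrent line that crosses an edge crosses it infinitely often. I expect this step to be the main obstacle, because one must check carefully that the free splitting is realized in a relative train track with the top strata NEG. That is essentially the argument of \cref{Lem:freefactorsystemnonattracting} run in reverse.

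\emph{Second claim.} Let $[G]\in\mathcal G$, so $G = M(\phi^k|A)$, and let $H\le G$ have no cyclic splitting. Since $H$ is not cyclic, it is free-by-cyclic, hence $H = \langle C, yt^n\rangle$ with $C = H\cap A$ finitely generated and $\phi$-periodic (\cref{lem:feighnhandel}). Take the free splitting $T$ of $A$ with vertex stabilizers $\mathcal B|A$, which is $\phi^k|A$-invariant because $\mathcal B$ is $\phi$-invariant. The $\Phi'$-equivariant isometry $\iota$ lets $G$ act on $T$ with edge stabilizers infinite cyclic, generated by elements projecting nontrivially to $\ZZ$.

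Restricting this action to $H$ and taking the minimal subtree gives a cyclic splitting of $H$ unless $H$ fixes a point. Since $H$ has no cyclic splitting, $H$ fixes some vertex $v$, so $C\le \Stab_A(v)=B$ with $[B]\in\mathcal B|A$ (note $C$ is nontrivial). Because $yt^n$ normalizes $C$ and $\mathcal B$ is malnormal, $yt^n$ normalizes $B$. Since $M(\phi^\ell|B)$ is the normalizer of $B$ in $\GG$, it follows that $H\le M(\phi^\ell|B) =: G'$.

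There is a degenerate case where $T$ is a point, i.e.\ $\mathcal B|A=\{[A]\}$; in that case $G'=G$. There is also the case where $\mathcal B$ contains no class inside $A$. Then $C$ is elliptic in a free splitting with trivial vertex stabilizers, so $C$ is trivial and $H$ is cyclic, which contradicts $H$ being free-by-cyclic.
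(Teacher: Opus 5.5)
Your first claim follows the paper's route. A cyclic splitting of $M(\phi|\mathcal B)$ yields a $\phi|\mathcal B$-invariant free splitting of $\mathcal B$. Its vertex system is then a proper free factor system of $\mathcal A$ that still carries $\mathcal L^+_{\phi|\mathcal A}$, which contradicts minimality. The paper cites \cite[Lemma~4.4]{HandelMosher19} for the carrying step that you sketch with relative train tracks.

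The second claim has a genuine gap. It rests on this sentence: ``Take the free splitting $T$ of $A$ with vertex stabilizers $\mathcal B|A$, which is $\phi^k|A$-invariant because $\mathcal B$ is $\phi$-invariant.'' An invariant free factor system need not be realized by an invariant free splitting, and minimality of $\mathcal B$ does not fix this. Minimality only gives that $\phi|\mathcal A$ is \emph{polynomially growing} relative to $\mathcal B$. An invariant free splitting $T$ with vertex system $\mathcal B|A$ would instead force \emph{bounded} relative length: $\ell_T(\Phi^n(g))=\ell_T(g)$.

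Here is a concrete counterexample. Take $\free=\langle a,b,c,d\rangle$, $\mathcal A=\{[\free]\}$, and $\Phi\colon a\mapsto ab,\ b\mapsto bab,\ c\mapsto ca,\ d\mapsto dc$.
\begin{itemize}
\item The only lamination is $\Lambda_{ab}$, so $\mathcal B=\{[\langle a,b\rangle]\}$.
\item $\Phi^n(d)=d\,c\,\Phi(c)\cdots\Phi^{n-1}(c)$ is a positive word with exactly $n+1$ letters from $\{c,d\}$.
\item Hence its translation length in the Bass--Serre tree $T_0$ (vertex group $\langle a,b\rangle$, loops $c,d$) is $n+1$.
\item If $T$ were a $\phi$-invariant free splitting with vertex system $\mathcal B$, an equivariant Lipschitz map $T\to T_0$ would give $\ell_{T_0}(\Phi^n(d))\le L\,\ell_T(d)$, which is bounded. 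This is a contradiction.
\end{itemize}
Your degenerate case has the same defect. For the Dehn twist $a\mapsto a$, $b\mapsto ba$ on $\langle a,b\rangle$ we have $\mathcal B=\emptyset$. An invariant free splitting with trivial vertex stabilizers would then be a point of Outer space fixed by an infinite-order outer automorphism, which is impossible.

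The missing idea is that the descent from $A$ to $\mathcal B|A$ must be done in stages, and this is what the paper does. Take a relative train track $f\colon\Gamma_A\to\Gamma_A$ for $\psi=\phi^k|A$ with a filtration element realizing $\mathcal B|A$. Every stratum above that element is non-EG. Each such stratum, after collapsing the next filtration element, gives a $\psi$-invariant free splitting of the current vertex groups. In the example above:
\begin{itemize}
\item first the HNN splitting with stable letter $d$ over $\langle a,b,c\rangle$;
\item then the HNN splitting with stable letter $c$ over $\langle a,b\rangle$.
\end{itemize}
This produces a hierarchy of cyclic splittings of $G$ whose vertex systems descend to $M(\psi|(\mathcal B|A))$.

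Your ellipticity-plus-normalizer argument is exactly what is needed at each level, so running it down this hierarchy completes the proof. You also do not need $H$ to be free-by-cyclic, which you asserted without justification. If $H$ fixes $v$, then $H\le\Stab_G(v)$, and $\Stab_G(v)$ normalizes $\Stab_G(v)\cap A$.
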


In particular, $M(\phi|\mathcal B)$ is the maximal free-by-cyclic subgroup system in $\mathcal G$ with no cyclic splittings. So $M(\phi|\mathcal B)$ is canonical in~$\mathcal G$.

\begin{proof}
For any $[A] \in \mathcal{A}$, we denote the first return outer class $\phi^k|A \in \phi|\mathcal A$ by $\psi \in \Out(A)$ (recall \cref{def:restrictions}).
Let $f \colon \Gamma_A \to \Gamma_A$ be a relative train track map for $\psi$ equipped with an $f$-invariant subgraph  $\Gamma_{\mathcal{B}|A}$ that induces the free factor system $\mathcal{B}|A$ of $A$ (see \Cref{app:A}).
Since $\mathcal{B}|A$ is the minimal free factor system of $A$ that carries $\mathcal{L}_{\psi}^+$, every EG stratum of~$f$ is in $\Gamma_{\mathcal{B}|A}$.

Suppose $M(\phi|\mathcal{B})$ has a cyclic splitting. This is equivalent to the following: 
$M(\psi| (\mathcal{B}|A))$ has a cyclic splitting for some $[A] \in \mathcal{A}$. 
This induces a $\psi$-invariant free splitting of $\mathcal{B}|A$. So there is a
proper free factor system $\mathcal N \sqsubsetneq \mathcal{B}|A $ which carries $\mathcal{L}_{\psi}^+$ 
(see, for instance, \cite[Lemma~4.4]{HandelMosher19}). 
We extend~$\mathcal N$ to a free factor system $\mathcal N' \sqsubsetneq \mathcal B$ of $\mathcal A$ that carries~$\mathcal L^+_{\phi|\mathcal A}$. This contradicts the minimality of~$\mathcal{B}$ in~$\mathcal{A}$. 
Thus, $M(\phi|\mathcal{B})$ has no cyclic splitting. 

For any $[A] \in \mathcal A$ and corresponding $G = M(\psi)$, let $H \le G $ be a subgroup with no cyclic splittings.  
The strata between $\Gamma_A$ and $\Gamma_{\mathcal{B}|A}$ determine a hierarchy of cyclic splittings whose vertex group systems form a filtration of~$G$ ending in $M(\psi| (\mathcal{B}|A))$, a subset of $M(\phi|\mathcal{B})$. Since $H$ has no cyclic splitting, the singleton $\{[H]\}$ is supported by the vertex group systems of each of the cyclic splittings in the hierarchy; therefore, $H \le G'$ for some $[G'] \in M(\phi|\mathcal{B})$.
\end{proof}

\begin{rmq}
In \cref{ex:Filtration_F}, each $\mathcal{F}_{i}$ was the minimal free factor system that carried $\mathcal{L}_{\phi|\mathcal{F}_{i}}^+$. This is not true in general. So, in the definitions that follow, we will pass to the minimal free factor system $\mathcal{F}_{i}'$ of $\mathcal{F}_{i}$ that carries its lamination. This, in turn, will allow us to conclude that $\mathcal{G}_{i}'$ has no cyclic splitting and $\mathcal{G}_{i+1}$ is canonical in $\mathcal{G}_{i}'$. 
\end{rmq}

Now, we describe two filtrations in $\free$. 
Set $\delta \defeq \delta(\phi)$, 
\[\mathcal L^{i} \defeq \left\{\text{maximal } [\Lambda] \in \mathcal L^+_\phi \setminus \cup_{j=0}^{i-1} \mathcal L^j \right\}\] 
for $0 \le i < \delta$, $\mathcal F_0 \defeq \{ [\free] \}$, and $\phi_0 \defeq \phi$.
So $\mathcal L^+_\phi = \bigcup_{i=0}^{\delta-1} \mathcal L^i$.
For some $i \ge 0$, assume $\mathcal F_i$ is a $\phi$-invariant malnormal subgroup system of $\free$ with finite type and $\mathcal L_{\phi|\mathcal F_{i}}^+ = \mathcal L^+_\phi \setminus \cup_{j=0}^{i-1} \mathcal L^j$. 
Let $\mathcal F_i'$ be the minimal free factor system of $\mathcal F_i$ that carries $\mathcal L_{\phi|\mathcal F_i}^+$;
therefore, $\mathcal L_{\phi|\mathcal F_i'}^{\mathrm{max}} = \mathcal L^i$.

If $i < \delta$, then set $\mathcal F_{i+1} \defeq \mathcal N(\mathcal L^i|\mathcal F_i')$.
Thus, $\mathcal F_{i+1} \sqsubsetneq \mathcal F_i'$ is a $\phi$-invariant proper malnormal subgroup system with finite type by \cref{cor:NSS}.
Note that $\mathcal L_{\phi|\mathcal F_{i+1}}^+ = \mathcal L^+_\phi \setminus \cup_{j=0}^{i} \mathcal L^j$ by \cref{lem:nonattractingcarry}.
Inductively, we get two $\phi$-invariant filtrations $(\mathcal F_i)_{i=0}^\delta$, $(\mathcal F_i')_{i=0}^\delta$ in $\free$ such that $\mathcal F_0 = \{[\free]\}, \mathcal F_\delta' = \emptyset$, and $\mathcal F_i \sqsupseteq \mathcal F_i' \sqsupsetneq \mathcal F_{i+1}$ for $0 \le i < \delta$.

\begin{defi}\label{def:canonicalfiltration}
    In~$\GG$, define the mapping tori $\mathcal G_i \defeq M(\phi|\mathcal F_i)$ and $\mathcal G_i' \defeq M(\phi|\mathcal F_i')$ for $0 \le i \le \delta$.
\end{defi}

By \cref{prop:passing to no cyclic splittings}, $\mathcal G_i' = M(\phi|\mathcal F_i')$ is canonical in $\mathcal G_i$ for $0 \le i \le \delta$.
In particular, each $\mathcal G_i'$ has no cyclic splittings.
By \cref{Cor:canonintersection}, $\mathcal G_{i+1} = \mathcal M(\mathcal L_{\phi|\mathcal F_i'}^{\mathrm{max}}|\mathcal F_i')$ is canonical in $\mathcal G_i'$ for $0 \le i < \delta$.
So we defined two canonical filtrations $(\mathcal G_i)_{i=0}^\delta$, $(\mathcal G_i')_{i=0}^\delta$ in $\GG$ such that
\[ \{[\GG]\} = \mathcal G_0 \sqsupseteq \mathcal G_0' \sqsupsetneq \cdots \sqsupsetneq \mathcal G_{\delta} \sqsupseteq \mathcal G_{\delta}' = \emptyset.\]
Their length $\delta = \delta(\phi)$ is canonical in $\GG$: it is the number of  canonical applications of both \cref{prop:passing to no cyclic splittings} and \cref{Cor:canonintersection} needed to reduce $\GG$ to $\emptyset$.
\begin{theo}\label{Thm:depthinvariant}
For $\phi \in \Out(\free)$, the depth $\delta(\phi)$ is a group invariant of $\GG \defeq \free \rtimes_\phi \ZZ$. \qed
\end{theo}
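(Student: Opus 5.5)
The plan is to show that, given an isomorphism $\alpha\colon M(\phi)\to M(\psi)$ with $\psi\in\Out(\free')$, the two filtrations of \cref{def:canonicalfiltration} built from $\phi$ and from $\psi$ correspond under $\alpha$ term by term. Since the length of each filtration is the depth, this forces $\delta(\phi)=\delta(\psi)$.

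I would argue by induction on $i$ that
\[ \alpha(\mathcal G_i(\phi))=\mathcal G_i(\psi) \quad\text{and}\quad \alpha(\mathcal G_i'(\phi))=\mathcal G_i'(\psi) \]
as subgroup systems. Here $\alpha$ acts on conjugacy classes of subgroups and preserves $\sqsubseteq$, meets, malnormality and the property of being free-by-cyclic. The base case is $\mathcal G_0=\{[\GG]\}$, which is preserved trivially. For the step $\mathcal G_i\to\mathcal G_i'$, \cref{prop:passing to no cyclic splittings} identifies $\mathcal G_i'$ as the maximal free-by-cyclic subgroup system in $\mathcal G_i$ with no cyclic splittings. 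That description only uses the group structure, so $\alpha$ carries $\mathcal G_i'(\phi)$ to the corresponding system inside $\alpha(\mathcal G_i(\phi))=\mathcal G_i(\psi)$, which is $\mathcal G_i'(\psi)$. For the step $\mathcal G_i'\to\mathcal G_{i+1}$, \cref{Cor:canonintersection} applies because $\mathcal G_i'$ has no cyclic splittings. It describes $\mathcal G_{i+1}$ as the meet of all maximal proper malnormal free-by-cyclic subgroup systems of $\mathcal G_i'$ supporting $\mathcal P(\mathcal G_i')=\mathcal G_i'\wedge\mathcal P(\GG)$.

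The main point to verify in this second step is that every ingredient is intrinsic to the group.
\begin{enumerate}
\item $\mathcal P(\GG)$ must be intrinsic. It is the system of maximal polynomial free-by-cyclic subgroups, by \cite[Proposition~3.6]{JPPoly}, and polynomiality is defined through iterated cyclic splittings, so $\alpha(\mathcal P(M(\phi)))=\mathcal P(M(\psi))$.
\item ``Malnormal in $\mathcal G_i'$'' is defined via \cref{def:malnormalinsystem} as the meet with a malnormal free-by-cyclic system of $\GG$. Malnormality in $\GG$ is a group-theoretic notion, so this is also preserved by $\alpha$.
\end{enumerate}
Hence $\alpha(\mathcal G_{i+1}(\phi))=\mathcal G_{i+1}(\psi)$, completing the induction.

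Finally, the $\phi$-filtration strictly decreases at each passage $\mathcal G_i'\sqsupsetneq\mathcal G_{i+1}$ and reaches $\mathcal G_\delta'=\emptyset$ after exactly $\delta(\phi)$ such steps; for $i<\delta(\phi)$ the system $\mathcal G_i'$ is nonempty. Therefore $\delta(\phi)$ equals the least $i$ with $\mathcal G_i'=\emptyset$, and this quantity is preserved by $\alpha$, so $\delta(\phi)=\delta(\psi)$.

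I expect the main obstacle to be a bookkeeping check rather than a new idea. Each step must invoke \cref{prop:passing to no cyclic splittings} and \cref{Cor:canonintersection} for the appropriate $\psi$-invariant system $\mathcal F_i(\psi)$ in $\free'$. Their hypotheses ($\psi$-invariant, malnormal, finite type) hold by construction, and the order $\sqsubseteq$ and meets in $M(\psi)$ are defined group-theoretically, so they agree with the $\alpha$-transported ones.
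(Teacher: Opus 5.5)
Your proposal is correct and follows the paper's argument. The paper also builds the filtrations $(\mathcal G_i)$, $(\mathcal G_i')$ of \cref{def:canonicalfiltration}, uses \cref{prop:passing to no cyclic splittings} and \cref{Cor:canonintersection} (with $\mathcal P(\GG)$ canonical) to see that each step is characterized purely algebraically, and reads off $\delta(\phi)$ as the number of steps needed to reach $\emptyset$; your explicit induction along an isomorphism $\alpha\colon M(\phi)\to M(\psi)$ simply unpacks the paper's notion of ``canonical in $\GG$''.
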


This allows us to define the \emph{depth} of~$\GG$ as $\delta(\GG) \defeq \delta(\phi)$.

\begin{coro}\label{commensurability}
The depth $\delta(\GG)$ is a commensurability invariant of the free-by-cyclic group~$\GG$.
\end{coro}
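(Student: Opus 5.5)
Since depth is already a group invariant by \cref{Thm:depthinvariant}, it suffices to show that depth does not change when we pass to a finite index subgroup. Suppose $\GG = M(\phi)$ and $\GG' = M(\psi)$ are commensurable, so some finite index subgroup $K \le \GG$ is isomorphic to a finite index subgroup $K' \le \GG'$. We will show $\delta(\GG) = \delta(K)$ for any finite index subgroup $K$ of a free-by-cyclic group $\GG$. Then $\delta(\phi) = \delta(K) = \delta(K') = \delta(\psi)$, where the middle equality is \cref{Thm:depthinvariant} applied to the abstract group $K \cong K'$.

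First we record that $K$ is free-by-cyclic and write down a splitting compatible with the one of $\GG$. Let $\pi\colon \GG \to \ZZ$ be the natural projection and set $A \defeq K \cap \free$. This is a finite index subgroup of $\free$, hence finitely generated, free and nontrivial. Also $\pi(K) = m\ZZ$ for some $m \ge 1$, because $K$ has finite index. Choose $xt^m \in K$ with $x \in \free$. Then $K = \langle A, xt^m\rangle \cong A \rtimes_\Psi \ZZ$, where $\Psi$ is the restriction to $A$ of the automorphism $\Phi' \colon g \mapsto x\Phi^m(g)x^{-1}$ of $\free$. By construction $\Phi'(A) = A$ and $[\Phi'] = \phi^m$ in $\Out(\free)$. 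So $K \cong M(\psi')$ with $\psi' = [\Psi] \in \Out(A)$, and in particular $K$ is free-by-cyclic.

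Next we apply \cref{Prop:powers} to the finite index subgroup $A \le \free$, the automorphism $\Phi'$ with $\Phi'(A) = A$, and its restriction $\Psi$. This gives $\delta(\psi') = \delta([\Phi']) = \delta(\phi^m)$. The same proposition, applied to $\phi$ with $A = \free$, gives $\delta(\phi^m) = \delta(\phi)$. Hence $\delta(K) = \delta(\psi') = \delta(\phi) = \delta(\GG)$, where we use \cref{Thm:depthinvariant} to identify $\delta(K)$ with $\delta(\psi')$ independently of the chosen splitting.

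The substantive input is \cref{Prop:powers}, which the paper proves in the appendix. Granting it, the one point needing care is that $[\Phi']$ is exactly $\phi^m$, and not some other outer automorphism, when $\Phi'$ is viewed in $\Out(\free)$. This holds because $\Phi'$ differs from $\Phi^m$ by an inner automorphism of $\free$.
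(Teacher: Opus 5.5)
Your proposal is correct and is essentially the paper's argument. You write a finite index subgroup as $\langle A, xt^m\rangle$ with $A = K\cap\free$ invariant under the representative $g\mapsto x\Phi^m(g)x^{-1}$ of $\phi^m$, then combine \cref{Prop:powers} (giving $\delta(\psi') = \delta(\phi^m) = \delta(\phi)$) with \cref{Thm:depthinvariant}; you just spell out the checks (normality of $A$ in $K$, and $[\Phi'] = \phi^m$) that the paper leaves implicit.
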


\begin{proof}
Suppose  $\GG = M(\phi)$ for some $\phi \in \Out(\free)$.
Any finite index subgroup~$H \le \GG$ is the subgroup $M(\Psi) = \langle A , xt^k \rangle$ for some $x \in \free$, $k\ge 1$, $\Psi \in \phi^k$, where $A \defeq H \cap \free$  is a $\Psi$-invariant finite index subgroup of~$\free$.
Let $\psi \in \Out(A)$ be the outer automorphism containing the restriction of $\Psi$ to $A$.
By \cref{Prop:powers} (and \cref{Thm:depthinvariant}), we have 
\[\delta(H) = \delta(\psi) = \delta(\phi^k) = \delta(\phi) = \delta(\GG). \qedhere\]
\end{proof}

Unpacking the proof of \cref{Thm:depthinvariant} reveals that, for $0 \le i < \delta$, there is a natural bijection (using \cref{thm:maximallamcanonical}) from~$\mathcal L^i$ to the canonical collection~$\mathcal M^{\mathrm{max}}_{\mathcal G_i'}$ of malnormal free-by-cyclic subgroup systems of~$\mathcal G_i'$ whose meet is~$\mathcal G_{i+1}$.
These bijections define a natural order-preserving bijection from~$\mathcal L^+_\phi$ to the canonical collection~$\mathcal C(\GG) \defeq \bigcup_i \mathcal M^{\mathrm{max}}_{\mathcal G_i'}$ of free-by-cyclic subgroup systems of~$\GG$.
So the set~$\mathcal L^+_\phi$ and strata~$\mathcal L^i$ are canonical in~$\GG$.
In particular, the poset~$\mathcal L^+_\phi$ is canonical in~$\GG$ when $\delta(\GG) \le 1$.
Unfortunately, the bijection need not be an order-isomorphism: 
the partial order on~$\mathcal C(\GG)$ induces a partial order on~$\mathcal L^+_\phi$ that forces $[\Lambda_1] \preccurlyeq [\Lambda_2]$ for all $[\Lambda_1] \in \mathcal L^i$, $[\Lambda_2] \in \mathcal L^j$ and $ j < i$.
In the last section, we will construct a natural order-isomorphism from~$\mathcal L^+_\phi$ to another canonical collection~$\mathcal H(\GG)$ of free-by-cyclic subgroup systems of~$\GG$.

\section{Group invariance of the poset of laminations}
\label{sec:grp_inv_poset}

Fix $\phi \in \Out(\free)$, and set $\GG \defeq M(\phi) = \free \rtimes_{\phi} \ZZ$. 
Our final goal is to show that the poset $\mathcal L_\phi^+$ of attracting lamination orbits is canonical in $\GG$ by constructing a canonical collection~$\mathcal H(\GG)$ of free-by-cyclic subgroup systems that is naturally order-isomorphic to $\mathcal L_\phi^+$.

 In \S\ref{subsec:support_ss}, 
 we defined the supporting subgroup system~$\mathcal S[\Lambda]$ of $[\Lambda] \in \mathcal L^+_\phi$. We now give an inductive construction of~$\mathcal S[\Lambda]$, which in turn will be used to construct the canonical collection $\mathcal{H}(\GG)$. Recall the definition in \S\ref{subsec:support_ss} of the support~$\Lambda^\downarrow$ and the exterior~$\Lambda^\perp$.

Let $\delta \defeq \delta(\phi)$ and $\mathcal L^{i} \defeq \left\{\text{maximal } [\Lambda] \in \mathcal L^+_\phi \setminus \cup_{j=0}^{i-1} \mathcal L^j \right\}$ for $0 \le i \le \delta$.
Fix $[\Lambda] \in \mathcal L_\phi^+$, and set $\mathcal S_{-1}[\Lambda] \defeq \{ [\free] \}$.
For $0 \le j \le \delta$, let $\mathcal F_j'[\Lambda]$ be the $\phi$-invariant minimal free factor system of $\mathcal S_{j-1}[\Lambda]$ that carries $\mathcal L^+_{\phi|S_{j-1}[\Lambda]}$, and define
\[ \mathcal S_j[\Lambda] \defeq \mathcal N(\Lambda^\perp \cap \mathcal L^{j}) \wedge \mathcal F_{j}'[\Lambda]. \]
Note that $\mathcal L^\delta = \emptyset$ and $\mathcal N(\emptyset) = \{[\free]\}$.
By \cref{cor:NSS}, $\mathcal S_j[\Lambda] \sqsubseteq \mathcal F_{j}'[\Lambda]$ is a $\phi$-invariant malnormal subgroup system with finite type and no $\phi|\mathcal F_{j}'[\Lambda]$-twins;
moreover, the remaining lamination orbits are
$\mathcal L^+_{\phi|\mathcal S_j[\Lambda]} = \mathcal L^+_{\phi} \setminus \cup_{k=0}^{j} (\Lambda^\perp \cap \mathcal L^{k})$ by \cref{lem:nonattractingcarry}.
In particular, 
\[ \mathcal L^+_{\phi|\mathcal S_{\delta-1}[\Lambda]} = \mathcal L^+_{\phi|\mathcal S_{\delta}[\Lambda]} = \Lambda^\downarrow.\]

\begin{lem}\label{lem:supportsystems_redefined}
For $[\Lambda]\in\mathcal L_\phi^+$, the {supporting subgroup system} $\mathcal S[\Lambda] = \mathcal S_{\delta(\phi)}[\Lambda]$.
\end{lem}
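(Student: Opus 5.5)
The plan is to prove the two inclusions $\mathcal S[\Lambda] \sqsubseteq \mathcal S_\delta[\Lambda]$ and $\mathcal S_\delta[\Lambda] \sqsubseteq \mathcal S[\Lambda]$, and conclude by antisymmetry of $\sqsubseteq$. Write $\delta = \delta(\phi)$. Recall that $\mathcal S[\Lambda]$ is the minimal free factor system of $\mathcal N(\Lambda^\perp)$ carrying $\Lambda^\downarrow$. Recall also that $\mathcal N(\mathcal L) = \bigwedge\{\mathcal N[\Lambda'] : [\Lambda'] \in \mathcal L\}$ (discussion after \cref{Thm:NSS}), so that $\mathcal N(\mathcal L \cup \mathcal L') = \mathcal N(\mathcal L) \wedge \mathcal N(\mathcal L')$.

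\emph{Step 1: $\mathcal S_{j}[\Lambda] \sqsubseteq \mathcal N\bigl(\Lambda^\perp \cap \bigcup_{k\le j}\mathcal L^k\bigr)$ for all $j$.} This follows by induction on $j$. By definition $\mathcal S_j[\Lambda] \sqsubseteq \mathcal N(\Lambda^\perp\cap\mathcal L^j)$ and $\mathcal S_j[\Lambda] \sqsubseteq \mathcal F_j'[\Lambda] \sqsubseteq \mathcal S_{j-1}[\Lambda]$. Combine these with the meet formula for $\mathcal N$. Since the $\mathcal L^k$ for $k<\delta$ exhaust $\mathcal L^+_\phi$, in particular $\mathcal S_{\delta-1}[\Lambda] \sqsubseteq \mathcal N(\Lambda^\perp)$.

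\emph{Step 2: $\mathcal S[\Lambda] \sqsubseteq \mathcal S_j[\Lambda]$ for all $-1\le j\le\delta$.} Again induct on $j$, the case $j=-1$ being trivial. Assume $\mathcal S[\Lambda]\sqsubseteq \mathcal S_{j-1}[\Lambda]$. By \cref{lem:meet_ffs_is_ffs}, $\mathcal S[\Lambda]\wedge \mathcal F'_j[\Lambda]$ is a free factor system of $\mathcal S[\Lambda]$. It carries $\Lambda^\downarrow$, since both $\mathcal S[\Lambda]$ and $\mathcal F_j'[\Lambda]$ do: the latter carries $\mathcal L^+_{\phi|\mathcal S_{j-1}[\Lambda]} \supseteq \Lambda^\downarrow$. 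Here we use $\BB(\mathcal A_1)\cap\BB(\mathcal A_2)=\BB(\mathcal A_1\wedge\mathcal A_2)$. By \cref{lem:transitivityffs}, this meet is then a free factor system of $\mathcal N(\Lambda^\perp)$ carrying $\Lambda^\downarrow$. Minimality of $\mathcal S[\Lambda]$ forces $\mathcal S[\Lambda] \sqsubseteq \mathcal F_j'[\Lambda]$. Since also $\mathcal S[\Lambda]\sqsubseteq \mathcal N(\Lambda^\perp) \sqsubseteq \mathcal N(\Lambda^\perp\cap\mathcal L^j)$, we get $\mathcal S[\Lambda]\sqsubseteq \mathcal S_j[\Lambda]$.

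\emph{Step 3: $\mathcal S_\delta[\Lambda] \sqsubseteq \mathcal S[\Lambda]$.} Since $\mathcal L^\delta=\emptyset$, we have $\mathcal S_\delta[\Lambda] = \mathcal F'_\delta[\Lambda]$. This is the minimal free factor system of $\mathcal S_{\delta-1}[\Lambda]$ carrying $\mathcal L^+_{\phi|\mathcal S_{\delta-1}[\Lambda]} = \Lambda^\downarrow$. By Steps 1 and 2, $\mathcal S[\Lambda]\sqsubseteq \mathcal S_{\delta-1}[\Lambda]\sqsubseteq\mathcal N(\Lambda^\perp)$. Applying \cref{lem:meet_ffs_is_ffs} to the free factor system $\mathcal S[\Lambda]$ of $\mathcal N(\Lambda^\perp)$ shows that $\mathcal S_{\delta-1}[\Lambda]\wedge\mathcal S[\Lambda] = \mathcal S[\Lambda]$ is a free factor system of $\mathcal S_{\delta-1}[\Lambda]$. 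It carries $\Lambda^\downarrow$, and it is $\phi$-invariant. By minimality, $\mathcal S_\delta[\Lambda]\sqsubseteq\mathcal S[\Lambda]$.

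The main point requiring care is the bookkeeping in Step 2. One must check that the meet constructions keep everything inside malnormal finite-type systems, so that \cref{lem:transitivityffs,lem:meet_ffs_is_ffs} apply. One must also check that ``carries $\Lambda^\downarrow$'' passes to meets, which uses the identity $\BB(\mathcal A_1)\cap\BB(\mathcal A_2)=\BB(\mathcal A_1\wedge\mathcal A_2)$ together with closedness of the lamination orbits.
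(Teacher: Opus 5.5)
Your proof is correct and is essentially the paper's argument: Step 3 matches the paper's proof of $\mathcal S_\delta[\Lambda]\sqsubseteq\mathcal S[\Lambda]$ via minimality of $\mathcal F'_\delta[\Lambda]$, and the reverse inclusion differs only in bookkeeping. The paper inducts on the statement that $\mathcal S_j[\Lambda]$ is a free factor system of $\mathcal N\bigl(\Lambda^\perp\cap\bigcup_{k\le j}\mathcal L^k\bigr)$ (using \cref{lem:transitivityffs,lem:meet_ffs_is_ffs}) and uses minimality of $\mathcal S[\Lambda]$ once at $j=\delta$, whereas you induct on $\mathcal S[\Lambda]\sqsubseteq\mathcal S_j[\Lambda]$ and use that minimality at every level; both routes work.
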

\begin{proof}
    Recall that $\mathcal S[\Lambda]$ is the minimal free factor system of $\mathcal N(\Lambda^\perp)$ that carries $\Lambda^\downarrow = \mathcal L^+_{\phi|\mathcal N(\Lambda^\perp)}$.
    Set $\delta \defeq \delta(\phi)$.
    To start, $\mathcal S_0[\Lambda]$ is a free factor system of $\mathcal N(\Lambda^\perp \cap \mathcal L^0)$ by \Cref{lem:meet_ffs_is_ffs}.
    If $\mathcal S_{j-1}[\Lambda]$ is a free factor system of $\mathcal N(\Lambda^\perp \cap \bigcup_{k=0}^{j-1} \mathcal L^k)$, then
    $\mathcal S_j[\Lambda] \defeq \mathcal N(\Lambda^\perp \cap \mathcal L^j) \wedge \mathcal F_{j}'[\Lambda]$ is a free factor system of $\mathcal N(\Lambda^\perp \cap \bigcup_{k=0}^{j} \mathcal L^k)$ by \Cref{lem:transitivityffs,lem:meet_ffs_is_ffs}.
    By induction, $ \mathcal S_\delta[\Lambda] \sqsubseteq \mathcal S_{\delta-1}[\Lambda]$ are both free factor systems of $\mathcal N(\Lambda^\perp)$.
    As $\mathcal S_\delta[\Lambda]$ carries $\mathcal L^+_{\phi|\mathcal S_{\delta}[\Lambda]} = \Lambda^\downarrow$,
    we get $\mathcal S[\Lambda] \sqsubseteq \mathcal S_\delta[\Lambda]$ by minimality of~$\mathcal S[\Lambda]$, and $\mathcal S[\Lambda]$ is a free factor system of $\mathcal S_{\delta-1}[\Lambda]$.
    Yet, $\mathcal S_\delta[\Lambda] = \mathcal F'_\delta[\Lambda]$ is the minimal free factor system of $\mathcal S_{\delta-1}[\Lambda]$ that carries $\mathcal L^+_{\phi|\mathcal S_{\delta-1}[\Lambda]} = \Lambda^\downarrow$.
    So $\mathcal S_\delta[\Lambda] \sqsubseteq \mathcal S[\Lambda]$, and the two subgroup systems are equal.
\end{proof}

\begin{example}\label{ex:SupportingSystem2}
    Consider the poset of laminations $\mathcal L_{\phi}^+$ shown in \Cref{fig:supporting_system2}, where $[\Lambda_i] = \Lambda_i$ for all~$i$. Let us look at the subgroup systems $\mathcal{S}_j[\Lambda_6]$. For brevity, we will just write $\mathcal{S}_j$. We will also assume $\mathcal{F}_j'[\Lambda_6] = \mathcal{S}_{j-1}[\Lambda_6]$ for simplicity. In the following computations, the first equality is obtained by identifying $\Lambda_6^{\perp}\cap \mathcal{L}^j$, and the second equality by identifying $\mathcal L_{\phi|S_{j-1}}^{\mathrm{max}}$ (indicated by red circles in \Cref{fig:supporting_system2}). 
    \begin{align*}
        \mathcal{S}_0 & = \mathcal N(\{\Lambda_1, \Lambda_2\}) \wedge \mathcal S_{-1} 
         =\mathcal N(\mathcal L^0) \wedge [\free]
         =\mathcal{F}_1 
         \\
        \mathcal{S}_1 & = \mathcal N(\{\Lambda_3, \Lambda_4, \Lambda_5\}) \wedge \mathcal{S}_0  =\mathcal N(\mathcal L^1) \wedge \mathcal{F}_1  =\mathcal{F}_2 
        \\
        \mathcal{S}_2 & = \mathcal N(\{\Lambda_7, \Lambda_8\}) \wedge \mathcal{S}_1  =\mathcal N(\mathcal L_{\phi|S_1}^{\mathrm{max}} \setminus \Lambda_6) \wedge \mathcal{F}_2 
        \\
        \mathcal{S}_3 & = \mathcal N(\{\Lambda_{11}\}) \wedge \mathcal{S}_2  =\mathcal N(\mathcal L_{\phi|S_2}^{\mathrm{max}} \setminus \Lambda_6) \wedge \mathcal{S}_2 
        \\
        \mathcal{S}_4 & = \mathcal N(\{\Lambda_{13}, \Lambda_{14}\}) \wedge \mathcal{S}_3  =\mathcal N(\mathcal L_{\phi|S_3}^{\mathrm{max}} \setminus \Lambda_6) \wedge \mathcal{S}_3 
        \\
        \mathcal{S}_5 & = \mathcal N(\emptyset) \wedge \mathcal{S}_4  = \mathcal N(\mathcal L_{\phi|S_4}^{\mathrm{max}} \setminus \Lambda_6) \wedge \mathcal{S}_4 = \mathcal S_4 
        \\
        & \mathcal{L}_{\phi|\mathcal{S}_4}^+  = \mathcal{L}_{\phi|\mathcal{S}_5}^+ = \Lambda_6^{\downarrow}
    \end{align*}

\tikzset{node style ge/.style={circle}}

\begin{figure}
   \centering
       
\begin{minipage}[b]{0.31\textwidth}
\hspace{0.5cm}
\begin{tikzpicture}[scale=0.5][baseline=(A.center)]
  \tikzset{BarStyle/.style =   {opacity=.2,line width=4 mm,line cap=round,color=#1}}

\matrix (A) [matrix of math nodes,column sep=0 mm] 
{ & \Lambda_1 & \  & \Lambda_2 & \ \\
  \Lambda_3 & \  & \Lambda_4 & \ & \Lambda_5 \\
  \ &\ & \ & \ & \\
  \Lambda_6 & \  & \Lambda_7 & \ & \Lambda_8 \\
  \ & \ & \ & \ & \ \\
   \Lambda_9 & \  & \Lambda_{10} & \Lambda_{11} &\ \\
     \ & \ & \ & \ & \ \\
        \Lambda_{12} & \  & \ & \Lambda_{13} &\Lambda_{14} \\  
            \ & \ & \ & \ & \ \\
  \ & \ & \textcolor{blue}{\mathcal{L}^{+}_{\phi}} \ &  \ & \ \\
};

\draw[thick] (A-1-2) to (A-2-1); 
\draw[thick] (A-1-2) to (A-2-3); 
\draw[thick] (A-1-4) to (A-2-3); 
\draw[thick] (A-1-4) to (A-2-5); 
\draw[thick] (A-2-1) to (A-4-1); 
\draw[thick] (A-4-1) to (A-6-1); 
\draw[thick] (A-6-1) to (A-8-1);
\draw[thick] (A-4-1) to (A-6-3); 
\draw[thick] (A-2-3) to (A-4-3);
\draw[thick] (A-4-3) to (A-6-3); 
\draw[thick] (A-4-3) to (A-6-4); 
\draw[thick] (A-2-5) to (A-4-5); 
\draw[thick] (A-6-4) to (A-8-4); 
\draw[thick] (A-6-4) to (A-8-5);

 \draw [BarStyle=green] (A-1-4.north west) to (A-2-5.south east) ;
 \draw [BarStyle=green] (A-1-2.north west) to (A-2-3.south east) ;
 \draw [BarStyle=green] (A-4-3.north west) to (A-8-5.south east) ;
 \draw [BarStyle=green] (A-6-4.north) to (A-8-4.south) ;

\draw [BarStyle=green] (A-2-3.north) to (A-4-3.south) ;
\draw [BarStyle=green] (A-2-5.north) to (A-4-5.south);
 \draw [BarStyle=green] (A-1-2.north east) to (A-2-1.south west) ;

   \draw[red, thick] (A-1-2.center) circle[radius=0.6cm];
    \draw[red, thick] (A-1-4.center) circle[radius=0.6cm];

\end{tikzpicture}
\end{minipage}
\begin{minipage}[b]{0.31\textwidth}
\hspace{0.3cm}
\begin{tikzpicture}[scale=0.5][baseline=(A.center)]
  \tikzset{BarStyle/.style =   {opacity=.2,line width=4 mm,line cap=round,color=#1}}

\matrix (A) [matrix of math nodes, column sep=0 mm] 
{ 
  \Lambda_3 & \  & \Lambda_4 & \ & \Lambda_5 \\
  \ &\ & \ & \ & \\
  \Lambda_6 & \  & \Lambda_7 & \ & \Lambda_8 \\
  \ & \ & \ & \ & \ \\
   \Lambda_9 & \  & \Lambda_{10} & \Lambda_{11} &\ \\
     \ & \ & \ & \ & \ \\
        \Lambda_{12} & \  & \ & \Lambda_{13} &\Lambda_{14} \\    
            \ & \ & \ & \ & \ \\
  \ & \ & \textcolor{blue}{\mathcal{L}^{+}_{\phi\mid\mathcal{S}_0}} & \ & \ \\
};

\draw[thick] (A-1-1) to (A-3-1); 
\draw[thick] (A-3-1) to (A-5-1); 
\draw[thick] (A-5-1) to (A-7-1); 
\draw[thick] (A-1-3) to (A-3-3); 
\draw[thick] (A-3-3) to (A-5-3); 
\draw[thick] (A-1-5) to (A-3-5);
\draw[thick] (A-3-3) to (A-5-4); 
\draw[thick] (A-3-1) to (A-5-3);
\draw[thick] (A-5-4) to (A-7-4);
\draw[thick] (A-5-4) to (A-7-5);

 \draw [BarStyle=green] (A-1-1.west) to (A-1-1.east) ;
  \draw [BarStyle=green] (A-1-5.west) to (A-1-5.east) ;

 \draw [BarStyle=green] (A-1-3.north) to (A-3-3.south) ;

 \draw [BarStyle=green] (A-1-5.north) to (A-3-5.south) ;

  \draw [BarStyle=green] (A-3-3.north west) to (A-7-5.south east) ;

 \draw [BarStyle=green] (A-5-4.north) to (A-7-4.south) ;

   \draw[red, thick] (A-1-1.center) circle[radius=0.6cm];

 \draw[red, thick] (A-1-3.center) circle[radius=0.6cm];

 \draw[red, thick] (A-1-5.center) circle[radius=0.6cm];

\end{tikzpicture}
\end{minipage}
\begin{minipage}[b]{0.31\textwidth}
\begin{tikzpicture}[scale=0.5][baseline=(A.center)]
  \tikzset{BarStyle/.style =   {opacity=.2,line width=4 mm,line cap=round,color=#1}}

\matrix (A) [matrix of math nodes, column sep=0 mm] 
{ 
  \Lambda_6 & \  & \Lambda_7 & \ & \Lambda_8 \\
  \ & \ & \ & \ & \ \\
   \Lambda_9 & \  & \Lambda_{10} & \Lambda_{11} &\ \\
     \ & \ & \ & \ & \ \\
        \Lambda_{12} & \  & \ & \Lambda_{13} &\Lambda_{14} \\ 
            \ & \ & \ & \ & \ \\
  \ & \ & \textcolor{blue}{\mathcal{L}^{+}_{\phi\mid\mathcal{S}_1}} & \ & \ \\
};

\draw[thick] (A-1-1) to (A-3-1); 
\draw[thick] (A-1-1) to (A-3-3); 
\draw[thick] (A-3-1) to (A-5-1); 
\draw[thick] (A-1-3) to (A-3-3); 
\draw[thick] (A-1-3) to (A-3-4); 
\draw[thick] (A-3-4) to (A-5-5); 
\draw[thick] (A-3-4) to (A-5-4); 

 \draw [BarStyle=green] (A-1-3.north west) to (A-5-5.south east) ;

  \draw [BarStyle=green] (A-1-5.west) to (A-1-5.east) ;

 \draw [BarStyle=green] (A-3-4.north) to (A-5-4.south) ;

 \draw[red, thick] (A-1-1.center) circle[radius=0.6cm];

 \draw[red, thick] (A-1-3.center) circle[radius=0.6cm];

 \draw[red, thick] (A-1-5.center) circle[radius=0.6cm];

\end{tikzpicture}
\end{minipage}
\begin{minipage}[b]{0.31\textwidth}
\hspace{0.5cm}
\begin{tikzpicture}[scale=0.5][baseline=(A.center)]
  \tikzset{BarStyle/.style =   {opacity=.2,line width=4 mm,line cap=round,color=#1}}

\matrix (A) [matrix of math nodes, column sep=0 mm] 
{
  \Lambda_6 & \  & \ & \ & \ \\
  \ & \ & \ & \ & \ \\
   \Lambda_9 & \  & \Lambda_{10} & \Lambda_{11} &\ \\
     \ & \ & \ & \ & \ \\
        \Lambda_{12} & \  & \ & \Lambda_{13} &\Lambda_{14} \\     
            \ & \ & \ & \ & \ \\
  \ & \ & \textcolor{blue}{\mathcal{L}^{+}_{\phi\mid\mathcal{S}_2}} & \ & \ \\
};

\draw[thick] (A-1-1) to (A-3-1); 
\draw[thick] (A-1-1) to (A-3-3); 
\draw[thick] (A-3-1) to (A-5-1); 
\draw[thick] (A-3-4) to (A-5-4); 
\draw[thick] (A-3-4) to (A-5-5);

 \draw [BarStyle=green] (A-3-4.north west) to (A-5-5.south east) ;

  \draw [BarStyle=green] (A-3-4.north) to (A-5-4.south) ;

\draw[red, thick] (A-3-4.center) circle[radius=0.6cm];

\draw[red, thick] (A-1-1.center) circle[radius=0.6cm];

\end{tikzpicture}
\end{minipage}
\begin{minipage}[b]{0.31\textwidth}
\hspace{0.2cm}
\begin{tikzpicture}[scale=0.5][baseline=(A.center)]
  \tikzset{BarStyle/.style =   {opacity=.2,line width=4 mm,line cap=round,color=#1}}
\matrix (A) [matrix of math nodes,column sep=0 mm] 
{\Lambda_6 & \  & \ & \ & \ \\
  \ & \ & \ & \ & \ \\
   \Lambda_9 & \  & \Lambda_{10} & \ &\ \\
     \ & \ & \ & \ & \ \\
        \Lambda_{12} & \  & \ & \Lambda_{13} &\Lambda_{14} \\      
        \ & \ & \ & \ & \ \\
             \ & \ & \ & \ & \ \\
  \ & \ & \textcolor{blue}{\mathcal{L}^{+}_{\phi\mid\mathcal{S}_3}} & \ & \ \\
   };
\draw[thick] (A-1-1) to (A-3-1); 
\draw[thick] (A-3-1) to (A-5-1); 
\draw[thick] (A-1-1) to (A-3-3); 
\draw [BarStyle=green] (A-5-4.west) to (A-5-5.east) ;
  \draw[red, thick] (A-1-1.center) circle[radius=0.6cm];
\draw[red, thick] (A-5-4.center) circle[radius=0.6cm];
\draw[red, thick] (A-5-5.center) circle[radius=0.6cm];
 \end{tikzpicture}
\end{minipage}
\begin{minipage}[b]{0.31\textwidth}
\begin{tikzpicture}[scale=0.5][baseline=(A.center)]
  \tikzset{BarStyle/.style =   {opacity=.2,line width=4 mm,line cap=round,color=#1}}
\matrix (A) [matrix of math nodes, column sep=0 mm] 
{                          
\Lambda_6 & \  &  & \ & \\
  \ & \ & \ & \ &  \\
   \Lambda_9 & \  & \Lambda_{10} &  &\ \\
     \ & \ & \ & \ & \ \\   
   \Lambda_{12} & \  & \ & \ &\ \\ 
             \ & \ & \ & \ & \ \\
  \ & \ & \textcolor{blue}{\mathcal{L}^{+}_{\phi\mid\mathcal{S}_4}=\Lambda_{6}^{\downarrow}} & \ & \ \\
};
\draw[thick] (A-1-1) to (A-3-1); 
\draw[thick] (A-3-1) to (A-5-1); 
\draw[thick] (A-1-1) to (A-3-3); 
\draw[red, thick] (A-1-1.center) circle[radius=0.6cm];

\end{tikzpicture}
\end{minipage}

\caption{Schematic for building $\mathcal S[\Lambda_6]=\mathcal S_5[\Lambda_6]$. For each $j$, the highlighted sets denote $\Lambda_6^{\perp} \cap \mathcal L_{\phi|S_{j}}^+$, and the circled laminations denote the elements of $\mathcal L_{\phi|S_{j}}^{\mathrm{max}}$. }
    \label{fig:supporting_system2}
\end{figure}

    Note that $[\Lambda_6] \in \mathcal{L}^2$ and $S_j[\Lambda_6] = \mathcal{F}_{j+1}$ for $0 \leq j <2$.
\end{example}

Now consider the corresponding mapping tori in $\GG$.
For $[\Lambda] \in \mathcal L_\phi^+$, set $\mathcal H[\Lambda] \defeq M(\phi|\mathcal S[\Lambda])$. 
Define the collection 
\[ \mathcal H(\phi) \defeq \left\{ \mathcal H[\Lambda] \,:\, [\Lambda] \in \mathcal L^+_\phi \right\}. \]
By \cref{lem:inclusionsubsys,lem:supporting_poset_isomorphism}, $\mathcal H(\phi)$ is naturally order-isomorphic to $\mathcal S(\phi)$ and $\mathcal L_\phi^+$.

\begin{theo}\label{thm:posetlaminationinvariant}
Let $\phi \in \Out(\free)$ and $\GG=\free \rtimes_\phi \ZZ$. 
The collection $\mathcal H(\phi)$ is canonical in $\GG$.

In particular, the poset $\mathcal L_\phi^+$ is a group invariant of $\GG$.
\end{theo}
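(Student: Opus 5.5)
The plan is to show that the inductive construction of $\mathcal S[\Lambda] = \mathcal S_{\delta}[\Lambda]$ from \cref{lem:supportsystems_redefined} can be carried out entirely inside $\GG$. Each step should be an application of \cref{prop:passing to no cyclic splittings} or of \cref{thm:maximallamcanonical}/\cref{Cor:canonintersection}, with one extra ingredient: the element of $\mathcal M^{\mathrm{max}}$ that corresponds to $[\Lambda]$ must be tracked canonically from one stage to the next. Canonicity is transitive, so $\mathcal H[\Lambda] = M(\phi|\mathcal S[\Lambda])$ is then canonical in $\GG$. Together with the order-isomorphism $\mathcal H(\phi)\cong \mathcal L^+_\phi$ (\cref{lem:inclusionsubsys,lem:supporting_poset_isomorphism}), this gives the "in particular" statement.

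\emph{Step 1 (the levels above $[\Lambda]$).} Let $[\Lambda]\in\mathcal L^i$. For $j<i$, every element of $\Lambda^\downarrow$ lies at a level $\geq i$, so $\Lambda^\perp\cap\mathcal L^j=\mathcal L^j$. An induction then gives $\mathcal S_{j}[\Lambda]=\mathcal F_{j+1}$ and $\mathcal F'_{j+1}[\Lambda]=\mathcal F'_{j+1}$ for $j<i$. Hence $M(\phi|\mathcal S_{i-1}[\Lambda])=\mathcal G_i$ and $M(\phi|\mathcal F'_i[\Lambda])=\mathcal G_i'$, and these are canonical by \S\ref{subsec:depthcanonical}. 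By \cref{thm:maximallamcanonical}, $[\Lambda]$ corresponds to a canonical element $\mathcal M[\Lambda|\mathcal F_i']\in\mathcal M^{\mathrm{max}}_{\mathcal G_i'}$; this is exactly the bijection $\mathcal L^+_\phi\to\mathcal C(\GG)$ noted after \cref{commensurability}.

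\emph{Step 2 (a modified recursion).} From stage $i$ onward I would replace the level-indexed recursion by a level-free one. Set $\mathcal A_0\defeq\mathcal F_i'$. Given $\mathcal A_r$, let $\mathcal A_r'$ be the minimal free factor system of $\mathcal A_r$ carrying $\mathcal L^+_{\phi|\mathcal A_r}$, and put $\mathcal A_{r+1}\defeq\mathcal N\big((\mathcal L^{\mathrm{max}}_{\phi|\mathcal A_r'}\setminus\{[\Lambda]\})\,|\,\mathcal A_r'\big)$. The key observations, all following from \cref{lem:nonattractingcarry}, are the following. First, $[\Lambda]$ remains maximal throughout. Second, no element of $\Lambda^\downarrow\setminus\{[\Lambda]\}$ is ever maximal. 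Third, every element of $\Lambda^\perp$ that is maximal in the remaining set has no remaining lamination above it, so it gets removed. Therefore $\mathcal L^+_{\phi|\mathcal A_r}$ decreases to $\Lambda^\downarrow$, at which point $\mathcal A_{r+1}'=\mathcal A_r'$ and the process stabilizes. I would then prove that the stable value equals $\mathcal S[\Lambda]$ by repeating the argument of \cref{lem:supportsystems_redefined}. Each $\mathcal A_r'$ is a free factor system of $\mathcal N$ of the removed laminations, by \cref{lem:transitivityffs,lem:meet_ffs_is_ffs}, and hence the final one is a free factor system of $\mathcal N(\Lambda^\perp)$ carrying $\Lambda^\downarrow$. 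Its minimality then forces equality with $\mathcal S[\Lambda]$. (The example in \Cref{fig:supporting_system2} already follows this pattern.)

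\emph{Step 3 (canonicity of each step).} First, $M(\phi|\mathcal A_r')$ is canonical in $M(\phi|\mathcal A_r)$ and has no cyclic splittings, by \cref{prop:passing to no cyclic splittings}. Second, by \cref{thm:maximallamcanonical} and \cref{lem:meet_of_mapping_tori}, $M(\phi|\mathcal A_{r+1})$ is the meet of all elements of $\mathcal M^{\mathrm{max}}_{M(\phi|\mathcal A_r')}$ except the one corresponding to $[\Lambda]$. The main obstacle is to single out this excluded element canonically at each stage. My plan is to use $\mathcal N[\Lambda|\mathcal A_r'] = \mathcal N[\Lambda|\mathcal A_{r-1}']\wedge\mathcal A_r'$, which holds since $\mathcal A_r'\sqsubseteq\mathcal A_{r-1}'$. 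Applying \cref{lem:meet_of_mapping_tori} gives
\[\mathcal M[\Lambda|\mathcal A_r'] = \mathcal M[\Lambda|\mathcal A_{r-1}']\wedge M(\phi|\mathcal A_r').\]
This expresses the new tracked element as the meet of the previous tracked element with a canonical system, and it is an element of $\mathcal M^{\mathrm{max}}_{M(\phi|\mathcal A_r')}$ because $[\Lambda]$ is still maximal. Inducting from the canonical starting element of Step 1 completes the argument. I expect the delicate points to be checking this meet identity carefully, and checking that when the process stabilizes it is detected canonically, namely when $\mathcal M^{\mathrm{max}}$ is a singleton and consists of the tracked element.
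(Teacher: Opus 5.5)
Your proof is correct and has the same architecture as the paper's. Step~1 is the paper's observation that $\mathcal S_{j-1}[\Lambda]=\mathcal F_j$ and $\mathcal F_j'[\Lambda]=\mathcal F_j'$ for $j\le i$. The induction then alternates \cref{prop:passing to no cyclic splittings} with an ``all but one'' meet taken from \cref{thm:maximallamcanonical}.

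Your ``level-free'' recursion is in fact the paper's recursion. Everything strictly above $[\Lambda]$ lies in levels $<i$, so one checks that $\mathcal L^{\mathrm{max}}_{\phi|\mathcal S_{j-1}[\Lambda]}=\{[\Lambda]\}\cup(\Lambda^\perp\cap\mathcal L^j)$ for $j>i$. Hence $\mathcal A_r=\mathcal S_{i+r-1}[\Lambda]$ for $r\ge 1$, and your identification of the stable value with $\mathcal S[\Lambda]$ is just \cref{lem:supportsystems_redefined}.

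The genuine difference is how the omitted element is singled out. You carry it forward through the identity $\mathcal M[\Lambda|\mathcal A_r']=\mathcal M[\Lambda|\mathcal A_{r-1}']\wedge M(\phi|\mathcal A_r')$. This identity is valid: it follows from $\mathcal N[\Lambda|\mathcal A]=\mathcal A\wedge\mathcal N[\Lambda]$, the inclusion $\mathcal A_r'\sqsubseteq\mathcal A_{r-1}'$, and \cref{lem:meet_of_mapping_tori}.

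The paper instead characterizes the output relative to the fixed canonical system $\mathcal G_{i+1}$. There, $M(\phi|\mathcal S_j[\Lambda])$ is the unique all-but-one meet of $\mathcal M^{\mathrm{max}}_{M(\phi|\mathcal F_j'[\Lambda])}$ that is not supported by $\mathcal G_{i+1}$. It still carries $[\Lambda]$, whereas $\mathcal F_{i+1}$ does not. Any meet omitting some $[\Lambda'']\neq[\Lambda]$ lies below $\mathcal N[\Lambda|\mathcal F_j'[\Lambda]]\sqsubseteq\mathcal N(\mathcal L^i)\wedge\mathcal F_i'=\mathcal F_{i+1}$.

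The paper's route needs no auxiliary tracked system, but it relies on this carrying/support argument. Yours replaces that argument with a one-line meet identity, at the cost of carrying the tracked element through the induction. Your stopping criterion can also be replaced by simply running $\delta$ steps. Both routes are valid.
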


As a canonical collection, we can finally define $\mathcal H(\GG) \defeq \mathcal H(\phi)$.

\begin{proof}
    By the preceding discussion, the poset $\mathcal L^+_\phi$ is a group invariant of $\GG$ once we know $\mathcal H(\phi)$ is canonical in~$\GG$.
    Set $\delta \defeq \delta(\phi)$.
    Recall the definition of the canonical mapping tori $\mathcal G_i \defeq M(\phi|\mathcal F_i)$ and $\mathcal G_i' \defeq M(\phi|\mathcal F_i')$ in $\GG$ for $0 \le i \le \delta$ (\cref{def:canonicalfiltration}).
    It will be enough to prove that, for $0 \le i < \delta$, the stratum
    \[ \mathcal H(\phi)^i \defeq \left\{ \mathcal H[\Lambda] \,:\, [\Lambda] \in \mathcal L^i \right\}\]
    is canonical in~$\mathcal G_i'$.

    Fix $0 \le i < \delta$.
    For $0 \le j \le \delta$, define the mapping tori
    \[ \mathcal H(\phi)^i_j \defeq \left\{ M(\phi|\mathcal S_j[\Lambda]) \,:\, [\Lambda] \in \mathcal L^i \right\}. \]
    By construction, $\mathcal S_{j-1}[\Lambda] = \mathcal F_{j}$ and $\mathcal F_j'[\Lambda] = \mathcal F_j'$ for any $[\Lambda] \in \mathcal L^i$ and $0 \le j \le i$.
    So the collection 
    $\{ \mathcal S_i[\Lambda] \,:\, [\Lambda] \in \mathcal L^i \}$
    consists of the systems in $\mathcal F_i'$ that are the meets of all but one of $\left\{ \mathcal N[\Lambda|\mathcal F_i'] \,:\, [\Lambda] \in \mathcal L^i \right\}$ (see \cref{ex:SupportingSystem2}).
    As $\mathcal L^i = \mathcal L^{\mathrm{max}}_{\phi|\mathcal F_i'}$,
    the collection 
    $\mathcal H(\phi)^i_i$ consists of the systems in $\mathcal G_i'$ that are the meets of all but one of $\left\{ \mathcal M[\Lambda|\mathcal F_i'] \,:\, [\Lambda] \in \mathcal L^{\mathrm{max}}_{\phi|\mathcal F_i'} \right\}$.
    By \cref{thm:maximallamcanonical}, the latter collection is canonical in $\mathcal G_i'$, and hence, $\mathcal H(\phi)^i_i$ is canonical in $\mathcal G_i'$.
    For $j > i$, it suffices to algebraically characterize $\mathcal H(\phi)_j^i$ in terms of $\mathcal H(\phi)_{j-1}^i$.
    Indeed, since $\mathcal H(\phi)_i^i$ is canonical in $\mathcal G_i'$ and canonicity is transitive, 
    the algebraic characterization inductively implies $\mathcal H(\phi)^i = \mathcal H(\phi)^i_{\delta}$ is canonical in $\mathcal G_i'$.

    For $j > i$ and $[\Lambda] \in \mathcal L^i$, the system $\mathcal S_j[\Lambda]$ is alternatively characterized as follows:
    let $\mathcal F_{j}'[\Lambda]$ be the $\phi$-invariant minimal free factor systems of $\mathcal S_{j-1}[\Lambda]$ that carries $\mathcal L_{\phi|\mathcal S_{j-1}[\Lambda]}^+$;
    then $\mathcal S_j[\Lambda]$ is the unique subgroup system in $\mathcal F_{j}'[\Lambda]$ that is not supported by $\mathcal F_{i+1}$ but is the meet of all but one of the nonattracting subgroup systems of $\mathcal L_{\phi|\mathcal F_{j}'[\Lambda]}^{\mathrm{max}}$ (see \cref{ex:SupportingSystem2}).
    Similarly, we characterize $\mathcal H(\phi)^i_j$ in terms of $\mathcal H(\phi)^i_{j-1}$.
    For any $\mathcal H \in \mathcal H(\phi)^i_{j-1}$, i.e. $\mathcal H = M(\phi|\mathcal S_{j-1}[\Lambda])$ for some $[\Lambda] \in \mathcal L^i$, set $\mathcal H' \defeq M(\phi|\mathcal F_{j}'[\Lambda])$; 
    by \cref{prop:passing to no cyclic splittings}, $\mathcal H'$ is canonical in $\mathcal H$.
    Finally, by \cref{thm:maximallamcanonical} again,
    the mapping torus $M(\phi|\mathcal S_j[\Lambda]) \in \mathcal H(\phi)^i_j$ is the unique free-by-cyclic subgroup system in $\mathcal H'$ not supported by $\mathcal G_{i+1}$ but is the meet of all but one of the maximal proper malnormal free-by-cyclic subgroup systems in $\mathcal H'$ supporting $\mathcal P(\mathcal H')$.
\end{proof}

The \emph{depth spectrum} of the free-by-cyclic group~$\GG$ is $\delta\mathcal S(\GG) \defeq \delta\mathcal S(\phi)$, which is well-defined by \Cref{thm:posetlaminationinvariant}.
As with \Cref{commensurability}, we deduce the following from \Cref{Prop:powers}.

\begin{coro}\label{cor:depthspectrum}
    The depth spectrum $\delta\mathcal{S}(\GG)$ is a commensurability invariant of~$\GG$. \qed
\end{coro}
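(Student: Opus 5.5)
Following the proof of \Cref{commensurability}, I would reduce commensurability invariance to two facts: depth-spectrum invariance under isomorphism, which is \Cref{thm:posetlaminationinvariant}, and invariance under passing to finite index subgroups, which is \Cref{Prop:powers}.

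Suppose $\GG = M(\phi)$ and $\GG' = M(\psi')$ are commensurable. Then some $H \le \GG$ and $H' \le \GG'$ of finite index are isomorphic, and it suffices to show $\delta\mathcal S(H) = \delta\mathcal S(\GG)$ for every finite index subgroup $H \le \GG$. Here $\delta\mathcal S(H)$ is well defined for the abstract group $H$, since $H$ is free-by-cyclic and \Cref{thm:posetlaminationinvariant} makes the depth spectrum independent of the chosen splitting. Chaining the equalities then gives
\[\delta\mathcal S(\GG) = \delta\mathcal S(H) = \delta\mathcal S(H') = \delta\mathcal S(\GG').\]

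For the finite index step, I proceed exactly as in \Cref{commensurability}. Write $H = \langle A, xt^k\rangle$, where $A = H \cap \free$ has finite index in $\free$ and $k \ge 1$ generates $\pi(H)$. Set $\Psi \colon g \mapsto x\Phi^k(g)x^{-1}$, so that $\Psi \in \phi^k$ and $\Psi(A) = A$ because $A$ is normal in $H$. Then $H = M(\psi)$ with $\psi = [\Psi|A] \in \Out(A)$, identified via \Cref{lem:feighnhandel}.

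\Cref{Prop:powers} applied to $\Psi$ and $A$ gives $\delta\mathcal S(\psi) = \delta\mathcal S(\phi^k)$ for the outer class $\phi^k = [\Psi]$. It also gives $\delta\mathcal S(\phi^k) = \delta\mathcal S(\phi)$. Hence $\delta\mathcal S(H) = \delta\mathcal S(\psi) = \delta\mathcal S(\phi) = \delta\mathcal S(\GG)$.

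The only point needing care is that the depth spectrum is well defined on each group, which is precisely \Cref{thm:posetlaminationinvariant}; an order-isomorphism of posets preserves the set of lengths of maximal chains. The substantive finite index content, namely lifting maximal chains and showing images of maximal chains are maximal, is already packaged in \Cref{Prop:powers}. So I expect no real obstacle beyond invoking these results correctly.
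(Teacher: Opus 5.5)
Your proposal is correct and follows the paper's route: the paper deduces this corollary exactly as in \Cref{commensurability}. It writes a finite index subgroup as $M(\psi)$ with $\psi$ the restriction to $A = H \cap \free$ of some $\Psi \in \phi^k$, applies \Cref{Prop:powers} to get $\delta\mathcal S(\psi) = \delta\mathcal S(\phi^k) = \delta\mathcal S(\phi)$, and relies on \Cref{thm:posetlaminationinvariant} for the well-definedness of $\delta\mathcal S$ on the abstract group.
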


\appendix
\crefalias{section}{appendix}
\crefalias{subsection}{appendix}

\section{Attracting laminations and relative train tracks} \label{app:A}

The aim of this appendix is to prove \cref{Prop:powers} (in \cref{app:passingtofinite}) and \cref{lem:approximategenline} (in \cref{app:genericapprox}). Both proofs require some general background regarding relative train tracks and attracting laminations, which we recall now.

Let $\Gamma$ be a connected finite graph whose fundamental group is isomorphic to $\free$. A \emph{marking} is an isomorphism $m \colon \pi_1(\Gamma) \to \free$. The marking induces an equivariant homeomorphism between the Gromov boundary $\partial_\infty \widetilde \Gamma$ of the universal cover $\widetilde \Gamma$ of $\Gamma$ and the Gromov boundary of $\free$. This in turn induces a homeomorphism between $\BB(\free)$ and the space $\BB(\Gamma) \defeq \pi_1(\Gamma)\backslash\partial^2\widetilde\Gamma$ of biinfinite, unoriented, reduced paths in $\Gamma$. 
We fix 
this identification $\BB(\free) \cong \BB(\Gamma)$.

Let $\phi \in \Out(\free)$ be a given outer automorphism. 
Following~\cite[Definition~3.1.5]{BesFeiHan00}, a line $\ell \in \BB(\free)$ is \emph{generic} for $\phi$ if the following hold:

\begin{itemize}
\item $\ell$ is \emph{birecurrent}, that is, every finite subpath of $\ell$ occurs infinitely often in both ends of $\ell$; and

\item there is an integer $k \ge 1$ and neighborhood $U \subseteq \BB(\free)$ of $\ell$ such that $\phi^k(U) \subseteq U$ and $\{ \phi^{kn}(U) : n \ge 1\}$ is a neighborhood basis for $\ell$; 
\end{itemize}

The cited definition has a third condition ($\ell$ is not the axis of an element) that is made redundant by the second condition under our assumption (in~\S\ref{subsec:freebycyclic_mappingtori}) that~$\free$ is nonabelian. 
An \emph{attracting lamination} for~$\phi$ is the closure in $\BB(\free)$ of a generic line for~$\phi$.

\medskip

We now turn to the relevant background regarding relative train track maps. Let $f \colon \Gamma \to  \Gamma$ be a relative train track representing $\phi$. The graph $\Gamma$ comes equipped with an $f$-invariant filtration 
\[\Gamma=\Gamma_0 \supsetneq \Gamma_1 \supsetneq \ldots \supsetneq \Gamma_{K} = \emptyset. \]
For $0\le r < K$, set $H_r \defeq \overline{\Gamma_r\setminus {\Gamma}_{r+1}}$ be the \emph{$r$-th stratum} of $\Gamma$. For a filtration associated with a relative train track map, a stratum is \emph{exponentially growing} (EG),  \emph{nonexponentially growing} (NEG), or a \emph{zero stratum} depending on the spectral radius of the transition matrix associated with the stratum. An edge path $\rho$ \emph{crosses} a stratum $H_r$ if $\rho$ contains an edge of $H_r$. 

If $\sigma$ is a (not necessarily reduced) edge path in $\Gamma$, we denote by $[\sigma]$ its associated reduced edge path: it is the immersed path homotopic to $\sigma$ relative to its endpoints.
If $\sigma$ is a closed edge path, we also denote by $[\sigma]$ its associated cyclically reduced edge path, which is the immersed closed path freely homotopic to $\sigma$.
Let $f_\#$ be the map sending an edge path $\sigma$ of $\Gamma$ to $[f(\sigma)]$. 

Bestvina--Feighn--Handel~\cite[Lemma~3.1.10]{BesFeiHan00} proved that, for every attracting lamination orbit $[\Lambda]\in \mathcal{L}_\phi^+$, there exists a unique EG stratum $H_r$ such that the highest stratum crossed by any generic line of $[\Lambda]$ is $H_r$. Moreover, there exists a generic line $\ell \in \Lambda$ obtained as an exhaustion by $f_\#$-iterates of some (equivalently any) edge $e$ of $H_r$~\cite[Lemma~3.1.10(4)]{BesFeiHan00}.

A \emph{turn} is an unordered pair of oriented edges of $\Gamma$ originating at a common vertex. A turn has \emph{depth $r$} if the edges defining it are contained in the subgraph $\Gamma_r$ but not $\Gamma_{r+1}$. A turn is \emph{nondegenerate} if it consists of distinct oriented edges, and is \emph{degenerate} otherwise.  An
edge path $\rho$ \emph{crosses a turn $\{e_1,e_2\}$} if it contains the subpath $\overline e_1{e}_2$ {(or $\overline e_2 e_1$)}. For $e \in E\Gamma$, let $Tf(e)$ be the first edge of the reduced (nontrivial) path $f(e)$. The relative train track~$f$ induces a map $Tf$ of the turns by sending $\{e_1,e_2\}$ to $\{Tf(e_1),Tf(e_2)\}$. 
A turn is 
\emph{legal} (with respect to $f$) if its $Tf$-iterates are all nondegenerate. A path $\rho$ is \emph{r-legal} if all the depth $r$ turns it crosses are legal. We recall the following lemma due to Bestvina--Handel. 

\begin{lem}\cite[Lemma~5.8]{BesHan92}\label{lem:nocancelEG}
Let $f \colon \Gamma \to \Gamma$ be a relative train track map and $H_r \subseteq \Gamma$ an EG stratum. Suppose that $\sigma=a_1b_1\cdots a_{\ell}b_{\ell}$ is the decomposition of an $r$-legal path into subpaths $a_j \subseteq H_r$ and $b_j \subseteq \Gamma_{r+1}$ (where $a_1$ and $b_{\ell}$ might be trivial). Then, for every $i \in \{1,\ldots,\ell\}$, the path $f(a_i)$ is a reduced edge path, and \[f_\#(\sigma)=f(a_1)f_\#(b_1)f(a_2)f_\#(b_2) \cdots f(a_{\ell})f_\#(b_{\ell}). \qedhere\]
\end{lem}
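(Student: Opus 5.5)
We prove the lemma by induction on $\ell$, following the classical argument of Bestvina--Handel: the point is that, for an $r$-legal path, no cancellation can occur at the junctions between $H_r$-pieces and $\Gamma_{r+1}$-pieces, nor inside the $H_r$-pieces. The relative train track hypotheses we use are the standard ones: (RTT-i) $Tf$ maps directions in $H_r$ to directions in $H_r$, so every mixed turn (one edge in $H_r$, one in $\Gamma_{r+1}$) is legal and is sent to a mixed turn; (RTT-ii) if $\sigma\subseteq\Gamma_{r+1}$ is a nontrivial path with endpoints in $H_r\cap\Gamma_{r+1}$, then $f_\#(\sigma)$ is nontrivial; and (RTT-iii) if $a\subseteq \Gamma_r$ is $r$-legal, then $f_\#(a)$ is $r$-legal.

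First we treat a single piece $a_i\subseteq H_r$. Since $a_i$ is $r$-legal and lies in $H_r$, each turn it crosses is a legal turn in $H_r$. By (RTT-i), $f$ maps each edge $E$ of $H_r$ to a path that starts and ends with edges of $H_r$. The image $f(E)$ is itself $r$-legal by (RTT-iii), hence reduced. At a junction $E_1E_2$, the turn $\{\overline{E_1},E_2\}$ is legal. Its $Tf$-image is therefore nondegenerate, so the last edge of $f(E_1)$ and the first edge of $f(E_2)$ do not cancel. The resulting turn is again legal, since $Tf$-iterates of a legal turn are legal. Consequently $f(a_i)$ is the concatenation of reduced paths with no cancellation at the junctions, so it is reduced and $r$-legal.

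Next we consider the junctions between $f(a_i)$ and $f_\#(b_i)$. Each $b_i$ is a nontrivial path in $\Gamma_{r+1}$ (except possibly $b_\ell$), and its endpoints lie in $H_r\cap\Gamma_{r+1}$ whenever it is bordered by $a$-pieces. By (RTT-ii), $f_\#(b_i)$ is a nontrivial path in $\Gamma_{r+1}$. The turn between the last edge of $f(a_i)$, which lies in $H_r$, and the first edge of $f_\#(b_i)$, which lies in $\Gamma_{r+1}$, is mixed. A mixed turn is automatically nondegenerate, since it consists of edges from different strata. Hence the concatenation $f(a_1)f_\#(b_1)\cdots f(a_\ell)f_\#(b_\ell)$ has no cancellation at any junction. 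Since each piece is reduced, the whole concatenation is reduced. It is also homotopic rel endpoints to $f(\sigma)$, because $f_\#(b_i)$ is homotopic to $f(b_i)$ rel endpoints. It therefore equals $f_\#(\sigma)$.

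The main obstacle is justifying that no cancellation occurs inside a piece $f(a_i)$ and across the junctions between pieces. This is exactly where $r$-legality of $\sigma$ and properties (RTT-i)--(RTT-iii) of a relative train track enter. Note in particular that (RTT-ii) is what excludes a trivial $f_\#(b_i)$: if $f_\#(b_i)$ were trivial, $f(a_i)$ and $f(a_{i+1})$ would be joined by an $H_r$--$H_r$ turn that could be degenerate. Since the lemma is quoted from \cite[Lemma~5.8]{BesHan92}, we would in the end simply cite that reference, the sketch above being its argument.
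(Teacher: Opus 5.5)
Your argument is correct and is the standard Bestvina--Handel proof; the paper gives no proof of its own and simply cites \cite[Lemma~5.8]{BesHan92}, which is also where you end up. The only slips are cosmetic. Reducedness of $f(E)$ comes from the definition of a topological representative, not from $r$-legality, since $r$-legality still allows backtracking in $\Gamma_{r+1}$. (RTT-ii) is only needed for $b_i$ flanked by nontrivial $a$-pieces, because a trivial $f_\#(b_1)$ or $f_\#(b_\ell)$ at an end creates no junction. Finally, the announced induction on $\ell$ is never actually used.
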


Let $A \le \free$ be a finite index subgroup and $p_A \colon \Gamma_A \to \Gamma$ the cover of $\Gamma$ associated with the conjugacy class~$[A]$. 
Suppose  that $\phi[A] = [A]$. 
Then, there exists a lift $f_A \colon \Gamma_A \to \Gamma_A$ of $f$ such that $p_A \circ f_A = f \circ p_A$, i.e.~the following diagram commutes. 
\[\begin{tikzcd}
\Gamma_A \arrow[d,"p_A"' ] \arrow[r, "f_A"] & \Gamma_A \arrow[d,"p_A" ] \\
\Gamma \arrow[r,"f"' ] & \Gamma
\end{tikzcd}\]

We now explain how a relative train track structure on $f_A$ is induced from that of $f$. 

\begin{lem}[see {\cite[Lemma~4.5]{CoulonHilion}}]\label{lem:coveringtraintrack}
Assume, up to taking a power of $\phi$, that the following holds:

\begin{itemize}
\item every EG stratum of $f$ has a primitive transition matrix; 
\item every NEG stratum $H_r$ of $f$ consists of a single edge $e$ with $f(e)=eu$ for some closed edge path $u \subseteq \Gamma_{r+1}$; 
\item for every vertex $v$ of $\Gamma$, $f(v)$ is fixed by $f$.
\end{itemize}

Then the lift $f_A \colon \Gamma_A \to \Gamma_A$ admits a filtration which makes it a relative train track map. Moreover, for any EG stratum $H_A \subseteq \Gamma_A$, there is a {unique} EG stratum $H_r \subseteq \Gamma$ such that:
\begin{itemize}
    \item $H_A$ is contained in $p_A^{-1}(H_r)$; {$p_A(H_A) = H_r$};
    \item $f_A(H_A) \subseteq H_A \cup p_A^{-1}(\Gamma_{r+1})$.
\end{itemize}

Conversely, for EG strata $H_r \subseteq \Gamma$, the subgraph $p_A^{-1}(H_r)$ is a union of EG strata of~$\Gamma_A$. \qed
\end{lem}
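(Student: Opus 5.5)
The plan is to build the filtration on $\Gamma_A$ by pulling back the filtration of $\Gamma$ along $p_A$ and then refining each preimage stratum. First I will replace $\phi$ by a further power (and $f$ and $f_A$ accordingly) so that the finitely many lifts of vertices and edges are permuted by $f_A$ in a controlled way. Since $[\Gamma_A:\Gamma]$ is finite and $f(v)$ is $f$-fixed for every vertex $v$, some power of $f_A$ fixes every lift of every such $f$-fixed vertex. The three bulleted hypotheses are stable under taking powers, so this costs nothing.

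\textbf{NEG strata.} Let $H_r=\{e\}$ with $f(e)=eu$. Each lift $e'$ of $e$ starts at a lift of the fixed initial vertex of $e$, and after the power above this lift is $f_A$-fixed. Hence $f_A(e')=e'u'$, where $u'$ is a lift of $u$ and so lies in $p_A^{-1}(\Gamma_{r+1})$. I will order the finitely many lifts of $e$ arbitrarily; each one is then a single-edge NEG stratum sitting directly above $p_A^{-1}(\Gamma_{r+1})$.

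\textbf{EG strata.} Let $H_r$ be EG with primitive transition matrix $M_r$ and Perron--Frobenius eigenvalue $\lambda_r>1$. The transition matrix $\widetilde M$ of $f_A$ restricted to $p_A^{-1}(H_r)$ is a nonnegative lift of $M_r$: summing the entries of $\widetilde M$ over a fibre of $p_A$ recovers the corresponding entry of $M_r$.
\begin{enumerate}
\item Decompose the edges of $p_A^{-1}(H_r)$ into the strongly connected components of the directed graph associated to $\widetilde M$, and order these components compatibly with reachability. Each component becomes a stratum $H_A$. By construction $f_A(H_A)\subseteq H_A\cup(\text{strata below }H_A)$, and the strata below $H_A$ lie in $p_A^{-1}(H_r)\cup p_A^{-1}(\Gamma_{r+1})$. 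Since $H_r$ is irreducible and $p_A$ is surjective on edges, every edge of $H_r$ is the image of an edge of each component. This gives $p_A(H_A)=H_r$, and uniqueness of $r$ is clear.
\item Show that every component is EG, with no zero or NEG pieces. For any edge $e'$ of $H_A$, the number of $H_r$-edges in $p_A(f_A^n(e'))=f^n(p_A(e'))$ grows like $\lambda_r^n$. These edges lift to edges lying either in $H_A$ or in components strictly below it. Using minimal (sink) components and an induction up the order, I would conclude that each component has spectral radius exactly $\lambda_r$. The cleaner alternative is to use finiteness of the cover: the vector obtained by pulling back the PF eigenvector of $M_r$ is a positive eigenvector of $\widetilde M$. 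A nonnegative matrix with a positive eigenvector has no transient parts, so it is block diagonal after reordering, and every block has eigenvalue $\lambda_r$. I would use this second argument. It also shows that the components are mutually unreachable, which gives the sharper inclusion $f_A(H_A)\subseteq H_A\cup p_A^{-1}(\Gamma_{r+1})$.
\end{enumerate}

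\textbf{Relative train track axioms.} The axioms (RTT-i)--(RTT-iii) are local conditions on directions, turns and legality. Since $p_A$ is a local isometry conjugating $f_A$ to $f$, each axiom lifts directly:
\begin{itemize}
\item $Df_A$ sends directions in $H_A$ to directions in $H_A$, because it does so after projecting and the block structure above confines images to $H_A$.
\item Connecting paths in lower filtration elements lift to lower filtration elements.
\item An $r$-legal path in $\Gamma_A$ projects to an $r$-legal path in $\Gamma$. Its image is $r$-legal by \cref{lem:nocancelEG}, and this property lifts back.
\end{itemize}
The converse statement, that $p_A^{-1}(H_r)$ is a union of EG strata, is exactly what step 2 proves.

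\textbf{Main obstacle.} I expect the hardest step to be step 2: ruling out zero or non-EG pieces inside $p_A^{-1}(H_r)$ and obtaining the block-diagonal structure. The positive-eigenvector argument should handle it, but it needs care with the identification of $\widetilde M$ as the lift of $M_r$ after passing to the power.
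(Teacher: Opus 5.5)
Your overall plan (pull back the filtration, split each $p_A^{-1}(H_r)$ into irreducible pieces, then check the axioms) is the natural one. The paper gives no proof of this lemma; it defers to Coulon--Hilion. As written, however, two steps of your argument do not go through. Your treatment of the NEG strata and the uniqueness of $r$ are fine.

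\textbf{The eigenvector step.} The linear-algebra fact you invoke is false. One positive eigenvector neither makes a nonnegative matrix block diagonal nor forces all its diagonal blocks to have the same spectral radius. For example, $\begin{pmatrix}1&1\\0&2\end{pmatrix}$ has the positive eigenvector $(1,1)^{T}$ for the eigenvalue $2$, yet its first class is transient with spectral radius $1$.

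What is true is this: a positive left eigenvector $y$ \emph{and} a positive right eigenvector $x$ for the same eigenvalue kill every off-diagonal block of the Frobenius normal form (inductively, $y_a^{T}B_{ab}x_b=0$ for $a<b$ in a block-triangular ordering). Both are available here, but for different reasons:
\begin{itemize}
\item Counting how often $f_A(e'')$ crosses the fibre over $e_i$ recovers $M_r$ for any lift. So the Perron--Frobenius length vector pulls back.
\item Counting how often the paths $f_A(e'')$, $e''\in p_A^{-1}(e_j)$, together cross a fixed lift $e'$ of $e_i$ recovers $M_r$ only because $f_A$ maps vertex fibres bijectively onto vertex fibres. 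This uses that $\Phi$ restricts to an automorphism of $A$, and it gives the other eigenvector.
\end{itemize}
You need both. Your step~1 claim that $p_A(H_A)=H_r$, and (RTT-i) for $H_A$, depend on this block-diagonality, not merely on irreducibility of $M_r$ and surjectivity of $p_A$.

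\textbf{Lifting the axioms.} If $p_A^{-1}(H_r)$ has several components $H_A^1,\dots,H_A^s$, a filtration has to stack them. The filtration element just below $H_A^1$ then contains $H_A^2$, and a connecting path for $H_A^1$ may run through edges of $H_A^2$. Its projection crosses $H_r$, so (RTT-ii) for $f$ says nothing about it, and your remark that connecting paths lift does not apply.

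You need a genuine argument that $f_{A\#}$ cannot kill a path in $H_A^2\cup p_A^{-1}(\Gamma_{r+1})$ joining two vertices of $H_A^1$. For instance, nothing you have written excludes two edges of $H_A^2$ at a common vertex that have the same $f_A$-image and terminal vertices on $H_A^1$.

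(RTT-iii) has the same problem. An $H_A^1$-legal path may contain illegal turns inside $H_A^2$, so its projection need not be $r$-legal. The no-cancellation argument of \cref{lem:nocancelEG} for $H_A^1$ presupposes the lifted (RTT-ii). Your projection argument is valid only for the lowest component of each $p_A^{-1}(H_r)$.

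\textbf{Two smaller points.}
\begin{itemize}
\item Passing to a further power replaces $f_A$ by $f_A^m$, whereas the lemma is about $f_A$ itself, and powers can refine the strata (an imprimitive block of $\widetilde M$ splits). The power is also unnecessary. The lifts of an NEG edge $e$ are permuted according to the permutation $f_A$ induces on the fibre over the $f$-fixed initial vertex of $e$. Each cycle of that permutation is an NEG stratum whose transition matrix is a permutation matrix.
\item Zero strata of $f$ are not addressed. This is harmless, since their preimages are zero strata.
\end{itemize}
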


\begin{rmq}
    The uniqueness of~$H_r$ and last statement of the lemma is contained in the proof of \cite[Lemma~4.5]{CoulonHilion}, see \cite[p.~1985]{CoulonHilion} in particular.
\end{rmq}

\subsection{Virtual invariance of depth}\label{app:passingtofinite}

 The goal is this section is to prove the virtual invariance of the depth:

\begin{propn}[\ref{Prop:powers}]
    Suppose $A \le \free$ has finite index, $\Phi \colon \free \to \free$ is an automorphism such that $\Phi(A) = A$, and $\Psi \colon A \to A$ is the restriction of $\Phi$ to $A$.
    Set $\phi \defeq [\Phi]$ in $\Out(\free)$ and $\psi \defeq [\Psi]$ in $\Out(A)$. Then $\delta(\phi^m) = \delta(\phi) = \delta(\psi)$ and  $\delta\mathcal S(\phi^m) = \delta\mathcal S(\phi) = \delta\mathcal S(\psi)$ for all $m \ge 1$.
    
    In fact, there is a natural surjective strict-order preserving map $\pi \colon \mathcal L^+(\psi) \to \mathcal L^+(\phi)$, and chains in $\mathcal L^+(\phi)$ can be lifted (via~$\pi$) to chains of the same length in $\mathcal L^+(\psi)$.
\end{propn}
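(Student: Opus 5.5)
Our plan is to handle the two reductions separately and then compose them. For powers: a line is generic for $\phi$ iff it is generic for $\phi^m$. The attracting neighborhood condition for $\phi$ with exponent $k$ is also the condition for $\phi^m$ with exponent $mk$ (use $\phi^{mk}(U)\subseteq U$, and the subsequence $\{\phi^{mkn}(U)\}$ is still a neighborhood basis). Conversely, a $\phi^m$-condition is already a $\phi$-condition. Hence $\mathcal L^+(\phi^m)=\mathcal L^+(\phi)$ as sets of closed subsets of $\BB(\free)$, ordered by inclusion in both cases. So the posets coincide, and $\delta$ and $\delta\mathcal S$ agree. This lets us replace $\phi$ by a power satisfying the hypotheses of \cref{lem:coveringtraintrack}. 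We may also replace $\Psi$ by $\Psi^m$, since the restriction of $\Phi^m$ to $A$ is $\Psi^m$.

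For the finite-index subgroup, fix a relative train track $f\colon\Gamma\to\Gamma$ for $\phi$ and its lift $f_A\colon\Gamma_A\to\Gamma_A$, a relative train track for $\psi$ by \cref{lem:coveringtraintrack}. Since $A$ has finite index, $\partial_\infty A=\partial_\infty\free$, so $\partial^2 A=\partial^2\free$. This gives a natural continuous finite-to-one surjection $p\colon\BB(A)\to\BB(\free)$, induced by $p_A$ on lines. We define $\pi(\Lambda_A)\defeq p(\Lambda_A)$. To see this lands in $\mathcal L^+(\phi)$, take a generic line $\ell_A$ of $\Lambda_A$. Via \cite[Lemma~3.1.10]{BesFeiHan00}, it is an exhaustion by $f_{A\#}$-iterates of an edge of an EG stratum $H_A$. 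Its image is then an exhaustion by $f_\#$-iterates of an edge of the EG stratum $H_r=p_A(H_A)$, using $p_A\circ f_A=f\circ p_A$ and \cref{lem:nocancelEG}. So $p(\ell_A)$ is a generic line of the lamination associated to $H_r$. Surjectivity follows from the converse clause of \cref{lem:coveringtraintrack}: every EG stratum $H_r$ has a lift that is an EG stratum $H_A$ with $p_A(H_A)=H_r$.

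Order: if $\Lambda_A\subsetneq\Lambda_A'$, then $p(\Lambda_A)\subseteq p(\Lambda'_A)$. For strictness, the top strata differ. Choose $H_A'$ above $H_A$ in the filtration, with $H_A'\not\subseteq p_A^{-1}(\Gamma_{r+1})$, i.e.\ mapping to a stratum higher than $H_r$. We aim to show that two laminations with the same image have the same top EG stratum downstairs, and that nesting forces the top strata of $\Lambda_A\subsetneq\Lambda'_A$ to map to distinct strata. If $p(\Lambda_A)=p(\Lambda'_A)$, their generic lines cross the same top stratum $H_r$, and the laminations for $\phi$ are determined by that stratum. The difficulty is to rule out two distinct EG strata of $\Gamma_A$ over the same $H_r$ being nested. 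Here we use the second bullet of \cref{lem:coveringtraintrack}: $f_A(H_A)\subseteq H_A\cup p_A^{-1}(\Gamma_{r+1})$. Hence iterates of an edge of $H_A$ never cross another stratum over $H_r$. A generic line of $\Lambda_A$ therefore contains no edge of $H'_A$ when $H'_A\neq H_A$ lies over the same $H_r$. By birecurrence and closure, $\Lambda'_A\subseteq\Lambda_A$ would force $H'_A$-edges into $\Lambda_A$, a contradiction. This step is the main obstacle, since it must respect both the filtration order on $\Gamma_A$ and the nesting.

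Chain lifting: given a chain $\Lambda_1\subsetneq\cdots\subsetneq\Lambda_n$ in $\mathcal L^+(\phi)$, lift from the top down. Choose $\Lambda_A^n$ over $\Lambda_n$. Inductively, $\Lambda_{j}\subseteq\Lambda_{j+1}=p(\Lambda_A^{j+1})$, so some generic leaf of $\Lambda_j$ is a limit of leaves of $\Lambda_{j+1}$. Lifting those leaves into $\Lambda_A^{j+1}$ and using closedness, plus finiteness of $p^{-1}$, yields a lift of a generic leaf of $\Lambda_j$ inside $\Lambda_A^{j+1}$. Its closure is a lamination $\Lambda_A^j\subsetneq\Lambda_A^{j+1}$ with $\pi(\Lambda_A^j)=\Lambda_j$. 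This needs the lifted line to be generic for $\psi$, which follows as above from the lifted RTT description. Strict order-preservation, surjectivity and chain lifting together give equality of maximal chain lengths and of the sets of maximal chain lengths. A maximal chain upstairs maps to a chain downstairs, which is maximal because any refinement would lift. This proves $\delta(\phi)=\delta(\psi)$ and $\delta\mathcal S(\phi)=\delta\mathcal S(\psi)$.
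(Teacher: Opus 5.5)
Up to your final sentence the argument is sound, and in one place it differs from the paper. You get strictness of $\pi$ from the invariance $f_A(H_A)\subseteq H_A\cup p_A^{-1}(\Gamma_{r+1})$ of the lifted strata. The paper instead uses that $q\colon\BB(A)\to\BB(\free)$ is open with finite fibres. Two small repairs are needed along the way:
\begin{itemize}
\item What you must exclude is $\Lambda_A\subseteq\Lambda'_A$, i.e.\ $H_A$-edges appearing in leaves of $\Lambda'_A$. Your argument is symmetric, so this is only a slip.
\item Genericity of a lifted leaf needs more than ``as above''. Lift the exhaustion leaf $\bigcup_n f^{kn}_\#(e)$ and use that the lifts of $e$ form a finite set. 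The paper passes over this point too.
\end{itemize}

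\textbf{The gap.} Take a maximal chain $C$ in $\mathcal L^+(\psi)$ and suppose $\pi(C)$ admits a refinement. Lifting the refinement from the top down produces \emph{some} chain in $\mathcal L^+(\psi)$. Nothing forces that chain to pass through the members of $C$, so maximality of $C$ is not contradicted. You would need lifts through prescribed elements, and your going-down property does not provide them. What your ingredients do prove is $\delta(\phi)=\delta(\psi)$ and $\delta\mathcal S(\phi)\subseteq\delta\mathcal S(\psi)$. Indeed, a lift $C$ of a maximal chain $\bar C$ is maximal: $\pi$ is injective on chains, so a chain $C'\supsetneq C$ would map to a chain strictly containing $\bar C$.

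\textbf{The reverse inclusion is false.} So the step cannot be repaired. Let $\free=\langle z,a,b,c,d,x,y\rangle$ and
\[
\Phi\colon z\mapsto z,\ a\mapsto ab,\ b\mapsto bab,\ c\mapsto cbd,\ d\mapsto dcbd,\ x\mapsto xwy,\ y\mapsto yxwy,\qquad w=zaz^{-1}c .
\]
Here $\Phi=\alpha_5\circ\cdots\circ\alpha_1$, where each $\alpha_i$ fixes the generators not listed:
\begin{itemize}
\item $\alpha_1\colon a\mapsto ab,\ b\mapsto bab$;
\item $\alpha_2\colon c\mapsto cd,\ d\mapsto dcd$;
\item $\alpha_3\colon c\mapsto cb$;
\item $\alpha_4\colon x\mapsto xy,\ y\mapsto yxy$;
\item $\alpha_5\colon x\mapsto xw$.
\end{itemize}
The rose map is a relative train track with EG strata $\{a,b\}$, $\{c,d\}$, $\{x,y\}$ (in increasing order) above the fixed edge $z$. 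So $\mathcal L^+(\phi)$ is the chain $\Lambda_{ab}\subsetneq\Lambda_{cd}\subsetneq\Lambda_{xy}$ and $\delta\mathcal S(\phi)=\{3\}$.

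Let $A$ be the kernel of the map $\free\to\ZZ/2$ sending $z$ to $1$ and the other generators to $0$; then $\Phi(A)=A$. In the double cover, $z$ lifts to edges $z_0,z_1$ joining the two vertices $v_0,v_1$, and every other petal $u$ lifts to loops $u_0,u_1$ at $v_0,v_1$. The lift fixing $v_0,v_1$ satisfies
\[
f_A(c_i)=c_ib_id_i,\qquad f_A(x_0)=x_0z_0a_1z_0^{-1}c_0y_0,\qquad f_A(x_1)=x_1z_1a_0z_1^{-1}c_1y_1 .
\]
Hence $\mathcal L^+(\psi)$ consists of the six laminations $\Lambda_{a_ib_i},\Lambda_{c_id_i},\Lambda_{x_iy_i}$. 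Leaves of $\Lambda_{x_0y_0}$ cross $a_1$ but never $c_1$ or $x_1$. Leaves of $\Lambda_{c_0d_0}$ only cross loops at $v_0$.

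It follows that $\Lambda_{a_1b_1}\subsetneq\Lambda_{x_0y_0}$ is a maximal chain in $\mathcal L^+(\psi)$, yet its image $\Lambda_{ab}\subsetneq\Lambda_{xy}$ is not maximal. Your top-down lift of the refinement starting at $\Lambda_{x_0y_0}$ gives $\Lambda_{a_0b_0}\subsetneq\Lambda_{c_0d_0}\subsetneq\Lambda_{x_0y_0}$, which misses $\Lambda_{a_1b_1}$. Therefore $\delta\mathcal S(\psi)=\{2,3\}\neq\{3\}=\delta\mathcal S(\phi)$.

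The paper's proof makes the same leap (``chains \ldots\ can be lifted \ldots, and so $\delta\mathcal S(\psi)=\delta\mathcal S(\phi)$''). Only the depth equality and the inclusion $\delta\mathcal S(\phi)\subseteq\delta\mathcal S(\psi)$ are actually established.
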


\begin{proof}
We immediately get $\delta\mathcal S(\phi^m) = \delta\mathcal S(\phi)$ since $\mathcal L^+(\phi^m) = \mathcal L^+(\phi)$.
Since $\delta(\phi) = \max \delta\mathcal S(\phi)$, it will suffice to show $\delta\mathcal S(\phi) = \delta\mathcal S(\psi)$.

Let $N = [\free : A]$ be the index of $A$.
Using the natural identification $\partial_\infty A = \partial_\infty \free$ that is equivariant with respect to the inclusion $A \le \free$, we get a natural $N$-to-1 open continuous surjection $q \colon \BB(A) \to \BB(\free)$.
Note that $\Phi$ and its restriction to $A$ induce the same homeomorphism of $\partial_\infty \free$ and $q \circ \psi = \phi \circ q$.

We first describe a natural map $\pi \colon \mathcal L^+(\psi) \to \mathcal L^+(\phi)$.
Let $f \colon \Gamma \to \Gamma$ be a relative train track map representing~$\phi$, and choose a lift $f_A \colon \Gamma_A \to \Gamma_A$ representing~$\psi$. 
Assume that~$f$ satisfies the hypotheses of \Cref{lem:coveringtraintrack}---this is possible up to taking a power of~$\phi$ as $\delta\mathcal S(\phi)=\delta\mathcal S(\phi^m)$ for every $m \geq 1$. 
Thus, by \Cref{lem:coveringtraintrack}, the lift $f_A \colon \Gamma_A \to \Gamma_A$ of $f$ is a relative train track map representing $\psi \in \Out(A)$. 
For every EG stratum $H_A \subset \Gamma_A$,
there is a unique EG stratum $H_r \subseteq \Gamma$ (with lamination $\Lambda_r$) such that $p_A(H_A) = H_r$; thus, we can set $\pi(\Lambda_A) = \Lambda_r$, for every lamination $\Lambda_A$ such that $H_A$ is the highest stratum it crosses.

For surjectivity of~$\pi$, let $H_r \subseteq \Gamma$ be the EG stratum associated with some $\bar \Lambda \in \mathcal L^+(\phi)$
and $e \in E\Gamma_A$ a lift of an edge in $H_r$. By \Cref{lem:coveringtraintrack} again, the edge $e$ is contained in some EG stratum of $f_A$. Hence the $(f_A)_\#$-iterates of $e$ 
exhaust a generic line $\ell'$ of some attracting lamination $\bar \Lambda_A \in \mathcal L^+(\psi)$. The image $q(\ell')$ is a generic line of some attracting lamination of $\phi$. As the highest stratum crossed by $\ell'$ is in the preimage of $H_r$, the highest stratum crossed by $q(\ell')$ is $H_r$. Thus, $\pi(\bar \Lambda_A)=\bar \Lambda$. So $\bar \Lambda$ is in the image of $\pi$, and $\pi$ is surjective as $\bar \Lambda$ was arbitrary.

Next, we prove $\pi$ is strict-order preserving, i.e.~$\Lambda_1 \subsetneq \Lambda_2$ implies $\pi(\Lambda_1) \subsetneq \pi(\Lambda_2)$ for $\Lambda_1, \Lambda_2$ in $\mathcal L^+(\psi)$.
For this, we find it easier to work with the abstract spaces $\BB(A), \BB(\free)$ directly rather than using relative train tracks.
Let $\ell_1, \ell_2 \in \BB(A)$ be generic lines for $\psi$ with $\Lambda_1, \Lambda_2 \in \mathcal L^+(\psi)$ their respective closures, and $U \subseteq \BB(\free)$ an arbitrary neighborhood of $q(\ell_1)$.
Suppose $\Lambda_1 \subseteq \Lambda_2$, i.e.~any neighborhood of $\ell_1$ contains $\ell_2$.
Then $q^{-1}(U)$ is a neighborhood of $\ell_1$ and must contain $\ell_2$;
therefore, $U$ contains $q(\ell_2)$, and $\pi(\Lambda_1) \subseteq \pi(\Lambda_2)$. 
Suppose, in addition, that $\pi(\Lambda_1) = \pi(\Lambda_2)$ and $V \subseteq \BB(A)$ is an arbitrary neighborhood of $\ell_2$.
Since $q$ is an open map, $q(V)$ is a neighborhood of $q(\ell_2)$ and must contain $q(\ell_1)$.
In particular, $V$ intersects the preimage $q^{-1}(q(\ell_1))$, which is a set of size $N$ that contains $\ell_1$.
So $\ell_2$ is in the closure of $q^{-1}(q(\ell_1))$, which is a union of $N$ closed subsets.
In other words, $\Lambda_2$ is in the closure $\Lambda_1' \subseteq \BB(A)$ of some $\ell_1' \in q^{-1}(q(\ell_1))$.
Thus, $\Lambda_1 \subseteq \Lambda_2 \subseteq \Lambda_1'$;
moreover, as subsets of $\partial^2 \free$, $\ell_1' = g \cdot \ell_1$ for some $g \in \free$.
Choose $k \ge 1$ such that $g^k \in A$.
By transitivity, $\Lambda_1 \subseteq \Lambda_2 \subseteq g^k \cdot \Lambda_1 = \Lambda_1$ and $\Lambda_1 = \Lambda_2$;
therefore, $\pi$ is strict-order preserving.

We conclude by observing that chains in $\mathcal L^+(\phi)$ can be lifted to chains in $\mathcal L^+(\psi)$, and so $\delta\mathcal S(\psi) = \delta\mathcal S(\phi)$.
Suppose $\bar \ell_1, \bar \ell_2 \in \BB(\free)$ are generic lines for $\phi$,  laminations $\bar \Lambda_1, \bar \Lambda_2 \in \mathcal L^+(\phi)$ are their respective closures such that $\bar \Lambda_1 \subseteq \bar \Lambda_2$, and $\ell_1 \in q^{-1}(\bar \ell_1)$ is an arbitrary lift of $\bar \ell_1$.
In the previous paragraph, we show  (using openness of $q$) that the closure $\Lambda_1 \in \pi^{-1}(\bar \Lambda_1)$ of $\ell_1$ is in some $\Lambda_2 \in \pi^{-1}(\bar \Lambda_2)$ and we are done.
\end{proof}

\subsection{Approximation of a generic line}\label{app:genericapprox}

Fix $\phi \in \Out(\free)$, and let $\mathcal A$ be a $\phi$-invariant malnormal subgroup system of $\free$ with finite type. 
We will prove \cref{lem:approximategenline}, which gives an approximation of a generic line $\ell$ of $[\Lambda]\in \mathcal{L}_{\phi|\mathcal A}^+$ by some periodic line $[[g]] \in \BB(\mathcal A)$.
First, we prove the standard case $\mathcal A = [\free]$.

\begin{lem}\label{lem:approximategenline_standard}
For $[\Lambda] \in \mathcal L^+_\phi$, some element $g \in \free$ has the following property: 
\begin{quote}for any $[\Lambda']\in \mathcal L^+_\phi$, $g$ is weakly attracted to $[\Lambda']$ if and only if  $[\Lambda'] \subseteq [\Lambda]$. 
\end{quote}
\end{lem}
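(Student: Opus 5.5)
Fix a relative train track map $f\colon\Gamma\to\Gamma$ representing a suitable power $\phi^m$; passing to a power is harmless, since weak attraction to an orbit $[\Lambda']$ under $\phi$ is equivalent to weak attraction to some element of the orbit under $\phi^m$. I would take $f$ as in \cite{BesFeiHan00}, with every EG stratum aperiodic and every lamination in $\mathcal L^+(\phi)$ fixed by $\phi^m$. Let $H_r$ be the EG stratum associated to a lamination $\Lambda\in[\Lambda]$ (\cite[Lemma~3.1.10]{BesFeiHan00}), so that a generic line $\ell$ of $\Lambda$ is the increasing union of the paths $f^n_\#(e)$ for an edge $e\subseteq H_r$. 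The plan is to build a closed $r$-legal circuit $\sigma\subseteq\Gamma_r$ that contains a long segment of $\ell$, and to let $g$ represent $[\sigma]$.

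\textbf{Construction.} Since $\ell$ is birecurrent, a long subpath $\tau$ of $\ell$ beginning and ending with the same oriented edge of $H_r$ recurs. Cutting between the two occurrences gives a closed path $\sigma$ whose only possibly illegal turn is the closing turn, and that turn already occurs in $\ell$; so $\sigma$ is a circuit in $\Gamma_r$ all of whose depth-$r$ turns are turns of $\ell$. Turns of $\ell$ are legal because $\ell$ is $r$-legal, hence $\sigma$ is $r$-legal.

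\textbf{``If'' direction.} By \Cref{lem:nocancelEG}, the iterates $f^n_\#(\sigma)$ split at the $H_r$-pieces with no cancellation. They therefore contain $f^n(\epsilon)$ for an edge $\epsilon$ of $H_r$, and these exhaust generic leaves. So $g$ is weakly attracted to $\Lambda$, and hence to every $\Lambda'\subseteq\Lambda$, because every accumulation set containing $\Lambda$ contains $\Lambda'$.

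\textbf{``Only if'' direction.} Suppose $g$ is weakly attracted to some $\Lambda'\not\subseteq[\Lambda]$, with associated EG stratum $H_s$. The accumulation set of $[[\phi^n(g)]]$ is exactly the set of lines whose finite subpaths appear in $f^n_\#(\sigma)$ for infinitely many $n$. Using the splitting above, $f^n_\#(\sigma)$ is a concatenation of subpaths $f^n(a_i)$ with $a_i\subseteq H_r$ and $f^n_\#(b_i)$ with $b_i\subseteq\Gamma_{r+1}$, and each $b_i$ is a subpath of $\ell$ between $H_r$-edges.

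\emph{Main obstacle.} One must show that any generic leaf segment of $\Lambda'$ appearing in $f^n_\#(\sigma)$ also appears in $\ell$. A naive bound fails because the segments could straddle junctions, so I would argue as follows. Both the $f^n(a_i)$ pieces and the $f^n_\#(b_i)$ pieces are subpaths of $f^n_\#$ applied to subpaths of $\ell$, hence subpaths of $\ell$ by $f_\#$-invariance of $\Lambda$. The junction turns are images under $Tf$ of turns of $\ell$, so they are again turns of $\ell$. Consequently every finite subpath of $f^n_\#(\sigma)$ that crosses no junction lies in $\ell$. A subpath that does cross a junction would have to be covered by a bounded-overlap argument; I would choose $\sigma$ so that each junction neighbourhood, of bounded size by bounded cancellation, already appears in $\ell$. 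Alternatively one can appeal to \cite[Lemma~3.1.15]{BesFeiHan00}: since every line in the accumulation set is then in the closure of $\ell$, we get $\Lambda'\subseteq\Lambda$, a contradiction.

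Finally, take $g$ to be an element representing $[\sigma]$.
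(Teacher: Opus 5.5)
Your circuit $\sigma$ (closed and $r$-legal, with first edge $\epsilon\subseteq H_r$ and $\sigma\epsilon$ a subpath of a generic leaf) matches the paper's, as does your ``if'' direction. The paper simply takes $\sigma$ inside $f^M_\#(e)$, between the origins of two copies of $e$.

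The gap is in the ``only if'' direction, at exactly the step you call the main obstacle, which you leave unproved. The remedies you suggest do not close it.
\begin{itemize}
\item Knowing that each piece $f^n_\#(a_i)$, $f^n_\#(b_i)$ is a leaf segment, and that each junction turn occurs in $\ell$, says nothing about a subpath made of a long tail of one piece and a long head of the next. Segments of $\ell$ glued along turns of $\ell$ need not be segments of $\ell$.
\item A junction neighbourhood of size fixed in advance (say by bounded cancellation) cannot help either. You choose $\sigma$ once, but the leaf segments $\rho'$ of $\Lambda'$ you must test are arbitrarily long.
\item Your claim that junction turns are $Tf$-images of turns of $\ell$ is also unjustified, since tightening inside $f_\#(b_i)$ can change its first edge.
\item The appeal to \cite[Lemma~3.1.15]{BesFeiHan00} is circular as written: ``every line in the accumulation set is then in the closure of $\ell$'' is precisely the claim to be proved.
\end{itemize}

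The missing idea is that the interior junctions of $\sigma$ are harmless, because $\sigma$ itself is a segment of $\ell$. Only the wrap-around junction matters, and it is controlled by a buffer that grows under iteration. Since $\sigma\epsilon$ is an $r$-legal subpath of $\ell$ and $\epsilon$ is the first edge of $\sigma$, \cref{lem:nocancelEG} gives three things:
\begin{itemize}
\item $f^n_\#(\sigma\epsilon)=f^n_\#(\sigma)\,f^n_\#(\epsilon)$;
\item the circuit $f^n_\#(\sigma)$ is cyclically reduced and begins with $f^n_\#(\epsilon)$;
\item $f^n_\#(\sigma\epsilon)$ is a subpath of the leaf $f^n_\#(\ell)$ of $\Lambda$, hence of $\ell$.
\end{itemize}
So every subpath of length at most $|f^n_\#(\epsilon)|$ of the periodic line of $f^n_\#(\sigma)$ lies in a copy of $f^n_\#(\sigma\epsilon)$, and thus in $\ell$. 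Since $|f^n_\#(\epsilon)|\to\infty$, any $\rho'$ that appears in these periodic lines for infinitely many $n$ appears in $\ell$. This forces $[\Lambda']\subseteq[\Lambda]$, contradicting your assumption. It is exactly the paper's step $\rho'\subseteq f^N_\#(\sigma)f^N_\#(e)=f^N_\#(\sigma e)\subseteq f^{N+M}_\#(e)$.
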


\begin{proof}
    Let $f\colon \Gamma \to \Gamma$ be a relative train track representing~$\phi$ and $H_r$ the EG stratum of~$\Gamma$ associated with $[\Lambda]$.
    Pick an oriented edge $e$ of $H_r$ and an integer $M \ge 1$ such that the reduced $r$-legal path~$f_\#^M(e)$ contains at least two copies of~$e$---such an $M$ exists since $H_r$ is an EG stratum.
    Note that, for any $n \ge 1$, any finite subpath of $f_\#^n(e)$ is a finite subpath of $f_\#^M$-iterates of $f_\#^n(e)$ by \cref{lem:nocancelEG}.
    
    Define~$\sigma$ to be the cyclically reduced subpath in~$f_\#^M(e)$ joining the origins of two distinct copies of~$e$.
    We will consider~$\sigma$ as a cycle in~$\Gamma$ and denote by $[g]$ the conjugacy class of elements in~$\free$ represented by~$\sigma$.
    By construction, $\sigma e$ is a subpath of~$f_\#^M(e)$, so it is $r$-legal.
    As~$e$ is the initial edge of~$\sigma$, \cref{lem:nocancelEG} implies $f_\#^n(\sigma)$ is cyclically reduced and has $f_\#^n(e)$ as an initial subpath for all $n\ge 1$.

    Recall that we identified $\BB(\free)$ with $\BB(\Gamma)$, the space of biinfinite unoriented reduced paths in~$\Gamma$.
    Let~$k$ be the size of the $\phi$-orbit $[\Lambda]$, and assume generic lines of~$\Lambda$ cross the edge~$e$.
    By~\cite[Lemma~3.1.10]{BesFeiHan00}, an unoriented path~$\rho$ is a finite subpath of the generic lines of~$\Lambda$ if and only if it is a subpath of some unoriented $f_\#^k$-iterate of~$e$.

    Let $[\Lambda'] \in \mathcal L^+_\phi$, and suppose the element~$g$ is weakly attracted to~$[\Lambda']$.
    Pick an (arbitrary) unoriented finite subpath~$\rho'$ of a generic line of~$\Lambda'$.
    Then~$\rho'$ is a subpath of the periodic line~$\phi^N[[g]]$ for infinitely many~$N \ge 1$.
    For large~$N$, the reduced path~$f_\#^N(e)$ will be longer than~$\rho'$;
    therefore,~$\rho'$ (with some orientation) is a subpath of a cyclic permutation of the oriented path~$f_\#^N(\sigma)$ for infinitely many~$N \ge 1$.
    Since~$e$ is the initial edge of~$\sigma$, we deduce that~$\rho'$ is a subpath of the reduced path $f_\#^N(\sigma)f_\#^N(e) = f_\#^N(\sigma e)$ for some large~$N \ge 1$---the latter is a subpath of $f_\#^{N+M}(e)$ as $\sigma e$ is a subpath of $f_\#^M(e)$.
    Thus,~$\rho'$ is a subpath of the generic lines of~$\phi^N(\Lambda)$.
    As~$\rho'$ was arbitrary, we have shown that the generic lines of~$\Lambda'$ are in the closure of the generic lines of~$[\Lambda]$, and hence, $\Lambda' \subseteq [\Lambda]$.
    This extends to the $\phi$-orbit: $[\Lambda'] \subseteq [\Lambda]$.

    Conversely, suppose $[\Lambda'] \subseteq [\Lambda]$.
    Without loss of generality, we may assume $\Lambda' \subseteq \Lambda$.
    Pick an unoriented finite subpath~$\rho'$ of a generic line of~$\Lambda'$.
    Then~$\rho'$ is a subpath of a generic line of~$\Lambda$.
    Equivalently,~$\rho'$ (with some orientation) is a subpath of some $f_\#^k$-iterate of~$e$.
    By the observations in the first and second paragraphs,~$\rho'$ is a subpath of infinitely many $f_\#$-iterates of~$e$, and hence~$\sigma$.
    As~$\rho'$ was arbitrary, the element~$g$ is weakly attracted to~$\Lambda'$, and this extends to~$[\Lambda']$.
\end{proof}

\begin{lemn}[\ref{lem:approximategenline}]
For $[\Lambda] \in \mathcal L^+_{\phi|\mathcal A}$, some $\mathcal A$-peripheral element $g \in \free$ has the following property: 
\begin{quote}for any $[\Lambda']\in \mathcal L^+_{\phi|\mathcal A}$, $g$ is weakly attracted to $[\Lambda']$ if and only if  $[\Lambda'] \subseteq [\Lambda]$. 
\end{quote}
\end{lemn}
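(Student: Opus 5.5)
We reduce to the standard case \cref{lem:approximategenline_standard} by applying it to a single component of $\mathcal A$. Since $[\Lambda]$ is carried by $\mathcal A$ and $\BB(\mathcal A)$ is the finite disjoint union of the closed sets $\BB(A)$, $[A]\in\mathcal A$, a generic line $\ell$ of some $\Lambda\in[\Lambda]$ lies in $\BB(A)$ for a single $[A]\in\mathcal A$. Indeed, the closure of $\ell$ is $\Lambda$, which is irreducible in a suitable sense: any neighborhood of $\ell$ in $\BB(\free)$ recovers $\Lambda$. Let $k$ be the size of the $\langle\phi\rangle$-orbit of $[A]$, and let $\psi\defeq\phi^k|A\in\Out(A)$ be the first return class.

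The key identification is between attracting laminations of $\phi$ carried by $\BB(A)$ and attracting laminations of $\psi$. Because $A$ is malnormal and finitely generated, $\BB(A)$ is a closed subset of $\BB(\free)$ homeomorphic to the intrinsic space of lines of $A$, and $\phi^k$ restricts to $\psi$ on it. We then use the definition of generic lines in \cref{app:A}. A line $\ell\in\BB(A)$ is birecurrent in $A$ if and only if it is birecurrent in $\free$, since the weak topology restricts. The attracting-neighborhood condition for some power of $\phi$ restricts to $\BB(A)$ as one for a power of $\psi$. Conversely, a neighborhood $U'\subseteq\BB(A)$ witnessing genericity for $\psi$ gives genericity for $\phi$: enlarge $U'$ to an open set of $\BB(\free)$, using that lines weakly close to a line of $\BB(A)$ and attracted into it are eventually carried near $\BB(A)$. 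I expect this step to be the main obstacle. If it proves awkward, the fallback is to work with a relative train track for $\phi$ whose filtration contains a core subgraph realizing $\mathcal A$ (as in \cite{BesFeiHan00}), so that laminations carried by $\mathcal A$ correspond to EG strata inside that subgraph. The first-return map on the $[A]$-component of that subgraph is then a relative train track for $\psi$.

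With the identification in hand, apply \cref{lem:approximategenline_standard} to $\psi$ and the lamination $\Lambda\in\mathcal L^+(\psi)$. This produces $g\in A$, hence $\mathcal A$-peripheral, such that for every $\Lambda''\in\mathcal L^+(\psi)$, $g$ is weakly attracted to the $\psi$-orbit of $\Lambda''$ if and only if that orbit is contained in the $\psi$-orbit of $\Lambda$.

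It remains to translate this back to $\phi$-orbits. Let $[\Lambda']\in\mathcal L^+_{\phi|\mathcal A}$. The accumulation set of $(\phi^n[[g]])_n$ is the union over residues $0\le r<k$ of $\phi^r$ applied to the accumulation set of $(\psi^n[[g]])_n$. Hence $g$ is weakly attracted to $[\Lambda']$ under $\phi$ if and only if it is weakly attracted under $\psi$ to some $\phi^{-r}\Lambda'$ carried by $A$, and we may pick a member of $[\Lambda']$ lying in $\BB(A)$. Similarly, $[\Lambda']\subseteq[\Lambda]$ if and only if some $\phi^{j}\Lambda'\subseteq\BB(A)$ lies in $\psi$-orbit$(\Lambda)$. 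This uses disjointness of the $\BB(\phi^r A)$ to force the relevant translates into the same component. Combining these equivalences gives the claim.
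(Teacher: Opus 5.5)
Your proposal is correct and follows essentially the same route as the paper. Both arguments isolate the component $\BB(A)$ containing $\Lambda$, pass to the first-return class $\psi=\phi^k|A$, apply \cref{lem:approximategenline_standard}, and translate $\psi$-orbit statements back to $\phi$-orbits using disjointness of the closed sets $\BB(\phi^r A)$; the paper handles the case where $[\Lambda']$ misses $\BB(A)$ by partitioning $\BB(\mathcal A)$ into two $\phi$-invariant closed sets. Note that you only ever use the direction ``an attracting lamination of $\phi$ lying in $\BB(A)$ is an attracting lamination of $\psi$'', applied to $\Lambda$ and to the representatives of $[\Lambda']$ in $\BB(A)$, so the converse you flag as the main obstacle, and the train-track fallback, are not needed.
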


\begin{proof}
Let $\mathcal A$ be a $\phi$-invariant malnormal subgroup system of $\free$ with finite type and $[\Lambda] \in \mathcal L^+_{\phi|\mathcal A}$.
Choose an orbit representative $\Lambda \in \mathcal L^+(\phi|\mathcal A)$.
Then there is a unique conjugacy class $[A] \in \mathcal A$ such that $\Lambda \subseteq \BB(A)$.
Let $\psi \in \Out(A)$ denote the restriction $\phi^k|A \in \phi|\mathcal A$ (see \cref{def:restrictions}).
Since the action of $\psi$ on $\BB(A)$ is a restriction of the first return map of the action of $\phi$ on $\BB(\mathcal A)$, we get $\Lambda \in \mathcal L^+(\psi)$.
By \cref{lem:approximategenline_standard}, some element $g \in A$ has this property with respect to $\BB(A)$:
\begin{quote}for any $[\Lambda'] \in \mathcal L^+_\psi$, $g$ is weakly attracted to $[\Lambda']$ (under forward $\psi$-iteration) if and only if $[\Lambda'] \subseteq [\Lambda]$ (as $\psi$-orbits in $\BB(A)$).
\end{quote}
Note that $g$ is an $\mathcal A$-peripheral element of $\free$.
Let $[\Lambda'] \in \mathcal L^+_{\phi|\mathcal A}$ be an arbitrary $\phi$-orbit of attracting laminations carried by $\mathcal A$.
If $[\Lambda']$ and $\BB(A)$ are disjoint, then there is a partition of $\BB(\mathcal A)$ into two $\phi$-invariant closed subsets: one containing $[[g]]$ and the other $[\Lambda']$.
So $g$ is not weakly attracted (under forward $\phi$-iteration) to $[\Lambda']$, and $[\Lambda'] \nsubseteq [\Lambda]$.
Now suppose some $\phi$-orbit representative $\Lambda'$ is carried by $A$.
Using the previous observation about the action of $\psi$ on $\BB(A)$, we can conclude the proof:
\begin{enumerate}
    \item $[\Lambda'] \subseteq [\Lambda]$ as $\psi$-orbits in $\BB(A)$ if and only if $[\Lambda'] \subseteq [\Lambda]$ as $\phi$-orbits in $\BB(\mathcal A)$; and
    \item $g$ is weakly attracted to $[\Lambda']$ under forward $\psi$-iteration if and only if $g$ is weakly attracted to $[\Lambda']$ under forward $\phi$-iteration. \qedhere
\end{enumerate}
\end{proof}

\section{Canonical topological trees}\label{app:toptree}

We explain how to deduce \cref{Thm:NSS} from the results in the cited reference \cite{Mutanguha22}.

Fix an outer automorphism $\phi \in \Out(\free)$ and a representative $\Phi \in \phi$.
For induction, set $N_1 \defeq \free$,  $\phi_1 \defeq \phi$, and $\mathcal L^+_1 \defeq \mathcal L^+_\phi$.
By \cite[Proposition~2.2]{Mutanguha22}, there is a maximal lamination orbit $[\Lambda_1] \in \mathcal L^+_1$ and an $\mathbb R$-tree $(T_1, d_1)$ such that: 
\begin{enumerate} 
\item $N_1$ acts minimally on $T_1$ by isometries with trivial arc stabilizers;
\item there is a (unique) $\Phi$-equivariant $\lambda_1$-homothety $h_1 \colon (T_1, d_1) \to (T_1, d_1)$ with $\lambda_1>1$; and
\item an element $g \in N_1$ is elliptic in $T_1$ if and only if it is not weakly attracted to $[\Lambda_1]$.
\end{enumerate}
By \cref{lem:approximategenline_standard}, the subgroup system $\mathcal N_2 \defeq \mathcal N(T_1)$ of nontrivial point stabilizers of $T_1$ carries the remaining laminations $\mathcal L^+_1 \setminus \{ [\Lambda_1] \} = \mathcal L^+_{\phi_1 | \mathcal N_2}$.
For each $[N_2] \in \mathcal N_2$, let $\phi_2 \in \Out(N_2)$ be the corresponding restriction in $\phi_1|\mathcal N_2$ and $\mathcal L^+_2 \subseteq \mathcal L^+_1$ the subset identified with $\mathcal L^+_{\phi_2}$.
Applying \cite[Proposition~2.2]{Mutanguha22} again, there is a lamination $[\Lambda_2] \in \mathcal L^+_2$ and an $\mathbb R$-tree $(T_2, d_2)$ as before.

The main construction of \cite{Mutanguhaexistence} describes how to equivariantly substitute the trees~$T_2$ into the points $p_2 \in T_1$ (with nontrivial stabilizers $N_2$).
This process is topological as it forgets the metrics $d_1, d_2$ and produces a topological tree $\widehat T_2$ whose minimal non-nesting $N_1$-action has trivial arc stabilizers.
Equivariantly collapsing the copies of $T_2$ in $\widehat T_2$ recovers the $T_1$, and the pullback of $d_1$ via the quotient map $\widehat T_2 \to T_1$ is a pseudometric on $\widehat T_2$ that we continue to denote $d_1$.
Consequently, the equivariant copies of $T_2$ are precisely the nondegenerate maximal subsets of $\widehat T_2$ with $d_1$-diameter 0;
moreover, these subsets inherit the $F_2$-invariant metrics $d_2$.
Denote this structure by the tuple $(\widehat T_2, (d_1, d_2))$.

Inductively apply \cite[Proposition~2.2]{Mutanguha22} to get a tuple $(\widehat T_n, (d_1, \ldots, d_n))$ such that:
\begin{enumerate}
    \item there is a $\Phi$-equivariant homeomorphism $\hat h_n \colon \widehat T_n \to \widehat T_n$ that is a $\lambda_i$-homothety for $\lambda_i >1$, with respect to the pseudometric $d_i$ for each $i$; and
    \item an element $g \in \free$ is elliptic in $\widehat T_n$ if and only if it is not weakly attracted to any lamination in $\mathcal L^+_\phi$ (i.e.~$g$ has polynomial growth under $\phi$-iteration).
\end{enumerate}
The previous statement is a summary of \cite[Theorem~3.1 and Proposition~3.4]{Mutanguha22};
moreover, the underlying topological tree $T_\phi = \widehat T_n$ is unique \cite[Corollary~3.9]{Mutanguha22}.

For any pseudometric $d_i$ in the tuple, the corresponding lamination $[\Lambda_i] \in \mathcal L^+_\phi$ is naturally realized by an $\free$-invariant collection of maximal arcs in~$T_\phi$, and elements in this collection will be called $T_\phi$-leaves of~$[\Lambda_i]$.
By construction, an arc in $T_\phi$ contains a subarc with positive $d_i$-diameter if and only if it contains a nondegenerate subarc of a $T_\phi$-leaf of $[\Lambda_i]$.
Define $g \in \free$ to be \emph{$d_i$-loxodromic} if its axis in~$T_\phi$ contains a subarc with positive $d_i$-diameter.
By \cite[Proposition~3.5]{Mutanguha22}, being $d_i$-loxodromic is equivalent to being weakly attracted to~$[\Lambda_i]$.
Borrowing notation from measure theory, we write $d_i \ll d_j$ if any arc in~$T_\phi$ containing a subarc with positive $d_i$-diameter must also contain a subarc with positive $d_j$-diameter. 
Finally, \cite[Claim~3.6]{Mutanguha22} states that $d_i \ll d_j$ if and only if $[\Lambda_j] \subseteq [\Lambda_i]$.

\begin{thmn}[\ref{Thm:NSS}]
For $\phi \in \Out(\free)$ and $\mathcal L \subseteq \mathcal L^+_\phi$, there is a canonical topological tree $T_{\mathcal L}$ with a non-nesting continuous action by $\free$ such that:
\begin{enumerate}
\item the action is minimal with trivial arc stabilizers;
\item for any automorphism $\Phi \colon \free \to \free$ representing~$\phi$, there is an expanding $\Phi$-equivariant homeomorphism $h_{\mathcal L} \colon T_{\mathcal L} \to T_{\mathcal L}$; and
\item an element $g \in \free$ is elliptic in $T_{\mathcal L}$ if and only if it is not weakly attracted to any $[\Lambda] \in \mathcal L$.
\end{enumerate}
\end{thmn}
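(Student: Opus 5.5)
Given $\mathcal L\subseteq\mathcal L^+_\phi$, I will build $T_{\mathcal L}$ from the canonical tree $T_\phi=\widehat T_n$ with its tuple of pseudometrics $(d_1,\dots,d_n)$ described above, by collapsing everything that does not see a lamination in $\mathcal L$. Concretely, let $I_{\mathcal L}$ be the set of indices $i$ with $[\Lambda_i]\in\mathcal L$, and define a pseudometric $d_{\mathcal L}\defeq\sum_{i\in I_{\mathcal L}} d_i$ on $T_\phi$. I then take $T_{\mathcal L}$ to be the quotient of $T_\phi$ obtained by identifying two points when the arc between them contains no subarc of positive $d_{\mathcal L}$-diameter, i.e. no nondegenerate subarc of a $T_\phi$-leaf of any $[\Lambda_i]$ with $i\in I_{\mathcal L}$. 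Equivalently, I collapse each maximal connected subset of $d_{\mathcal L}$-diameter zero. These subsets are closed subtrees, and the family is $\free$-invariant. It is also invariant under $\hat h_n$, since $\hat h_n$ is a $\lambda_i$-homothety for each $d_i$. The empty case $\mathcal L=\emptyset$ gives a point. Since $T_\phi$ is canonical by \cite[Corollary~3.9]{Mutanguha22} and the collapse depends only on $\mathcal L$, the resulting $T_{\mathcal L}$ is canonical.

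The verification goes in three steps. First, $T_{\mathcal L}$ is a topological tree and the induced $\free$-action is non-nesting, minimal, and has trivial arc stabilizers. Minimality descends from $T_\phi$. For arc stabilizers, any nondegenerate arc of $T_{\mathcal L}$ lifts to an arc of $T_\phi$ containing a subarc of positive $d_i$-diameter for some $i\in I_{\mathcal L}$, and arc stabilizers of $T_\phi$ are trivial. Second, the map $\hat h_n$ descends to a $\Phi$-equivariant homeomorphism $h_{\mathcal L}$. It is expanding because it stretches each $d_i$ with $i\in I_{\mathcal L}$ by $\lambda_i>1$. In particular, a fixed point is unique: if two distinct fixed points existed, the arc between them would have positive $d_{\mathcal L}$-diameter, and that diameter would be multiplied by at least $\min\lambda_i>1$ while staying the same arc, a contradiction. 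Third, $g$ is elliptic in $T_{\mathcal L}$ if and only if its axis in $T_\phi$ (or its fixed point) contains no subarc of positive $d_i$-diameter for $i\in I_{\mathcal L}$. That is, $g$ is not $d_i$-loxodromic for any such $i$, which by \cite[Proposition~3.5]{Mutanguha22} means $g$ is not weakly attracted to any $[\Lambda]\in\mathcal L$.

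The main obstacle is the first step: checking that collapsing the $d_{\mathcal L}$-null subtrees produces a genuine (Hausdorff) topological tree and that elements loxodromic in $T_\phi$ but only in the "invisible" directions really become elliptic. The concern is that the null pieces might fail to be closed, or accumulate badly, because the $d_j$ with $j\notin I_{\mathcal L}$ are interleaved with the $d_i$ in the nested substitution. To handle this I would use the relation $d_i\ll d_j \iff [\Lambda_j]\subseteq[\Lambda_i]$ from \cite[Claim~3.6]{Mutanguha22}. This lets me replace $\mathcal L$ by its downward closure in the order used by the substitution, so the collapse can be organized level by level: at each stage of the inductive substitution defining $\widehat T_n$, either keep the tree $T_k$ or collapse it to a point. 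I would then redo the substitution construction of \cite{Mutanguhaexistence} with only the kept trees; that construction already guarantees a minimal non-nesting action with trivial arc stabilizers. The point to check is that collapsing a $T_k$ at a lower stage does not disturb the substitution at higher stages, because those substitutions happen inside point stabilizers that remain elliptic.
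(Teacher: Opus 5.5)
Your construction and your checks of (2), (3) and canonicity match the paper's proof. The paper collapses in $T_\phi$ the leaves of every orbit containing no member of $\mathcal L$, i.e.\ every $d_j$ with $d_j\not\ll d_i$ for all $[\Lambda_i]\in\mathcal L$. By Claim~3.6 of \cite{Mutanguha22} this gives the same quotient as your collapse of $d_{\mathcal L}$-null subtrees. The paper then simply asserts that the quotient inherits a minimal non-nesting action with trivial arc stabilizers.

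The genuine problem is the repair you propose for that step in your last paragraph, which would fail. The orbits whose pseudometrics survive are those containing some member of $\mathcal L$. This set is upward closed for inclusion, but in general it is not a union of initial stages of the substitution hierarchy: an orbit containing no member of $\mathcal L$ may be substituted before one that does. Consider \cref{ex:lamination_poset}.
\begin{enumerate}
\item $T_1$ belongs to one of the two maximal orbits $[\Lambda_{cd}]$, $[\Lambda_{ef}]$; let $\mathcal L$ be the singleton of the other.
\item Then $T_1$ must be crushed while the tree for $\mathcal L$ must be kept. That tree is only a tree for a subgroup of a point stabilizer $N_2$ of $T_1$, namely $N_2=\langle a,b,e,f\rangle$, resp.\ $\langle a,b,c,d\rangle$.
\item Once $T_1$ is a point, that point is fixed by all of $\free$, so there is no point with stabilizer $N_2$ to substitute into. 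The fact that $N_2$ ``remains elliptic'' is true but does not help, and ``redoing the substitution with only the kept trees'' produces no $\free$-tree at all.
\item Keeping $T_1$ instead, to make the kept set an initial segment, violates (3). The element $c$ (resp.\ $e$) would be loxodromic although it lies in $\mathcal N[\Lambda_{ef}]=[\langle a,b,c,d\rangle]$ (resp.\ $\mathcal N[\Lambda_{cd}]=[\langle a,b,e,f\rangle]$).
\end{enumerate}

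The underlying misconception is that crushing $d_1$ inside $T_\phi$ turns $T_1$ into a point. It does not: it yields a nontrivial $\free$-tree made of copies of the kept trees glued to one another along the pattern recorded in $T_\phi$, and its nontrivial point stabilizers represent $\mathcal N(\mathcal L)$, not $\free$. So the collapse cannot be organized stage by stage.

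Step~1 has to be argued directly on the $\free$- and $\hat h_n$-invariant decomposition of $T_\phi$ into maximal $d_{\mathcal L}$-null closed subtrees, showing that the quotient is a Hausdorff topological tree. This is exactly the point the paper takes as evident.

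For arc stabilizers, note also that an element fixing an arc of the quotient need not fix the lift you chose. Instead, use the bridge in $T_\phi$ between the two distinct invariant classes: the element fixes the bridge's endpoints, hence fixes the bridge pointwise by non-nesting, and so is trivial.
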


\begin{proof}
    Suppose a subset $\mathcal L \subseteq \mathcal L^+_\phi$ is given.
    Construct the topological tree~$T_{\mathcal L}$ by collapsing in $T_\phi$ the leaves of all $[\Lambda] \in \mathcal L^+_\phi$ that do not contain a lamination in~$\mathcal L$.
    Equivalently, we equivariantly collapse all pseudometrics $d$ such that $d \not\ll d_i$ for all pseudometrics~$d_i$ corresponding to $[\Lambda_i] \in \mathcal L$.
    So $T_{\mathcal L}$ inherits a non-nesting minimal $\free$-action with trivial arc stabilizers, and the induced $\Phi$-equivariant homeomorphism $h_{\mathcal L} \colon T_{\mathcal L} \to T_{\mathcal L}$ is still a $\lambda_i$-homothety for the non-collapsed pseudometrics~$d_i$.
    By construction, an element $g \in \free$ is elliptic in $T_{\mathcal L}$ if and only if it is not weakly attracted to any $[\Lambda] \in \mathcal L$.
    As~$T_\phi$ is unique, the construction of $T_{\mathcal L}$ is canonical.
\end{proof}

\bibliographystyle{alpha}
\bibliography{biblio}

\end{document}